\newcommand*{\MRref}[2]{ \href{http://www.ams.org/mathscinet-getitem?mr=#1}{MR #1}}
\newcommand*{\arxiv}[1]{\href{http://www.arxiv.org/abs/#1}{arXiv: #1}}
\numberwithin{equation}{section}
\theoremstyle{plain}
\newtheorem{theorem}[equation]{Theorem}
\newtheorem{lemma}[equation]{Lemma}
\newtheorem{proposition}[equation]{Proposition}
\newtheorem{corollary}[equation]{Corollary}
\theoremstyle{definition}
\newtheorem{definition}[equation]{Definition}
\theoremstyle{remark}
\newtheorem{remark}[equation]{Remark}
\newtheorem{example}[equation]{Example}
\newtheorem{question}[equation]{Question}
\DeclareMathOperator{\Aut}{Aut}
\DeclareMathOperator{\cspn}{\overline{span}}
\DeclareMathOperator{\Ind}{Ind}
\DeclareMathOperator{\Inf}{Inf}
\DeclareMathOperator{\supp}{\mathrm{supp}}
\DeclareMathOperator{\sat}{\mathrm{sat}}
\DeclareMathOperator{\pt}{\mathrm{pt}}
\newcommand*{\nb}{\nobreakdash}
\newcommand*{\Star}{\(^*\)\nobreakdash-}
\newcommand*{\dd}{\,d}
\newcommand*{\bs}{\backslash}
\newcommand*{\C}{\mathbb C}
\newcommand*{\EG}{\underline{EG}}
\newcommand*{\Y}{\mathcal Y}
\newcommand*{\Z}{\mathbb Z}
\newcommand*{\Lb}{\mathcal L}
\newcommand*{\K}{\mathcal K}
\newcommand*{\M}{\mathcal{M}} 
\newcommand*{\Cst}{C^*} 
\newcommand*{\cont}{C}
\newcommand*{\contz}{\cont_0}
\newcommand*{\contc}{\cont_c}
\newcommand*{\contb}{\cont_b}
\newcommand*{\id}{\textup{id}}
\newcommand*{\Ad}{\textup{Ad}}
\newcommand*{\Hils}{\mathcal H}
\newcommand*{\U}{\mathcal U}
\newcommand*{\D}{\mathcal D}
\newcommand*{\E}{\mathcal E}
\newcommand*{\EE}{\mathbb E}
\newcommand*{\En}{E}
\newcommand*{\Enorm}{{\pn}}
\newcommand*{\defeq}{\mathrel{\vcentcolon=}}
\newcommand*{\congto}{\xrightarrow\sim}
\newcommand*{\braket}[2]{\langle#1\!\mid\!#2\rangle}
\newcommand*{\bbraket}[2]{\mathopen{\langle\!\langle}#1\!\mid\!#2\mathclose{\rangle\!\rangle}}
\newcommand*{\sbe}{\subseteq} 
\newcommand*{\F}{\mathcal F}
\newcommand*{\cstar}{\texorpdfstring{$C^*$\nobreakdash-\hspace{0pt}}{*-}}
\newcommand*{\into}{\hookrightarrow}
\newcommand*{\onto}{\twoheadrightarrow}
\newcommand*{\red}{\mathrm{r}}
\newcommand*{\redg}{{\mathrm{r}^G}}
\newcommand*{\redh}{{\mathrm{r}^H}}
\newcommand*{\un}{\mathrm{u}}
\newcommand*{\ung}{{\mathrm{u}^G}}
\newcommand*{\unh}{{\mathrm{u}^H}}
\newcommand*{\pn}{\mathrm{\mu}}
\newcommand*{\png}{{\mathrm{\mu}^G}}
\newcommand*{\pnh}{{\mathrm{\mu}^H}}
\newcommand*{\qn}{\mathrm{\nu}}
\newcommand*{\Qn}{\mathcal{Q}}
\newcommand*{\Fix}{\mathrm{Fix}}
\newcommand*{\CP}{\mathrm{CP}}
\newcommand*{\I}{\mathcal{I}} 
\newcommand{\cspan}{\overline{\operatorname{span}}}
\newcommand{\lt}{\mathrm{lt}}
\newcommand{\rt}{\mathrm{rt}}
\newcommand*{\X}{\mathcal X} 
\newcommand*{\CCCorr}{\mathfrak{Corr}}
\newcommand*{\Corr}[2]{\mathfrak{Corr}({#1},{#2})}
\newcommand*{\CCorr}[1]{\mathfrak{Corr}({#1})}
\newcommand*{\CorrG}{\mathfrak{Corr}(G)}
\newcommand*{\CorrH}{\mathfrak{Corr}(H)}
\newcommand{\ie}{\emph{i.e.}}
\newcommand{\eg}{\emph{e.g.}}
\begin{document}
\title[Imprimitivity theorems for weakly proper actions]{Imprimitivity theorems for weakly proper actions of locally compact groups}

\author{Alcides Buss}
\email{alcides.buss@ufsc.br}
\address{Departamento de Matem\'atica\\
 Universidade Federal de Santa Catarina\\
 88.040-900 Florian\'opolis-SC\\
 Brazil}

\author{Siegfried Echterhoff}
\email{echters@uni-muenster.de}
\address{Mathematisches Institut\\
University of M\"un\-ster\\
 Einsteinstr.\ 62\\
 48149 M\"unster\\
 Germany}

\begin{abstract}
In a recent paper the authors introduced universal and exotic generalized fixed-point algebras
for weakly proper group actions on \cstar{}algebras. Here we extend the notion of weakly proper actions
to actions on Hilbert modules. As a result we obtain several imprimitivity theorems
establishing important Morita equivalences between universal, reduced, or exotic
crossed products and appropriate universal, reduced, or exotic
fixed-point algebras, respectively. In particular, we obtain an exotic version of Green's imprimitivity theorem and
a very general version of the symmetric imprimitivity theorem by weakly proper actions of product groups $G\times H$.
In addition, we study functorial properties of generalized fixed-point algebras for equivariant categories of
\cstar{}algebras based on correspondences.
\end{abstract}

\subjclass[2010]{46L55, 46L08}

\thanks{Supported by Deutsche Forschungsgemeinschaft  (SFB 878, Groups, Geometry \& Actions) and by CNPq (Ciências sem Fronteira) -- Brazil.}
\thanks{The authors are grateful for various useful comments by the referee which helped to improve the results of the paper.}

\maketitle

\section{Introduction}
\label{sec:introduction}
Suppose that $\alpha:G\to \Aut(A)$ is an action of a locally compact group $G$ on the \cstar{}algebra $A$
and let $X$ be a proper $G$\nb-space. Then we say that $A$ is a weakly proper $X\rtimes G$-algebra
if there exists a $G$\nb-equivariant nondegenerate \Star{}homomorphism $\phi:C_0(X)\to \M(A)$.
This notion substantially generalizes the notion of (centrally) proper $C_0(X)$\nb-algebras, in which the
homomorphism $\phi:C_0(X)\to \M(A)$ is assumed to take values in the center $Z\M(A)$
of the multiplier algebra $\M(A)$. On the other hand, it follows from \cite[Proposition 5.9]{Rieffel:Integrable_proper} that
weakly proper actions are always proper in the  sense of Rieffel (see \cite{Rieffel:Proper, Rieffel:Integrable_proper}), who
 showed in \cite{Rieffel:Proper} that all proper actions in his sense
allow the construction of a generalized fixed-point algebra $A^G_\red$ which is Morita equivalent
to an ideal in the reduced crossed product $A\rtimes_{\alpha,\red}G$. But already in his paper
 \cite{Rieffel:Proper}, Rieffel discussed the question, whether it is possible to obtain similar constructions
 which involve the universal crossed product $A\rtimes_{\alpha ,\un}G$.
We show in \cite{Buss-Echterhoff:Exotic_GFPA} that such theory exists in the case of weakly proper actions. In that paper we also construct a universal version of the generalized
fixed-point algebras $A_{\un}^G$, which is
 Morita equivalent to an ideal in the universal crossed product
$A\rtimes_{\alpha ,\un}G$. It can be obtained as a completion of the
 {\em fixed-point algebra with compact supports}
 $$A_c^G=\contc(G\bs X)\cdot\{m\in \M(A)^G: f\cdot m, m\cdot f\in A_c\; \mbox{ for all } f\in \contc(X)\}\cdot \contc(G\bs X),$$
 with $f\cdot a:= \phi(f)a$ for $f\in C_b(X)$, $a\in \M(A)$, and $A_c:=\contc(X)\cdot A\cdot \contc(X)$.
 As an application, we obtained Landstad duality for maximal coactions and we answered
 a number of questions about duality properties of exotic crossed products which were raised in a recent paper by
Kaliszewski, Landstad and Quigg in \cite{Kaliszewski-Landstad-Quigg:Exotic}.

In this paper we further develop the theory by studying weakly proper actions on
Hilbert modules which are inspired by Meyer's theory of square-integrable Hilbert modules
as studied in \cite{Meyer:Generalized_Fixed}. To be more precise, if $(B,\beta)$ is a $G$\nb-algebra and
$(\E,\gamma)$ is a $G$\nb-equivariant Hilbert $B$-module, we say that $(\E,\gamma)$
is a weakly proper $(B, X\rtimes G)$-module if there exists a proper $G$\nb-space $X$ and
a $G$\nb-equivariant nondegenerate representation $\phi:C_0(X)\to \Lb(\E)$.
Of course, this implies that $(\K(\E), \Ad\gamma)$ is a weakly proper $X\rtimes G$-algebra
as above. Let $\F_c(\E):=\phi(\contc(X))\E\subseteq \E$. We show that $\F_c(\E)$ completes to a Hilbert $B\rtimes_{\beta,\un}G$-module $\F_{\un}(\E)$
with respect to the $\contc(G,B)$-valued inner product
$$\bbraket{\xi}{\eta}_{\contc(G,B)}(s)=\Delta(s)^{-1/2}\braket{\xi}{\gamma_s(\eta)}_B.$$
The corresponding universal generalized fixed-point algebra $\Fix_{\un}^G(\E)$ is defined as
$$\Fix_{\un}^G(\E):=\K(\F_{\un}(\E)).$$
It turns out that  $\Fix_{\un}^G(\E)$ is canonically isomorphic to the universal generalized fixed-point algebra $\K(\E)_{\un}^G$ (see Proposition~\ref{prop-exotic-compacts}).
It follows directly from the definition that $\F_{\un}(\E)$ implements a Morita equivalence
between $\Fix_{\un}^G(\E)$ and the ideal
$$\mathcal I_B:=\cspan \bbraket{\F_{\un}(\E)}{\F_{\un}(\E)}_{B\rtimes_{\beta,{\un}} G}
\subseteq B\rtimes_{\beta, {\un}} G.$$
We say that the action of $G$ on $\E$ is {\em universally saturated} if $\mathcal I_B=B\rtimes_{\beta,{\un}}G$.

We shall see that this construction has many interesting applications:
consider the Hilbert $B$-module $L^2(G)\otimes B\cong L^2(G,B)$ for the $G$-algebra $(B,\beta)$.
Let $\rho:G\to \U(L^2(G))$ denote the right regular representation of $G$ and let
$M:C_0(G)\to \Lb(L^2(G))$ denote the representation by multiplication operators.
Then  $L^2(G)\otimes B$ becomes a weakly proper Hilbert $(B, G\rtimes G)$-module
with $G$-action $\rho\otimes\beta$ and structure map
$M\otimes 1_B\colon C_0(G)\to \Lb(L^2(G)\otimes B)$.
If we restrict the action to a closed subgroup $H$ of $G$, $L^2(G,B)$ becomes a weakly proper
$(B, G\rtimes H)$-module, and it turns out that $\F_{\un}^H(L^2(G,B))$ is isomorphic to
Green's $C_0(G/H, B)\rtimes_{\tau\otimes \beta,{\un}}G- B\rtimes_{\beta,{\un}} H$ equivalence bimodule. Hence, we obtain Green's imprimitivity theorem as a special case of our constructions.
Indeed, if we start just with an action of $H$ on $B$, we obtain the version for induced algebras:
$$\Ind_H^G(B,\beta)\rtimes_{\Ind\beta,\un}G\sim_M B\rtimes_{\beta, {\un}} H.$$
The key result for this (Theorem~\ref{theo:TheFixedPointFunctorForCommutingActions}) is a general isomorphism
$(A\rtimes_{\alpha,{\un}} H)^G_{\un}\cong A^G_{\un}\rtimes_{\alpha, {\un}} H$
which works for any weak $X\rtimes (G\times H)$-algebra $(A,\alpha)$ such that $G$ acts properly
on $X$ (\ie, there exists a $G\times H$-equivariant structure map $\phi:\contc(X)\to \M(A)$, but we assume properness only for the action of $G$).
In this case, $A\rtimes_{\alpha,{\un}} H$ is a weakly proper $X\rtimes G$-algebra via the
composition of $\phi$ with the canonical embedding $i_A:A\to \M(A\rtimes_{\alpha, {\un}} G)$.

If both, $G$ and $H$ act properly on $X$, we obtain a very general version of the symmetric imprimitivity theorem:
there is a canonical partial $A_{\un}^G\rtimes_{\un} H-A_{\un}^H\rtimes_{\un} G$ equivalence bimodule which
implements a Morita equivalence if the actions of $G$ and $H$ on $A$ are both universally saturated,
which is always true if the actions of $G$ and $H$ on $X$ are free.
We show that the module can be realized as a completion of $A_c=\contc(X)\cdot A\cdot \contc(X)$
which can be made into a partial $\contc(H, A_c^G)-\contc(G, A_c^H)$ pre-equivalence bimodule
(a partial equivalence bimodule between two \cstar{}algebras $A$ and $B$ is an equivalence bimodule
between ideals $I_A$ and $I_B$ in $A$ and $B$, respectively).

Above we stated all results for the universal crossed-product norms. But all proofs are given also for
the reduced crossed products and, more generally, for certain compatible choices of exotic crossed-product norms for $G$, $H$ and $G\times H$ in the sense of Kaliszewski, Landstad and Quigg  (see \cite{Kaliszewski-Landstad-Quigg:Exotic}).
In particular, we shall see that Green's imprimitivity theorem holds for any given
exotic $G$-crossed product
corresponding to a $G$-invariant weak-* closed ideal $E\subseteq B(G)$ as constructed in \cite{Kaliszewski-Landstad-Quigg:Exotic}
 if we take the  $H$-crossed product with respect to the weak-* closed ideal $E_H\subseteq B(H)$ generated by $\{f|_H: f\in B(G)\}$.
In the reduced case we recover a number of results from the literature.
In particular, our symmetric imprimitivity theorem extends the one given
by an Huef, Raeburn and Williams in \cite{anHuef-Raeburn-Williams:Symmetric} for Rieffel proper actions.
However, we think that our approach is more direct and the description of the pre-equivalence
bimodule is much easier to handle. Of course, our result also covers the version of the symmetric
imprimitivity theorem for $C_0(X)$-algebras due to Kasparov (\cite{Kasparov:Novikov}) and
Raeburn (\cite{Raeburn:Induced_Symmetric}).

The paper is structured as follows: in a preliminary section we recall some background on crossed products
by actions on \cstar{}algebras and Hilbert modules with respect to universal, reduced and exotic crossed-product norms.
For the latter we restrict ourselves to norms which are attached to $G$\nb-invariant ideals of the Fourier-Stieltjes algebra $B(G)$, as studied in \cite{Kaliszewski-Landstad-Quigg:Exotic}. We also obtain some useful
functoriality properties
for such exotic crossed products which are needed in the sequel. In \S 3 we introduce
weakly proper Hilbert modules and the corresponding fixed-point algebras and we establish a number of
useful results about these objects. In \S 4 we study equivariant bimodules
$(\E,\gamma)$ between two weakly proper $X\rtimes G$-algebras $(A,\alpha)$ and $(B,\beta)$, and we show that
such modules always allow the construction of an $A^G_{\un}-B^G_{\un}$ fixed module $\E^G_{\un}$
(and similarly for other crossed-product norms), which will be a full equivalence bimodule if the actions of $G$ on $A$ and $B$ are both universally saturated.
A similar construction of fixed modules for weakly proper actions was done in the reduced case in \cite{Huef-Raeburn-Williams:FunctorialityGFPA} under the additional assumption that the action on $X$ is \emph{free}. Our construction does not use this assumption and allows functoriality of fixed-point algebras for \emph{saturated} weakly proper actions.
We also derive a useful decomposition result for such bimodules, which will play an important role
for the description of the direct module for the symmetric imprimitivity theorem, which, together with Green's
theorem, is derived in \S 5. Finally, in \S 6 we discuss some functorial properties of the construction
which assigns to a proper $X\rtimes G$-algebra its fixed-point algebra $A^G_\pn$ with respect to
exotic crossed-product norms $\|\cdot\|_\pn$ attached to $G$\nb-invariant subsets of $B(G)$.
This generalizes the main results  of the papers \cite{Huef-Kaliszewski-Raeburn-Williams:Naturality-Rieffel, Huef-Kaliszewski-Raeburn-Williams:Naturality-Symmetric} by an Huef, Kaliszewski, Raeburn  and Williams.

\section{Preliminaries}\label{sec:Preliminaries}

\subsection{Exotic crossed products} Let $(B,G, \beta)$ be a \cstar{}dynamical system. Then $\contc(G,B)$ becomes a \Star{}algebra with respect to the usual convolution and involution:
$$f*g(t)\defeq\int_G f(s)\beta_s(g(s^{-1}t))\dd{s}\quad\mbox{and}\quad f^*(t)\defeq \Delta(t)^{-1}\beta_t(f(t^{-1}))^*.$$
The full crossed product $B\rtimes_{\beta,\un} G$ is the completion of $\contc(G,B)$ with respect to the universal norm
$\|\cdot\|_\un$ obtained from integrated forms of covariant representations.
Let us write $(\iota_B,\iota_G)$ for the canonical covariant representation of $(B,G, \beta)$ into $\M(B\rtimes_{\beta}G)$, \ie, $\iota_B\colon B\to \M(B\rtimes_{\beta,\un} G)$ and $\iota_G\colon G\to\U\M(B\rtimes_{\beta,\un} G)$ are given by:
$$\big(\iota_B(b)\cdot f\big)(t)= bf(t),\quad f\cdot \iota_B(b)=f(t)\beta_t(b),$$
$$\big(\iota_G(s)\cdot f\big)(t)=\beta_s(f(s^{-1}t)),\quad \big(f\cdot\iota_G(s)\big(t)=\Delta(s)^{-1}f(ts^{-1}),$$
for $f\in \contc(G,B)$, $b\in B$ and $s\in G$. We refer to  \cite{Williams:Crossed} for
further details.

If $(B,G,\beta)$ is a C*-dynamical system, let $(i_B^\red, i_G^\red): (B,G)\to \M(B\otimes \K(L^2(G)))$ denote the covariant
pair given by $i_{G}^\red=1\otimes\lambda_G$ and $i_{B}^\red$ denotes the
composition
$$B\stackrel{\tilde\beta}{\longrightarrow} C_b(G,B)\subseteq \M(B\otimes C_0(G))\stackrel{\id_B\otimes M}{\longrightarrow}
\M(B\otimes \K(L^2(G)))$$
in which $\tilde\beta$ maps $b\in B$ to the function $t\mapsto \beta_{t^{-1}}(b)$ and
$M:C_0(G)\to \Lb(L^2(G))$ is the representation by multiplication operators.
Its integrated form $\Lambda_B:=i_B^\red\rtimes i_G^\red:B\rtimes_{\beta,\un}G\to \M(B\otimes \K(L^2(G)))$ is called the {\em regular representation} of $(B,G,\beta)$ and the {\em reduced crossed product} $B\rtimes_{\beta,\red}G$ is defined as the image
$\Lambda_B(B\rtimes_\beta G)\subseteq \M(B\otimes \K(L^2(G))$.
Of course, we may also
regard $B\rtimes_{\beta,r}G$ as the completion of $\contc(G,B)$ with respect to the {\em reduced norm}
$\|f\|_\red=\|\Lambda_B(f)\|$.

In this paper we shall also consider  {\em exotic} \cstar{}norms $\|\cdot\|_\pn$ on $\contc(G,B)$, \ie,
norms which satisfy the \cstar{}condition $\|f^** f\|_\pn=\|f\|_\pn^2$, so that the (Hausdorff) completion
$B\rtimes_{\beta,\pn}G$ of $\contc(G,B)$ with respect to this norm is a \cstar{}algebra. Such norms are most interesting if they
 satisfy
$\|f\|_r\leq \|f\|_\pn\leq \|f\|_u$ and if they are coming from a crossed-product functor $(B,G,\beta)\mapsto B\rtimes_{\beta,\pn}G$.
Such functors have been studied recently by several
authors (\eg, see \cite{Brown-Guentner:New_completions, Kaliszewski-Landstad-Quigg:Exotic,
Buss-Echterhoff:Exotic_GFPA,  Baum-Guentner-Willett}). Although some of our results could possibly be stated
for more general crossed-product norms, in this paper we shall work exclusively with
the crossed-product norms as introduced by Kaliszewski, Quigg, and Landstad in \cite{Kaliszewski-Landstad-Quigg:Exotic}.
As we shall see below, these norms have very good functorial properties with respect to equivariant morphisms
and correspondences.

To recall the construction, let $B(G)$ denote the Fourier-Stieltjes algebra of $G$, \ie, the set of all matrix coefficients
$s\mapsto \braket{\pi(s)\xi}{\eta}$ of all unitary representations $\pi$ of $G$.
Recall that $B(G)$ identifies with the Banach-space dual $\Cst(G)^*$ if we map the function $s\mapsto\braket{\pi(s)\xi}{\eta}$
 to the linear functional on $\Cst(G)$ given by $x\mapsto \braket{\pi(x)\xi}{\eta}$.
 Assume now that $\En\subseteq B(G)$ is a nonzero
  ideal in $B(G)$ which is $G$\nb-invariant with respect to left and right translation.
 It is shown in \cite[Lemma 3.1 and Lemma 3.14]{Kaliszewski-Landstad-Quigg:Exotic} that
 $I_\En:={^\perp \En}=\{x\in \Cst(G): f(x)=0\;\mbox{ for all } f\in \En\}$ is a closed ideal in $\Cst(G)$ which is contained in
 the kernel of the regular representation $\lambda_G:\Cst(G)\to C_\red^*(G)$, and hence the quotient
 $C_\En^*(G):=\Cst(G)/{I_\En}$ is a group \cstar{}algebra lying between $C_\red^*(G)$ and $\Cst(G)$.
Let $q_\En:\Cst(G)\to C_\En^*(G)$ denote the quotient map and let $u:G\to\U\M(C^*(G))$ denote
 the canonical homomorphism. If $\beta\colon G\to \Aut(B)$ is an action, let $\widehat\beta:=(i_B\otimes 1)\rtimes (i_G\otimes u)\colon B\rtimes_{\beta,\un} G\to \M(B\rtimes_{\beta,\un} G\otimes \Cst(G))$ denote the dual coaction.
Then Kaliszewski, Quigg and Landstad define the $E$\nb-crossed product $B\rtimes_{\beta,\Enorm_E}G$
as
$$B\rtimes_{\beta, \Enorm_E}G:=(B\rtimes_{\beta,\un}G)/J_{\beta,\En}\quad \text{with}
\quad J_{\beta,\En}:=\ker(\id\otimes \,q_\En)\circ \widehat{\beta}.$$
They also show that the dual coaction $\widehat{\beta}$ of $G$ on $B\rtimes_{\beta,\un} G$ factors to a
dual coaction $\widehat{\beta}_{\pn_E}$ on the $\En$-crossed product  (\cite[{Theorem~6.2}]{Kaliszewski-Landstad-Quigg:Exotic}).
Moreover, it follows from \cite[Corollary 3.13]{Kaliszewski-Landstad-Quigg:Exotic} that $C_\En^*(G)\cong \C\rtimes_{\Enorm_E}G$.
The $\En$-norm does not change if we pass from $\En$ to its weak-* closure in $B(G)$
by \cite[Lemma 3.5]{Kaliszewski-Landstad-Quigg:Exotic}, so we may always assume that $\En$ is weak\nb-* closed.
Moreover, for $\En=B(G)$ we obtain the universal norm $\|\cdot\|_\un$ and for $\En=B_\red(G)$, the weak-* closure of  Fourier algebra $A(G)$,  we obtain the reduced norm $\|\cdot\|_\red$.
In what follows we shall  write $\pn$ instead of $\Enorm_E$
if  confusion seems unlikely.

For later use it is important for us to see that the above construction of a crossed-product functor works for
any given quotient $C_\nu^*(G)$ of $C^*(G)$ with quotient map
$q:C^*(G)\to C_\nu^*(G)$. For this let $(B,G,\beta)$ be any given \cstar{}dynamical system and let
\begin{equation}\label{eq-new-crossed}
J_\nu:=\ker\left((\id_{B\rtimes_{\beta,\un}G}\otimes q)\circ \widehat{\beta}: B\rtimes_{\beta,\un}G\to \M\big(B\rtimes_{\beta,\un} G\otimes C^*_\nu(G)\big)\right).
\end{equation}
Then we can define $B\rtimes_{\beta,\nu}G:=(B\rtimes_{\beta,\un}G)/J_\nu$. But the following proposition shows that this does not give any new crossed products.

\begin{proposition}\label{prop-crossed}
Let $I\subseteq C^*(G)$ be the kernel of the quotient map $q:C^*(G)\to C_\nu^*(G)$, let $F:=I^\perp$ denote the annihilator of $I$ in
$B(G)$, and let $E:=\langle F\rangle$ denote the  weak-* closed ideal in $B(G)$ generated by $F$. Then the crossed product
$B\rtimes_{\beta,\nu}G$ constructed above coincides with the Kaliszewski-Landstad-Quigg crossed product $B\rtimes_{\beta, \pn}G$
constructed from $E$.  In particular, we have $\C\rtimes_\nu G\cong C_\nu^*(G)$ if and only if $F=I^\perp$ is an ideal in $B(G)$.
\end{proposition}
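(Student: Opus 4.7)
The plan is to show that the ideals $J_\nu$ and $J_{\beta,E}$ coincide in $B\rtimes_{\beta,\un}G$; the first assertion of the proposition is precisely this, and the second assertion will then follow by specialising to $B=\C$.

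The first step is to describe both ideals via slice maps. The Banach-space dual of $\Cst_\nu(G)=\Cst(G)/I$ is canonically identified with $F=I^\perp\subseteq B(G)$ via $\tilde\psi\mapsto \tilde\psi\circ q$, and since slice maps separate points of $\M(B\rtimes_{\beta,\un}G\otimes\Cst_\nu(G))$ one obtains
$$J_\nu=\{x\in B\rtimes_{\beta,\un}G: S_\psi(\widehat\beta(x))=0\text{ for all }\psi\in F\},$$
where $S_\psi:=\id\otimes\psi$ is the right slice map. The same argument applied to $q_E\colon\Cst(G)\to\Cst_E(G)$ produces $J_{\beta,E}=\{x: S_\psi(\widehat\beta(x))=0\text{ for all }\psi\in E\}$.

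Since $F\subseteq E$ the inclusion $J_{\beta,E}\subseteq J_\nu$ is automatic; the crucial step is the converse. Given $x\in J_\nu$ put
$$N_x:=\{\psi\in B(G): S_\psi(\widehat\beta(x))=0\},$$
a weak-$*$ closed subspace of $B(G)$ containing $F$. To conclude $E=\langle F\rangle\subseteq N_x$ it suffices to show $N_x$ is an ideal. For this I would exploit coassociativity of the dual coaction, $(\id\otimes\Delta)\widehat\beta=(\widehat\beta\otimes\id)\widehat\beta$: viewing the product $\phi\psi\in B(G)$ as the functional $(\phi\otimes\psi)\circ\Delta$ on $\Cst(G)$ yields the identity
$$S_{\phi\psi}(\widehat\beta(x))=S_\phi\bigl(\widehat\beta(S_\psi(\widehat\beta(x)))\bigr),$$
which vanishes whenever $\psi\in N_x$; the symmetric argument handles right multiplication. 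This gives $J_\nu\subseteq J_{\beta,E}$ and completes the first statement.

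For the second assertion, specialise to $B=\C$ and combine the first part with \cite[Corollary~3.13]{Kaliszewski-Landstad-Quigg:Exotic} to obtain $\C\rtimes_\nu G\cong \C\rtimes_{\pn_E}G\cong\Cst_E(G)=\Cst(G)/{}^\perp E$, whereas $\Cst_\nu(G)=\Cst(G)/I$. Hence $\C\rtimes_\nu G\cong\Cst_\nu(G)$ is equivalent to ${}^\perp E=I$. By bipolar duality ${}^\perp F=I$ and $F\subseteq E$ gives ${}^\perp E\subseteq I$ for free; the reverse inclusion forces $E\subseteq I^\perp=F$, so $E=F$, which—given that $F$ is automatically weak-$*$ closed as an annihilator—is precisely the statement that $F$ is an ideal of $B(G)$. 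The main obstacle I anticipate is the careful handling of the slice-map formalism on multiplier algebras, in particular separating elements of $\M(B\rtimes_{\beta,\un}G\otimes\Cst_\nu(G))$ by slice maps and verifying the displayed coassociativity computation without nondegeneracy issues; once these technical points are in hand, the ideal property of $N_x$ and the bipolar bookkeeping for the second assertion are essentially formal.
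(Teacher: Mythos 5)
Your argument is correct and is essentially the paper's own proof recast in dual form: where the paper identifies $^\perp E$ with $\ker(\id_G\otimes q)\circ\delta_G$ via the formula $(g\cdot f)(x)=(g\otimes f)(\delta_G(x))$ and then runs a chain of kernel identities using the coassociativity $(\id\otimes\delta_G)\circ\widehat\beta=(\widehat\beta\otimes\id_G)\circ\widehat\beta$, you slice against functionals in $F$ and $E$ and use the very same two ingredients to show that the annihilator set $N_x$ is a weak-$*$ closed ideal of $B(G)$. The technical points you flag are exactly the ones the paper also has to address (slice maps separate points here because $\widehat{\beta}$ takes values in $\widetilde{\M}(B\rtimes_{\beta,\un}G\otimes C^*(G))$, and the bipolar bookkeeping for the last assertion is the same), so nothing essential is missing.
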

\begin{proof}
We first note that $F$ is $G$\nb-invariant by \cite[Lemma 3.1]{Kaliszewski-Landstad-Quigg:Exotic}.
Let $\delta_G: C^*(G)\to \M(C^*(G)\otimes C^*(G))$ denote the comultiplication on $C^*(G)$.
If $f\in F$ and $g\in B(G)$, then
the point-wise product $g\cdot f$
is given by the formula $(g\cdot f)(x)=(g\otimes f)\circ \delta_G(x)$ for $x\in C^*(G)$. This implies that
$x\in C^*(G)$ lies in the annihilator $^\perp E$ of $E=\overline{B(G)\cdot F}^{w^*}$ if and only if $x\in J:=\ker(\id_G\otimes q)\circ \delta_G$.

Now let $q_J:C^*(G)\to C^*(G)/J$ denote the quotient map. The proposition will follow as soon as we show that for any system
$(B,G,\beta)$ we get  $$\ker (\id_{B\rtimes_{\beta,\un} G}\otimes q_J)\circ \widehat\beta=\ker (\id_{B\rtimes_{\beta,\un} G}\otimes q)\circ \widehat\beta.$$
To see this we  compute
\begin{align*}
\ker(\id_{B\rtimes_{\beta,\un} G}\otimes q_J)\circ \widehat\beta&=\ker(\id_{B\rtimes_{\beta,\un} G}\otimes (\id_G\otimes q)\circ \delta_G)\circ \widehat\beta\\
&=\ker(\id_{B\rtimes_{\beta,\un} G}\otimes \id_G\otimes q)\circ (\id_{B\rtimes_{\beta,\un} G}\otimes \delta_G)\circ \widehat\beta\\
&= \ker(\id_{B\rtimes_{\beta,\un} G}\otimes \id_G\otimes q)\circ (\widehat\beta\otimes \id_G)\circ \widehat\beta\\
&= \ker(\widehat\beta\otimes q)\circ \widehat\beta\\
&=\ker(\id_{B\rtimes_{\beta,\un} G}\otimes q)\circ \widehat\beta.
\end{align*}
For the first equation we used $\ker q_J=\ker  (\id_G\otimes q)\circ \delta_G$ and for the last equation we used $\ker \widehat\beta=\{0\}$.
This finishes the proof of the first assertion. The last assertion in the statement follows from the first one and the fact observed above that for a $G$-invariant ideal $E\sbe B(G)$ we always have $\C\rtimes_{\mu_E}G\cong C_{E}^*(G)$.
\end{proof}

The following result shows that taking $\En$-crossed products has good functorial properties. For notation, if
$\beta:G\to \Aut(B)$ is an action we always write $(i_B^\pn, i_G^\pn)$ for the canonical
covariant homomorphism of $(B,G, \beta)$ into $\M(B\rtimes_{\beta,\pn}G)$ whose integrated form is the
quotient map $B\rtimes_{\beta,\un}G\onto B\rtimes_{\beta,\pn}G$.

\begin{lemma}\label{lem-sub} Let $(A,G,\alpha)$ and $(B,G,\beta)$ be \cstar{}dynamical systems and assume
that $\phi:A\to \M(B)$ is a (possibly degenerate) $G$\nb-equivariant map. Suppose further that $\En\subseteq B(G)$ is a $G$\nb-invariant nonzero ideal with corresponding crossed-product norm $\|\cdot\|_\pn$.
Then there is a canonical \Star{}homomorphism $\phi\rtimes_\Enorm G\colon A\rtimes_{\alpha,\Enorm}G\to
\M(B\rtimes_{\beta,\Enorm}G)$ given on $\contc(G,A)$ by $(\phi\rtimes_\Enorm G( f))(s)=\phi(f(s))$ (acting via convolution
on $\contc(G,B)\subseteq B\rtimes_{\beta,\Enorm}G$) and such that the following are true:
\begin{enumerate}
\item $\phi\rtimes_\Enorm G$ is the integrated form of the covariant homomorphism $(i_B^\Enorm\circ \phi, i_G^{\Enorm})$
of $(A,G,\alpha)$ whose integrated form factors through $A\rtimes_{\alpha,\Enorm}G$.
\item $\phi\rtimes_\Enorm G$ is nondegenerate if $\phi$ is nondegenerate.
\item $\phi\rtimes_\Enorm G$ is faithful if the corresponding map $\phi\rtimes_\un G:A\rtimes_{\alpha,\un}G\to \M(B\rtimes_{\beta,\un}G)$
for the full crossed products is faithful.
\item $\phi\rtimes_\Enorm G$ is equivariant with respect to the dual coactions
$\widehat{\alpha}_\Enorm$ and $\widehat{\beta}_\Enorm$, \ie, $\widehat\beta_\Enorm\circ (\phi\rtimes_\Enorm G)=(\phi\rtimes_\Enorm G\otimes \id_G)\circ \widehat\alpha_\Enorm$.
\end{enumerate}
\end{lemma}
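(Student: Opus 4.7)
The plan is to build $\phi\rtimes_\un G$ first from the universal property of the full crossed product, then descend to the $\pn$\nb-crossed products by exploiting the characterization $A\rtimes_{\alpha,\pn}G=(A\rtimes_{\alpha,\un}G)/J_{\alpha,\En}$ with $J_{\alpha,\En}=\ker\big((\id\otimes q_\En)\circ\widehat\alpha\big)$ together with the dual-coaction equivariance (4).

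At the universal level, the pair $(i_B^\un\circ\phi,i_G^\un)$ is a covariant homomorphism of $(A,G,\alpha)$ into $\M(B\rtimes_{\beta,\un}G)$: the covariance relation $i_G^\un(s)\cdot(i_B^\un\circ\phi)(a)\cdot i_G^\un(s)^*=(i_B^\un\circ\phi)(\alpha_s(a))$ follows from $G$\nb-equivariance of $\phi$ combined with the standard covariance of $(i_B^\un,i_G^\un)$. Its integrated form defines $\phi\rtimes_\un G$, and a short calculation on $\contc(G,A)$ recovers the prescribed pointwise formula. Equivariance (4) at the universal level is then routine: it needs only be verified on the generators $i_A^\un(a)$ and $i_G^\un(s)$ of $A\rtimes_{\alpha,\un}G$, using $\widehat\alpha(i_A^\un(a))=i_A^\un(a)\otimes 1$ and $\widehat\alpha(i_G^\un(s))=i_G^\un(s)\otimes u_s$ (and similarly for $\beta$).

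The key descent step is as follows. Write $q^A$ and $q^B$ for the quotient maps $A\rtimes_{\alpha,\un}G\onto A\rtimes_{\alpha,\pn}G$ and $B\rtimes_{\beta,\un}G\onto B\rtimes_{\beta,\pn}G$, and $\tilde q^B$ for the canonical extension of $q^B$ to multipliers. For $x\in J_{\alpha,\En}$, the universal equivariance gives
$$(\id\otimes q_\En)\widehat\beta\big(\phi\rtimes_\un G(x)\big)=(\phi\rtimes_\un G\otimes\id)\big[(\id\otimes q_\En)\widehat\alpha(x)\big]=0.$$
Multiplying by an arbitrary $y\in B\rtimes_{\beta,\un}G$ and using that $\widehat\beta$ is a \Star{}homomorphism yields $\phi\rtimes_\un G(x)\cdot y\in J_{\beta,\En}$, hence $\tilde q^B\big(\phi\rtimes_\un G(x)\big)\cdot q^B(y)=0$ for all $y$. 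Since $q^B$ is surjective and multipliers are determined by their one-sided action, this forces $\tilde q^B\big(\phi\rtimes_\un G(x)\big)=0$, so $\tilde q^B\circ(\phi\rtimes_\un G)$ factors through $q^A$ to give the desired $\phi\rtimes_\pn G$. Properties (1), (2), and (4) then fall out: (1) is tautological from the construction, (2) follows since $\tilde q^B$ is surjective and preserves nondegeneracy, and (4) follows from its universal analogue by applying $\tilde q^B\otimes\id$ and using the factorization of $\widehat\beta_\pn$ through $q^B$.

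The main obstacle is (3). The strategy is to reverse the above kernel argument: using nondegeneracy of the coaction $\widehat\beta$, one first shows that $(\id\otimes q_\En)\widehat\beta(B\rtimes_{\beta,\un}G)\cdot(1\otimes C^*_\En(G))$ is dense in $B\rtimes_{\beta,\un}G\otimes C^*_\En(G)$, from which it follows that for any multiplier $m\in\M(B\rtimes_{\beta,\un}G)$ one has $\tilde q^B(m)=0$ if and only if $(\id\otimes q_\En)\widehat\beta(m)=0$. Combined with equivariance, $\phi\rtimes_\pn G(q^A(\tilde y))=0$ translates to $(\phi\rtimes_\un G\otimes\id)\big[(\id\otimes q_\En)\widehat\alpha(\tilde y)\big]=0$, and injectivity of $\phi\rtimes_\un G\otimes\id_{C^*_\En(G)}$ then forces $\tilde y\in J_{\alpha,\En}$, so $q^A(\tilde y)=0$. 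The delicate point is the multiplier-level extension of the tensor map when $\phi\rtimes_\un G$ fails to be nondegenerate, which is resolved by observing that the relevant element lies in the essential range of $(\id\otimes q_\En)\widehat\alpha$, where the extension is unambiguous and preserves injectivity.
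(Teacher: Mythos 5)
Your proposal is correct and follows essentially the same route as the paper: (1), (2), (4) are handled the same way, and faithfulness (3) rests on the identical chain of kernel identities $J_{\alpha,\En}=\ker(\phi\rtimes_\un G\otimes q_\En)\circ\widehat\alpha=\ker(\id\otimes q_\En)\circ\widehat\beta\circ(\phi\rtimes_\un G)=\ker(\phi\rtimes_\Enorm G)$, including the same caveat that the compositions make sense for degenerate $\phi$ because $\widehat\alpha$ takes values in $\widetilde{\M}(A\rtimes_{\alpha,\un}G\otimes C^*(G))$. The only (harmless) difference is that the paper imports the existence of $\phi\rtimes_\Enorm G$ from \cite[Proposition 5.2]{Buss-Echterhoff:Exotic_GFPA}, whereas you derive it self-containedly from the universal-level coaction equivariance and the definition of $J_{\beta,\En}$.
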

 \begin{proof} Existence of the map $\phi\rtimes_\Enorm G$  is given by \cite[Proposition 5.2]{Buss-Echterhoff:Exotic_GFPA}
 and it is then easy to check that conditions (1) and (2) hold.  In order to check (4) we first note that
$\widehat\beta\circ (\phi\rtimes_\un G)=(\phi\rtimes_\un G\otimes \id_G)\circ \widehat\alpha$, which follows from the fact that both are given
 by the covariant homomorphism $(i_B\circ \phi\otimes 1, i_G\otimes u)$ of $(A,G,\alpha)$.
 It is clear that this factors through the condition in (4).
 Finally, to check (3) assume that
 $A\rtimes_{\alpha,\un}G$ injects into $\M(B\rtimes_{\beta,\un}G)$ via $\phi\rtimes_\un G$. It then follows that
 \begin{align*}
 J_{\alpha,\En}&=\ker(\id_{A\rtimes_\un G}\otimes q_\En)\circ \widehat{\alpha}
 =\ker(\phi\rtimes_\un G\otimes q_\En)\circ \widehat\alpha\\
&= \ker (\id_{B\rtimes_{\un} G}\otimes q_\En)\circ \widehat\beta\circ (\phi\rtimes_\un G)=\ker(\phi\rtimes_\Enorm G),
 \end{align*}
where in the last line we regard $\phi\rtimes_\Enorm G$ as a homomorphism from $A\rtimes_{\alpha,\un}G$ to
$\M(B\rtimes_{\beta,\Enorm}G)$. Note that all compositions in the above equation exist even if
$\phi$ is degenerate. The only place where degeneracy of $\phi$
 might lead to a problem is the composition  $\ker(\phi\rtimes_\un G\otimes q_\En)\circ \widehat\alpha$. But this
 exists because $\widehat{\alpha}$ takes values in the subalgebra $\widetilde{\M}(A\rtimes_{\alpha,\un}G\otimes C^*(G))$
of ${\M}(A\rtimes_{\alpha,\un}G\otimes C^*(G))$ consisting of those multipliers $m$ which satisfy
$m(1\otimes z)\in A\rtimes_{\alpha,\un}G\otimes C^*(G)$ for all $z\in C^*(G)$.
The result follows.
\end{proof}

\subsection{Crossed products by Hilbert modules}\label{sec:CrossedProductsHilbertModules}\label{subsec-crossed-modules}
If $(B,G,\beta)$ is a \cstar{}dynamical system, then a {\em $G$\nb-equivariant} Hilbert $B$-module is
a Hilbert $B$-module $\E$ together with a strongly continuous action $\gamma:G\to \Aut(\E)$
such that $\braket{\gamma_s(\xi)}{\gamma_s(\eta)}_B=\beta_s\left(\braket{\xi}{\eta}_B\right)$
and $\gamma_s(\xi\cdot b)=\gamma_s(\xi)\cdot\beta_s(b)$
for all $\xi, \eta\in \E$ and $b\in B$. We then say that $(\E,\gamma)$ is a {\em Hilbert $(B,G)$-module.}

If  $(\E,\gamma)$ is a Hilbert $(B, G)$-module, then $\contc(G,\E)$ can be turned into a pre-Hilbert $\contc(G,B)$-module with respect to the following operations:
\begin{equation}\label{eq:InnerProductOfExG}
\braket{x}{y}(t)\defeq \int_G\beta_{s^{-1}}(\braket{x(s)}{y(st)}_B)\dd{s}\quad \mbox{ for all } x,y\in \contc(G,\E),
\end{equation}
\begin{equation}\label{eq:RightActionOnExG}
(x * \varphi)(t)\defeq \int_G x(s)\beta_s(f(s^{-1}t))\dd{s}\quad \mbox{ for all } x\in\contc(G,\E), \varphi\in\contc(G,B).
\end{equation}
If, in addition, $\E$ carries a $G$\nb-equivariant left action of a $G$\nb-algebra $(A,\alpha)$, \ie, if there is a $G$\nb-equivariant \Star{}homomorphism $\phi\colon A\to \Lb(\E)$, written as $a\cdot \xi\defeq \phi(a)\xi$, then $\contc(G,A)$ also acts on the left of $\contc(G,\E)$ by:
\begin{equation}\label{eq-left-action}
(f* x)(t) \defeq \int_G f(s)\cdot \gamma_s(x(s^{-1}t))\dd{s},\quad \mbox{ for all } f\in \contc(G,A), x\in \contc(G,\E).
\end{equation}
In particular, this can be applied to $A=\K(\E)$ with $G$\nb-action $\alpha=\Ad\gamma$. In this case, we
also have a compatible left $\contc(G,\K(\E))$-valued inner product on $\contc(G,\E)$ given by
\begin{equation}\label{eq-CcGK}
_{\contc(G,\K(\E))}\braket{x}{y}(t)
=\int_G\Delta_G(t^{-1}s){ _{\K(\E)}\braket{x(s)}{\gamma_t(y(t^{-1}s))}}\dd{s},
\end{equation}
for all $x,y\in \contc(G,\E)$. It is shown in  \cite{Combes:Crossed_Morita} and \cite{Kasparov:Novikov} that $\contc(G,\E)$ completes to give
a Hilbert $B\rtimes_{\beta,\un} G$-module $\E\rtimes_{\gamma,\un}G$ such that the left action of $\contc(G,A)$
extends to a \Star{}ho\-mo\-mor\-phism $\phi\rtimes_\un G\colon A\rtimes_{\alpha,\un}G\to \Lb(\E\rtimes_{\gamma,\un}G)$.
Similarly, we may regard the above defined inner product with values in the reduced crossed product
$B\rtimes_{\beta,r}G$ and obtain a completion $\E\rtimes_{\gamma,\red}G$ with left action
 $\phi\rtimes_\red G:A\rtimes_{\alpha,\red}G\to \Lb(\E\rtimes_{\beta,\red}G)$. We want to extend this construction
 to arbitrary crossed-product norms. As we shall see below, this works
 especially well if we consider norms related to nonzero $G$\nb-invariant ideals $\En\subseteq B(G)$.

If $E$ is an ideal in $B(G)$ and $\|\cdot\|_\pn$ is the corresponding crossed-product norm on
$\contc(G,B)$  then the pre-Hilbert $\contc(G,B)$-module $\contc(G,\E)$ completes
to a Hilbert $B\rtimes_{\beta,\pn}G$-module $\E\rtimes_{\gamma,\pn}G$
and the identity map on $\contc(G,\E)$ extends to a surjective linear map
$$\tilde\Qn_\pn\colon \E\rtimes_{\gamma,\un}G\onto \E\rtimes_{\gamma,\pn}G.$$
This map is a morphism of Hilbert modules (see
\cite{Echterhoff-Raeburn:Multipliers,Echterhoff-Kaliszewski-Quigg-Raeburn:Categorical})
compatible with the canonical surjection $\Qn_\pn\colon B\rtimes_{\beta,\un}G\onto B\rtimes_{\beta,\pn}G$
meaning that $\tilde\Qn(x*f)=\tilde\Qn(x)\Qn(f)$ and
$\Qn(\braket{x}{y}_{B\rtimes_{\beta,\un}G})=\braket{\tilde\Qn(x)}{\tilde\Qn(y)}_{B\rtimes_{\beta,\pn}G}$
for all $x,y\in \E\rtimes_{\gamma,\un}G$ and $f\in B\rtimes_{\beta,\un}G$. In particular,
$\tilde\Qn$ induces a surjective
 \Star{}homomorphism
 $$\psi_\pn\colon
 \K(\E)\rtimes_{\Ad\gamma,\un} G\cong \K(\E\rtimes_{\gamma,\un} G)\onto \K(\E\rtimes_{\gamma,\pn} G)$$
 satisfying $\psi_\pn\big(_{\K(\E)\rtimes_{\gamma,\un} G}\braket{x}{y}\big)={_{\K(\E\rtimes_{\gamma,\pn}G)}\braket{\tilde\Qn(x)}{\tilde\Qn(y)}}$ for all $x,y\in \E\rtimes_{\gamma,\un} G$.
 We then get

 \begin{lemma}\label{lem-E-imp}
 The  \Star{}homomorphism  $\psi_\pn\colon
 \K(\E)\rtimes_{\Ad\gamma,\un} G\onto \K(\E\rtimes_{\gamma,\pn} G)$
 factors through an isomorphism $\K(\E)\rtimes_{\Ad\gamma,\pn} G\congto \K(\E\rtimes_{\gamma,\pn} G)$.
 Moreover, if $L(\E)=\left(\begin{smallmatrix}\K(\E) & \E\\ \E^* & B\end{smallmatrix}\right)$ is the linking algebra of $\E$
endowed with the canonical $G$\nb-action $\vartheta=\left(\begin{smallmatrix}\alpha & \gamma\\ \gamma^* & \beta\end{smallmatrix}\right)$ \textup(see \cite[Chapter 2.5]{Echterhoff-Kaliszewski-Quigg-Raeburn:Categorical}\textup), then the canonical identification
$$\contc(G,L(\E))\cong \left(\begin{array}{cc} \contc(G,\K(\E)) & \contc(G,\E)\\ \contc(G,\E)^* &\contc(G,B)\end{array}\right)$$
 induces an isomorphism $L(\E)\rtimes_{\vartheta, \pn}G\cong L(\E\rtimes_{\gamma, \pn}G)$.
 \end{lemma}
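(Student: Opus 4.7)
The plan is to prove both assertions simultaneously by first establishing the linking-algebra isomorphism and then extracting the first statement as the upper-left corner. Let $p\in\M(L(\E))$ denote the projection onto the $\K(\E)$-corner. Since $p$ is $G$\nb-fixed, $i_{L(\E)}^{\un}(p)$ is a $G$-invariant projection in $\M(L(\E)\rtimes_{\vartheta,\un}G)$ which descends to $i_{L(\E)}^{\pn}(p)\in\M(L(\E)\rtimes_{\vartheta,\pn}G)$. The corner inclusions $\K(\E)\into L(\E)$ and $B\into L(\E)$ are $G$\nb-equivariant (possibly degenerate) \Star{}homomorphisms. At the universal level, the classical result of Combes and Kasparov already gives $L(\E)\rtimes_{\vartheta,\un}G\cong L(\E\rtimes_{\gamma,\un}G)$, under which $\K(\E)\rtimes_{\Ad\gamma,\un}G$ and $B\rtimes_{\beta,\un}G$ are identified with the complementary corners cut out by $i^{\un}(p)$ and $i^{\un}(1-p)$; in particular, the induced maps of the universal crossed products into $\M(L(\E)\rtimes_{\vartheta,\un}G)$ are faithful.

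Applying Lemma~\ref{lem-sub}(3) to these two corner inclusions, we obtain faithful \Star{}homomorphisms
$$\K(\E)\rtimes_{\Ad\gamma,\pn}G\to \M(L(\E)\rtimes_{\vartheta,\pn}G)\quad\text{and}\quad B\rtimes_{\beta,\pn}G\to \M(L(\E)\rtimes_{\vartheta,\pn}G).$$
Since $p$ remains $G$\nb-fixed after passing to the $\pn$\nb-crossed product and the construction is natural in the dual coaction, the images of these maps are precisely the complementary corners $i^{\pn}(p)(L(\E)\rtimes_{\vartheta,\pn}G)i^{\pn}(p)$ and $i^{\pn}(1-p)(L(\E)\rtimes_{\vartheta,\pn}G)i^{\pn}(1-p)$. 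Thus the diagonal of $L(\E)\rtimes_{\vartheta,\pn}G$ is already the diagonal of $L(\E\rtimes_{\gamma,\pn}G)$.

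It remains to identify the off-diagonal $i^{\pn}(p)(L(\E)\rtimes_{\vartheta,\pn}G)i^{\pn}(1-p)$, a Hilbert $B\rtimes_{\beta,\pn}G$\nb-module arising as a completion of $\contc(G,\E)\sbe \contc(G,L(\E))$, with $\E\rtimes_{\gamma,\pn}G$. For $x\in\contc(G,\E)$ viewed off-diagonal in $\contc(G,L(\E))$, the element $x^**x$ computed via the convolution and involution of $\contc(G,L(\E))$ lies in the lower-right corner $\contc(G,B)$, and a direct calculation shows that it coincides with the inner product $\braket{x}{x}$ defined by \eqref{eq:InnerProductOfExG}. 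Consequently the norm $\|x\|=\|x^**x\|^{1/2}$ inherited from $L(\E)\rtimes_{\vartheta,\pn}G$ agrees with the $\pn$\nb-norm on $\contc(G,\E)$ used to build $\E\rtimes_{\gamma,\pn}G$ in \S\ref{subsec-crossed-modules}.

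Combining these steps yields the desired canonical isomorphism $L(\E)\rtimes_{\vartheta,\pn}G\cong L(\E\rtimes_{\gamma,\pn}G)$, which is the second assertion. Taking the upper-left corner produces an isomorphism $\K(\E)\rtimes_{\Ad\gamma,\pn}G\cong \K(\E\rtimes_{\gamma,\pn}G)$; tracing through the construction, this is precisely the factorization of $\psi_\pn$ through the quotient map $\K(\E)\rtimes_{\Ad\gamma,\un}G\onto \K(\E)\rtimes_{\Ad\gamma,\pn}G$, proving the first assertion. The main technical obstacle is the off-diagonal norm identification --- verifying that the linking-algebra multiplication truly reproduces the inner product formula \eqref{eq:InnerProductOfExG} on $\contc(G,\E)$; once this calculation is pinned down, the remainder is a clean application of Lemma~\ref{lem-sub}(3) and the corner calculus for $G$\nb-fixed projections.
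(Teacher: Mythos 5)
Your proposal is correct and follows essentially the same route as the paper: reduce everything to the linking-algebra isomorphism $L(\E)\rtimes_{\vartheta,\pn}G\cong L(\E\rtimes_{\gamma,\pn}G)$, use Lemma~\ref{lem-sub}(3) applied to the corner inclusions $\K(\E)\into L(\E)$ and $B\into L(\E)$ to identify the diagonal corners isometrically, and then deduce the off-diagonal identification from $\|x\|^2=\|x^**x\|$ with $x^**x$ landing in the $\contc(G,B)$-corner. The only difference is that you spell out the verification that the linking-algebra involution and convolution reproduce the inner product \eqref{eq:InnerProductOfExG}, which the paper leaves as an implicit isometry claim.
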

\begin{proof}
Since the canonical inclusion of $\K(\E)\rtimes_{\Ad\gamma,\un}G$ into $L(\E)\rtimes_{\vartheta,\un} G$ induced by the $G$\nb-equivariant inclusion
$\K(\E)\into L(\E)$ is injective, the canonical map of $\Enorm$\nb-crossed products
$\K(\E)\rtimes_{\Ad\gamma,\Enorm}G\into L(\E)\rtimes_{\vartheta, \Enorm} G$ is injective by Lemma~\ref{lem-sub}.
Thus the result will follow if we can show the  isomorphism
$L(\E)\rtimes_{\vartheta, \Enorm}G\cong L(\E\rtimes_{\gamma, \Enorm}G)$.
Note that this isomorphism holds for the universal crossed products. Applying Lemma~\ref{lem-sub}
to the inclusion $B\into L(\E)$, we see that $\K(\E)\rtimes_{\Ad\gamma, \Enorm}G$ and $B\rtimes_{\beta,\Enorm}G$
are opposite corners in $L(\E)\rtimes_{\vartheta,\Enorm}G$.
Hence, the canonical identification $
\left(\begin{array}{cc} \contc(G,\K(\E)) & \contc(G,\E)\\ \contc(G,\E)^* &\contc(G,B)\end{array}\right)
\cong \contc(G, L(\E))$
is isometric in the upper right and lower left corner with respect to the crossed-product norms $\|\cdot\|_{\Enorm}$
everywhere. But this implies that this identification extends to the desired isomorphism.
\end{proof}

A combination of Lemma~\ref{lem-sub} with Lemma~\ref{lem-E-imp} above shows
 that the $\En$-crossed product construction is functorial with respect to $G$\nb-equivariant correspondences:

\begin{corollary}\label{cor-E-correspond}
 Suppose that $(\E,\gamma)$ is a $G$\nb-equivariant correspondence from $({A},G,{\alpha})$ to $({B},G,{\beta})$,
 \ie, $(\E,\gamma)$ is a Hilbert $(B,G)$-module together with a $G$\nb-equivariant \Star{}homomorphism
 $\phi:A\to \Lb(\E)$.
 Then $\E\rtimes_{\gamma, \Enorm}G$ becomes a correspondence from ${A}\rtimes_{{\alpha},\Enorm}G$ to
 ${B}\rtimes_{{\beta},\Enorm}G$ with left action of $A\rtimes_{\alpha,\Enorm}G$ given by
 $$\phi\rtimes_\Enorm G\colon A\rtimes_{\alpha,\Enorm}G\to \M(\K(\E)\rtimes_{\Ad\gamma, \Enorm}G)\cong \Lb(\E\rtimes_{\gamma,\Enorm}G).$$
 Moreover, this $\En$-crossed product functor respects composition of correspondences in the sense that if $(\E,\gamma)$ is a $G$\nb-equivariant correspondence from $({A},G,\alpha)$ to $({B},G,{\beta})$ and $(\F,\tau)$ is a correspondence from $({B},G,{\beta})$ to $({C},G,{\sigma})$, then
 $$(\E\rtimes_{\gamma,\Enorm} G)\otimes_{B\rtimes_{\beta,\Enorm}G}(\F\rtimes_{\tau,\Enorm} G) \cong (\E\otimes_B\F)\rtimes_{\gamma\otimes\tau,\Enorm}G$$
 as correspondences from ${A}\rtimes_{{\alpha},\Enorm} G$ to ${C}\rtimes_{{\sigma},\Enorm}G$.
\end{corollary}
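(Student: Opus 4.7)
The plan is to dispatch the two assertions in turn. The first (the existence of the correspondence structure) falls out directly from the two preceding lemmas; the second (compatibility with composition) is the real content and will be handled by descending from the classical universal case.

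For the first statement, the left action of $A$ on $\E$ amounts to a $G$\nb-equivariant (possibly degenerate) \Star{}homomorphism $\phi\colon A\to \M(\K(\E))$. Applying Lemma~\ref{lem-sub} to $\phi$, with target system $(\K(\E),\Ad\gamma)$, yields a \Star{}homomorphism
$$\phi\rtimes_\Enorm G\colon A\rtimes_{\alpha,\Enorm}G\to \M(\K(\E)\rtimes_{\Ad\gamma,\Enorm}G).$$
Composing with the isomorphism $\K(\E)\rtimes_{\Ad\gamma,\Enorm}G\cong \K(\E\rtimes_{\gamma,\Enorm}G)$ provided by Lemma~\ref{lem-E-imp}, and using the canonical identification $\M(\K(\E\rtimes_{\gamma,\Enorm}G))=\Lb(\E\rtimes_{\gamma,\Enorm}G)$, produces the required left action and thus the correspondence structure.

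For the composition statement, I would first recall the classical isomorphism at the universal level,
$$(\E\rtimes_{\gamma,\un}G)\otimes_{B\rtimes_{\beta,\un}G}(\F\rtimes_{\tau,\un}G)\congto (\E\otimes_B\F)\rtimes_{\gamma\otimes\tau,\un}G,$$
which is implemented on $\contc(G,\E)\odot\contc(G,\F)$ by the convolution-type map $\Psi(x\otimes y)(t)=\int_G x(s)\otimes_B\tau_s(y(s^{-1}t))\dd{s}$ (see \cite{Combes:Crossed_Morita, Echterhoff-Raeburn:Multipliers}). To descend to the $\En$\nb-level, I would invoke the compatible surjective Hilbert-module morphisms $\tilde\Qn_\pn\colon \E\rtimes_{\gamma,\un}G\onto \E\rtimes_{\gamma,\pn}G$ and $\tilde\Qn_\pn\colon \F\rtimes_{\tau,\un}G\onto \F\rtimes_{\tau,\pn}G$, both lying over $\Qn_\pn\colon B\rtimes_{\beta,\un}G\onto B\rtimes_{\beta,\pn}G$. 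These induce a surjective morphism of internal tensor products onto $(\E\rtimes_{\gamma,\pn}G)\otimes_{B\rtimes_{\beta,\pn}G}(\F\rtimes_{\tau,\pn}G)$, lying over $C\rtimes_{\sigma,\un}G\onto C\rtimes_{\sigma,\pn}G$. Transporting this surjection through the universal isomorphism and comparing it with the analogous surjection $(\E\otimes_B\F)\rtimes_{\gamma\otimes\tau,\un}G\onto (\E\otimes_B\F)\rtimes_{\gamma\otimes\tau,\pn}G$ produces the desired $\En$\nb-level isomorphism. Compatibility of the left $A\rtimes_{\alpha,\pn}G$\nb-actions is automatic, since both sides inherit their left action from the universal picture via Lemma~\ref{lem-sub}.

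The main obstacle is showing that the two descent kernels match. Concretely, one has to verify that an element $z\in (\E\rtimes_{\gamma,\un}G)\otimes_{B\rtimes_{\beta,\un}G}(\F\rtimes_{\tau,\un}G)$ satisfies $\braket{z}{z}\in\ker(C\rtimes_{\sigma,\un}G\onto C\rtimes_{\sigma,\pn}G)$ if and only if $\braket{\Psi(z)}{\Psi(z)}$ does. This reduces to a direct inner-product computation: since $\Psi$ implements an isometric isomorphism with respect to the $C\rtimes_{\sigma,\un}G$\nb-valued inner products, and the $\pn$\nb-inner products are images of the $\un$\nb-inner products under $\Qn_\pn$, the two kernels coincide. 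Alternatively, a more conceptual argument uses Lemma~\ref{lem-E-imp} to realize everything as corners inside linking algebras, where the identification becomes the restriction of an isomorphism of \cstar{}algebras and the compatibility is transparent.
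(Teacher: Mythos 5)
Your proof is correct and follows essentially the same route as the paper: the first assertion is exactly the combination of Lemma~\ref{lem-sub} with Lemma~\ref{lem-E-imp}, and the composition statement rests on the same convolution map $\Upsilon(x\otimes y)(s)=\int_G x(t)\otimes\tau_t(y(t^{-1}s))\dd t$ preserving the $\contc(G,C)$-valued inner products and the module actions on the dense subspaces of compactly supported functions. The only cosmetic difference is that you complete at the universal level first and then descend by matching the kernels of the quotient maps, whereas the paper completes the $\contc$-level picture directly in the $\Enorm$-norm; since the inner-product identity holds already at the $\contc(G,C)$-level, both organizations yield the same isometry for any $E$\nb-crossed-product norm.
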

\begin{proof} The first result follows directly as a combination of
Lemma~\ref{lem-sub} with \linebreak Lemma~\ref{lem-E-imp}.
For the composition, we follow the arguments given in the proof of \cite[Theorem 3.7]{Echterhoff-Kaliszewski-Quigg-Raeburn:Categorical}
to see that the rule
$$\Upsilon(x\otimes y)(s)=\int_G x(t)\otimes \tau_t(y(t^{-1}s))\dd t$$
determines a map $\Upsilon: \contc(G,\E)\odot \contc(G,\F)\to \contc(G, \E\otimes_B\F)$ which preserves
the $\contc(G,C)$-valued inner product, the right $\contc(G,C)$-action, the left $\contc(G,A)$-action and has dense image in the inductive limit topology. Passing to completions with respect to the norms induced by the $E$\nb-norm $\|\cdot\|_\Enorm$
on $C_c(G,C)$ on both sides  gives the desired result.
\end{proof}

\section{Weakly proper Hilbert modules and generalized fixed-point algebras}

In this section we extend the theory of universal or exotic generalized fixed-point algebras
for weakly proper
$X\rtimes G$-algebras as introduced in \cite{Buss-Echterhoff:Exotic_GFPA} to cover weakly proper Hilbert modules as follows:

\begin{definition}
Let $(B,G, \beta)$ be a \cstar{}dynamical system and let $X$ be a proper $G$\nb-space.
A {\em weakly proper $(B, X\rtimes G)$-module} is a Hilbert $(B,G)$-module $(\E,\gamma)$ together
 with a $G$\nb-equivariant nondegenerate \Star{}homomorphism $\phi\colon \contz(X)\to\Lb_B(\E)$.
 In other words,  $(\E,\gamma)$ is a $G$\nb-equivariant correspondence from $C_0(X)$ to $B$.
\end{definition}

Recall from \cite{Buss-Echterhoff:Exotic_GFPA} that a $G$\nb-algebra $(A,\alpha)$ is called a {\em weakly proper} $X\rtimes G$-algebra for the
 proper $G$\nb-space $X$
 if there exists a $G$\nb-equivariant nondegenerate \Star{}homomorphism $\phi:C_0(X)\to \M(A)$. If we view $(A,\alpha)$ as a Hilbert $(A,G)$-module, this just means that $(A,\alpha)$ is a weakly proper
 $(A, X\rtimes G)$-module. Conversely, if $(\E,\gamma)$ is a weakly proper $(B,X\rtimes G)$-module,
 then $(\K(\E), \Ad\gamma)$ is a weakly proper $X\rtimes G$-algebra.
 We should notice that weakly proper $(B, X\rtimes G)$-modules are always continuously square-integrable in the sense of
 Meyer \cite{Meyer:Generalized_Fixed} (see also Remark~\ref{rem:weakly_proper=>square-integrable} below) and that part of our treatment is inspired by his results.

As usual, we use module notation for the left action of $\contz(X)$ on $\E$, \ie, we write $f\cdot\xi\defeq \phi(f)\xi$ for $f\in \contz(X)$ and $\xi\in \E$ whenever $(\E,\gamma)$ is a $(B, X\rtimes G)$-module as above. Moreover, in what follows
we denote by $\tau\colon G\to \Aut(C_0(X))$ the action of $G$ on $C_0(X)$ given by $\tau_s(f)(x)=f(s^{-1}\cdot x)$.

It follows from classical results by Rieffel and Green (see \cite{Green:algebras_transformation_groups, Rieffel:Applications_Morita}) that if
we view $\contc(G, C_0(X))$ as a dense subalgebra of $C_0(X)\rtimes_{\tau,\un} G$
then $\F_c(X):=\contc(X)$ can be made into
a pre-Hilbert $\contc(G, C_0(X))$-module by defining the
inner product and right action of $\contc(G, C_0(X))$ on $\F_c(X)$ by
\begin{align*}
\bbraket{\xi}{\eta}_{\contc(G, C_0(X))}(g,x)&=\Delta(g)^{-1/2}\overline{\xi(x)}\eta(g^{-1} x)\quad\text{and}\\
(\xi* \varphi)(x)&=\int_G\Delta(g)^{-1/2}\xi(g^{-1} x)\varphi(g^{-1}, g^{-1} x)\,dg
\end{align*}
for all $\xi,\eta\in \F_c(X)$ and $\varphi\in \contc(G, C_0(X))$. Hence $\F_c(X)$ completes to give a
Hilbert $C_0(X)\rtimes_{\tau,\un} G$-module $\F(X)$, and the left action of $C_0(G\backslash X)$ on $\F_c(X)$
given by $(f\cdot \xi)(x)=f(Gx)\xi(x)$ induces an isomorphism $C_0(G\backslash X)\cong \K(\F(X))$.
The inner product on $\F(X)$ is full if and only if $G$ acts freely on $X$ (see \cite{Marelli_Raeburn:Proper_actions_not_saturated}), in which case
$\F(X)$ becomes a $C_0(G\backslash X)-C_0(X)\rtimes_{\tau,\un} G$ equivalence bimodule. In general,
we obtain an equivalence bimodule between $C_0(G\backslash X)$ and the ideal
$$I_X:=\cspn\{\bbraket{\xi}{\eta}_{C_0(X)\rtimes_\un G}: \xi,\eta\in \F(X)\}\subseteq C_0(X)\rtimes_{\tau,\un} G.$$
We extend this construction to arbitrary $(B, X\rtimes G)$ Hilbert modules $(\E,\gamma)$ as follows:
let $\F_c(\E):=\contc(X)\cdot \E$. We define a $\contc(G,B)$-valued inner product on $\F_c(\E)$ and a
right action of $\contc(G,B)$ on $\F_c(\E)$ by

\begin{equation}\label{eq:InnerProduct}
\bbraket{\xi}{\eta}(t)\defeq \Delta(t)^{-1/2}\braket{\xi}{\gamma_t(\eta)}_B,
\quad\mbox{and}
\end{equation}
\begin{equation}\label{eq:ModuleConvolution}
\xi*\varphi\defeq \int_G\Delta(t)^{-1/2}\gamma_t(\xi\cdot \varphi(t^{-1}))\dd{t},
\end{equation}
for $\xi,\eta\in \F_c(\E)$ and $\varphi\in \contc(G,B)$. The following lemma will turn out to be
extremely useful. We need some notation:

\begin{definition}\label{ind-limit}
 We say that a net $(\xi_i)$ in $\F_c(\E)$ converges to $\xi\in \F_c(\E)$ in the {\em inductive
limit topology on $\F_c(\E)$}, if there exists $f\in \contc(X)$ such that $f\cdot \xi_i=\xi_i$ for all $i$
and $\xi_i\to \xi$ in $\E$ with respect to $\|\cdot\|_B=\sqrt{\|\braket{\cdot}{\cdot}_B\|}$.
\end{definition}

The inductive limit topology on $\contc(G,B)$ is the usual one given by uniform convergence
of nets with supports in a fixed compact subset of $G$.
\\
\\
{\bf Comment:} Note that although we use the word ``topology'' in the above definition,
we actually will not show (or use) that there is a topology on $\F_c(\E)$ such that convergence of nets  in this topology
is precisely the one defined above. Whenever we use terms like ``continuous with respect to the
inductive topology'' or ``inductive limit dense'', we just mean continuity in terms of inductive limit
convergent nets in the sense of the above definition or  the possibility of approximating elements
by such nets, respectively. We shall do so also for other spaces with similar definitions
of inductive limit convergent nets, like $C_c(G,B)$ or
for the fixed-point algebra with compact supports $A_c^G$ for a weakly proper $X\rtimes G$-algebra $A$.
The notion of inductive limit convergent nets in $A_c^G$ has been introduced in \cite[Definition 2.11]{Buss-Echterhoff:Exotic_GFPA}.
Hence we follow a similar policy as discussed in \cite[Remark 1.86]{Williams:Crossed}.
In this sense we can formulate:

\begin{lemma}\label{lem-inductive-limit-top}
The $\contc(G,B)$-valued inner product on $\F_c(\E)$ is jointly continuous with respect to the inductive
limit topology on $\F_c(\E)$ and the inductive limit topology on $\contc(G,B)$.
\end{lemma}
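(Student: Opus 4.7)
The plan is to verify the two defining requirements of the inductive limit topology on $\contc(G,B)$ separately: namely, that all the relevant inner products are supported in a common compact subset of $G$, and that convergence holds uniformly in the $B$-norm. So, assuming a net $(\xi_i,\eta_i)$ converges to $(\xi,\eta)$ in $\F_c(\E)\times\F_c(\E)$ in the inductive limit topology, I fix $f,g\in\contc(X)$ with $f\cdot\xi_i=\xi_i$ and $g\cdot\eta_i=\eta_i$ for every $i$, and with $\xi_i\to\xi$ and $\eta_i\to\eta$ in the norm of $\E$. Norm continuity of the $\contz(X)$-action automatically forces $f\cdot\xi=\xi$ and $g\cdot\eta=\eta$ as well.

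For the support estimate, I would exploit the equivariance identity $\gamma_t(g\cdot\eta_i)=\tau_t(g)\cdot\gamma_t(\eta_i)$ together with commutativity of $\contz(X)$, which lets me move $\phi(\tau_t(g))$ across the inner product via $\phi(\tau_t(g))^*=\phi(\tau_t(\bar g))$. This yields
\[
\braket{\xi_i}{\gamma_t(\eta_i)}_B=\braket{(\tau_t(\bar g)\,f)\cdot\xi_i}{\gamma_t(\eta_i)}_B,
\]
which vanishes whenever the cutoff function $\tau_t(\bar g)\,f\in\contc(X)$ is zero, i.e.\ whenever $t\cdot\supp(g)\cap\supp(f)=\emptyset$. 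Properness of the $G$-action on $X$ (the map $(t,x)\mapsto(tx,x)$ from $G\times X$ to $X\times X$ is proper) then forces $K\defeq\{t\in G:t\cdot\supp(g)\cap\supp(f)\neq\emptyset\}$ to be compact, and the same reasoning shows that $\bbraket{\xi}{\eta}$ is also supported in $K$.

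Uniform convergence on $K$ follows by writing
\[
\bbraket{\xi_i}{\eta_i}(t)-\bbraket{\xi}{\eta}(t)=\Delta(t)^{-1/2}\bigl(\braket{\xi_i-\xi}{\gamma_t(\eta_i)}_B+\braket{\xi}{\gamma_t(\eta_i-\eta)}_B\bigr)
\]
and applying the Hilbert-module Cauchy--Schwarz inequality combined with the isometry $\|\gamma_t(\zeta)\|_B=\|\zeta\|_B$. Since $\Delta^{-1/2}$ is bounded on the compact set $K$ and the net $(\eta_i)$ is norm-bounded (being convergent), the right-hand side is dominated by a constant multiple of $\|\xi_i-\xi\|_B\,\|\eta_i\|_B+\|\xi\|_B\,\|\eta_i-\eta\|_B$, which tends to zero uniformly in $t\in K$.

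The main obstacle is the support step: it is the only place where both the commutativity of $\contz(X)$ (needed to swap $\phi(\tau_t(g))$ to the other side of the inner product so that the cutoff $\tau_t(\bar g)\,f$ appears explicitly) and the properness of the $G$-space $X$ (needed to make $K$ compact) are essential. Once a common compact support is secured, uniform convergence is just Cauchy--Schwarz together with boundedness of $\Delta^{-1/2}$ on $K$.
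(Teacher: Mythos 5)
Your proof is correct and is precisely the standard argument that the paper invokes by reference (its ``proof'' is a citation to the analogous Lemma~2.10 of \cite{Buss-Echterhoff:Exotic_GFPA} for weakly proper $X\rtimes G$-algebras): cutoff functions plus properness of the $G$\nb-action on $X$ give a common compact support, and Cauchy--Schwarz with the isometry of $\gamma_t$ gives uniform convergence. One minor remark: commutativity of $\contz(X)$ is not actually needed to move $\phi(\tau_t(g))$ across the inner product --- adjointability, \ie\ $\braket{x}{Ty}_B=\braket{T^*x}{y}_B$ with $\phi(\tau_t(g))^*=\phi(\tau_t(\bar g))$, already suffices.
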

\begin{proof} This follows as in the proof of \cite[Lemma 2.10]{Buss-Echterhoff:Exotic_GFPA} in the case
of weakly proper $X\rtimes G$-algebras.
\end{proof}

\begin{proposition}\label{prop:PreHilbertModule} Let $\|\cdot\|_\pn$ be an $E$\nb-crossed-product norm on $\contc(G,B)$.
Then the operations~\eqref{eq:InnerProduct} and~\eqref{eq:ModuleConvolution} turn
$\F_c(\E)$ into a pre-Hilbert $\contc(G,B)$-module with respect to $\|\cdot\|_\pn$ and hence $\F_c(\E)$ completes to a Hilbert $B\rtimes_{\beta,\pn} G$-module $\F_\pn(\E)$.
Moreover, there is an isomorphism of Hilbert $B\rtimes_{\gamma,\pn}G$-modules
$$\Psi_\pn\colon \F(X)\otimes_{C_0(X)\rtimes G}(\E\rtimes_{\gamma,\pn}G)\to \F_\pn(\E)$$
given on elementary tensors $\xi \otimes x\in \F_c(X)\odot \contc(G,\E)$ by
\begin{equation}\label{eq-decom} \Psi_\pn(\xi\otimes x)= \xi*x:=\int_G \Delta(t)^{-1/2} \gamma_t(\xi\cdot x(t^{-1}))\dd{t} \in \F_c(\E).
\end{equation}
\end{proposition}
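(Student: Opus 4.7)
The plan is to construct the isomorphism $\Psi_\pn$ first and then use it to transfer the Hilbert module structure from an honest interior tensor product onto $\F_c(\E)$. By the classical results of Rieffel and Green recalled just above, $\F(X)$ is a (partial) $C_0(G\bs X)$\nb-$C_0(X)\rtimes_{\tau,\un}G$ equivalence bimodule; by Corollary~\ref{cor-E-correspond}, the $G$\nb-equivariant correspondence $\phi\colon C_0(X)\to\Lb(\E)$ induces a left action $\phi\rtimes_\pn G$ of $C_0(X)\rtimes_{\tau,\pn}G$ on the Hilbert $B\rtimes_{\beta,\pn}G$-module $\E\rtimes_{\gamma,\pn}G$, so the interior tensor product $\F(X)\otimes_{C_0(X)\rtimes_{\tau,\pn}G}(\E\rtimes_{\gamma,\pn}G)$ is an honest Hilbert $B\rtimes_{\beta,\pn}G$-module. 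First I would verify that~\eqref{eq-decom} makes sense on the algebraic tensor product $\F_c(X)\odot\contc(G,\E)$ and lands in $\F_c(\E)$: for $\xi\in\contc(X)$ with support $K$ and $x\in\contc(G,\E)$ with support $L$, the integrand has compact support in $t\in L^{-1}$ and the result is left-multiplied as the identity by any $f\in\contc(X)$ equal to $1$ on $L\cdot K$. A routine Fubini argument then shows $\Psi_\pn$ is balanced over $\contc(G,C_0(X))$ for the right action on $\F_c(X)$ and the left action on $\contc(G,\E)$ from~\eqref{eq-left-action}.

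The heart of the argument is the inner product identity
\begin{equation*}
\bbraket{\Psi_\pn(\xi\otimes x)}{\Psi_\pn(\eta\otimes y)}(s)
=\braket{x}{\bbraket{\xi}{\eta}_{\contc(G,C_0(X))}\cdot y}_{\contc(G,B)}(s),
\end{equation*}
for $\xi,\eta\in\F_c(X)$ and $x,y\in\contc(G,\E)$, where the left side uses~\eqref{eq:InnerProduct} and the right side is the tensor-product inner product. Expanding the left side as $\Delta(s)^{-1/2}\braket{\Psi_\pn(\xi\otimes x)}{\gamma_s\Psi_\pn(\eta\otimes y)}_B$ yields a double integral in $(t,u)$; applying the equivariance $\braket{\gamma_t\zeta_1}{\gamma_t\zeta_2}_B=\beta_t\braket{\zeta_1}{\zeta_2}_B$, commutativity of the $\contc(X)$-action on $\E$, the substitution $u\mapsto tu$, and the recalled formula $\bbraket{\xi}{\eta}_{\contc(G,C_0(X))}(g,x)=\Delta(g)^{-1/2}\overline{\xi(x)}\eta(g^{-1}x)$ collapses this into the convolution expression on the right side matching~\eqref{eq:InnerProductOfExG}. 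Once this identity is in hand, the form on the image of $\Psi_\pn$ coincides with the genuine $B\rtimes_{\beta,\pn}G$-valued inner product of the tensor product, so it is positive and $\|\cdot\|_\pn$-bounded; a parallel, easier Fubini computation shows $\Psi_\pn$ intertwines the right $\contc(G,B)$-actions.

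To finish, I would show the image $\Psi_\pn(\F_c(X)\odot\contc(G,\E))$ is inductive-limit dense in $\F_c(\E)$. Since $\F_c(\E)=\contc(X)\cdot\E$ is linearly spanned by elements $f\cdot\zeta$ with $f\in\contc(X)$, $\zeta\in\E$, it suffices to approximate each such element. Choosing a nonnegative $\varphi_n\in\contc(G)$ concentrated near $e$ and normalized by $\int\Delta(t)^{-1/2}\varphi_n(t^{-1})\,dt=1$, and setting $x_n(t)\defeq\varphi_n(t)\zeta$, we obtain
\begin{equation*}
\Psi_\pn(f\otimes x_n)=\int\Delta(t)^{-1/2}\varphi_n(t^{-1})\,\gamma_t(f\cdot\zeta)\,dt\longrightarrow f\cdot\zeta
\end{equation*}
in the inductive limit topology on $\F_c(\E)$: all terms are left-multiplied as the identity by a fixed $g\in\contc(X)$ once $\supp\varphi_n$ lies in a chosen compact neighborhood of $e$, while strong continuity of $\gamma$ gives the norm convergence in $\E$. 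Combining this with Lemma~\ref{lem-inductive-limit-top} transfers the pre-Hilbert $\contc(G,B)$-module structure from the algebraic tensor product to all of $\F_c(\E)$ and, after completion, yields the asserted isomorphism with $\F_\pn(\E)$.

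The step I expect to be the main obstacle is the inner product identity in the second paragraph: it is a triple integration (an inner product on $\E$ together with two Haar integrals) involving three modular function factors that must cancel exactly, so the substitutions and adjoints have to be organized carefully before the expression reduces to the convolution formula coming from $\contc(G,C_0(X))$. Nothing is conceptually subtle, but the bookkeeping is the place where a stray $\Delta$ or a misplaced adjoint is most likely to cause trouble.
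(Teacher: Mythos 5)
Your proposal is correct and follows essentially the same route as the paper: the paper also establishes positivity and the isomorphism simultaneously by showing that $\Psi_\pn$ preserves the $\contc(G,B)$-valued inner products on $\F_c(X)\odot\contc(G,\E)$ and that its image is inductive-limit dense in $\F_c(\E)$, then invokes Lemma~\ref{lem-inductive-limit-top} to transfer the Hilbert-module structure from the interior tensor product. The only differences are cosmetic — you spell out the approximate-identity density argument that the paper delegates to \cite[Lemma 2.12]{Buss-Echterhoff:Exotic_GFPA} (with a harmless slip: the relevant compact set is $L^{-1}\cdot K$ rather than $L\cdot K$).
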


\begin{proof} The proof is very similar to the proof of \cite[Proposition 2.2]{Buss-Echterhoff:Exotic_GFPA} and \cite[Proposition 2.9]{Buss-Echterhoff:Exotic_GFPA}, but we follow a slightly different direction here by proving both
assertions of the proposition simultaneously.

First of all it is straightforward to check that all algebraic properties of a pre-Hilbert module, as
 spelled out in \cite[Lemma 2.16]{Raeburn-Williams:Morita_equivalence}, hold. The only property which deserves a closer look
 is positivity of the inner product: for all $\xi\in \F_c(\E)$ we need to check that $\bbraket{\xi}{\xi}_{\contc(G,B)}$
 is positive as an element of $B\rtimes_{\beta,\pn}G$. Since the product
 on $\F(X)\otimes_{C_0(X)\rtimes G}(B\rtimes_{\beta,\pn}G)$ is positive,  this will follow from
 Lemma~\ref{lem-inductive-limit-top} once we have shown the following
 facts:
 \begin{enumerate}
 \item[(a)] The map $\Psi_\pn$ preserves the $\contc(G,B)$-valued inner product on the dense subspace
 $\F_c(\E)\odot \contc(G,\E)$, and
 \item[(b)] the image $\Psi_\pn\big(\F_c(\E)\odot \contc(G,\E)\big)$ is dense in $\F_c(\E)$ with respect to the
 inductive limit topology.
 \end{enumerate}
 Now, statement (a) follows from a straightforward computation very similar to the one given in
 the proof of \cite[Proposition 2.9]{Buss-Echterhoff:Exotic_GFPA} and (b) follows exactly as in the proof of a similar
 statement given in \cite[Lemma 2.12]{Buss-Echterhoff:Exotic_GFPA}.
 Statements (a), (b) also imply that $\Psi_\pn$ extends from $\F_c(X)\odot \contc(G,B)$ to the
 desired isomorphism of Hilbert modules.
 \end{proof}

For later use we state the following lemma:

\begin{lemma}\label{lem-dense}
Suppose that $\mathcal D$ is norm-dense in $\E$. Then $\contc(X)\cdot \mathcal D$ is inductive limit
dense in $\F_c(\E)$, and therefore it is dense in $\F_\pn(\E)$ for every crossed-product norm
$\|\cdot\|_\pn$ on $\contc(G,B)$.
\end{lemma}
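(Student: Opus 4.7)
The plan is to show that every $\xi \in \F_c(\E)$ can be approximated, in the inductive limit sense, by elements of $C_c(X) \cdot \mathcal D$, and then to deduce $\pn$-norm density from Lemma~\ref{lem-inductive-limit-top}.

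First, I would use that a general $\xi \in \F_c(\E) = C_c(X) \cdot \E$ has the form $\xi = \sum_{j=1}^n f_j \cdot \eta_j$ with $f_j \in C_c(X)$ and $\eta_j \in \E$. Set $K := \bigcup_j \supp(f_j)$, a compact subset of $X$, and choose $g \in C_c(X)$ with $g \equiv 1$ on $K$, so that $g \cdot f_j = f_j$ for all $j$. Since $\mathcal D$ is norm-dense in $\E$, for every $j$ pick a net $(\eta_j^i) \subseteq \mathcal D$ with $\eta_j^i \to \eta_j$ in $\|\cdot\|_B$. Define
$$\xi_i \;:=\; \sum_{j=1}^n f_j \cdot \eta_j^i \;\in\; \spn\bigl(C_c(X) \cdot \mathcal D\bigr).$$
By construction $g \cdot \xi_i = \xi_i$ for every $i$, and
$$\|\xi - \xi_i\|_B \;\leq\; \sum_{j=1}^n \|\phi(f_j)\|\,\|\eta_j - \eta_j^i\|_B \;\longrightarrow\; 0,$$
so $\xi_i \to \xi$ in the inductive limit topology on $\F_c(\E)$ in the sense of Definition~\ref{ind-limit}.

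For the second assertion, I would invoke Lemma~\ref{lem-inductive-limit-top}: the inductive limit convergence $\xi_i \to \xi$ yields $\bbraket{\xi - \xi_i}{\xi - \xi_i} \to 0$ in $C_c(G,B)$ in the inductive limit topology, \ie, the resulting net is supported in a fixed compact set (namely $(\supp g)^{-1}\supp g$) and converges uniformly in $\|\cdot\|_B$. Since the $L^1$-norm and hence any $C^*$-norm $\|\cdot\|_\pn$ on $C_c(G,B)$ is dominated by the inductive limit topology (via $\|\varphi\|_\pn \leq \|\varphi\|_1 \leq \lambda(L)\sup_s\|\varphi(s)\|$ for $\varphi$ supported in a compact set $L$), we obtain $\|\xi - \xi_i\|_{\F_\pn(\E)}^2 = \|\bbraket{\xi-\xi_i}{\xi-\xi_i}\|_\pn \to 0$, so $C_c(X) \cdot \mathcal D$ is dense in $\F_\pn(\E)$.

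There is no real obstacle here; the only point requiring care is the support-control clause in Definition~\ref{ind-limit}, which forces a \emph{single} cutoff $g$ working for the entire net $(\xi_i)$. This is automatic in the construction above because the $f_j$'s are held fixed while only the $\mathcal D$-part is perturbed, so the same $g$ serves for all $i$.
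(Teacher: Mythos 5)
Your proof is correct and follows essentially the same route as the paper, which simply declares the first assertion ``clear'' and derives the second from the fact that the inductive limit topology on $\contc(G,B)$ dominates the $L^1$-topology and hence any crossed-product norm; you have merely filled in the routine details (fixed cutoff $g$, finite-sum approximation, appeal to Lemma~\ref{lem-inductive-limit-top}). The only cosmetic blemish is the expression $(\supp g)^{-1}\supp g$, which misuses group notation for subsets of $X$ --- the relevant compact set is $\{t\in G: t\cdot\supp(g)\cap\supp(g)\neq\emptyset\}$, compact by properness --- but this aside is not needed for your argument.
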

\begin{proof} The first assertion is clear and the second assertion follows from the first
together with the fact that the inductive limit topology on $\contc(G,B)$ is stronger than
the norm-topology for any crossed-product norm (since it is stronger than the $L^1$-topology).
\end{proof}

\begin{definition}
The $\pn$-generalized fixed-point algebra associated to a weakly proper $(B,X\rtimes G)$-module $(\E,\gamma)$ is, by definition, the \cstar{}algebra $\Fix_\pn^G(\E)\defeq \K(\F_\pn(\E))$ of compact operators on $\F_\pn(\E)$.
In the particular case where $\|\cdot\|_\pn=\|\cdot\|_\un$ is the universal norm (or the reduced norm $\|\cdot\|_\red$) we call
$\Fix_\un^G(\E)$ (resp $\Fix_\red^G(\E)$) the {\em universal (resp. reduced) generalized fixed-point algebra associated to $\E$.}
\end{definition}

\begin{corollary}
If $(\E,\gamma)$ is a weakly proper $(B, X\rtimes G)$-module, then $\Fix_\pn^G(\E)$ is Morita equivalent to the ideal
$$\I_\pn\defeq \cspn\bbraket{\F_\pn(\E)}{\F_\pn(\E)}_{B\rtimes_\pn G}=\cspn\{\bbraket{\xi}{\eta}_{B\rtimes_\pn G}:\xi,\eta\in\F_\pn(\E)\}$$
 of $B\rtimes_{\beta,\pn} G$.
\end{corollary}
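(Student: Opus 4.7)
The plan is to recognize this as a direct application of the standard Hilbert module principle: any Hilbert $B$-module $\F$ gives a Morita equivalence between $\K(\F)$ and the closed linear span of its inner products, provided this span is a (closed, two-sided) ideal in $B$. By Proposition~\ref{prop:PreHilbertModule}, $\F_\pn(\E)$ is a genuine Hilbert $B\rtimes_{\beta,\pn}G$-module, and by definition $\Fix_\pn^G(\E)=\K(\F_\pn(\E))$ acts on it from the left. Thus $\F_\pn(\E)$ is already a $\Fix_\pn^G(\E)$-$(B\rtimes_{\beta,\pn}G)$ bimodule, and we only need to identify the image of the inner product.

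First I would verify that $\I_\pn$ is a closed two-sided ideal in $B\rtimes_{\beta,\pn}G$. Right-invariance is immediate from compatibility of the inner product with the right action: $\bbraket{\xi}{\eta}\cdot f=\bbraket{\xi}{\eta*f}$ for all $f\in B\rtimes_{\beta,\pn}G$, which shows $\I_\pn\cdot (B\rtimes_{\beta,\pn}G)\subseteq \I_\pn$. Left-invariance then follows by taking adjoints, using $\bbraket{\xi}{\eta}^*=\bbraket{\eta}{\xi}$: for $f\in B\rtimes_{\beta,\pn}G$ we have $f\cdot\bbraket{\xi}{\eta}=(\bbraket{\eta}{\xi}\cdot f^*)^*=\bbraket{\eta*f^*}{\xi}^*=\bbraket{\xi}{\eta*f^*}^*\cdot$ (more plainly, $f\cdot\bbraket{\xi}{\eta}\in \I_\pn$ since $\I_\pn$ is self-adjoint and right-invariant). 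Closedness is part of the definition.

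The Morita equivalence is then immediate from the standard construction: the right action of $\I_\pn$ on $\F_\pn(\E)$ is nondegenerate because $\I_\pn=\cspn\bbraket{\F_\pn(\E)}{\F_\pn(\E)}_{B\rtimes_{\beta,\pn}G}$ and the identity $\xi\cdot\bbraket{\eta}{\zeta}=\bbraket{\xi}{\eta}\cdot\zeta$-type relations (together with the existence of approximate units in $\I_\pn$ of the form $\sum_i\bbraket{\xi_i}{\xi_i}$) show that $\F_\pn(\E)\cdot\I_\pn$ is dense in $\F_\pn(\E)$. The right inner product is full into $\I_\pn$ by the very definition of $\I_\pn$, and the left $\Fix_\pn^G(\E)$-valued inner product $_{\K(\F_\pn(\E))}\!\braket{\xi}{\eta}=\ketbra{\xi}{\eta}$ has dense span in $\K(\F_\pn(\E))=\Fix_\pn^G(\E)$ by the standard description of compact operators on a Hilbert module. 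There is no real obstacle here; the only point that deserves a line of verification is the two-sidedness of $\I_\pn$, which is why I would single it out explicitly.
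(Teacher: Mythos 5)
Your argument is correct and is exactly the reasoning the paper has in mind: the corollary is stated without proof as an immediate consequence of Proposition~\ref{prop:PreHilbertModule} and the standard fact that any Hilbert module $\F$ over a \cstar{}algebra $D$ implements a Morita equivalence between $\K(\F)$ and the closed ideal generated by the range of the inner product. Your explicit verification that $\I_\pn$ is a two-sided ideal and that the right action of $\I_\pn$ is nondegenerate fills in precisely the routine details the authors chose to omit.
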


\begin{remark}\label{rem:weakly_proper=>square-integrable}
Let $(\E,\gamma)$ be a weakly proper $(B, X\rtimes G)$-module. Then the subspace $\F_c(\E)$ of $\E$ consists of square-integrable elements and is relatively continuous by \cite[Theorem~6.5]{Meyer:Generalized_Fixed}.
Then our reduced fixed-point algebra $\Fix_\red^G(\E)$ coincides with the one constructed by
Meyer in \cite{Meyer:Generalized_Fixed}. It can also be obtained from Rieffel's theory in \cite{Rieffel:Proper}
by observing that $\Fix_\red^G(\E)$ coincides with the reduced generalized fixed-point algebra $\K(\E)^{G,\Ad\gamma}_\red$ with respect to the dense subalgebra $\K(\E)_c\defeq \contc(X)\cdot \K(\E)\cdot \contc(X)$, as we shall prove in Proposition~\ref{prop-exotic-compacts} below.
\end{remark}

Our constructions of $\F_\pn(\E)$ and $\Fix_\pn^G(\E)$ depend, \emph{a priori}, on the given structure map $\phi\colon\contz(X)\to\Lb(\E)$. However, the following result says that this dependence is not too strong:

\begin{proposition}\label{prop-independence}
Let $(\E,\gamma)$ be a $(B,G)$-module and let $X$ and $Y$ be proper $G$\nb-spaces.
Assume that $\phi:C_0(X)\to \Lb(\E)$ and $\psi:C_0(Y)\to \Lb(\E)$ are two $G$\nb-equivariant nondegenerate \Star{}homomorphisms such that
$$\phi(\contc(X))\psi(\contc(Y))=\psi(\contc(Y))\phi(\contc(X))$$
and let $\F_\pn(\E)^X$  \textup{(}resp. $\F(\E)^Y_\pn$\textup{)} denote the
module for the $(B,X\rtimes G)$-structure \textup{(}resp. $(B,Y\rtimes G)$-structure\textup{)} on $\E$.
Then $\F_\pn(\E)^X=\F_\pn(\E)^Y$ as Hilbert $B\rtimes_{\beta,\pn}G$-modules.
In particular, both structures give the same generalized fixed-point algebras $\Fix_\pn^G(\E)$.
\end{proposition}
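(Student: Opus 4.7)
The plan is to exhibit a common subspace $\mathcal{D}\subseteq \E$ that is dense in both $\F_\pn(\E)^X$ and $\F_\pn(\E)^Y$ and on which both $\contc(G,B)$\nb-valued inner products agree, so that the two completions are forced to coincide. A natural candidate is
$\mathcal{D}\defeq \phi(\contc(X))\psi(\contc(Y))\E$.
First I would verify $\mathcal{D}\subseteq \F_c(\E)^X\cap \F_c(\E)^Y$: the inclusion in $\F_c(\E)^X=\phi(\contc(X))\E$ is immediate from the definition, while the commutation hypothesis rewrites $\phi(f)\psi(g)\xi=\psi(g)\phi(f)\xi$, placing the element in $\psi(\contc(Y))\E=\F_c(\E)^Y$.

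Since the formulas \eqref{eq:InnerProduct} and \eqref{eq:ModuleConvolution} involve only the action $\gamma$ and the $B$\nb-valued inner product, and not the structure map, the $\contc(G,B)$\nb-valued inner products and right $\contc(G,B)$\nb-actions on $\F_c(\E)^X$ and $\F_c(\E)^Y$ agree on $\mathcal{D}$. It therefore suffices to show that $\mathcal{D}$ is dense in $\F_\pn(\E)^X$; the argument for $Y$ is symmetric. Fix $\phi(f)\xi\in\F_c(\E)^X$ with $f\in\contc(X)$, $\xi\in\E$, and choose an approximate unit $(g_i)_{i\in I}$ for $\contz(Y)$ contained in $\contc(Y)$. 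Nondegeneracy of $\psi$ implies $\psi(g_i)\to 1$ strictly in $\Lb(\E)$, so $\phi(f)\psi(g_i)\xi=\psi(g_i)\phi(f)\xi\to \phi(f)\xi$ in $\E$. Next pick $h\in\contc(X)$ with $hf=f$; then $\phi(h)\cdot\phi(f)\psi(g_i)\xi=\phi(f)\psi(g_i)\xi$ for every $i$ and $\phi(h)\cdot\phi(f)\xi=\phi(f)\xi$, so the approximation takes place in the inductive limit topology of $\F_c(\E)^X$ in the sense of Definition~\ref{ind-limit}.

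Applying Lemma~\ref{lem-inductive-limit-top} to the net of differences then gives inductive limit convergence of $\bbraket{\phi(f)\psi(g_i)\xi-\phi(f)\xi}{\phi(f)\psi(g_i)\xi-\phi(f)\xi}$ to zero in $\contc(G,B)$, and hence convergence to zero in $B\rtimes_{\beta,\pn}G$ for any $E$\nb-crossed-product norm. Thus $\phi(f)\psi(g_i)\xi\to \phi(f)\xi$ in the Hilbert module norm of $\F_\pn(\E)^X$, establishing density of $\mathcal{D}$ in $\F_\pn(\E)^X$, and by symmetry in $\F_\pn(\E)^Y$. Since both modules arise as completions of the same pre-Hilbert $\contc(G,B)$\nb-module $\mathcal{D}$, we conclude $\F_\pn(\E)^X=\F_\pn(\E)^Y$, and consequently $\Fix_\pn^G(\E)=\K(\F_\pn(\E))$ is independent of the chosen structure.

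The only genuine subtlety is upgrading the norm approximation $\phi(f)\psi(g_i)\xi\to\phi(f)\xi$ in $\E$ to convergence in the $B\rtimes_{\beta,\pn}G$\nb-module norm; the trick of fixing $h\in\contc(X)$ with $hf=f$ \emph{before} taking the approximate unit is precisely what confines all approximants to a single ``inductive limit neighbourhood'' in $\F_c(\E)^X$, after which Lemma~\ref{lem-inductive-limit-top} does the work.
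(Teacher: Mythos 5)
Your proof is correct and follows essentially the same route as the paper: both work with the common subspace $\contc(X)\cdot\contc(Y)\cdot\E$, use the commutation hypothesis to see it sits inside (and is dense in) both modules, and observe that the inner products there do not involve the structure maps; the paper simply invokes Lemma~\ref{lem-dense} where you re-derive the needed density by hand with an approximate unit. One cosmetic point: the hypothesis is an equality of \emph{sets} $\phi(\contc(X))\psi(\contc(Y))=\psi(\contc(Y))\phi(\contc(X))$, not elementwise commutation, but every step where you write $\phi(f)\psi(g)=\psi(g)\phi(f)$ only needs the set equality (or, for the convergence $\phi(f)\psi(g_i)\xi\to\phi(f)\xi$, only boundedness of $\phi(f)$), so nothing breaks.
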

\begin{proof} Since $\contc(Y)\cdot \E$ is dense in $\E$, it follows that
$\contc(X)\cdot \contc(Y)\cdot \E$ is inductive limit dense in $\F_c(\E)^X$
and hence also (norm) dense in $\F_\pn(\E)^X$ by Lemma~\ref{lem-dense}. The assumption $\phi(\contc(X))\psi(\contc(Y))=\psi(\contc(Y))\phi(\contc(X))$
implies that
$\contc(X)\cdot \contc(Y)\cdot \E=\contc(Y)\cdot \contc(X)\cdot \E$ is also dense in $\F_\pn(\E)^Y$.
Since the $\contc(G,B)$-valued inner products of both modules coincide on
$\contc(X)\cdot \contc(Y)\cdot \E$, the result follows.
\end{proof}

\begin{remark}\label{rem-unique} {(a)} It follows from the above result, that if
$\phi:C_0(X)\to \Lb(\E)$ takes values in the center of $\Lb(\E)$,
then every other $G$\nb-equivariant structure map $\psi:C_0(Y)\to\Lb(\E)$
will lead to the same modules and  fixed-point algebras
as $\phi$. This is the situation of proper actions as defined by Kasparov in \cite[\S 3]{Kasparov:Novikov}.
\\
{(b)} Suppose that $(\E_1,\gamma_1)$ is a
weakly proper $(B, X\rtimes G)$-module and $(\E_2, \gamma_2)$ is a weakly proper
$(C, Y\rtimes G)$-module. Let $\E_1\otimes_\nu \E_2$ denote the
the completion of the algebraic tensor product
$\E_1\odot\E_2$ with respect to the canonical $B\otimes_\nu C$-valued inner product for
a given \cstar{}tensor norm $\|\cdot\|_\nu$ on $B\odot C$.
Then $(\E_1\otimes_\nu \E_2, \gamma_1\otimes \gamma_2)$
carries three different canonical structures as weakly proper $(B\otimes_\nu C, G)$-modules:
one given by $\phi\otimes \psi\colon C_0(X\times Y)\to \Lb(\E_1\otimes_\nu \E_2)$,
the second given by $\phi\otimes 1\colon C_0(X)\to \Lb(\E_1\otimes_\nu \E_2)$ and third given by
$1\otimes \psi:C_0(Y)\to \Lb(\E_1\otimes_\nu \E_2)$. The previous proposition implies that
all three structures give the same modules and fixed-point algebras.
\\
{(c)} Suppose that $\E$ is a weakly proper
$(B,X\rtimes G)$-module with structural homomorphism $\phi\colon \contz(X)\to\Lb(\E)$ and suppose $\theta$ is a continuous $G$\nb-equivariant
map from $X$ into another proper $G$\nb-space $Y$. Then $\theta$ induces a $G$\nb-equivariant nondegenerate \Star{}homomorphism $\theta^*\colon \contz(Y)\to\contb(X)$, $f\mapsto f\circ\theta$, which therefore induces a $G$\nb-equivariant \Star{}homomorphism $\psi\defeq \phi\circ\theta^*\colon \contz(Y)\to \Lb(\E)$. It is clear that the images of
$\phi$ and $\psi$ commute (pointwise) in $\Lb(\E)$ and therefore both structures give the same modules $\F_\pn(\E)$ and fixed-point algebras $\Fix_\pn(\E)$ by Proposition~\ref{prop-independence}.
\\
{(d)} Recall from \cite{Baum-Connes-Higson:BC, Kasparov-Skandalis:Bolic}
 that for any locally compact group $G$ there
exists a universal proper $G$\nb-space $\EG$ with the property that for any other
proper $G$\nb-space $X$ we get a continuous $G$\nb-equivariant map $\varphi:X\to \EG$
which is unique up to $G$\nb-homotopy (and hence the space $\EG$ is unique up to $G$\nb-homotopy equivalence).
If $(\E,\gamma)$ is a weakly proper $(B,X\rtimes G)$-module with structure
map $\phi_X:C_0(X)\to \Lb(\E)$, and if $\varphi\colon X\to\EG$ is as above, then we obtain a $G$\nb-equivariant structure map
$\phi_{\EG}\colon C_0(\EG)\to \Lb(\E); \phi_{\EG}(f)=\phi_X(f\circ \varphi)$.
As a special case of (c), both structure maps give the same modules and fixed-point algebras.
In particular, we could always assume that $X=\EG$.
\\
{(e)} It is not clear to us, whether the commutation assumption
$\phi(\contc(X))\psi(\contc(Y))=\psi(\contc(Y))\phi(\contc(X))$ in the proposition is
necessary, \ie, whether there might exist two different structures of a given $(B,G)$-module
$(\E,\gamma)$ as a weakly proper module
which give different generalized fixed-point algebras.
\end{remark}

\begin{definition}\label{def-saturated}
A weakly proper $(B, X\rtimes G)$-module $(\E,\gamma)$ is called {\em universally saturated} if
$\I_u=\cspn\bbraket{\F_u(\E)}{\F_u(\E)}_{B\rtimes_{\beta,\un} G}=B\rtimes_{\beta,\un} G$. Moreover, if $\|\cdot\|_\pn$
is any crossed-product norm on $\contc(G,B)$, we say that $(\E,\gamma)$ is {\em $\pn$-saturated},
if the $B\rtimes_{\beta,\pn} G$-valued inner product on $\F_\pn(\E)$ is full.
\end{definition}

\begin{remark} If $(A,\alpha)$ is a proper $G$\nb-algebra in the sense of Rieffel, then he defined
the action to be {\em saturated}, if the module $\F_\red(A)$ is a full Hilbert $A\rtimes_{\alpha,\red}G$-module.
In our terminology, this means that $(A,\alpha)$
is $\red$-saturated. The following result shows that
this follows if the action is universally saturated, but it is not clear to us, whether the converse
is also true.
\end{remark}

\begin{lemma}\label{lem-univ-sat}
Let $(\E,\gamma)$ be a weakly proper $(B, X\rtimes G)$-module.
Then the following are true:
\begin{enumerate}
\item If the action of $G$ on $X$ is free and $\E$ is full as a Hilbert $B$-module, then $(\E,\gamma)$ is universally saturated.
\item If $(\E,\gamma)$ is universally saturated, then it is also $\pn$-saturated for every
crossed-product norm $\|\cdot\|_\pn$ on $\contc(G,B)$. More generally, if $(\E,\gamma)$ is $\qn$-saturated, then it is also $\pn$-saturated whenever $\|\cdot\|_\pn$ and $\|\cdot\|_\qn$ are crossed-product norms on $\contc(G,B)$ satisfying $\|\cdot\|_\pn\leq \|\cdot\|_\qn$.
\item If $(\E,\gamma)$ is $\pn$-saturated, then $\F_\pn(\E)$ is a $\Fix_\pn^G(\E)-B\rtimes_{\beta,\pn}G$ equivalence bimodule.
\end{enumerate}
\end{lemma}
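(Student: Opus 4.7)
The plan is to take the three claims in turn, using for (1) the decomposition
$$\Psi_\un\colon \F(X)\otimes_{C_0(X)\rtimes_{\tau,\un}G}(\E\rtimes_{\gamma,\un}G)\congto \F_\un(\E)$$
from Proposition~\ref{prop:PreHilbertModule}, for (2) functoriality of the completion in the crossed-product norm, and for (3) the standard fact that any full right Hilbert module is an imprimitivity bimodule over its compacts.

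For (1), I would read the $B\rtimes_{\beta,\un}G$-valued inner product on $\F_\un(\E)$ through $\Psi_\un$: on elementary tensors it takes the balanced form
$$\bbraket{\xi_1\otimes x_1}{\xi_2\otimes x_2}_{B\rtimes_{\beta,\un}G}=\braket{x_1}{\bbraket{\xi_1}{\xi_2}_{C_0(X)\rtimes_{\tau,\un}G}\cdot x_2}_{B\rtimes_{\beta,\un}G}.$$
When $G$ acts freely on $X$, the Rieffel--Green result recalled just before Proposition~\ref{prop:PreHilbertModule} makes $\F(X)$ a full Hilbert $C_0(X)\rtimes_{\tau,\un}G$-module. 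Since the left action of $C_0(X)\rtimes_{\tau,\un}G$ on $\E\rtimes_{\gamma,\un}G$ is nondegenerate by Lemma~\ref{lem-sub}(2), the displayed inner products span the same closed ideal of $B\rtimes_{\beta,\un}G$ as the ordinary inner products of $\E\rtimes_{\gamma,\un}G$. It thus remains to show that $\E\rtimes_{\gamma,\un}G$ is full over $B\rtimes_{\beta,\un}G$ whenever $\E$ is full over $B$, and this I would read off from the linking-algebra isomorphism $L(\E)\rtimes_\vartheta G\cong L(\E\rtimes_\gamma G)$ of Lemma~\ref{lem-E-imp}: fullness of $\E$ means $B$ is a full corner of $L(\E)$, which passes to the description of $B\rtimes_{\beta,\un}G$ as a full corner of $L(\E\rtimes_{\gamma,\un}G)$.

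For (2), given any two crossed-product norms with $\|\cdot\|_\pn\leq \|\cdot\|_\qn$, the identity on $\contc(G,B)$ extends to a surjective \Star{}homomorphism $\Qn\colon B\rtimes_{\beta,\qn}G\onto B\rtimes_{\beta,\pn}G$, and the analogous completion procedure applied to $\F_c(\E)$ produces a compatible Hilbert module surjection $\tilde\Qn\colon \F_\qn(\E)\onto \F_\pn(\E)$ with $\Qn(\bbraket{\xi}{\eta}_\qn)=\bbraket{\tilde\Qn\xi}{\tilde\Qn\eta}_\pn$. Hence $\Qn(\I_\qn)\subseteq \I_\pn$, and if $\I_\qn=B\rtimes_{\beta,\qn}G$ then surjectivity of $\Qn$ forces $\I_\pn=B\rtimes_{\beta,\pn}G$; the universally saturated case corresponds to $\qn=\un$. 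Finally, (3) is purely formal: $\Fix_\pn^G(\E)=\K(\F_\pn(\E))$ by definition, and the rank-one operators $\ketbra{\xi}{\eta}$ span $\Fix_\pn^G(\E)$, so the only outstanding condition for $\F_\pn(\E)$ to be a $\Fix_\pn^G(\E)$-$B\rtimes_{\beta,\pn}G$ imprimitivity bimodule is fullness of the right inner product, which is precisely $\pn$-saturatedness. The main obstacle is the corner argument in (1) yielding fullness of $\E\rtimes_{\gamma,\un}G$; everything else is bookkeeping.
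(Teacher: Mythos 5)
Your overall route is the paper's: part (1) via the factorization $\F_\un(\E)\cong\F(X)\otimes_{C_0(X)\rtimes_\un G}(\E\rtimes_{\gamma,\un}G)$ from Proposition~\ref{prop:PreHilbertModule} together with fullness of $\F(X)$ for free actions and fullness of $\E\rtimes_{\gamma,\un}G$; part (2) via the quotient $\F_\qn(\E)\onto\F_\pn(\E)$ compatible with $B\rtimes_{\beta,\qn}G\onto B\rtimes_{\beta,\pn}G$; part (3) directly from the definition $\Fix_\pn^G(\E)=\K(\F_\pn(\E))$. Parts (2) and (3) are fine as written and match the paper.

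The one place where you go beyond what the paper records --- justifying that $\E$ full over $B$ implies $\E\rtimes_{\gamma,\un}G$ full over $B\rtimes_{\beta,\un}G$ --- contains a slip: fullness of $\E$ is \emph{not} equivalent to $B=qL(\E)q$ being a full corner of $L(\E)$. That corner is \emph{always} full, since $\E B\E^*$ is dense in $\E\E^*$, which spans a dense subspace of $\K(\E)$; so transporting this through $L(\E)\rtimes_{\vartheta,\un}G\cong L(\E\rtimes_{\gamma,\un}G)$ yields no information about the right inner product. Fullness of $\braket{\cdot}{\cdot}_B$ on $\E$ is instead equivalent to the \emph{other} corner $\K(\E)=pL(\E)p$ being full, because the ideal of $L(\E)$ it generates meets $B$ precisely in $\cspn\braket{\E}{\E}_B$. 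With that correction your argument does go through: if $\E$ is full, the $G$\nb-invariant ideal of $L(\E)$ generated by $\K(\E)$ is all of $L(\E)$, hence the ideal of $L(\E)\rtimes_{\vartheta,\un}G\cong L(\E\rtimes_{\gamma,\un}G)$ generated by $\K(\E)\rtimes_{\Ad\gamma,\un}G$ is everything (for the universal norm the ideal generated by the image of a $G$\nb-invariant ideal $I$ is $I\rtimes_\un G$), and intersecting with the corner $B\rtimes_{\beta,\un}G$ gives $\cspn\braket{\E\rtimes_{\gamma,\un}G}{\E\rtimes_{\gamma,\un}G}=B\rtimes_{\beta,\un}G$. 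Alternatively, and more elementarily, one checks on $C_c(G,\E)$ that the convolution inner products $x^**y$ span an inductive-limit dense subspace of $C_c(G,B)$ whenever $\braket{\E}{\E}_B$ is dense in $B$.
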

\begin{proof} For the first assertion recall  that for free and proper actions on $X$ the
Hilbert $C_0(X)\rtimes_{\tau,\un} G$-module $\F(X)$ is full. Moreover, if  $\E$ is $B$-full, then $\E\rtimes_{\gamma,\un} G$ is $B\rtimes_{\beta,\un} G$-full.  Then Proposition~\ref{prop:PreHilbertModule} implies that
$\F_u(\E)\cong \F(X)\otimes_{C_0(X)\rtimes_\un G}(\E\rtimes_{\gamma,\un}G)$ is full as a Hilbert $B\rtimes_{\beta,\un} G$-module.
The second assertion follows from the fact that $\F_\pn(\E)$ is the quotient of $\F_\qn(\E)$
corresponding to the quotient map $Q_{\qn,\pn}\colon B\rtimes_{\beta,\qn} G\to B\rtimes_{\beta,\pn}G$.
The last assertion is clear.
\end{proof}

The following result generalizes the isomorphism of bimodules in Proposition~\ref{prop:PreHilbertModule}:

\begin{proposition}\label{prop:TensorDecompositionFixedPointModules}
Suppose that $(\E_1,\gamma_1)$ is a weakly proper Hilbert $(B,X\rtimes G)$-module and that
$(\E_2,\gamma_2)$ is a Hilbert $(C,G, \sigma)$-module.
Assume further that $\varphi\colon B\to \Lb(\E_2)$ is a $G$\nb-equivariant \Star{}homomorphism.
Then the action of $C_0(X)$ on the first factor of $\E_1\otimes_B\E_2$
gives $(\E_1\otimes_B\E_2,\gamma)$ with $\gamma=\gamma_1\otimes\gamma_2$
 the structure of a weakly proper
 Hilbert $(C,X\rtimes G)$-module.
 Let $\|\cdot\|_\pn$ be any crossed-product norm on $\contc(G,C)$. Then the map which sends
 an elementary tensor $\xi\otimes x\in \F_c(\E_1)\odot \contc(G, \E_2)$ to the element
 $$\xi*x{\defeq}\int_G\Delta(t)^{-1/2}\gamma_t(\xi\otimes x(t^{-1}))\dd{t}\in \F_c(\E_1\otimes_B\E_2)$$
 extends to an isomorphism of Hilbert $C\rtimes_\pn G$-modules:
\begin{equation}\label{eq-factor1}
U\colon \F_\qn(\E_1)\otimes_{B\rtimes_{\beta,\qn} G}(\E_2\rtimes_\pn G)\congto \F_\pn(\E_1\otimes_B\E_2)
\end{equation}
for any crossed-product norm $\|\cdot\|_\qn$ on $\contc(G,B)$ such that the left action of $\contc(G,B)$ on
$\E_2\rtimes_{\gamma_2,\pn}G$ extends to $B\rtimes_{\beta,\qn}G$ \textup(in particular, for $\|\cdot\|_\qn=\|\cdot\|_\pn$ by
Lemma~\ref{lem-E-imp}\textup).
\end{proposition}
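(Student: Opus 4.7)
The plan is to follow the pattern of Proposition~\ref{prop:PreHilbertModule} and prove simultaneously that the formula for $U$ defines an isometric map of pre-Hilbert modules with dense image; the existence of $U$ as a well-defined bounded map, together with positivity of the $\contc(G,C)$\nb-valued inner product on $\F_c(\E_1\otimes_B\E_2)$, will then be a byproduct of the argument rather than a prerequisite. First I would verify that $(\E_1\otimes_B\E_2,\gamma_1\otimes\gamma_2)$ really is a weakly proper $(C,X\rtimes G)$\nb-module via $\phi\otimes 1\colon C_0(X)\to\Lb(\E_1\otimes_B\E_2)$, which is $G$\nb-equivariant by construction and nondegenerate because $\phi(C_c(X))\E_1$ is dense in $\E_1$.

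The central computation is then the inner-product identity
\begin{equation*}
\bbraket{\xi_1*x_1}{\xi_2*x_2}_{\contc(G,C)}
= \braket{x_1}{\bbraket{\xi_1}{\xi_2}_{\contc(G,B)}*x_2}_{\contc(G,C)}
\end{equation*}
for $\xi_i\in\F_c(\E_1)$ and $x_i\in\contc(G,\E_2)$, where the outer inner product on the right is the $\contc(G,C)$\nb-valued inner product on $\contc(G,\E_2)$ from~\eqref{eq:InnerProductOfExG}, and $\bbraket{\xi_1}{\xi_2}_{\contc(G,B)}$ is the inner product on $\F_c(\E_1)$ from~\eqref{eq:InnerProduct}. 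This is a Fubini/change-of-variables manipulation that mirrors the one in the proof of \cite[Proposition~2.9]{Buss-Echterhoff:Exotic_GFPA}; the only new ingredient is the $G$\nb-equivariance of $\varphi$, which is used to move $\gamma_1\otimes\gamma_2$ past the left action of $B$ on $\E_2$ when rearranging integrals. The same computation shows the balance identity $(\xi*f)*x = \xi*(f*x)$ for $f\in\contc(G,B)$, which gives balance over $\contc(G,B)$ and, by continuity in $\|\cdot\|_\qn$, over $B\rtimes_{\beta,\qn}G$ once we use the hypothesis that the left action extends to the $\qn$\nb-crossed product.

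Density of the image is handled by a standard approximate-identity argument. By Lemma~\ref{lem-dense} applied to $\E_1\otimes_B\E_2$, the set $C_c(X)\cdot(\E_1\odot\E_2)$ is inductive-limit-dense in $\F_c(\E_1\otimes_B\E_2)$, and any such element $(f\cdot\xi)\otimes\eta$ can be approximated in the inductive limit topology by elements $(f\cdot\xi)*x$ as $x$ runs through approximate units in $\contc(G,\E_2)$ concentrated near the identity of $G$ and picking out $\eta$, exactly as in \cite[Lemma~2.12]{Buss-Echterhoff:Exotic_GFPA}. The joint inductive-limit continuity of $\bbraket{\cdot}{\cdot}$ from Lemma~\ref{lem-inductive-limit-top}, together with the analogous continuity of the $\contc(G,C)$\nb-valued inner product on $\contc(G,\E_2)$, then extends the inner-product identity to all of $\F_c(\E_1\otimes_B\E_2)$, simultaneously yielding positivity, isometry of $U$ for the $\pn$\nb-norms, and density of its image; the completion argument gives the asserted isomorphism.

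The main obstacle I anticipate is not any individual step but the bookkeeping required to verify the balance identity $(\xi*f)*x=\xi*(f*x)$ cleanly at the level of $\contc$\nb-spaces: the pointwise calculation must correctly track the factor $\Delta(t)^{-1/2}$ in~\eqref{eq:ModuleConvolution}, the modular function appearing in the convolution on $\contc(G,B)$, and the interaction with $\varphi$ and with $\gamma_2$ on the second tensorand. Once that identity is in hand, the passage from $\contc(G,B)$\nb-balance to $B\rtimes_{\beta,\qn}G$\nb-balance is formal, and the remainder of the proof assembles itself along the lines already used for Proposition~\ref{prop:PreHilbertModule}.
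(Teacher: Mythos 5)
Your proposal is correct and follows essentially the same route as the paper: verify that $\xi\otimes x\mapsto\xi*x$ preserves the $\contc(G,C)$\nb-valued inner products via the identity $\bbraket{\xi*x}{\eta*y}=\braket{x}{\bbraket{\xi}{\eta}*y}$ on elementary tensors, then show the image is inductive-limit dense in $\F_c(\E_1\otimes_B\E_2)$ as in \cite[Lemma~2.12]{Buss-Echterhoff:Exotic_GFPA} and pass to completions. The only (harmless) redundancies are that positivity of the inner product on $\F_c(\E_1\otimes_B\E_2)$ already follows from Proposition~\ref{prop:PreHilbertModule}, and the balance identity over $\contc(G,B)$ need not be checked separately since isometry of $U$ on the algebraic tensor product automatically kills the null vectors defining the balanced tensor product.
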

\begin{proof}
Let us check that $U$
respects inner products of elementary tensors, \ie,
$$\bbraket{\xi\otimes x}{\eta\otimes y}_{\contc(G,C)}=\bbraket{\xi*x}{\eta*y}_{\contc(G,C)}$$
for all $\xi,\eta\in \F(\E_1)_c\;\text{and}\; x,y\in \contc(G,\E_2)$. In fact,
\begin{align*}
&\bbraket{\xi\otimes x}{\eta\otimes y}_{\contc(G,C)}(t)=
\braket{x}{\bbraket{\xi}{\eta}_{\contc(G,B)}* y}_{\contc(G,C)}(t)\\
&=\int_G \sigma_{s^{-1}}\big(\braket{x(s)}{(\bbraket{\xi}{\eta}_{\contc(G,B)}*y)(st)}_C\big)\,ds\, dr\\
&=\int_G\int_G \sigma_{s^{-1}}\big(\braket{x(s)}{\bbraket{\xi}{\eta}_{\contc(G,B)}(r)\gamma_{2,r}(y(r^{-1}st))}_C\big)\,ds \,dr\\
&=\int_G\int_G \Delta(r)^{-1/2}\sigma_{s^{-1}}\big(\braket{x(s)}{\braket{\xi}{\gamma_{1,r}(\eta)}_B\gamma_{2,r}(y(r^{-1}st))}_C\big)\,ds \,dr\\
&=\int_G\int_G \Delta(r)^{-1/2}\big(\braket{\gamma_{2,s^{-1}}(x(s))}{\braket{\gamma_{1, s^{-1}}(\xi)}{\gamma_{1,s^{-1}r}(\eta)}_B\gamma_{2,s^{-1}r}(y(r^{-1}st))}_C\big)\,ds \,dr\\
&\stackrel{r\mapsto str}{=}
\int_G\int_G \Delta(str)^{-1/2}\big(\braket{\gamma_{2,s^{-1}}(x(s))}{\braket{\gamma_{1, s^{-1}}(\xi)}{\gamma_{1,tr}(\eta)}_B\gamma_{2,tr}(y(r^{-1}))}_C\big)\,ds \,dr\\
&\stackrel{s\mapsto s^{-1}}{=}
\int_G\int_G \Delta(str)^{-1/2}\big(\braket{\gamma_{2,s}(x(s^{-1}))}{\braket{\gamma_{1, s}(\xi)}{\gamma_{1,tr}(\eta)}_B\gamma_{2,tr}(y(r^{-1}))}_C\big)\,ds \,dr\\
&= \int_G\int_G \Delta(str)^{-1/2}\big(\braket{\gamma_{1, s}(\xi)\otimes \gamma_{2,s}(x(s^{-1}))}{\gamma_{1,tr}(\eta)\otimes\gamma_{2,tr}(y(r^{-1}))}_C\big)\,ds \,dr\\
&= \int_G\int_G \Delta(str)^{-1/2}\big(\braket{\gamma_{ s}(\xi\otimes (x(s^{-1}))}{\gamma_{tr}(\eta\otimes y(r^{-1}))}_C\big)\,ds \,dr\\
&=\Delta(t)^{-1/2}\braket{\xi* x}{\gamma_t(\eta*y)}_C=\bbraket{\xi*x}{\eta*y}_{\contc(G,C)}(t).
\end{align*}
Hence $U$ extends to an isometry $\F_\pn(\E_1)\otimes_{B\rtimes_\pn G}\E_2\rtimes_\qn G\congto \F_\qn(\E)$ and the result will follow if we can show that it has dense image.
But an argument similar to the one used in the proof of \cite[Lemma 2.12]{Buss-Echterhoff:Exotic_GFPA} shows that the image is dense in $\F(\E_1\otimes_B\E_2)$ in the inductive
 limit topology, which gives the desired result.
\end{proof}

\begin{remark} The isomorphism $\F_\qn(\E_1)\otimes_{B\rtimes_\qn G}(\E_2\rtimes_\pn G)\cong \F_\pn(\E_1\otimes_B\E_2)$ can also be obtained via the following chain of isomorphisms:
\begin{align*}
\F_\qn(\E_1)\otimes_{B\rtimes_\qn G}(\E_2\rtimes_\pn G)
&\cong \F(X)\otimes_{\contz(X)\rtimes G}(\E_1\rtimes_\qn G)\otimes_{B\rtimes_\qn G}(\E_2\rtimes_\pn G)\\
&\cong \F(X)\otimes_{\contz(X)\rtimes G}\big((\E_1\otimes_B \E_2)\rtimes_\pn G\big)\\
&\cong \F_\pn(\E_1\otimes_B\E_2).
\end{align*}
The only problematic part is the isomorphism
$(\E_1\rtimes_\qn G)\otimes_{B\rtimes_\qn G}(\E_2\rtimes_\pn G)\cong (\E_1\otimes_B \E_2)\rtimes_\pn G$.
But this follows with exactly the same arguments as used in the proof of the corresponding result in
Corollary~\ref{cor-E-correspond}. The case of reduced norms was already known. It  can be obtained from \cite[Theorem~7.1]{Meyer:Generalized_Fixed}.
\end{remark}

Recall from \cite{Buss-Echterhoff:Exotic_GFPA} that if $(A,\alpha)$ is a weakly proper $X\rtimes G$-algebra and if $\|\cdot\|_\pn$
is a crossed-product norm on $\contc(G,A)$ corresponding to a $G$\nb-invariant ideal $E\subseteq B(G)$, then the $\pn$-generalized fixed-point algebra
$A_\pn^G$ can be obtained as a completion of the {\em fixed-point algebra with compact supports} $A_c^G$
defined as follows:
$$A_c^G:=\contc(G\backslash X)\cdot\{ m\in \M(A)^G: m\cdot f, f\cdot m\in A_c \;\forall f\in C_c(X)\}\cdot \contc(G\backslash X)\subseteq \M(A).$$
Here $\M(A)^G$ denotes the classical fixed-point algebra in $\M(A)$ and
$$A_c=\contc(X)\cdot A\cdot \contc(X)\subseteq A.$$
For each $a\in A_c$ the {\em strict unconditional integral} $\int_G^{st} \alpha_t(a)\,dt$  is the unique element in $\M(A)$ which satisfies  the equation
$$\left(\int_G^{st} \alpha_t(a)\,dt\right)c=\int_G \alpha_t(a)c\,dt\quad \forall c\in A_c$$
(see  \cite{Exel:Unconditional}). Note that the integrand on the right hand side is a continuous $A$-valued function with compact support, and therefore {the (Bochner) integral} exists, at least for $c\in A_c$. But the integral also converges for general $c\in A$ (see \cite{Rieffel:Integrable_proper}*{Propositions~4.4, 4.6 and Theorem~5.7} and also \cite{Meyer:Generalized_Fixed}*{Proposition~6.8}).
\medskip
\\
{\bf Notation:} In what follows, if $(A,\alpha)$ is a weakly proper $X\rtimes G$-algebra,
we write
$$\EE(a):=\int_G^{st}\alpha_t(a)\,dt,\quad\text{for $a\in A_c$}.$$
We then have  $A_c^G=\EE(A_c)$ by \cite[Lemma 2.5]{Buss-Echterhoff:Exotic_GFPA}.  Using this, it
 is shown in \cite[\S 2]{Buss-Echterhoff:Exotic_GFPA} that $\F_c(A)$ becomes a partial $A_c^G-\contc(G,A)$ pre-equivalence bimodule with left action of $A_c^G$ on $\F_c(A)$ given by multiplication inside $\M(A)$ and with
$A_c^G$-valued inner product on $\F_c(A)$  given by $_{A_c^G}\bbraket{\xi}{\eta}=
\EE(\xi\eta^*)=\int_G^{st} \alpha_t(\xi\eta^*)\,dt$. The completion $\mathcal F_\pn(A)$ of $\mathcal F_c(A)$ with respect to
$\|\cdot\|_{\F_\pn}:=\sqrt{\|\bbraket{\cdot}{\cdot}\|_\pn}$ then becomes an
$A_\pn^G-A\rtimes_{\alpha,\pn}G$ imprimitivity bimodule.

We now obtain a similar picture for
$\Fix_\pn^G(\E)$. Recall that if $(\E,\gamma)$ is a Hilbert $(B,X\rtimes G)$-module, then
$(\K(\E), \Ad\gamma)$ becomes a weakly proper
$X\rtimes G$-algebra by the same structure map $\phi:C_0(X)\to \M(\K(\E))\cong \Lb(\E)$ as for $\E$.

In the proof of the following proposition we  need the following fact: If $\E_2$ is a Hilbert $B$-module and
$\E_1$ is a Hilbert $\K(\E_2)$-module, then $k\mapsto k\otimes 1$ is an isomorphism between
 $\K(\E_1)$ and  $\K(\E_1\otimes_{\K(\E_2)}\E_2)$.  If the $\K(\E_2)$-valued inner product on $\E_1$ is full, this
 is a consequence of \cite[Proposition~4.7]{Lance:Hilbert_modules}. But the general case follows from this, since
 it is a straightforward consequence of the formula for the inner product on $\E_1\otimes_{\K(\E_2)}\E_2$ that
 $\E_1\otimes_{\K(\E_2)} \E_2\cong \E_1\otimes_{I}(I\cdot\E_2)$ for
 $I:=\overline{\braket{\E_1}{\E_1}}_{\K(\E_2)}$.

\begin{proposition}\label{prop-exotic-compacts}
Assume that $(\E,\gamma)$ is a Hilbert $(B,X\rtimes G)$-module for a proper $G$\nb-space $X$.
The action of $\K(\E)$ on $\E$ induces a left action
of $\K(\E)_c^G\subseteq \Lb(\E)$ on $\F_c(\E)$ and there is a left $\K(\E)_c^G$-valued inner product
on $\F_c(\E)$ given by
\begin{equation}\label{eq-KcG}
{_{\K(\E)_c^G}\bbraket{\xi}{\eta}}=\EE(_{\K(\E)}\braket{\xi}{\eta})\quad\left(=\int_G^{st}\alpha_t({_{\K(\E)}\braket{\xi}{\eta}}) \dd{t}\right),
\end{equation}
such that
${_{\K(\E)_c^G}\bbraket{\xi}{\eta}}\cdot\zeta=\xi\cdot\bbraket{\eta}{\zeta}_{\contc(G,B)}$
for all $\xi,\eta,\zeta\in \F_c(\E)$.
As a consequence, if $\|\cdot \|_\pn$ is a crossed-product norm on $\contc(G,B)$ corresponding to the $G$\nb-invariant
ideal $E\subseteq B(G)$,
then the left action of $\K(\E)_c^G$ on $\F_c(\E)$ extends to
an action of $\K(\E)_c^G$ on the completion $\F_\pn(\E)$ which extends to an isomorphism
$\K(\E)_\mu^G\cong  \Fix_\pn^G(\E)$.
\end{proposition}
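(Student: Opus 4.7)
The plan is to first establish the algebraic structure on $\F_c(\E)$ by direct computation and then deduce the global isomorphism $\K(\E)_\pn^G\cong\Fix_\pn^G(\E)$ from the case $\E=\K(\E)$, which is already treated in \cite{Buss-Echterhoff:Exotic_GFPA}, by means of the tensor decomposition of Proposition~\ref{prop:TensorDecompositionFixedPointModules}.

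First I would check the assertions about $\F_c(\E)$. Writing a generic element $m\in\K(\E)_c^G$ as $m=f_1\cdot m'\cdot f_2$ with $f_i\in\contc(G\bs X)$ and $m'\in\M(\K(\E))^G$ such that $m'\phi(h),\phi(h)m'\in\K(\E)_c$ for every $h\in\contc(X)$, and writing a typical $\xi=\phi(h)\xi_0\in\F_c(\E)$, one has $m\xi=\phi(f_1)\cdot m'\phi(f_2 h)\cdot\xi_0\in\contc(X)\cdot\K(\E)_c\cdot\E\subseteq\F_c(\E)$, so the left action is well defined. Similarly, for $\xi=\phi(h_1)\xi_0$ and $\eta=\phi(h_2)\eta_0$ in $\F_c(\E)$, the identity ${_{\K(\E)}\braket{\xi}{\eta}}=\phi(h_1)\cdot{_{\K(\E)}\braket{\xi_0}{\eta_0}}\cdot\phi(\overline{h_2})$ exhibits ${_{\K(\E)}\braket{\xi}{\eta}}$ as an element of $\contc(X)\cdot\K(\E)\cdot\contc(X)=\K(\E)_c$, so the right-hand side of~\eqref{eq-KcG} is defined and lies in $\EE(\K(\E)_c)=\K(\E)_c^G$ by \cite[Lemma~2.5]{Buss-Echterhoff:Exotic_GFPA}.

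The key compatibility ${_{\K(\E)_c^G}\bbraket{\xi}{\eta}}\cdot\zeta=\xi*\bbraket{\eta}{\zeta}_{\contc(G,B)}$ follows by unfolding definitions: substituting $\bbraket{\eta}{\zeta}(t^{-1})=\Delta(t)^{1/2}\braket{\eta}{\gamma_{t^{-1}}(\zeta)}_B$ into formula~\eqref{eq:ModuleConvolution} and using $\xi\cdot\braket{\eta}{\zeta'}_B={_{\K(\E)}\braket{\xi}{\eta}}\cdot\zeta'$ yields
\[
\xi*\bbraket{\eta}{\zeta}_{\contc(G,B)}=\int_G\gamma_t\bigl({_{\K(\E)}\braket{\xi}{\eta}}\cdot\gamma_{t^{-1}}(\zeta)\bigr)\dd t=\int_G\alpha_t({_{\K(\E)}\braket{\xi}{\eta}})\cdot\zeta\dd t,
\]
where $\alpha=\Ad\gamma$. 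Writing $\zeta=\phi(f)\zeta_0$ and ${_{\K(\E)}\braket{\xi}{\eta}}=\phi(h_1)k\phi(\overline{h_2})$, the integrand factors through $\phi(\tau_t(\overline{h_2})\cdot f)$, which vanishes unless $t\in\{s\in G:s\cdot\supp h_2\cap\supp f\neq\emptyset\}$---a compact set by properness of the $G$\nb-action on $X$. Hence the Bochner integral converges in $\E$ and equals $\EE({_{\K(\E)}\braket{\xi}{\eta}})\cdot\zeta$ by the defining property of the strict unconditional integral, extended from elements of $\K(\E)_c$ to elements of $\F_c(\E)$ by a short factorization argument.

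Finally, for the extension of the action to $\F_\pn(\E)$ and the isomorphism $\K(\E)_\pn^G\cong\Fix_\pn^G(\E)$, I would apply Proposition~\ref{prop:TensorDecompositionFixedPointModules} with $\E_1=\K(\E)$ (viewed as a weakly proper $(\K(\E),X\rtimes G)$-module via $\Ad\gamma$ and $\phi$) and $\E_2=\E$, obtaining an isomorphism of Hilbert $B\rtimes_{\beta,\pn}G$-modules
\[
\F_\pn(\E)\cong\F_\pn(\K(\E))\otimes_{\K(\E)\rtimes_{\Ad\gamma,\pn}G}(\E\rtimes_{\gamma,\pn}G).
\]
Since $\K(\E\rtimes_{\gamma,\pn}G)\cong\K(\E)\rtimes_{\Ad\gamma,\pn}G$ by Lemma~\ref{lem-E-imp}, applying the general fact recalled just before the statement to $\E_1=\F_\pn(\K(\E))$ and $\E_2=\E\rtimes_{\gamma,\pn}G$ yields $\Fix_\pn^G(\E)=\K(\F_\pn(\E))\cong\K(\F_\pn(\K(\E)))\cong\K(\E)_\pn^G$, where the last isomorphism is the imprimitivity bimodule statement of \cite{Buss-Echterhoff:Exotic_GFPA} applied to the weakly proper $X\rtimes G$-algebra $\K(\E)$. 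A routine calculation using $G$-invariance of elements of $\K(\E)_c^G\subseteq\M(\K(\E))^G$ shows that this composite intertwines the natural action of $\K(\E)_\pn^G$ on the left tensor factor with the left action of~\eqref{eq-KcG} on $\F_c(\E)$, simultaneously proving that the $\K(\E)_c^G$-action extends continuously to $\F_\pn(\E)$ and identifying the extension with $\K(\E)_\pn^G$. I expect the main obstacle to be the strict-integral identity in the previous paragraph: the defining relation $\EE(a)c=\int_G\alpha_t(a)c\dd t$ for $c\in\K(\E)_c$ has to be upgraded to the same identity with $c$ replaced by an arbitrary element of $\F_c(\E)$, which is essentially bookkeeping but needs care; everything else is a direct computation or a formal consequence of results already in the paper.
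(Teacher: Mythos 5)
Your proposal is correct and follows essentially the same route as the paper: establish that $_{\K(\E)}\braket{\xi}{\eta}\in\K(\E)_c$ so that \eqref{eq-KcG} makes sense and lands in $\EE(\K(\E)_c)=\K(\E)_c^G$, verify the compatibility identity directly, and then obtain the isomorphism $\K(\E)_\pn^G\cong\Fix_\pn^G(\E)$ from the decomposition $\F_\pn(\E)\cong\F_\pn(\K(\E))\otimes_{\K(\E)\rtimes_{\Ad\gamma,\pn}G}(\E\rtimes_{\gamma,\pn}G)$ of Proposition~\ref{prop:TensorDecompositionFixedPointModules} together with the $k\mapsto k\otimes 1$ identification of compact operators. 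The only difference is that you spell out the ``easy computation'' for the compatibility condition and correctly flag the need to upgrade the defining property of the strict unconditional integral from $c\in\K(\E)_c$ to $\zeta\in\F_c(\E)$, which the paper leaves implicit.
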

\begin{proof} Note first that if $\xi,\eta\in \F_c(\E)=\contc(X)\cdot \E$, then $_{\K(\E)}\braket{\xi}{\eta}\in \K(\E)_c$,
since we can find $f\in \contc(X)$ such that
 $\xi=f\cdot\xi$ and $\eta=f\cdot\eta$ and then
 $$_{\K(\E)}\braket{\xi}{\eta}=_{\K(\E)}\braket{f\cdot \xi}{f\cdot \eta}=f\cdot{_{\K(\E)}\braket{\xi}{\eta}}\cdot\bar{f}.$$
 Hence, the definition ${_{\K(\E)_c^G}\bbraket{\xi}{\eta}}:=\EE(_{\K(\E)}\braket{\xi}{\eta})$ makes sense.
 An easy computation gives the compatibility condition
 ${_{\K(\E)_c^G}\bbraket{\xi}{\eta}}\cdot\zeta=\xi\cdot\bbraket{\eta}{\zeta}_{\contc(G,B)}$
for all $\xi,\eta,\zeta\in \F_c(\E)$. But this implies that the linear span
${_{\K(\E)_c^G}\bbraket{\F_c(\E)}{\F_c(\E)}}$ of all such
inner products ${_{\K(\E)_c^G}\bbraket{\xi}{\eta}}$
 forms a dense subspace of
 $\Fix_\pn^G(\E)=\K(\F_\pn(\E))$.

Since $\K(\E)_c^G=\EE(\K(\E)_c)$ by \cite[Lemma 2.5]{Buss-Echterhoff:Exotic_GFPA},
it follows that ${_{\K(\E)_c^G}\bbraket{\F_c(\E)}{\F_c(\E)}}$ lies in $\K(\E)_c^G$.
 Hence the result will follow, if we can show that there is an isomorphism
 $\K(\E)_{\pn}^G\cong \K(\F_\pn(\E))=\Fix_\pn^G(\E)$ which extends the
 left action of $\K(\E)_c^G$ on $\F_c(\E)$.
Recall from Lemma  \ref{lem-E-imp} that
$\K(\E)\rtimes_{\Ad\gamma,\pn}G\cong \K(\E\rtimes_{\gamma,\pn}G)$.
As a special case of the decomposition (\ref{eq-factor1}) applied to the Hilbert $(\K(\E), X\rtimes G)$-module
$(\K(\E),\Ad\gamma)$ and the given module $(\E,\gamma)$ we get
\begin{equation}\label{eq-decompose}
\F_\pn(\E)\cong\F_\pn(\K(\E)\otimes_{\K(\E)}\E)\cong \F_{\pn}(\K(\E))\otimes_{\K(\E)\rtimes_{\Ad\gamma,\pn}G}(\E\rtimes_{\gamma,\pn}G).
\end{equation}
Hence $\K(\E)_\pn^G=\K(\F_{\pn}(\K(\E)))\cong \K\big(\F_{\pn}(\K(\E))\otimes_{\K(\E)\rtimes_{\Ad\gamma,\pn}G}(\E\rtimes_{\gamma,\pn}G)\big)\cong \K(\F_\pn(\E))$ via $k\mapsto k\otimes 1$ by the discussion preceding this proposition. Using the explicit formula for the isomorphism (\ref{eq-decompose})
as given in Proposition~\ref{prop:TensorDecompositionFixedPointModules}, it is straightforward to check that this isomorphism extends
the canonical left action of $\K(\E)_c^G$ on $\F_c(\E)$.
\end{proof}

\begin{remark}
Let $(\E,\gamma)$ be a weakly proper Hilbert $(B,X\rtimes G)$-module.
Then $\K(\E)=\E\otimes_B\E^*$ as Hilbert $(\K(\E), X\rtimes G)$-modules and we can
apply (\ref{eq-factor1}) to obtain an isomorphism
$$\F_{\pn}(\K(\E))\cong \F_\pn(\E)\otimes_{B\rtimes_\pn G}(\E^*\rtimes_{\pn} G).$$
\end{remark}

In the following proposition we view $G$ as a proper $G$\nb-space with respect to the
{\em right translation action} of $G$ on $G$. Then, if $\rho$ denotes the right regular representation of $G$,  $(L^2(G), \rho)$ becomes a weakly proper $(\C, G\rtimes G)$-module with respect to the representation $M:C_0(G)\to \Lb(L^2(G))$ via
multiplication operators.

\begin{proposition}\label{prop-L2}
Let $(\E,\gamma)$ be a Hilbert $(B,G)$-module and consider the $(B,G\rtimes G)$-module $L^2(G,\E)\cong L^2(G)\otimes \E$
with respect to the diagonal action $\rho\otimes\gamma$ of $G$. Then, for any $E$\nb-crossed-product norm $\|\cdot \|_\pn$ on $\contc(G,B)$, we have
$$\F_\pn\big(L^2(G,\E)\big)\cong \E\rtimes_{\gamma,\pn} G$$
as Hilbert $B\rtimes_{\beta,\pn}G$-modules. It follows
that $\Fix_\pn^G(L^2(G,\E))\cong \K(\E)\rtimes_{\Ad\gamma,\pn}G$.
\end{proposition}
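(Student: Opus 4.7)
My plan is to construct an explicit isomorphism at the dense level by defining a map $V\colon \contc(G,\E)\to \F_c(L^2(G,\E))$ by
\[
V(f)(s)\defeq \gamma_{s^{-1}}(f(s)).
\]
Any $f\in\contc(G,\E)$ is a compactly supported continuous function and hence lies in $L^2(G,\E)\cong L^2(G)\otimes \E$; multiplying by any $\chi\in\contc(G)$ with $\chi\equiv 1$ on $\supp f$ shows that $Vf\in \F_c(L^2(G,\E))$, and $V$ is manifestly a bijection of $\contc(G,\E)$ onto itself with inverse $g\mapsto(s\mapsto \gamma_s(g(s)))$.

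The heart of the argument is verifying that $V$ intertwines the $\contc(G,B)$-valued inner products and the right $\contc(G,B)$-actions. Using $((\rho\otimes\gamma)_t h)(s)=\Delta(t)^{1/2}\gamma_t(h(st))$ together with the identity $\gamma_t\gamma_{(st)^{-1}}=\gamma_{s^{-1}}$, one computes
\[
((\rho\otimes\gamma)_t Vg)(s)=\Delta(t)^{1/2}\gamma_{s^{-1}}(g(st)).
\]
Plugging this into $\bbraket{Vf}{Vg}(t)=\Delta(t)^{-1/2}\braket{Vf}{(\rho\otimes\gamma)_t Vg}_B$ and invoking $G$-equivariance of $\braket{\cdot}{\cdot}_B$ yields
\[
\bbraket{Vf}{Vg}(t)=\int_G \beta_{s^{-1}}\big(\braket{f(s)}{g(st)}_B\big)\dd s,
\]
which is precisely the $\contc(G,B)$-valued inner product~\eqref{eq:InnerProductOfExG} on $\contc(G,\E)\subseteq \E\rtimes_{\gamma,\un}G$. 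A parallel computation---pulling $\gamma_{s^{-1}}$ outside the integral defining $Vf*\varphi$ via the rule $\gamma_{s^{-1}}(\xi)\cdot b=\gamma_{s^{-1}}(\xi\cdot\beta_s(b))$ and then substituting $u=st$---shows $V(f*\varphi)=Vf*\varphi$ for all $\varphi\in\contc(G,B)$.

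Hence $V$ descends to an isometric, right $B\rtimes_{\beta,\pn}G$-linear map $V_\pn\colon \E\rtimes_{\gamma,\pn}G\to \F_\pn(L^2(G,\E))$ for any $E$-crossed-product norm $\|\cdot\|_\pn$. Since $V$ is a bijection of $\contc(G,\E)$ onto itself and $\contc(G,\E)$ is inductive-limit dense in $\F_c(L^2(G,\E))$ by Lemma~\ref{lem-dense} applied with $\mathcal D=\contc(G,\E)\subseteq L^2(G,\E)$, the image of $V_\pn$ is dense, so $V_\pn$ is the desired isomorphism of Hilbert $B\rtimes_{\beta,\pn}G$-modules. Taking compact operators and invoking Lemma~\ref{lem-E-imp} then gives
\[
\Fix_\pn^G(L^2(G,\E))=\K(\F_\pn(L^2(G,\E)))\cong \K(\E\rtimes_{\gamma,\pn}G)\cong \K(\E)\rtimes_{\Ad\gamma,\pn}G,
\]
completing the proof. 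The only real subtlety is guessing the correct twist in the definition of $V$; everything else is forced by the telescoping identity $\gamma_t\gamma_{(st)^{-1}}=\gamma_{s^{-1}}$, which is exactly what converts the $\gamma_t$-factor appearing in the $\F$-side inner product into the $\beta_{s^{-1}}$-factor on the crossed-product side.
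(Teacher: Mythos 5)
Your proof is correct and is essentially the paper's argument: your map $V$, $V(f)(s)=\gamma_{s^{-1}}(f(s))$, is exactly the inverse of the unitary $U(x)(t)=\gamma_t(x(t))$ used in the paper, and the verifications of the inner products, the right $\contc(G,B)$-action, and the density of $\contc(G,\E)$ in $\F_c(L^2(G,\E))$ are the same computations. The concluding identification $\K(\E\rtimes_{\gamma,\pn}G)\cong\K(\E)\rtimes_{\Ad\gamma,\pn}G$ via Lemma~\ref{lem-E-imp} also matches the paper.
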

\begin{proof}
Recall that $\E\rtimes_{\gamma, \pn} G$ is the completion of the pre-Hilbert $\contc(G,B)$-module $\contc(G,\E)$ with respect to the structure defined by the formulas~\eqref{eq:InnerProductOfExG} and~\eqref{eq:RightActionOnExG}.
On the other hand, since $\contc(G,\E)$ is inductive-limit dense in $\F_c(L^2(G,\E))=\contc(G)\cdot L^2(G,\E)$, we may view
$\F_\pn\big(L^2(G,\E)\big)$ as the completion of the pre-Hilbert module $\contc(G,\E)$ with respect to the $B\rtimes_\pn G$-valued inner product
\begin{multline*}
\bbraket{x}{y}_{B\rtimes_\pn G}(t)=\Delta(t)^{-1/2}\braket{x}{(\rho\otimes \gamma)_t(y)}_B\\
=\int_G\Delta(t)^{-1/2}\braket{x(s)}{\gamma_t(y(st))}\Delta(t)^{1/2}\dd{s}=\int_G\braket{x(s)}{\gamma_t(y(st))}\dd{s},
\end{multline*}
and the right $\contc(G,B)$-action:
$$x*f(t)=\left(\int_G \Delta(s)^{-1/2}(\rho\otimes\gamma)_s(x\cdot f(s^{-1}))\dd{s}\right)(t)=\int_G\gamma_s(x(ts))\beta_s(f(s^{-1}))\dd{s}.$$
Defining $U\colon \contc(G,\E)\to\contc(G,\E)$ by $U(x)|_t\defeq \gamma_t(x(t))$, it is straightforward to check that $U$ is a linear bijection satisfying $\braket{U(x)}{U(y)}_{B\rtimes_\pn G}=\bbraket{x}{y}_{B\rtimes_\pn G}$ and $U(x*f)=U(x)\cdot f$ for all $x,y\in \contc(G,\E)$ and $f\in \contc(G,B)$. It follows that $U$ extends to a unitary isomorphism $\F_\pn\big(L^2(G,\E)\big) \congto \E\rtimes_\pn G$ of Hilbert $B\rtimes_\pn G$-modules as desired.
The last assertion is then a consequence of Proposition~\ref{prop-exotic-compacts}.
\end{proof}

\begin{corollary}\label{cor-fix-AtensorK}
Let $(A,\alpha)$ be a $G$\nb-algebra and let $\|\cdot\|_\pn$ be an $E$\nb-crossed-product norm on $\contc(G,A)$.
Let $\K=\K(L^2(G))$ be equipped with the $G$\nb-action $\Ad\rho$ and let $A\otimes \K$ be equipped with
the diagonal action $\alpha\otimes\Ad\rho$. Then
$$(A\otimes \K)^{G}_{\pn}\cong A\rtimes_{\alpha,\pn}G.$$
\end{corollary}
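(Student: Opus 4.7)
The plan is to combine Proposition~\ref{prop-L2} with Proposition~\ref{prop-exotic-compacts} by taking the Hilbert $(A,G)$-module $\E=A$ (with itself as both module and coefficient algebra, and with action $\alpha$). First I would identify
$$A\otimes \K \;\cong\; \K(L^2(G)\otimes A)\;\cong\;\K(L^2(G,A))$$
as $G$-algebras, where the diagonal action $\alpha\otimes \Ad\rho$ corresponds to $\Ad(\rho\otimes\alpha)$ under this identification (this is the standard fact that conjugation by the $G$-action on the Hilbert module passes to the compact operators). Thus $(A\otimes\K,\alpha\otimes\Ad\rho)$ is isomorphic, as a $G$\nobreakdash-algebra, to $(\K(L^2(G,A)),\Ad(\rho\otimes\alpha))$.

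Next, viewing $L^2(G,A)$ as a weakly proper $(A,G\rtimes G)$-module via $M\otimes 1_A\colon C_0(G)\to\Lb(L^2(G,A))$, Proposition~\ref{prop-exotic-compacts} applied to this module yields an isomorphism
$$\K(L^2(G,A))^G_\pn \;\cong\; \Fix_\pn^G\big(L^2(G,A)\big).$$
Now Proposition~\ref{prop-L2}, applied to the Hilbert $(A,G)$-module $(A,\alpha)$, gives
$$\Fix_\pn^G\big(L^2(G,A)\big)\;\cong\;\K(A)\rtimes_{\Ad\alpha,\pn}G\;\cong\;A\rtimes_{\alpha,\pn}G,$$
where the last isomorphism uses the canonical $G$\nobreakdash-equivariant identification $\K(A)\cong A$ (with $\Ad\alpha$ corresponding to $\alpha$).

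Chaining these isomorphisms together produces the claimed identification $(A\otimes\K)^G_\pn\cong A\rtimes_{\alpha,\pn}G$. There is no real obstacle here: the content is packaged entirely in Propositions~\ref{prop-L2} and~\ref{prop-exotic-compacts}, and the only point that needs care is verifying that the $G$-equivariant isomorphism $A\otimes\K\cong\K(L^2(G,A))$ intertwines the diagonal action $\alpha\otimes\Ad\rho$ with $\Ad(\rho\otimes\alpha)$, which is immediate from the definition of the tensor product action on compact operators.
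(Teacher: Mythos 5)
Your proposal is correct and follows essentially the same route as the paper, whose proof is the one-line chain $(A\otimes\K)^G_{\pn}\cong \K(\F_\pn(L^2(G,A)))\cong\K(A\rtimes_\pn G)\cong A\rtimes_\pn G$, i.e.\ Proposition~\ref{prop-exotic-compacts} to identify the fixed-point algebra with $\Fix_\pn^G(L^2(G,A))$ followed by Proposition~\ref{prop-L2}. Your only cosmetic deviation is invoking the last assertion of Proposition~\ref{prop-L2} (via $\K(A)\cong A$) rather than taking compact operators on the module isomorphism $\F_\pn(L^2(G,A))\cong A\rtimes_{\alpha,\pn}G$ directly, which amounts to the same thing.
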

\begin{proof}
We have $(A\otimes\K)^G_{\pn}\cong \K(\F_\pn(L^2(G,A)))\cong\K(A\rtimes_\pn G)\cong A\rtimes_\pn G$.
\end{proof}

\begin{remark} {(a)} We should mention that the reduced version of Proposition~\ref{prop-L2}, and hence implicitly of Corollary~\ref{cor-fix-AtensorK}, is already given in \cite[\S7]{Meyer:Generalized_Fixed}. The reduced version of  Corollary~\ref{cor-fix-AtensorK} is  also given in
\cite[Corollary 4.4]{Kaliszewski-Muhly-Quigg-Williams:Fell_bundles_and_imprimitivity_theoremsII}.
As above, these results are always with respect to the canonical structure of $A\otimes \K(L^2(G))$
as a weakly proper $G\rtimes G$-algebra via the representation $1\otimes M\colon \contz(G)\to \M(A\otimes \K(L^2(G)))$ by multiplication operators.
However, if $A$ is a weakly proper $X\rtimes G$-algebra with respect to $\phi\colon \contz(X)\to \M(A)$, then $A\otimes\K(L^2(G))$ may also be viewed a weakly proper $X\rtimes G$-algebra with respect to $\phi\otimes 1\colon \contz(X)\to \M(A\otimes\K(L^2(G)))$ or  as a weakly proper $(X\times G)\rtimes G$-algebra with respect to $\phi\otimes M\colon \contz(X\rtimes G)\to \M(A\otimes \K(L^2(G)))$.
It follows from Proposition~\ref{prop-independence} together with Remark~\ref{rem-unique}(c) that all these structures give the same modules and fixed-point algebras. In particular, for all these structures we get $\F_\pn(L^2(G,A))\cong A\rtimes_{\alpha,\pn} G$  as Hilbert $A\rtimes_{\alpha,\pn}G$-module and hence $\Fix_\pn^G(L^2(G,A))=\K(\F_\pn(L^2(G,A)))
\cong A\rtimes_{\alpha,\pn} G$. This is a bit surprising since the counterexamples in \cite{Meyer:Generalized_Fixed} show that the $G$\nb-algebras of the form $A\otimes\K(L^2(G))$ allow the construction of generalized fixed-point algebras {\em for square-integrable actions}
which are not isomorphic nor Morita equivalent to $A\rtimes_\pn G$ even if $G=\Z$.
\\
{(b)} In \cite[Theorem 2.14]{Echterhoff-Emerson:Structure_proper} it is shown that
the full crossed product $A\rtimes_{\alpha,\un} G$ is isomorphic to the {\em reduced fixed-point algebra}
 $(A\otimes \K(L^2(G))^G_\red$ if $A\otimes \K(L^2(G))$ carries the structure of an $X\rtimes G$-algebra
 for some proper $G$\nb-space $X$ with structure map
 $\phi:C_0(X)\to Z\M(A)$, \ie, for centrally proper actions as studied by Kasparov in \cite{Kasparov:Novikov} and others (\eg, see \cite{Raeburn:Induced_Symmetric}).
 In particular, this shows that for centrally proper $X\rtimes G$-algebras $A$
 with structure map $\phi:C_0(X)\to Z\M(A)$ we get
 $$A\rtimes_{\alpha,\un}G\cong \big(A\otimes\K(L^2(G))\big)^G_\red\cong A\rtimes_{\alpha,\red}G$$
 so that in this case the full and reduced crossed products coincide. One can check that the corresponding isomorphism $A\rtimes_{\alpha,\un} G\congto A\rtimes_{\alpha,\red}G$ is given by the regular representation $\Lambda_A$. This  well-known fact  also follows from \cite[Proposition 4.11]{Tu:Baum-Connes}, since the transformation groupoid $X\rtimes G$ for a proper $G$\nb-space $X$ is amenable.
 \end{remark}

The following theorem shows that for centrally proper $X\rtimes G$-actions the fixed-point algebras $A_\pn^G$
can be described as
\begin{equation}\label{eq-centrally-proper}
A^G:=C_0(G\backslash X)\cdot\{ m\in \M(A)^G: \contz(X)\cdot m\sbe A\}\subseteq \M(A).
\end{equation}
This  coincides with the algebra  introduced by Kasparov in \cite[p.~164]{Kasparov:Novikov}.
 \begin{theorem}\label{thm-centrally-proper}
 Suppose that $(\E,\gamma)$ is a weakly proper $(B, X\rtimes G)$-module such that the structure map
 $\phi:C_0(X)\to \Lb(\E)$ takes values in the center of $\Lb(\E)$. Then
 $\F_\un(\E)=\Fix_\pn(\E)=\F_\red(\E)$ and, hence, $\Fix_\un^G(\E)=\Fix_\pn^G(\E)=\Fix_\red^G(\E)$
 for every $E$\nb-crossed-product norm $\|\cdot\|_\pn$.
 If $\E$ is universally saturated and full as a Hilbert $B$-module, then we also have $B\rtimes_{\beta,\un}G=B\rtimes_{\beta,\pn}G=B\rtimes_{\beta,\red}G$.

In particular, if $A$ is a proper $X\rtimes G$-algebra with structure map $\phi\colon C_0(X)\to Z\M(A)$,
then all generalized fixed-point algebras $A_\pn^G$
coincide with the algebra $A^G$  as defined in  (\ref{eq-centrally-proper}) above.

\end{theorem}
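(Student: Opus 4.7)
The whole theorem reduces to the fact, recalled in the remark preceding the statement and ultimately a consequence of amenability of the proper transformation groupoid $X\rtimes G$ via \cite[Proposition~4.11]{Tu:Baum-Connes}, that for any centrally proper $X\rtimes G$-algebra $(C,\sigma)$ the full and reduced crossed products $C\rtimes_{\sigma,\un}G$ and $C\rtimes_{\sigma,\red}G$ coincide; by the squeeze $\|\cdot\|_\red\leq\|\cdot\|_\pn\leq\|\cdot\|_\un$, any $E$-crossed product $C\rtimes_{\sigma,\pn}G$ then agrees with both.

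The hypothesis that $\phi$ maps into the centre of $\Lb(\E)=\M(\K(\E))$ says precisely that $(\K(\E),\Ad\gamma)$ is itself a centrally proper $X\rtimes G$-algebra, so all crossed products $\K(\E)\rtimes_{\Ad\gamma,\pn}G$ coincide. By Proposition~\ref{prop-exotic-compacts}, $\Fix_\pn^G(\E)\cong\K(\E)_\pn^G$, and $\K(\E)_\pn^G$ is by construction the completion of $\K(\E)_c^G$ for the norm coming from the Morita equivalence $\F_\pn(\K(\E))$ with an ideal of $\K(\E)\rtimes_{\Ad\gamma,\pn}G$. Since the ambient crossed product does not depend on $\pn$, neither does this norm, so $\Fix_\pn^G(\E)$ is $\pn$-independent. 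Combining this with the Morita equivalence between $\Fix_\pn^G(\E)$ and $\I_\pn\subseteq B\rtimes_{\beta,\pn}G$, the $\F_\pn$-norm $\|\xi\|_{\F_\pn}^2=\|{_{\Fix_\pn^G(\E)}\bbraket{\xi}{\xi}}\|$ on $\F_c(\E)$ is determined by the $\pn$-independent left inner product, giving $\F_\un(\E)=\F_\pn(\E)=\F_\red(\E)$ and hence the equality of the corresponding $\Fix$-algebras.

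Now assume further that $\E$ is $B$-full and universally saturated. Lemma~\ref{lem-univ-sat} gives $\I_\pn=B\rtimes_{\beta,\pn}G$ for every $\pn$, so the common Hilbert module $\F_\pn(\E)$ is a full Morita equivalence bimodule between the common $\Fix^G(\E)$ and $B\rtimes_{\beta,\pn}G$; since both the bimodule and the left algebra agree, the right algebras $B\rtimes_{\beta,\pn}G$ must canonically coincide. Specialising to $\E=A$ then yields $A_\un^G=A_\pn^G=A_\red^G$ as abstract completions of $A_c^G$. The step I expect to require the most care is the final identification of this common algebra with Kasparov's multiplier-algebra description $A^G$ of~(\ref{eq-centrally-proper}); this matches the well-known description of Rieffel's reduced generalized fixed-point algebra in the centrally proper case, and amounts to checking that the natural multiplier representation of $A_c^G$ on $A$ is $\|\cdot\|_\red$-isometric (using the $C_0(X)$-algebra structure together with the amenability of $X\rtimes G$ already invoked) and that every element of $A^G$ is approximated in norm by elements of $A_c^G$ obtained by cutting down with $\contc(G\bs X)$-approximate units.
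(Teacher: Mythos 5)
Your treatment of the first two assertions is correct and follows essentially the same route as the paper: reduce to the centrally proper algebra $(\K(\E),\Ad\gamma)$ via Proposition~\ref{prop-exotic-compacts}, invoke the coincidence of all crossed-product norms on $\contc(G,\K(\E))$ (amenability of $X\rtimes G$, squeezed between the reduced and universal norms), and transport the resulting $\pn$-independence of $\K(\E)_\pn^G$ back to $\F_\pn(\E)$ via the left inner product, and, in the saturated full case, to $B\rtimes_{\beta,\pn}G$ via the Rieffel correspondence. The paper organizes this slightly differently (it observes that the quotient maps $\F_\un(\E)\onto\F_\pn(\E)\onto\F_\red(\E)$ are isomorphisms if and only if the induced maps on compact operators are isomorphisms, and then shows the latter), but the content is the same.

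The final assertion, however, is not actually proved. You reduce it to two checks: (i) the multiplier representation of $A_c^G$ on $A$ is $\red$\nb-isometric, and (ii) every element of $A^G$ is norm-approximated by elements of $A_c^G$. Even granting both --- and (i) is indeed built into the construction of $A_\red^G$ in \cite{Buss-Echterhoff:Exotic_GFPA}, while (ii) is the easy observation the paper also makes --- these together only yield the inclusion $A^G\subseteq A_\pn^G$. The stated equality also requires the reverse inclusion $A_\pn^G\subseteq A^G$, \ie, that the norm closure of $A_c^G$ inside $\M(A)$ is still contained in Kasparov's algebra $A^G$ of \eqref{eq-centrally-proper}; your sketch does not address this direction at all, and it is where the actual work lies. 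The paper's argument is short: the set $\tilde A^G=\{m\in\M(A)^G:\contz(X)\cdot m\sbe A\}$ is norm-closed in $\M(A)$ and contains $A_c^G$, hence contains $A_\red^G$; since $A_\red^G$ is a nondegenerate $C_0(G\bs X)$-module by \cite[Proposition~6.2]{Buss-Echterhoff:Exotic_GFPA}, one gets $A_\red^G=C_0(G\bs X)\cdot A_\red^G\sbe C_0(G\bs X)\cdot\tilde A^G=A^G$. You need to supply this (or an equivalent) argument to close the gap.
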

 \begin{proof} Since $\F_\red(\E)$ is a quotient of $\F_\pn(\E)$, which in turn is a quotient of $\F_\un(\E)$, and since they agree if and only if their algebras of compact operators agree, which follows from the Rieffel correspondence between submodules of $\F_\un(\E)$ and
 ideals in $\K(\F_\un(\E))=\Fix_\un^G(\E)$, it suffices for the first assertion to show that
 $\Fix_\red^G(\E)=\Fix_\un^G(\E)$. But, by Proposition~\ref{prop-exotic-compacts}, we have
 $\Fix_\red^G(\E)=\K(\E)_\red^G$ and $\Fix_\un^G(\E)=\K(\E)_\un^G$, so we may assume without loss of generality
 that we are in the situation of a proper $X\rtimes G$-algebra $A$ with structure map $\phi:C_0(X)\to Z\M(A)$.
 But then we know from the above remark that $A\rtimes_{\alpha,\un} G=A\rtimes_{\alpha,\red}G$, which
 implies that $\F_\un(A)=\F_\red(A)$ and hence $A^G_\un=\K(\F_\un(A))=\K(\F_\red(A))=A_\red^G$.
 If the action of $G$ on $\E$ is universally saturated and $\E$ is $B$-full, the equation $\F_\un(\E)=\F_\red(\E)$ also implies that
 $B\rtimes_{\beta,\un} G=B\rtimes_{\beta,\red}G$ by the Rieffel correspondence between submodules of
 $\F_\un^G(\E)$ and ideals in $B\rtimes_{\beta,\un} G$.

 For the second assertion, we first observe that $A^G$ lies in the closure $A_\red^G$ of $A_c^G$
 inside $\M(A)$. So we have to show that $A_r^G\subseteq A^G$. By continuity of multiplication in $\M(A)$, it follows that $\tilde{A}^G=\{ m\in \M(A)^G: \contz(X)\cdot m\sbe A\}$ is closed in $\M(A)$ and it obviously contains $A_c^G$. Then $A_r^G$ is a closed subalgebra of $\tilde{A}^G$.  Since $A_r^G$ is a nondegenerate $C_0(G\backslash X)$-module by \cite{Buss-Echterhoff:Exotic_GFPA}*{Proposition~6.2}, we get
\begin{equation*}
A_\red^G=C_0(G\backslash X)\cdot A_\red^G\subseteq C_0(G\backslash X)\cdot\tilde{A}^G=A^G.\qedhere
\end{equation*}
\end{proof}

\begin{remark}\label{left-action}
For later use, it is important to describe the isomorphism
 $$A\rtimes_{\alpha,\pn}G\cong (A\otimes \K(L^2(G)))^G_\pn$$
 of Proposition~\ref{prop-L2} for the case $\E=A$ as Hilbert $(A, G)$-module
on the level of the dense subalgebra $\contc(G,A)$.
For this, let $\tilde\alpha:A\to C_b(G,A)\subseteq \M(A\otimes C_0(G))$ be given by
$\tilde\alpha(a)(s)=\alpha_{s^{-1}}(a)$ and let $\pi:A\to \M(A\otimes \K(L^2(G)))$ be the composition
$\pi:=(\id_A\otimes M)\circ \tilde\alpha$.
Combining \cite[Lemmas 7.4 and 7.6]{Williams:Crossed},
we have a canonical isomorphism
$C_0(G,A)\rtimes_{\tau\otimes\alpha,\un}G\cong A\otimes \K(L^2(G))$  given by the integrated form
of the covariant homomorphism $\big((1\otimes M)\otimes \pi, 1\otimes\lambda_G\big)$.
The isomorphism
$$\Phi:=((1\otimes M)\otimes \pi)\rtimes (1\otimes \lambda)\colon
C_0(G,A)\rtimes_{\tau\otimes\alpha}G\congto A\otimes \K(L^2(G))$$
is $\sigma-\alpha\otimes \Ad\rho$-equivariant
for the action $\sigma:G\to \Aut(C_0(G,A)\rtimes_{\tau\otimes\alpha}G)$ given on
$F\in \contc(G, C_0(G, A))$ by $\sigma_t(F)(s,r)=F(s, rt)$.
 Thus, if we view $\contc(G,A)$ as a dense submodule of $L^2(G,A)$, the left module action of $\contc(G, \contc(G,A))$ and
 the left $\contc(G,\contc(G,A))$-valued
 inner product are given by
\begin{equation}\label{eq-L2GA}
 \begin{split}
 F\cdot x(t)&=\int_G \alpha_{t^{-1}}(F(s,t))x(s^{-1}t)\dd{s}\quad\text{and}\\
{_{\contc(G, \contc(G,A))}\braket{x}{y}}(s,r)&=\Delta_G(s^{-1}r)\alpha_r\big(x(r)y(s^{-1}r)^*\big)
 \end{split}
\end{equation}
for $F\in \contc(G, \contc(G,A))$ and $x,y \in \contc(G,A)$.
The first formula follows from evaluating $\Phi(F)x$ at $t\in G$.  The second formula comes from
 the requirement ${_{\contc(G, \contc(G,A))}\braket{x}{y}}\cdot z=x\cdot\braket{y}{z}_A$ for $x,y,z\in
 \contc(G,A)$ together with the formulas  $\braket{x}{y}_A=\int_Gx(s)^*y(s)\dd{s}$
 and $(x\cdot a)(t)=x(t)a$ for $x,y\in  L^2(G,A)$ and $a\in A$.

Now, if we consider $A$ as the subalgebra of constant functions in $\M(C_0(G,A))$, we obtain
a natural embedding of $\contc(G,A)$ into $\big(C_0(G,A)\rtimes_{\tau\otimes \alpha}G)^G_c$.
Restricting $\Phi$ to this subspace gives the integrated form of the covariant pair
$(\pi, 1\otimes \lambda)$ of $(A,G,\alpha)$, which is just the regular representation $\Lambda_A$
of $(A,G,\alpha)$ on $L^2(G,A)$ as discussed in \S 2.
Hence, the left action of $\contc(G,A)$ and the left $\contc(G,A)$-valued inner product on $\contc(G,A)\subseteq \F_c(L^2(G,A))$ are given by
\begin{equation}\label{eq-Fix-L2GA}
\begin{split}
(f\cdot x)(t)&=\int_G \alpha_{t^{-1}}(f(s))x(s^{-1}t)\dd{s}\quad\text{and}\\
{_{\contc(G,A)}\bbraket{x}{y}}(t)&=\left(\int_G\sigma_r({_{\contc(G, \contc(G,A))}\braket{x}{y}})\dd{r}\right)(t)\\
&=\int_G \Delta_G(t^{-1}r)\alpha_r\big(x(r)y(r^{-1}t)^*\big)\dd{r}.
\end{split}
\end{equation}
One can check that the transformation
$U\colon \F_c(L^2(G,A))\to \contc(G,A); (Ux)(t)=\alpha_t(x(t))$,
as given in the proof of Proposition~\ref{prop-L2}, transforms these actions and inner products into the usual
convolution formulas for $f*x$ and $x *y^*$ on $\contc(G,A)\subseteq A\rtimes_{\pn}G$.
\end{remark}

We close this section with a result which shows that starting with a
$(B, X\rtimes G)$-module $(\E, \gamma)$ we can recover $(\E,\gamma)$ as a $(B,G)$-module
 from any of the modules $\F_\pn(\E)$ if $\|\cdot\|_\pn$ is the
$E$\nb-crossed-product norm on
$\contc(G,B)$ for some $G$\nb-invariant ideal $E\subseteq B(G)$. The result is an analogue of
Meyer's Theorem~5.3 in \cite{Meyer:Generalized_Fixed}. But we should note that we did not succeed to recover
also the structure map $C_0(X)\to\Lb(\E)$ from the modules $\F_\pn(\E)$, which is not surprising in the light of
Proposition~\ref{prop-independence} and Remark \ref{rem-unique}.

\begin{proposition}\label{prop:RecoveringModuleViaL2(G,B)}
Let $(\E,\gamma)$ be as above. For $g\in C_c(G,B)$, write $\tilde{g}(t):=\beta_t(g(t^{-1}))$. Then there is an isomorphism of Hilbert $(B,G)$-modules
$$\Phi\colon \F_\pn(\E)\otimes_{B\rtimes_\pn G}L^2(G,B)\congto \E$$
given on elementary vectors $\xi\otimes g\in \F_c(\E)\odot\contc(G,B)$ by
$$\Phi(\xi\otimes g){\defeq}\xi*\tilde g{=\int_G\Delta(t)^{-1/2}\gamma_{t}(\xi\cdot \tilde g(t^{-1}))\dd{t}=}\int_G\Delta(t)^{-1/2}\gamma_{t^{-1}}(\xi)g(t)\dd{t}.$$
\end{proposition}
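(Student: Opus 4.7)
The plan is to verify that the explicit formula for $\Phi$ on $\F_c(\E)\odot \contc(G,B)$ extends by continuity to the claimed isomorphism of Hilbert $(B,G)$-modules. I proceed in four steps: (a) well-definedness and balancing over $B\rtimes_\pn G$, (b) preservation of the $B$-valued inner product, (c) density of the image in $\E$, and (d) $G$-equivariance. Well-definedness on an elementary tensor is immediate because the integrand defining $\Phi(\xi\otimes g)$ is a norm-continuous $\E$-valued function supported in the compact set $\supp g$, so the Bochner integral converges in $\E$.

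For the balancing identity $\Phi(\xi\cdot f\otimes g)=\Phi(\xi\otimes f\cdot g)$ with $f\in \contc(G,B)$, I would combine formula \eqref{eq:ModuleConvolution} for the right action on $\F_c(\E)$ with the regular-representation formula for the left $B\rtimes_\pn G$-action on $L^2(G,B)$, namely $(f\cdot g)(s)=\int_G\beta_{s^{-1}}(f(r))\,g(r^{-1}s)\,dr$, which is legitimate because $\|\cdot\|_\pn\geq\|\cdot\|_\red$ ensures that the regular representation factors through $B\rtimes_\pn G$. Plugging in on both sides, applying Fubini and changing variables yields the identity.

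The heart of the argument is step (b). Unfolding the definitions gives
\[
\braket{\Phi(\xi\otimes g)}{\Phi(\eta\otimes h)}_B=\int_G\!\!\int_G \Delta(s)^{-1/2}\Delta(t)^{-1/2}\,g(s)^*\braket{\gamma_{s^{-1}}(\xi)}{\gamma_{t^{-1}}(\eta)}_B\,h(t)\,ds\,dt.
\]
Using $\gamma$-covariance of the inner product, namely $\braket{\gamma_{s^{-1}}(\xi)}{\gamma_{t^{-1}}(\eta)}_B=\beta_{s^{-1}}(\braket{\xi}{\gamma_{st^{-1}}(\eta)}_B)$, and substituting $u=st^{-1}$ (with Jacobian $dt=\Delta(u)^{-1}\Delta(s)\,du$), the integrand rewrites, via $\bbraket{\xi}{\eta}(u)=\Delta(u)^{-1/2}\braket{\xi}{\gamma_u(\eta)}_B$ from \eqref{eq:InnerProduct}, as $g(s)^*\beta_{s^{-1}}(\bbraket{\xi}{\eta}(u))\,h(u^{-1}s)$. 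This is precisely the integrand of $\braket{g}{\bbraket{\xi}{\eta}\cdot h}_{L^2(G,B)}$ by the regular-representation formula above, and the latter is the $B$-valued inner product of $\xi\otimes g$ and $\eta\otimes h$ on the balanced tensor product. Hence $\Phi$ is isometric on the dense algebraic tensor product and extends to an isometric map of Hilbert $B$-modules.

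For density, let $(f_n)\sbe \contc(G)$ be a normalised positive approximate identity at $e$; then for $\xi\in\F_c(\E)$ and $b\in B$, strong continuity of $\gamma$ yields $\Phi(\xi\otimes(f_n\otimes b))\to \xi\cdot b$ in $\E$-norm, and since $\F_c(\E)\cdot B=\contc(X)\cdot\E\cdot B$ is dense in $\E$ by nondegeneracy of the $C_0(X)$- and $B$-actions, the image of $\Phi$ is dense. Thus $\Phi$ extends to an isomorphism of Hilbert $B$-modules. For $G$-equivariance, note that the $G$-action on the tensor product comes from $L^2(G,B)$ because $\rho\otimes\beta$ commutes with the regular representation of $B\rtimes_\pn G$ (a short calculation using $\rho_t\lambda_r=\lambda_r\rho_t$ and the formula $\iota_B^\red(b)\xi(s)=\beta_{s^{-1}}(b)\xi(s)$); the identity $\gamma_r\circ\Phi=\Phi\circ(\id\otimes(\rho_r\otimes\beta_r))$ then follows on elementary tensors by the substitution $t\mapsto tr^{-1}$ inside the integral together with $dt=\Delta(r)\,du$ and the unitary formula $(\rho_r g)(u)=\Delta(r)^{1/2}g(ur)$. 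The main obstacle is step (b): the $\Delta^{-1/2}$ factors from \eqref{eq:InnerProduct} and \eqref{eq:ModuleConvolution} must conspire with the modular-function Jacobians in the change of variables to reproduce precisely the regular-representation formula for the left $B\rtimes_\pn G$-action on $L^2(G,B)$, and tracking the Fubini and substitution steps carefully is the delicate part of the proof.
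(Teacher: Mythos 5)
Your proposal is correct and follows essentially the same route as the paper: a direct computation showing that $\Phi$ intertwines the $B$-valued inner products (yours runs the paper's chain of equalities in reverse, via the regular-representation formula for the left $B\rtimes_\pn G$-action on $L^2(G,B)$), followed by an approximate-identity argument in $\contc(G,B)$ for density of the image and the covariance relation $\gamma_r(\xi*\tilde f)=\xi*\widetilde{\gamma'_r(f)}$ for equivariance. The explicit balancing check in your step (a) is harmless but redundant, since isometry with respect to the balanced inner product already forces $\Phi$ to descend to the interior tensor product.
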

\begin{proof}
Given $\xi,\eta\in \F_c(\E)$ and $f,g\in \contc(G,B)$, we have
\begin{align*}
\braket{\xi\otimes f}{\eta\otimes g}_B & =\braket{f}{\Lambda_B(\bbraket{\xi}{\eta}_{B\rtimes G})g}_B=\int_G f(t)^{-1}\big(\Lambda_B(\bbraket{\xi}{\eta}_{B\rtimes G})g\big)(t)\dd{t}\\
    &=\int_G\int_G f(t)^*\beta_{t^{-1}}(\bbraket{\xi}{\eta}_{B\rtimes G}(s))g(s^{-1}t)\dd{s}\dd{t}\\
    &=\int_G\int_G \Delta(s)^{-1/2}f(t)^*\beta_{t^{-1}}(\braket{\xi}{\gamma_s(\eta)}_B)g(s^{-1}t)\dd{s}\dd{t}\\
    &=\int_G\int_G \Delta(ts)^{-1/2}f(t)^*\braket{\gamma_{t^{-1}}(\xi)}{\gamma_{s^{-1}}(\eta)}_Bg(s)\dd{s}\dd{t}\\
    &=\int_G\int_G \braket{\Delta(t)^{-1/2}\gamma_{t^{-1}}(\xi)f(t)}{\Delta(s)^{-1/2}\gamma_{s^{-1}}(\eta)g(s)}_B\dd{s}\dd{t}\\
    &=\braket{\xi*\tilde f}{\eta*\tilde g}_B.
\end{align*}
It follows that $\xi\otimes f\mapsto \xi*\tilde f$ extends to an isometry $\Phi\colon \F_\pn(\E)\otimes_{B\rtimes_\pn G}L^2(G,B)\congto \E$. To see that it is surjective, let $\xi\in \F_c(\E)$ and $\epsilon>0$. By the continuity of $t\mapsto \Delta(t)^{-1/2}\gamma_{t^{-1}}(\xi)$, there is a compact unit neighborhood $V\sbe G$
with
$$\|\Delta(t)^{-1/2}\gamma_{t^{-1}}(\xi)-\xi\|<\epsilon/2\quad\mbox{ for all } t\in V.$$
 Take $b\in B$ with $\|b\|\leq 1$ and $\|\xi-\xi\cdot b\|<\epsilon/2$ and a non-negative function $g\in \contc(G)$ with $\supp(g)\sbe V$ and $\int_Gg(t)\dd{t}=1$. Then, for $f\defeq g\otimes b\in \contc(G,B)$, we get
\begin{multline*}
\|\xi*f-\xi\|\leq \|\xi*f-\xi\cdot b\|+\|\xi\cdot b-\xi\|\leq \\
\int_V\left(\|\Delta(t)^{-1/2}\gamma_{t^{-1}}(\xi)-\xi\|\|b\|g(t)\right)\dd{t}+\frac{\epsilon}{2}\leq \epsilon.
\end{multline*}
Writing $\gamma'$ for the $G$-action $\beta\otimes\rho$  on $L^2(G,B)\cong B\otimes L^2(G)$, it
is easy to see that $\gamma_s(\xi*\tilde f)=\xi*\widetilde{\gamma'_s(f)}$ for all $\xi\in\F_c(\E)$, $f\in\contc(G,B)$, $s\in G$.
This implies that $\Phi$ is $G$\nb-equivariant. This finishes the proof.
\end{proof}

\section{Weakly proper equivalence bimodules}

In this section we want to study partial equivalence bimodules between two weakly proper $X\rtimes G$-algebras.
We shall see that such bimodules allow a canonical construction of a \emph{fixed bimodule}, which is a partial
equivalence bimodule between the generalized fixed-point algebras.
In the next lemma, we let $(L(\E), \theta)=\left(\left(\begin{smallmatrix} A&\E\\ \E^*& B\end{smallmatrix}\right), \left(\begin{smallmatrix} \alpha&\gamma\\ \gamma^*& \beta\end{smallmatrix}\right)\right)$ denote the linking algebra of a $G$\nb-equivariant partial $(A,\alpha)-(B,\beta)$ equivalence bimodule $(\E,\gamma)$ and $\|\cdot\|_\pn$ denotes the $E$\nb-crossed-product norm for a fixed $G$\nb-equivariant ideal $E\subseteq B(G)$. By Lemma~\ref{lem-E-imp}, there is a canonical isomorphism $L(\E)\rtimes_{\theta,\pn}G\cong L(\E\rtimes_{\gamma,\pn}G)$.

\begin{lemma}\label{lem-XG-bimodule}
Suppose that $(A,\alpha)$ and $(B,\beta)$ are weakly proper $X\rtimes G$-algebras
with structure maps $\phi_A:C_0(X)\to \M(A)$ and $\phi_B:C_0(X)\to \M(B)$ and let
$(\E, \gamma)$ be a partial $(A,\alpha)-(B,\beta)$ equivalence bimodule.
Then the linking algebra $(L(\E),\theta)$
becomes a weak $X\rtimes G$-algebra with structure map
$$\phi_L:C_0(X)\to \M(L(\E)); \quad \phi_L(f)=\left(\begin{smallmatrix} \phi_A(f)& 0\\ 0& \phi_B(f)\end{smallmatrix}\right).$$
Moreover, if $p=\left(\begin{smallmatrix} 1& 0\\ 0& 0\end{smallmatrix}\right)$ and $q=\left(\begin{smallmatrix} 0& 0\\ 0& 1\end{smallmatrix}\right)\in \M(L(\E))$, the corner $\E^G_\pn:=p\big(L(\E)^G_\pn\big) q$ becomes a
partial $A_\pn^G- B_\pn^G$ equivalence bimodule.
\end{lemma}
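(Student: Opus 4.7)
Part~1 is a short formal check: $\phi_L$ is $G$-equivariant because $\theta$ restricts to $\alpha\oplus\beta$ on the diagonal of $L(\E)$, and nondegeneracy of $\phi_L$ follows from nondegeneracy of $\phi_A$ and $\phi_B$ together with the density facts $\cspn\phi_A(\contc(X))\cdot\E=\E=\cspn\E\cdot\phi_B(\contc(X))$, which come from the partial equivalence bimodule structure on $\E$.

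For Part~2, my plan is to exploit the block structure of $L(\E)$ at both the pre- and post-completion levels. First, by Proposition~\ref{prop-exotic-compacts} we have $L(\E)^G_\pn\cong\K(\F_\pn(L(\E)))$. The projections $p,q\in \M(L(\E))^G$ commute with $\phi_L(\contz(X))$, so right multiplication by them on $\F_\pn(L(\E))$ yields complementary adjointable projections and hence complementary multiplier projections of $L(\E)^G_\pn$ (which I still denote $p,q$). By the standard corner theory for $C^*$-algebras, $\E^G_\pn=p\,L(\E)^G_\pn\,q$ is \emph{automatically} a partial equivalence bimodule between the diagonal corners $p L(\E)^G_\pn p$ and $q L(\E)^G_\pn q$, with inner products $\bbraket{x}{y}=xy^*$ and $\braket{x}{y}=x^*y$. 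The content of the lemma is then to refine this: to recognise $A^G_\pn$ and $B^G_\pn$ acting on $\E^G_\pn$ through these corners, with the inner products actually landing in $A^G_\pn$ and $B^G_\pn$.

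At the dense level the block structure is immediate: using $L(\E)^G_c=\EE(L(\E)_c)$ from \cite[Lemma~2.5]{Buss-Echterhoff:Exotic_GFPA} together with the block form of $L(\E)_c=\phi_L(\contc(X))\cdot L(\E)\cdot\phi_L(\contc(X))$ one identifies
\[
L(\E)^G_c \;=\; \begin{pmatrix} A^G_c & \E^G_c \\ (\E^G_c)^* & B^G_c \end{pmatrix}, \qquad \E^G_c := p\,L(\E)^G_c\,q.
\]
To pass to the $\pn$-completion I would use the direct-sum decomposition $L(\E)p\cong A\oplus\E^*$ of Hilbert $(A,G)$-modules (here $\E^*$ is viewed as a weakly proper $(A,X\rtimes G)$-module with structure map $\phi_B$), which yields $\F_\pn(L(\E))\cdot p\cong \F_\pn(A)\oplus\F_\pn(\E^*)$ as Hilbert modules over $p(L(\E)\rtimes_{\theta,\pn}G)p\cong A\rtimes_{\alpha,\pn}G$ (the latter identification is Lemma~\ref{lem-E-imp}); an analogous decomposition handles $\F_\pn(L(\E))\cdot q$. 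A block-by-block computation then shows that the natural inclusion $A^G_c\hookrightarrow L(\E)^G_c$ is continuous for the $\pn$-norm and extends to a \Star{}homomorphism $A^G_\pn\to p L(\E)^G_\pn p$ (and symmetrically for $B^G_\pn$). Finally, for $x,y\in \E^G_c=p L(\E)^G_c q$ one has $xy^*\in A^G_c$ and $x^*y\in B^G_c$, so by continuity of multiplication the inner products on $\E^G_\pn$ take values in the images of $A^G_\pn$ and $B^G_\pn$ inside $L(\E)^G_\pn$, realising $\E^G_\pn$ as a partial $A^G_\pn$-$B^G_\pn$ equivalence bimodule.

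The main obstacle I anticipate is the extension step for the homomorphism $A^G_\pn\to L(\E)^G_\pn$: verifying that the set-theoretic inclusion $A^G_c\subset L(\E)^G_c$ is continuous for the respective $\pn$-norms requires the explicit decomposition of $\F_\pn(L(\E))$ indicated above and a careful comparison of how $a\in A^G_c$ acts on each summand. Once that is in place, everything else reduces to standard linking-algebra bookkeeping.
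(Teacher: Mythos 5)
Your strategy coincides with the paper's: everything reduces to showing that $p$ and $q$ define complementary projections in $\M(L(\E)^G_\pn)$, after which the corner $p\,L(\E)^G_\pn\,q$ is automatically a partial equivalence bimodule between the diagonal corners. Two corrections to the key step, though. First, it must be \emph{left}, not right, multiplication by $p$ on $\F_c(L(\E))\subseteq L(\E)$ that one extends to $\F_\pn(L(\E))$: left multiplication commutes with the right $\contc(G,L(\E))$-action because $p$ is $G$\nb-invariant, whereas right multiplication by $p$ is not even a module map (in $\xi*\varphi$ one would need $p$ to commute with $\varphi(t^{-1})\in L(\E)$, which fails on the off-diagonal corners), so it cannot produce a multiplier of $\K(\F_\pn(L(\E)))=L(\E)^G_\pn$. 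Second, the assertion that this ``yields complementary adjointable projections'' is precisely the nontrivial point and needs a boundedness argument: writing $\xi=f\cdot a$ with $f\in\contc(X)$, one has $\bbraket{p\xi}{p\xi}_{L(\E)\rtimes_\pn G}=\iota_{L(\E)}^\pn(a)^*\,PTP\,\iota_{L(\E)}^\pn(a)$ with $P=\iota_{L(\E)}^\pn(p)$ and $T=\phi_L\rtimes_\pn G\big(\bbraket{f}{f}_{\contz(X)\rtimes G}\big)\geq 0$; since $p$ is $G$\nb-invariant and commutes with $\phi_L(\contz(X))$, $P$ commutes with $T$, so $PTP\leq T$ and $\bbraket{p\xi}{p\xi}\leq\bbraket{\xi}{\xi}$. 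This estimate is what makes $\xi\mapsto p\xi$ extend to a self-adjoint idempotent in $\Lb(\F_\pn(L(\E)))=\M(L(\E)^G_\pn)$, and the paper isolates it in a separate lemma; your stated hypotheses ($G$\nb-invariance plus commutation with $\contz(X)$) are exactly the right ones, but the conclusion is not formal.

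Where you go beyond the paper is in identifying the diagonal corners with $A^G_\pn$ and $B^G_\pn$: the paper treats this as immediate, while you rightly note that one must check that the norm induced on $A^G_c$ from $L(\E)^G_\pn$ agrees with the one coming from $\K(\F_\pn(A))$. Your route via $\F_\pn(L(\E))\cdot P\cong\F_\pn(A)\oplus\F_\pn(\E^*)$ is sound: on the first summand $a\in A_c^G$ acts with its $A^G_\pn$-norm, and on the complementary pieces its norm is dominated by that via the nondegenerate map $A^G_\pn\to\M(\K(\E)^G_\pn)$ of \cite[Proposition~6.2]{Buss-Echterhoff:Exotic_GFPA}, so the inclusion is isometric. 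The final observation that the inner products of elements of $\E^G_c$ land in $A^G_c$ and $B^G_c$ matches Remark~\ref{rem:AlternativeDescriptionFixedModule}. With the two fixes above the argument is complete.
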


\begin{remark}\label{rem-proper-module}
{(a)} Of course, the lemma will apply for any $(B, X\rtimes G)$-module $(\E,\gamma)$ if
$(B,\beta)$ itself is a weakly proper $X\rtimes B$-algebra. In this case,
$(\E,\gamma)$ becomes an equivariant partial $\K(\E)-B$ equivalence bimodule
and $(\K(\E),\Ad\gamma)$ is a weakly proper $X\rtimes G$-algebra.
\\
{(b)} The above result can easily be extended to the case where $(A,\alpha)$ is a weakly proper
$X\rtimes G$-algebra and $(B,\beta)$ is a weakly proper $Y\rtimes G$-algebra for two
proper $G$\nb-spaces $X$ and $Y$, since by Remark~\ref{rem-unique} both spaces $X$ and $Y$ can be replaced
by a universal proper $G$\nb-space $\EG$.
\\
{(c)} By construction, the linking algebra $L(\E_\pn^G)$ of the partial $A^G_\pn-B^G_\pn$ equivalence bimodule $\E^G_\pn$ is
 canonically isomorphic to $L(\E)^G_\pn$.
\end{remark}

The proof of the lemma will follow at once, if we can show that $p$
(and hence also $q=1-p$) may be regarded as projections in $\M(L(\E)^G_\pn)$. But this follows from the following more general result:

\begin{lemma}
Let $(A,\alpha)$ be a weakly proper $X\rtimes G$-algebra and let $p\in \M(A)$ be a $G$\nb-invariant projection such that $p\cdot f=f\cdot p$ for all $f\in\contz(X)$.
Then, for every $E$\nb-crossed-product norm $\|\cdot\|_\pn$ on $\contc(G,A)$, the projection $p$ induces a projection $\tilde p\in \M(A^G_\pn)$ which acts as $p$ on $A^G_c$, \ie, $\tilde pa=pa$ for all $a\in A^G_c$.
\end{lemma}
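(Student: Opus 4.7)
The plan is to realize $\tilde p$ as a projection in $\Lb(\F_\pn(A))$, using the isomorphism $A^G_\pn\cong \K(\F_\pn(A))$ from Proposition~\ref{prop-exotic-compacts}, which identifies $\M(A^G_\pn)$ with $\Lb(\F_\pn(A))$. On the dense subspace $\F_c(A)=\contc(X)\cdot A$ I would define $\tilde p\xi\defeq p\xi$ (product in $\M(A)$). This preserves $\F_c(A)$ because $p$ commutes with $\phi(\contc(X))$, and it is right $\contc(G,A)$\nb-linear since $p$ is $G$\nb-invariant, so $p\gamma_t(\eta)=\gamma_t(p\eta)$ for every $t\in G$. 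Using $p=p^*$ together with $\gamma_t(p)=p$ one checks that $\tilde p$ is formally self-adjoint with respect to the $\contc(G,A)$-valued inner product:
$$\bbraket{\tilde p\xi}{\eta}(t)=\Delta(t)^{-1/2}\xi^* p\gamma_t(\eta)=\Delta(t)^{-1/2}\xi^*\gamma_t(p\eta)=\bbraket{\xi}{\tilde p\eta}(t),$$
and satisfies $\tilde p^2=\tilde p$ since $p^2=p$.

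The crucial step is to show that $\tilde p$ is bounded on $\F_c(A)$ for the norm coming from $A\rtimes_{\alpha,\pn}G$. For this, let $q\defeq 1-p\in \M(A)^G$, which again commutes with $\phi(\contc(X))$, and set $\tilde q\xi\defeq q\xi$ on $\F_c(A)$. Because $pq=0$, the cross terms vanish:
$$\bbraket{\tilde p\xi}{\tilde q\xi}(t)=\Delta(t)^{-1/2}\xi^* pq\gamma_t(\xi)=0,$$
and similarly $\bbraket{\tilde q\xi}{\tilde p\xi}=0$. Expanding $\bbraket{\xi}{\xi}=\bbraket{\tilde p\xi+\tilde q\xi}{\tilde p\xi+\tilde q\xi}$ in $A\rtimes_{\alpha,\pn}G$ yields the orthogonal decomposition
$$\bbraket{\xi}{\xi}_{\contc(G,A)}=\bbraket{\tilde p\xi}{\tilde p\xi}_{\contc(G,A)}+\bbraket{\tilde q\xi}{\tilde q\xi}_{\contc(G,A)}\geq \bbraket{\tilde p\xi}{\tilde p\xi}_{\contc(G,A)}$$
as positive elements of $A\rtimes_{\alpha,\pn}G$, so $\|\tilde p\xi\|\leq \|\xi\|$ in $\F_\pn(A)$. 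Hence $\tilde p$ extends continuously to a bounded, formally self-adjoint, idempotent, right $A\rtimes_{\alpha,\pn}G$\nb-linear operator on $\F_\pn(A)$, \ie\ a projection in $\Lb(\F_\pn(A))\cong \M(A^G_\pn)$.

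Finally, for $a\in A^G_c$ one writes $a=\varphi m\psi$ with $\varphi,\psi\in\contc(G\backslash X)$ and $m\in\M(A)^G$ satisfying $m\cdot f,\,f\cdot m\in A_c$ for all $f\in\contc(X)$; then $pa=\varphi(pm)\psi$, and $pm$ still belongs to the middle set of the definition of $A^G_c$ because $p\cdot A_c\subseteq A_c$ (as $p$ commutes with $\contc(X)$). Hence $pa\in A^G_c$. Under the identification $A^G_c\subseteq A^G_\pn\cong\K(\F_\pn(A))$, each $a\in A^G_c$ acts on $\F_c(A)$ by left multiplication $\xi\mapsto a\xi$ in $\M(A)$, so $\tilde p$ composed with this operator sends $\xi\mapsto p(a\xi)=(pa)\xi$, which corresponds to $pa\in A^G_c$ under the isomorphism. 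Thus $\tilde p a=pa$, as desired. The main obstacle is the boundedness step, handled cleanly by the orthogonality trick with $q=1-p$.
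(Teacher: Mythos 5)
Your proof is correct, and the key boundedness step is handled by a genuinely different (and arguably cleaner) argument than the paper's. The paper writes $\xi=f\cdot a$ and uses the factorization $\bbraket{\xi}{\xi}_{A\rtimes_\pn G}=\iota_A^\pn(a)^*\,\phi\rtimes_\pn G(\bbraket{f}{f}_{C_0(X)\rtimes G})\,\iota_A^\pn(a)$; since $P=\iota_A^\pn(p)$ commutes with the positive operator $T=\phi\rtimes_\pn G(\bbraket{f}{f})$, one gets $PTP\le T$ and hence $\bbraket{p\xi}{p\xi}\le\bbraket{\xi}{\xi}$. You instead decompose $\xi=p\xi+q\xi$ with $q=1-p$, observe that the cross terms $\bbraket{p\xi}{q\xi}$ vanish because $pq=0$ and $q$ is $G$\nb-invariant, and conclude $\bbraket{p\xi}{p\xi}\le\bbraket{\xi}{\xi}$ from positivity of $\bbraket{q\xi}{q\xi}$ in $A\rtimes_{\alpha,\pn}G$ (which is exactly what Proposition~\ref{prop:PreHilbertModule} provides, since $q\xi\in\F_c(A)$). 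Your route avoids invoking the homomorphism $\phi\rtimes_\pn G$ and the operator inequality $PTP\le T$ altogether, at the modest cost of explicitly citing the positivity of the $\pn$\nb-inner product on $\F_c(A)$; the paper's route packages the same contractivity into a single multiplier computation. You also spell out the final identification $\tilde p\,a=pa$ on $A_c^G$ (checking $pa\in A_c^G$ and matching left-multiplication operators under $A^G_\pn\cong\K(\F_\pn(A))$), which the paper leaves implicit. Both arguments are complete and correct.
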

\begin{proof}
Given $\xi=f\cdot a\in \F_c(A)$, for some $f\in \contc(X)$ and $a\in A$, we observe that $p\xi=f\cdot (pa)\in \F_c(A)$. Moreover, if $\phi\rtimes_\pn G:C_0(X)\rtimes G\to \M(A\rtimes_{\alpha,\pn}G)$ denotes the homomorphism
determined by the $G$-equivariant structure map $\phi:C_0(X)\to \M(A)$, we get
\begin{align*}
\bbraket{p\xi}{p\xi}_{A\rtimes_\mu G}&=\bbraket{(p\cdot f)a}{(p\cdot f)a}_{A\rtimes_\mu G}\\
&=\iota_A^\mu(a)^*\iota_A^\mu(p)(\phi\rtimes_\pn G(\bbraket{f}{f}_{C_0(X)\rtimes G})\iota_A^\mu(p)\iota_A^\mu(a).
\end{align*}
Since $p$ commutes with every element in $\phi(\contz(X))$ and is $G$\nb-invariant, it follows that $P\defeq \iota_A^\mu(p)\in \M(A\rtimes_\pn G)$ is a projection which commutes with every element in $\phi\rtimes_\pn G(\contz(X)\rtimes G)$. In particular, $P$ commutes with the positive operator $T\defeq\phi\rtimes_\pn G(\bbraket{f}{f}_{{C_0(X)\rtimes G}})$. It follows that $PTP\leq T$ and hence
$$\bbraket{p\xi}{p\xi}_{A\rtimes_\pn G}\leq \iota_A^\mu(a)^*\left(\phi\rtimes_\pn G(\bbraket{f}{f}_{{C_0(X)\rtimes G}})\right)\iota_A^\mu(a)=\bbraket{\xi}{\xi}_{A\rtimes_\mu G}.$$
Therefore, the map $\F_c(A)\to\F_c(A), \xi\mapsto p\xi$ extends to a bounded operator $\tilde p\colon \F_\pn(A)\to \F_\pn(A)$ which is adjointable with $\tilde p^*=\tilde p=\tilde p^2$, \ie, a projection $\tilde p\in \Lb(\F_\pn(A))=\M(A^G_\pn)$ as desired.
\end{proof}

\begin{remark}\label{rem:AlternativeDescriptionFixedModule}
The module $\E_\pn^G$ of Lemma~\ref{lem-XG-bimodule} can also be obtained as follows:
we first observe that the structure maps $\phi_A,\phi_B:C_0(X)\to \M(A), \M(B)$ define left and right actions of $C_0(X)$
on $\E$ by $f\cdot \xi:= \phi_A(f)\xi$ and $\xi\cdot f:=\xi\phi_B(f)$. With this notation we let
$\E_c:=\contc(X)\cdot \E\cdot \contc(X)$ and define
$$\E_c^G:=\contc(G\bs X)\cdot\{ \xi\in \M(\E)^G: f\cdot \xi,\,\xi\cdot f\in \E_c\;\mbox{ for all } f\in \contc(X)\}\cdot \contc(G\bs X).$$
Here, for convenience, we define $\M(\E)=pM(L(\E))q$ with $p,q$ as above. But it follows from \cite[Proposition 1.51]{Echterhoff-Kaliszewski-Quigg-Raeburn:Categorical}
that this coincides with the usual notion of the multiplier bimodule $\M(\E)\cong \Lb_B(B,\E)$ if
the left inner product is full (\ie, if $A\cong \K(\E)$ -- see \cite[\S 1.2]{Echterhoff-Kaliszewski-Quigg-Raeburn:Categorical}).
The module actions and inner products on $\E$ extend to the multiplier module and
then induce an $A_c^G-B_c^G$ pre-Hilbert bimodule structure
on $\E_c^G$ which then completes to a partial $A_\pn^G-B_\pn^G$ equivalence bimodule $\E_\pn^G$.
Using the obvious identification $\E_c^G\cong p L(\E)_c^G q$ it is not difficult to check that both constructions coincide.
\end{remark}

In what follows, we are going to relate the fixed bimodule construction $\E\mapsto \E^G_\pn$ with our previous construction $\E\mapsto \F^G_\pn(\E)$.

\begin{proposition}\label{prop-product}
 Suppose that $(A,\alpha)$ and $(B,\beta)$ are weakly proper $X\rtimes G$-algebras
 and assume that $(\E,\gamma)$ and $(\D,\theta)$ are two weakly proper $(C, X\rtimes G)$-modules
 for the $G$\nb-algebra $(C,\sigma)$ such that $(A,\alpha)\cong (\K(\E),\Ad\gamma)$ and
 $(B,\beta)=(\K(\D),\Ad \theta)$. Then there is a canonical isomorphism of partial $A^G_\pn-B^G_\pn$ equivalence bimodules
 $$\F^G_\pn(\E)\otimes_{C\rtimes_{\pn} G} \F^G_\pn(\D)^*\cong (\E\otimes_C \D^*)_\pn^G$$
 which is given on elementary tensors $\xi\otimes \eta\in \F_c(\E)\odot \F_c(\D)^*$ by
 $$\EE(\xi\otimes \eta)\defeq\int_G^{st} \gamma_s(\xi)\otimes\theta_s(\eta)\dd{s}\in (\E\otimes_B\D^*)_c^G.$$
\end{proposition}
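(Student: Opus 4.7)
My plan is to realize both sides of the claimed isomorphism as upper-right corners of a linking-algebra picture applied to the direct-sum Hilbert $(C,X\rtimes G)$-module $\M\defeq \E\oplus \D$, and then invoke Proposition~\ref{prop-exotic-compacts}.

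First, equip $\M$ with the diagonal structure map $\contz(X)\to\Lb(\M)$ and the $G$-action $\gamma\oplus\theta$, making it a weakly proper $(C,X\rtimes G)$-module. Under the standard matrix identification, $\K(\M)\cong L(\E\otimes_C\D^*)$, with the linking-algebra projections $p=1_A,\,q=1_B$ singling out the two corners and $\K(\D,\E)\cong \E\otimes_C\D^*$ occupying the upper-right entry. The induced $G$-action $\Ad(\gamma\oplus\theta)$ coincides with the linking-algebra $G$-action on $L(\E\otimes_C\D^*)$, so by definition $(\E\otimes_C\D^*)^G_\pn=p\,L(\E\otimes_C\D^*)^G_\pn\, q$.

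Second, since $\F_c(\M)=\F_c(\E)\oplus\F_c(\D)$ with diagonal inner products and right $\contc(G,C)$-action, we get $\F^G_\pn(\M)\cong \F^G_\pn(\E)\oplus\F^G_\pn(\D)$ as Hilbert $C\rtimes_\pn G$-modules. Hence $\Fix^G_\pn(\M)=\K(\F^G_\pn(\M))$ has matrix form with upper-right corner $\K(\F^G_\pn(\D),\F^G_\pn(\E))\cong \F^G_\pn(\E)\otimes_{C\rtimes_\pn G}\F^G_\pn(\D)^*$. The identification $\K(\M_2,\M_1)\cong \M_1\otimes_D \M_2^*$ for right Hilbert $D$-modules is standard: the map $\xi\otimes\eta^*\mapsto|\xi\rangle\langle\eta|$ is always an isometry with dense image and works equally well in the partial (non-full) case. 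Now apply Proposition~\ref{prop-exotic-compacts} to $\M$: this yields a canonical isomorphism $\Fix^G_\pn(\M)\cong \K(\M)^G_\pn=L(\E\otimes_C\D^*)^G_\pn$ determined on dense subspaces by the formula ${_{\K(\M)^G_c}\bbraket{u}{v}}=\EE({_{\K(\M)}\langle u,v\rangle})$ from~\eqref{eq-KcG}. Because both the $\K(\M)$-valued inner product and the strict integral $\EE$ manifestly preserve the matrix decomposition of $\K(\M)$, this isomorphism respects the linking-algebra corners. Multiplying by $p$ on the left and $q$ on the right therefore gives the desired isomorphism
$$\F^G_\pn(\E)\otimes_{C\rtimes_\pn G}\F^G_\pn(\D)^*\;\cong\; p\,L(\E\otimes_C\D^*)^G_\pn\,q\;=\;(\E\otimes_C\D^*)^G_\pn$$
of partial $A^G_\pn-B^G_\pn$ equivalence bimodules.

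Finally, to obtain the explicit formula one tracks an elementary tensor: $\xi\otimes\eta^*\in\F^G_\pn(\E)\otimes_{C\rtimes_\pn G}\F^G_\pn(\D)^*$ corresponds under the identification above to the rank-one operator ${_{\K(\F^G_\pn(\M))}\bbraket{(\xi,0)}{(0,\eta)}}=|(\xi,0)\rangle\langle(0,\eta)|$; Proposition~\ref{prop-exotic-compacts} sends it to $\EE({_{\K(\M)}\langle(\xi,0),(0,\eta)\rangle})$, whose only nonzero matrix entry is $\xi\otimes\eta^*$ in the upper-right corner of $L(\E\otimes_C\D^*)$. Evaluating $\EE$ in that corner produces exactly $\int_G^{st}\gamma_s(\xi)\otimes\theta_s(\eta^*)\dd s$, matching the claim (under the convention $\theta_s(\zeta^*)=\theta_s(\zeta)^*$).

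The main obstacle will be the careful matrix bookkeeping in the third paragraph, namely verifying that the isomorphism of Proposition~\ref{prop-exotic-compacts} respects the projections $p,q$. Concretely, one must check that the left $\K(\M)$-valued inner product on $\F_c(\M)$ splits along the direct sum $\F_c(\E)\oplus\F_c(\D)$ into the expected four matrix entries and that $\EE$ is entrywise. Once this compatibility is established, the whole argument is essentially formal matrix manipulation coupled with the previously proved identifications.
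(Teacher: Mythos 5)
Your argument is correct, but it follows a genuinely different route from the paper's. The paper proves the isomorphism ``by hand'': it uses the surjectivity of $\EE_L\colon L(\E\otimes_C\D^*)_c\to L(\E\otimes_C\D^*)_c^G$ together with the density of $\E\odot\D^*$ in $\E\otimes_C\D^*$ to see that $\EE$ has dense range, and then carries out an explicit double-integral computation showing that $\EE$ preserves the right $B_c^G$-valued inner product and module action (the left side following by symmetry). You instead apply Proposition~\ref{prop-exotic-compacts} to the direct-sum module $\M=\E\oplus\D$, identify $\K(\M)$ with $L(\E\otimes_C\D^*)$ and $\K(\F_\pn(\M))$ with the linking algebra of $\F_\pn(\E)\otimes_{C\rtimes_\pn G}\F_\pn(\D)^*$, and cut down by the corner projections $p,q$; the unnamed lemma following Lemma~\ref{lem-XG-bimodule} (on $G$-invariant projections commuting with $\contz(X)$) is exactly what guarantees that $p$ and $q$ descend to $\M(\K(\M)_\pn^G)$ and that the isomorphism of Proposition~\ref{prop-exotic-compacts} is entrywise, and the inner-product formula \eqref{eq-KcG} recovers the explicit expression $\EE(\xi\otimes\eta)=\int_G^{st}\gamma_s(\xi)\otimes\theta_s(\eta)\dd{s}$ on elementary tensors. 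What your approach buys is the elimination of the integral manipulations: the result becomes a formal corollary of the already-established identification $\K(\M)_\pn^G\cong\Fix_\pn^G(\M)$, at the price of the matrix bookkeeping you flag (which does go through, since the diagonal structure map and diagonal $G$-action make $p,q$ invariant and make both $_{\K(\M)}\braket{\cdot}{\cdot}$ and $\EE$ respect the decomposition). What the paper's computation buys is self-containedness and a direct verification that the specific map $\EE$ on $\F_c(\E)\odot\F_c(\D)^*$ is isometric, without appealing to the corner formalism beyond the interpretation of the strict integral.
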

\begin{proof}
We should first give an interpretation for the strict unconditional integral $\int_G^{st} \gamma_s(\xi)\otimes\theta_s(\eta)\dd{s}$:
If $\xi\in \F_c(\E)=\contc(X)\cdot \E$ and $\eta\in \F_c(\D)^*=\D^*\cdot C_c(X)$, then
$\xi\otimes \eta\in \contc(X)\big(\E\otimes_C\D^*)\cdot \contc(X)\subseteq \contc(X)\cdot L(\E\otimes_C\D^*)\cdot \contc(X)=
L(\E\otimes \D^*)_c$, and the integral can be interpreted as the restriction of the map
$\EE_L\colon L(\E\otimes_C\D^*)_c\to L(\E\otimes_C\D^*)_c^G$ to the upper right corner $(\E\otimes_C\D^*)_c$.
By \cite[Lemma 2.5]{Buss-Echterhoff:Exotic_GFPA}, the map $\EE_L$ is surjective, and since $\E\odot\D^*$ is norm dense in
$\E\otimes_C\D^*$, it follows that $\F_c(\E)\odot \F_c(\D^*)=\contc(X)\cdot \big(\E\odot\D^*\big)\cdot \contc(X)$
is inductive limit dense in $(\E\otimes_C\D^*)_c$. Thus, the map $\EE$ of the lemma is an isomorphism if it
preserves the inner products and module actions.

We do the computation for the right inner product and the right
module action. The left formulas follow by symmetry from similar computations.
So let $\xi_1,\xi_2\in \F_c(\E)$ and let $\eta_1,\eta_2\in \F_c(\D)^*$. Then, for elements $b,c\in B_c$, we get
\begin{align*}
b \bbraket{\EE(\xi_1\otimes\eta_1)}{&\,\EE(\xi_2\otimes \eta_2)}_{B_c^G} c\\
&=
b\left(\int_G^{st}\int_G^{st}\braket{\gamma_s(\xi_1)\otimes \theta_s(\eta_1)}{\gamma_t(\xi_2)\otimes \theta_t(\eta_2)}_B\dd{s}\dd{t}\right)c\\
&=\int_G\int_G b\braket{\gamma_s(\xi_1)\otimes \theta_s(\eta_1)}{\gamma_t(\xi_2)\otimes \theta_t(\eta_2)}_B c\dd{s}\dd{t}\\
&=\int_G\int_G
b\braket{\theta_s(\eta_1)}{\braket{\gamma_s(\xi_1)}{\gamma_t(\xi_2)}_C \theta_t(\eta_2)}_Bc\dd{s}\dd{t}\\
&=\int_G\int_G
b\beta_s\big(\braket{\eta_1}{\braket{\xi_1}{\gamma_{s^{-1}t}(\xi_2)}_C \theta_{s^{-1}t}(\eta_2)}_B\big)c\dd{s}\dd{t}\\
 &\stackrel{t\mapsto st}{=}\int_G\int_G
b \beta_s\big(\braket{\eta_1}{\braket{\xi_1}{\gamma_{t}(\xi_2)}_C \theta_{t}(\eta_2)}_B\big) c\dd{s}\dd{t}\\
 &=\int_G\int_G
b\beta_s\big(\braket{\eta_1}{\Delta(t)^{1/2}\bbraket{\xi_1}{\xi_2}_{C\rtimes_\pn G}(t) \theta_{t}(\eta_2)}_B\big)c\dd{s}\dd{t}\\
 &=\int_G
b\beta_s\big(\braket{\eta_1}{\bbraket{\xi_1}{\xi_2}_{C\rtimes_\pn G}\cdot \eta_2}_B\big)c\dd{s}\\
&=b\bbraket{\eta_1}{\bbraket{\xi_1}{\xi_2}_{C\rtimes_\pn G}\cdot\eta_2}_{B_c^G}c
=b\bbraket{\xi_1\otimes\eta_1}{\xi_2\otimes\eta_2}_{B_c^G}c.
\end{align*}
Note that all integrals starting from the third line are over functions with compact supports,
 hence they exist and the manipulations
are justified.
Assume now that $b\in B_c^G$, $\xi\in \F_c(\E)$ and $\eta\in \F_c(\D)^*$. Since $b\in \M(B)$ is $G$\nb-invariant,
we have $\EE(\xi\otimes \eta)\cdot b=\EE(\xi\otimes \eta \cdot b)$, which implies that $\EE$ is also compatible
with the right module actions.
\end{proof}

\begin{corollary}\label{cor-full}
Let $(A,\alpha)$, $(B,\beta)$ and $(\E,\gamma)$ be as in Lemma~\ref{lem-XG-bimodule}.
If the $A$-valued inner product on $\E$ is full, \ie, if $A\cong \K(\E)$, then
$$\E_\pn^G\cong \F_\pn^G(\E)\otimes_{B\rtimes_\pn G}\F_\pn^G(B)^*\cong \F_\pn^G(A)\otimes_{A\rtimes_\pn G}(\E\rtimes_\pn G)\otimes_{B\rtimes_\pn G}\F_\pn^G(B)^*$$
as partial $A^G_\pn-B^G_\pn$ equivalence bimodules.

In particular, if $\E$ is an $A-B$ equivalence bimodule and the actions of $G$ on $A$ and $B$ are $\pn$-saturated, then $\E_\pn^G$ becomes a \textup(full\textup) $A_\pn^G-B_\pn^G$ equivalence bimodule.
\end{corollary}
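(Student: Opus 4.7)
The plan is to derive both isomorphisms as direct applications of results already in hand, rather than doing any new bookkeeping on the level of dense subspaces. For the first isomorphism, I would invoke Proposition~\ref{prop-product} with $C=B$ and with $\D=B$ viewed as the tautological weakly proper $(B,X\rtimes G)$-module (so that $\K(\D)=B$), while $\E$ plays the role of the first module. The hypothesis $A\cong \K(\E)$ is exactly the compactness condition required by that proposition, and the canonical identification $\E\otimes_B B^*\cong \E$ of $A$-$B$ bimodules (together with the fact that the $G$-equivariant structure maps on both sides agree via Remark~\ref{rem-unique}) converts the conclusion of Proposition~\ref{prop-product} into
\[
\F^G_\pn(\E)\otimes_{B\rtimes_\pn G}\F^G_\pn(B)^*\;\cong\;(\E\otimes_B B^*)^G_\pn\;\cong\;\E^G_\pn.
\]

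For the three-fold decomposition, I would apply Proposition~\ref{prop:TensorDecompositionFixedPointModules} with $\E_1=A$ (viewed as a weakly proper $(A,X\rtimes G)$-module via the structure map $\phi_A$) and $\E_2=\E$, regarded as a Hilbert $(B,G)$-module with equivariant left action $A\to \Lb(\E)$. Since $A\otimes_A\E\cong \E$ as weakly proper $(B, X\rtimes G)$-modules, that proposition yields
\[
\F^G_\pn(A)\otimes_{A\rtimes_\pn G}(\E\rtimes_\pn G)\;\cong\;\F^G_\pn(\E),
\]
and plugging this into the first isomorphism gives the claimed chain.

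Finally, for the \emph{in particular} statement, suppose $\E$ is a full $A$-$B$ equivalence bimodule and that both actions are $\pn$-saturated. Then Lemma~\ref{lem-univ-sat}(3) guarantees that $\F^G_\pn(A)$ is a full $A^G_\pn$--$A\rtimes_\pn G$ equivalence bimodule and $\F^G_\pn(B)^*$ is a full $B\rtimes_\pn G$--$B^G_\pn$ equivalence bimodule. Moreover, Corollary~\ref{cor-E-correspond} (or the linking-algebra trick in Lemma~\ref{lem-E-imp} applied to $L(\E)$) shows that $\E\rtimes_\pn G$ is a full $A\rtimes_\pn G$--$B\rtimes_\pn G$ equivalence bimodule, because taking $\pn$-crossed products of both the left- and right-inner products preserves fullness (the image of $\contc(G,A)$ resp.\ $\contc(G,B)$ under the corresponding inner products is norm-dense in the respective crossed products). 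The internal tensor product of these three equivalence bimodules is then a full $A^G_\pn$--$B^G_\pn$ equivalence bimodule, and by the isomorphism established above it is identified with $\E^G_\pn$.

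The main obstacle I anticipate is purely bookkeeping: one has to be careful that the three-fold tensor product on the right, as constructed, is really an equivalence bimodule between the correct algebras and not merely between ideals, which is where the $\pn$-saturation hypothesis is used in an essential way through Lemma~\ref{lem-univ-sat}(3). Everything else reduces to invoking Propositions~\ref{prop-product} and~\ref{prop:TensorDecompositionFixedPointModules} with appropriate choices of the modules.
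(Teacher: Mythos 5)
Your proposal is correct and follows essentially the same route as the paper: the first isomorphism via Proposition~\ref{prop-product} applied to $\E\cong\E\otimes_B B^*$ with $\D=B$, and the second via Proposition~\ref{prop:TensorDecompositionFixedPointModules} applied to $A\otimes_A\E\cong\E$. Your more explicit treatment of the ``in particular'' clause (fullness of the three tensor factors under the saturation hypotheses via Lemma~\ref{lem-univ-sat}) just spells out what the paper leaves implicit and is fine.
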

\begin{proof}
We have a canonical isomorphism $\E\cong \E\otimes_B B^*$, so
the isomorphism $\E_\pn^G\cong \F_\pn^G(\E)\otimes_{B\rtimes_\pn G}\F_\pn^G(B)^*$ follows directly from Proposition~\ref{prop-product}.
Proposition~\ref{prop:TensorDecompositionFixedPointModules} applied for $A\otimes_A\E\cong \E$ yields a canonical isomorphism
$\F^G_\pn(\E)\cong \F^G_\pn(A)\otimes_{A\rtimes_\pn G}\E\rtimes_\pn G$ which in combination with the first part implies the second isomorphism
$\E_\pn^G\cong \F_\pn^G(A)\otimes_{A\rtimes_\pn G}(\E\rtimes_\pn G)\otimes_{B\rtimes_\pn G}\F_\pn^G(B)^*$.
\end{proof}

\begin{example}\label{ex-not-equivalent}
We should note that the final conclusion of Corollary~\ref{cor-full} does not hold in general
without the assumption that the actions on both algebras are saturated.
As an example, let $G$ be a compact group and consider $L^2(G)$ as a $\K(L^2(G))-\C$ equivalence bimodule. Let $G$ act on $L^2(G)$ via the right regular
representation $\rho:G\to \U(L^2(G))$. Then $L^2(G)$ becomes an equivariant
$\K(L^2(G))-\C$ equivalence bimodule with respect to the weakly proper
$\{\pt\}\rtimes G$-actions $\Ad\rho$ and $\id_\C$ on the algebras.
By Corollary~\ref{cor-fix-AtensorK}, we get $\K(L^2(G))^G\cong \Cst(G)$ and
we certainly have $\C^G=\C$. It is clear that both algebras are Morita equivalent
if and only if $G$ is the trivial group.
So, in general, the module $\E_\pn^G$ will only be a partial equivalence bimodule,
even if $\E$ is assumed to be a full $A-B$ equivalence bimodule.
\end{example}

\section{The imprimitivity theorems}

In this section we want to apply our techniques to derive several imprimitivity theorems for weakly proper actions.
Throughout this section we fix two locally compact groups $G$ and $H$. These groups will act on spaces, algebras and modules in a commutative way, \ie,
we will consider $G\times H$-actions. If $G\times H$ acts on a space $X$, we will write $\tau^G$ and $\tau^H$ for the corresponding $G$- and $H$\nb-actions.
We also use similar notation for actions on algebras or modules. Also, to avoid confusion and to be clear which action is being used, we make use of superscript notations, like $\F^G$ for our functor $\E\mapsto \F^G(\E)$ whenever $\E$ is a $(B,X\rtimes G)$-module.

Recall that for any $G\times H$-algebra $(B,\beta)$, the natural map $\contc(G\times H,B)\to \contc(G, \contc(H,B))$
induces isomorphisms
$$B\rtimes_{\beta,\un}(G\times H)\cong (B\rtimes_{\beta^G,\ung}G)\rtimes_{\beta^H,\unh}H$$
for the full crossed products. Here,  by abuse of notation, we also write $\beta^H$ for the action of $H$ on $B\rtimes_{\beta^G,\ung}G$
which is given for $f\in \contc(G,B)$ by $\beta^H_h(f)(t):=\beta^H_h(f(t))$ (and similarly
for the action of $G$ on $B\rtimes_{\beta^H,\unh}H$).
The isomorphism follows from the observation that $C_c(G\times H,B)$
is dense in both algebras and that $(\pi, v)\mapsto (\pi\rtimes v|_G, v|_H)$ gives a bijection of
covariant representations. This correspondence of representations sends the regular representation $\Ind_{\{e\}}^{G\times H}\pi$
induced from a faithful
representation $\pi:B\to \Lb({\Hils_\pi})$
of $(B, G\times H,\beta)$ on $L^2(G\times H,{\Hils_\pi})$ to the regular representation $\Ind_{\{e\}}^H(\Ind_{\{e\}}^G\pi)$
of $(B\rtimes_{\beta^G,\red^G}G, H, \beta^H)$
on $L^2(H, L^2(G,{\Hils_\pi}))$, and therefore we obtain a similar isomorphism
$B\rtimes_{\beta,\red}(G\times H)\cong (B\rtimes_{\beta^G,\redg}G)\rtimes_{\beta^H,\redh}H$
for the reduced crossed products.

In particular, changing the roles of $G$ and $H$, we obtain canonical isomorphisms
$$(B\rtimes_{\beta^H,\unh}H)\rtimes_{\beta^G,\ung}G\cong B\rtimes_{\beta,\un}(G\times H)\cong (B\rtimes_{\beta^G,\ung}G)\rtimes_{\beta^H,\unh}H$$
and a similar isomorphism exist for the reduced crossed products.

Suppose now that we have two crossed-product functors for $G$ and $H$ in the sense of Kaliszewski-Landstad-Quigg
(see \S2)  with corresponding norms $\pn^G$ and $\pn^H$.
By functoriality (see Lemma~\ref{lem-sub}), it follows that $\beta^G$ factors through an action
on $B\rtimes_{\beta^H,\mu^H}H$ (and vice versa), and we obtain two canonical quotient maps
\begin{align*}
q_{H,G}: &B\rtimes_{\beta,\un}(G\times H)\to (B\times_{\beta^H,\pnh}H)\rtimes_{\beta^G,\png}G\\
q_{G,H}: &B\rtimes_{\beta,\un} (G\times H)\to (B\times_{\beta^G,\png}G)\rtimes_{\beta^H,\pnh}H.
\end{align*}
Here we increased the abuse of notation by denoting the action of $G$ on $B\rtimes_{\beta^H,\mu^H}H$ still by $\beta^G$ (and similarly
for $\beta^H$).

\begin{definition}\label{def-compatible-action}
Let $(\png, \pn, \pnh)$ be a triple of  crossed-product norms for $G, G\times H$, and $H$ corresponding
to ideals $E_G\subseteq B(G), E\subseteq B(G\times H)$, and $E_H\subseteq B(H)$, respectively.
We then say that $(\png, \pn, \pnh)$ is {\em compatible}, if for every $G\times H$-algebra $B$, the above quotient maps
$q_{H,G}$ and $q_{G,H}$ factor through isomorphisms
$$(B\times_{\beta^G,\png}G)\rtimes_{\beta^H,\pnh}H \stackrel{q_{G,H}}{\longleftarrow}
B\rtimes_{\beta,\mu}(G\times H)\stackrel{q_{H,G}}{\longrightarrow}  (B\times_{\beta^H,\pnh}H)\rtimes_{\beta^G,\png}G,$$
respectively.
\end{definition}

By the above discussion, we know that the triple $(\ung, \un, \unh)$  of universal norms and the
triple $(\redg,\red,\redh)$ of reduced norms are compatible triples of crossed-product norms.
But in general it seems to be quite difficult to decide whether a given triple of crossed-product norms
is compatible. But if one of the groups, say $H$,  is amenable, we shall see that every crossed-product norm
$\png$ for $G$ fits into a triple $(\png,\pn, \unh)$ for the unique crossed-product norm $\unh$ for $H$:

\begin{proposition}\label{prop-compatible}
Suppose that $G$ and $H$ are locally compact groups such that $H$ is amenable. Let $E\subseteq B(G)$ be any
weak-* closed $G$\nb-invariant ideal of $B(G)$ and let $F\subseteq B(G\times H)$ be the ideal generated by
$E\otimes 1_H\subseteq B(G\times H)$. Let $\pn$ be the crossed-product norm for $G\times H$ corresponding to $F$
and $\png$ the crossed-product norm for $G$ corresponding to $E$. Then $(\png,\pn,\unh)$ is a compatible triple
of crossed-product norms for $G, G\times H, H$.
\end{proposition}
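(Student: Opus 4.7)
The plan is to reduce everything to the identification $C_F^*(G\times H)\cong C_E^*(G)\otimes C^*(H)$, where $F=\langle E\otimes 1_H\rangle$. Since $H$ is amenable, $C^*(H)$ is nuclear, so $C^*(G\times H)\cong C^*(G)\otimes C^*(H)$ canonically. Consider the quotient map $q_E\otimes\id\colon C^*(G\times H)\to C_E^*(G)\otimes C^*(H)$, with kernel $I=I_E\otimes C^*(H)$. By Proposition~\ref{prop-crossed}, this quotient defines a $(G\times H)$-crossed-product functor coinciding with the KLQ functor attached to the weak-*-closed ideal $\langle I^\perp\rangle\sbe B(G\times H)$, so it suffices to show $\langle I^\perp\rangle=F$. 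The inclusion $F\sbe\langle I^\perp\rangle$ is immediate from the pairing $(f\otimes 1_H)(x\otimes y)=f(x)=0$ for $f\in E$, $x\in I_E$, $y\in C^*(H)$. For the reverse, $F$ agrees with the weak-*-closed subspace $E\otimes_{w*}B(H)\sbe B(G\times H)$ (since $E$ is already a $B(G)$-ideal), and any $\phi\in I^\perp$ descends to a functional on $C_E^*(G)\otimes C^*(H)$, hence lies in the weak-* closure of $(C_E^*(G))^*\otimes C^*(H)^*=E\otimes B(H)$ inside $B(G\times H)$, using nuclearity of $C^*(H)$.

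With this identification in hand, I would derive both compatibility isomorphisms from exactness of the universal $H$-crossed product (automatic since $H$ is amenable, which also forces $\pnh=\unh$). For $q_{G,H}$, let $J^G\sbe B\rtimes_{\beta^G,\ung}G$ be the $\png$-defining ideal; it is $\beta^H$-invariant by Lemma~\ref{lem-sub}, and exactness yields a quotient $(B\rtimes_{\beta^G,\ung}G)\rtimes_{\beta^H,\unh}H\twoheadrightarrow (B\rtimes_{\beta^G,\png}G)\rtimes_{\beta^H,\unh}H$ which, via the canonical iso with $B\rtimes_{\beta,\un}(G\times H)$, realizes $q_{G,H}$. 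The iterated crossed product on the right carries a canonical dual $(G\times H)$-coaction obtained as the tensor product of the dual $G$-coaction on the outer crossed product (which factors through $C_E^*(G)$ by construction of $\png$ and Lemma~\ref{lem-sub}(4)) with the dual $H$-coaction on the inner crossed product (valued in $C^*(H)$); the joint coaction therefore factors through $C_E^*(G)\otimes C^*(H)\cong C_F^*(G\times H)$, so $q_{G,H}$ factors through $B\rtimes_{\beta,\pn}(G\times H)$. Conversely, on $B\rtimes_{\beta,\pn}(G\times H)$ the dual $(G\times H)$-coaction factors through $C_F^*(G\times H)\cong C_E^*(G)\otimes C^*(H)$; restricting to the $G$-direction forces the embedded copy of $B\rtimes_{\beta^G,\ung}G$ to descend to $B\rtimes_{\beta^G,\png}G$, yielding the inverse. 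The argument for $q_{H,G}$ is symmetric.

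The main obstacle I anticipate is the functional-analytic identification $\langle I^\perp\rangle=F$ in the first step, specifically the description $B(G\times H)\cong B(G)\otimes_{w*}B(H)$ and the ensuing dual of a spatial tensor product with a nuclear factor. Everything else amounts to a careful but routine combination of Proposition~\ref{prop-crossed}, Lemma~\ref{lem-sub}, and exactness of the universal $H$-crossed product for amenable $H$.
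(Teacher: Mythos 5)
Your overall strategy---reduce to Proposition~\ref{prop-crossed} and exploit amenability of $H$---is the same as the paper's, but two steps are not actually established. On the first half: you route everything through the identification $C_F^*(G\times H)\cong C_E^*(G)\otimes C^*(H)$ and the equality $\langle I^\perp\rangle=F$ for $I=I_E\otimes C^*(H)$, which requires $B(G\times H)=\overline{B(G)\odot B(H)}^{w^*}$ and weak-$*$ density of $E\odot B(H)$ in the dual of a minimal tensor product. These facts are true (product functionals separate points of $\otimes_{\min}$, and exactness of the nuclear algebra $C^*(H)$ gives $\ker(q_E\otimes\id)=I_E\otimes C^*(H)$), but you correctly flag them as the sticking point, and the detour is unnecessary: the paper uses the smaller quotient $q\colon C^*(G\times H)\to C_E^*(G)$ (kill $H$, then apply $q_E$) and verifies directly that $\ker q={}^\perp(E\otimes 1_H)$ by realizing elements of $E$ as coefficients of representations of $C_E^*(G)$; Proposition~\ref{prop-crossed} then identifies the resulting crossed product with the KLQ $F$\nb-crossed product with no dual-tensor-product analysis at all.

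The more serious gap is in the comparison of kernels. Writing $J_{\beta,F}=\ker(\id\otimes q)\circ\widehat\beta$, you must show $\ker q_{G,H}=J_{\beta,F}$ (and symmetrically for $q_{H,G}$). Your forward inclusion rests on the claim that the dual $(G\times H)$\nb-coaction on $(B\rtimes_{\beta^G,\png}G)\rtimes_{\beta^H,\unh}H$ composed with $\id\otimes q$ is injective. Injectivity of $(\id\otimes q_E)\circ\widehat{\beta^G}$ on $B\rtimes_{\beta^G,\png}G$ does not automatically survive the passage to the $H$\nb-crossed product: the statement actually needed is that a faithful $H$\nb-equivariant representation $\Phi\colon C\to\M(D)$ induces a faithful map on $\unh$\nb-crossed products, which holds because $\unh=\redh$ for amenable $H$ and fails in general (take $C=\C$, $D=C_0(H)$). ``Exactness of $\rtimes_{\unh}H$'' is not the right tool here---the full crossed product is exact over every group---so your appeal to it does not close this step. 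The converse inclusion, \ie\ injectivity of the induced surjection $B\rtimes_{\beta,\pn}(G\times H)\to(B\rtimes_{\beta^G,\png}G)\rtimes_{\beta^H,\unh}H$, is only asserted (``restricting to the $G$\nb-direction forces\dots''); to make it precise you would have to build the inverse by showing that the canonical copy of $B\rtimes_{\beta^G,\ung}G$ in $\M(B\rtimes_{\beta,\pn}(G\times H))$ kills $J_{\beta^G,E}$ and then integrate the resulting covariant pair of $(B\rtimes_{\beta^G,\png}G,H)$. The paper avoids both difficulties at once: after commuting $\otimes\,C_E^*(G)$ past $\rtimes_{\unh}H$ (again via amenability), it identifies both $(\id\otimes q)\circ\widehat\beta$ and the map defining the iterated crossed product as integrated forms of the \emph{same} covariant homomorphism $(i_B\otimes1)\rtimes\big((i_G\otimes v_E)\times(i_H\otimes1)\big)$, so the two kernels coincide on the nose.
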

\begin{proof}
Let us start with some notational comments: Let $(i_B, i_G, i_H)$ denote the canonical maps
of $(B,G,H)$ into $\M(B\rtimes_{\beta,\un}(G\times H))$. Then we use the same symbols for the corresponding
maps of $(B,G,H)$ into the iterated crossed products $(B\rtimes_{\beta_G,\ung}G)\rtimes_{\beta^H,\unh}H$
and $(B\rtimes_{\beta_H,\unh}H)\rtimes_{\beta^G,\ung}G$ which correspond to $(i_B, i_G, i_H)$ via the
canonical isomorphisms between  $B\rtimes_{\beta,\un}(G\times H)$ and the respective iterated crossed products.
To make things more precise: if $(j_{B\rtimes G}, j_H)$ denote the canonical maps of $(B\rtimes_{\beta^G,\ung}G, H)$
into $\M((B\rtimes_{\beta_G,\ung}G)\rtimes_{\beta^H,\unh}H)$ and $(k_B, k_G)$ denote the
canonical maps of $(B,G)$ into $\M(B\rtimes_{\beta^G,\ung}G)$, then $i_B=j_{B\rtimes G}\circ k_B$, $i_G=j_{B\rtimes G}\circ k_G$,
and $i_H=j_H$ if we identify $B\rtimes_{\beta,\un}(G\times H)$ with the iterated crossed product
$(B\rtimes_{\beta_G,\ung}G)\rtimes_{\beta^H,\unh}H$, and similarly for  the other iterated crossed product.

Consider now the canonical quotient map $q: C^*(G\times H)\to C_E^*(G)$ given by the composition
of the canonical quotient map $C^*(G\times H)\to C^*(G)$  followed by the quotient map
$q_{E}:C^*(G)\to C_E^*(G)$.
Realizing elements of $E$ as
coefficient functions of representations of $C_E^*(G)$, it is straightforward to check that the kernel
$I:=\ker q$ equals the annihilator $^\perp{(E\otimes 1_H)}\subseteq C^*(G\times H)$. Thus, it follows from
Proposition~\ref{prop-crossed}
that for any action $\beta: G\times H\to \Aut(B)$ the crossed-product $B\rtimes_{\beta,\pn}(G\times H)$
is given by the quotient of $B\rtimes_{\beta,\un}(G\times H)$ by the kernel $J_{\beta,F}$ of the homomorphism
$$(\id_{B\rtimes (G\times H)}\otimes q)\circ \widehat{\beta}: B\rtimes_{\beta,\un}(G\times H)\to
\M( B\rtimes_{\beta,\un}(G\times H)\otimes C_E^*(G)).$$
If ${v_E}:G\to\U\M(C_E^*(G))$ denotes the unitary representation which integrates to the
quotient map $q_{E}:C^*(G)\to C_E^*(G)$, then $(\id_{B\rtimes (G\times H)}\otimes q)\circ \widehat{\beta}$ is the integrated
form of the covariant homomorphism $(i_B\otimes 1)\rtimes \big((i_G\otimes {v_E})\times (i_H\otimes 1)\big)$.

We now consider the iterated crossed products. We start with $(B\rtimes_{\beta^G,\png}G)\rtimes_{\beta^H,\unh}H$.
The crossed product $B\rtimes_{\beta^G,\png}G$ can be identified with the image
of the homomorphism
$$\Psi:=(\id_{B\rtimes G}\otimes q_E)\circ\widehat{\beta^G}: B\rtimes_{\beta^G,\ung}G\to \M(B\rtimes_{\beta^G,\ung}G\otimes C_E^*(G)).$$
If we let $H$ act on $B\rtimes_{\beta^G,\ung}G\otimes C_E^*(G)$ via $\beta^H\otimes \id$, then $\Psi$ becomes $H$\nb-equivariant
and since the $\png$-crossed product is functorial by Lemma~\ref{lem-sub}, we see that $J_{\beta^G,E}\subseteq B\rtimes_{\beta^G, \ung}G$ is $\beta^H$-invariant. Since $H$ is amenable, and hence the universal crossed product by $H$ coincides with the reduced one, it follows that the homomorphism $$\Psi\rtimes H: (B\rtimes_{\beta^G,\ung}G)\rtimes_{\beta^H,\unh}H\to \M\big((B\rtimes_{\beta^G,\ung}G\otimes C_E^*(G))\rtimes_{\beta^H\otimes\id,\unh}H\big)$$
factors through an isomorphism of $(B\rtimes_{\beta^G,\png}G)\rtimes_{\beta^H,\unh}H$ with the image of $\Psi\rtimes H$.
This follows from the general observation that if  $C$ and $D$ are $H$-algebras and $\Phi:C\to \M(D)$ is a faithful $H$-equivariant representation, then the composition of $\Phi\rtimes H$ with a regular representation of $D\rtimes_\unh H$ is a regular representation of $C\rtimes_\unh H$.
By the amenability of $H$, we also have a canonical isomorphism
$$(B\rtimes_{\beta^G,\ung}G\otimes C_E^*(G))\rtimes_{\beta^H\otimes\id,\unh}H\cong
\big((B\rtimes_{\beta^G,\ung}G)\rtimes_{\beta^H\otimes\id,\unh}H\big)\otimes C_E^*(G)$$
(\eg, see \cite[Lemma 7.16]{Williams:Crossed}). Note that if $(l_{B\rtimes G\otimes C_E^*(G)}, l_H)$ are the canonical maps
of $(B\rtimes_{\beta^G,\ung}G\otimes C_E^*(G), H)$ into $\M\big((B\rtimes_{\beta^G,\ung}G\otimes C_E^*(G))\rtimes_{\beta^H\otimes\id,\unh}H\big)$
and if $(k_B, k_G)$ denote  the canonical maps of  $(B,G)$ into $\M(B\rtimes_{\beta^G,\ung}G)$, then, via the above isomorphism, we get the
identifications
$$i_B\otimes 1= l_{B\rtimes G\otimes C_E^*(G)}\circ (k_B\otimes 1), \quad i_G\otimes 1=  l_{B\rtimes G\otimes C_E^*(G)}\circ (k_G\otimes 1),
\quad\text{and}\quad
i_H\otimes 1= l_H.$$
Then, with a little book keeping, we see that the homomorphism
$\Psi\rtimes H$, viewed as a map into $\M((B\rtimes_{\beta^G,\ung}G)\rtimes_{\beta^H\otimes\id,\unh}H\big)\otimes C_E^*(G))$
is given as the integrated form of the covariant homomorphism
$\big((i_B\otimes 1)\rtimes (i_G\otimes {v_E}), i_H\otimes 1\big)$. This coincides with
$(i_B\otimes 1)\rtimes \big((i_G\otimes {v_E})\times (i_H\otimes 1)\big)=(\id_{B\rtimes (G\times H)}\otimes q)\circ \widehat{\beta}$ after identifying $(B\rtimes_{\beta^G,\ung}G)\rtimes_{\beta^H,\unh}H$ with $B\rtimes_{\beta,\un}(G\times H)$.
This implies the isomorphism $B\rtimes_{\beta,\mu}(G\times H)\cong (B\rtimes_{\beta^G,\png}G)\rtimes_{\beta^H,\unh}H$.

For the other iterated crossed product, note that the crossed product
$(B\rtimes_{\beta^H,\unh}H)\rtimes_{\beta^G,\png}G$ is the quotient of
$(B\rtimes_{\beta^H,\un}H)\rtimes_{\beta^G,\ung}G$ by the kernel of the homomorphism
$(\id_{B\rtimes H\rtimes G}\otimes q)\circ \widehat{\beta^G}$ which is given as the integrated form of the homomorphism
$({i}_B\otimes 1)\rtimes (i_H\otimes 1), i_G\otimes v_{E})$  which again corresponds to
$(i_B\otimes 1)\rtimes \big((i_G\otimes {v_E})\times (i_H\otimes 1)\big)=(\id_{B\rtimes (G\times H)}\otimes q)\circ \widehat{\beta}$ after identifying $(B\rtimes_{\beta^H,\unh}H)\rtimes_{\beta^G,\ung}G$ with $B\rtimes_{\beta,\un}(G\times H)$. This completes the proof.
\end{proof}

Assume that $(B,\beta)$ is a $G\times H$-algebra
and $X$ is a $G\times H$-space such that the action of $G$ on $X$ is proper.
Assume further that $(\E,\gamma)$ is a \emph{weak} $(B, X\rtimes(G\times H))$-module, meaning that we have
a $G\times H$-equivariant nondegenerate structure map $\phi:C_0(X)\to \Lb(\E)$, but we require
properness only for the $G$\nb-action. Assume further that $(\png,\pn, \pnh)$ is a compatible triple of
norms for $G, G\times H$, and $H$ as explained above.

\begin{lemma}\label{lem-actionH}
In the above situation, the Hilbert $B\rtimes_{\beta^G,\png} G$-module $\F_\png(\E)$ carries an $H$\nb-action,  given
on $\F_c(\E)\subseteq \E$ by the restriction of $\gamma^H$ to this subspace  (and which by further abuse of notation we
also denote $\gamma^H$) such that $(\F_\png(\E),\gamma^H)$ becomes a $(B\rtimes_{\beta^G,\png}G, H)$-Hilbert module.
As a consequence, if we write $(A,\alpha):=(\K(\E),\Ad\gamma)$, we obtain the $H$\nb-action $\alpha^H:=\Ad\gamma^H$ on
the fixed-point algebra $A_\png^G:=\Fix_\png^G(\E)=\K(\F_\png(\E))$.
It is given for $m\in A_c^G\subseteq \M(A)$ via the extension of $\alpha$ to $\M(A)$.
\end{lemma}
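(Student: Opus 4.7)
My plan is to verify the lemma in four steps: invariance of $\F_c(\E)$ under $\gamma^H$, algebraic compatibility with the pre-Hilbert module structure, strong continuity of the extension to $\F_\png(\E)$, and finally the identification with $\alpha^H$ on $A_c^G$.

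First, since $\phi:C_0(X)\to\Lb(\E)$ is $G\times H$-equivariant and the $G$- and $H$-actions commute, for $f\in\contc(X)$ and $\xi\in\E$ I have $\gamma_h^H(f\cdot\xi)=\tau_h^H(f)\cdot\gamma_h^H(\xi)$, and since $\tau_h^H(f)\in\contc(X)$ this gives $\gamma_h^H(\F_c(\E))\sbe\F_c(\E)$. Next I would use the commutativity of $G$ and $H$ together with equivariance of $\gamma$ to compute on $\F_c(\E)$:
$$\bbraket{\gamma_h^H(\xi)}{\gamma_h^H(\eta)}(t)=\Delta(t)^{-1/2}\braket{\gamma_h^H(\xi)}{\gamma_h^H\gamma_t^G(\eta)}_B=\beta_h^H(\bbraket{\xi}{\eta}(t)),$$
and similarly $\gamma_h^H(\xi*\varphi)=\gamma_h^H(\xi)*\beta_h^H(\varphi)$ where $\beta_h^H$ acts on $\contc(G,B)$ pointwise. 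By Lemma~\ref{lem-sub} this pointwise $H$-action extends to an automorphic action on $B\rtimes_{\beta^G,\png}G$, so the displayed identity shows that $\gamma^H$ extends isometrically from $\F_c(\E)$ to the completion $\F_\png(\E)$, compatibly with the right $B\rtimes_{\beta^G,\png}G$-action.

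For strong continuity, fix $\xi\in\F_c(\E)$ with $\xi=f\cdot\xi$ for some $f\in\contc(X)$, and let $h_0\in H$ with a compact neighborhood $K$. Continuity of the $H$-action on $X$ makes $K\cdot\supp(f)$ compact in $X$, so I can pick $g\in\contc(X)$ equal to $1$ on $K\cdot\supp(f)$; then $g\cdot\gamma_h^H(\xi)=\gamma_h^H(\xi)$ for every $h\in K$. Together with strong continuity of $\gamma^H$ on $\E$, this shows that $h\mapsto\gamma_h^H(\xi)$ converges to $\gamma_{h_0}^H(\xi)$ in the inductive-limit topology on $\F_c(\E)$; Lemma~\ref{lem-inductive-limit-top} then gives convergence of $\bbraket{\gamma_h^H(\xi)-\gamma_{h_0}^H(\xi)}{\gamma_h^H(\xi)-\gamma_{h_0}^H(\xi)}$ to $0$ in the inductive-limit topology on $\contc(G,B)$, which dominates the $\png$-norm. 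This proves continuity of the orbit map in $\F_\png$-norm on $\F_c(\E)$, and the standard $\epsilon/3$-argument using the isometry on a dense subspace promotes this to strong continuity on all of $\F_\png(\E)$.

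For the final assertion, since $\gamma^H$ commutes with $\gamma^G$ and $\tau^H$ preserves $\contc(X)$ and descends to $\contc(G\bs X)$, the extension $\alpha^H$ to $\M(A)=\Lb(\E)$ preserves the defining conditions of $A_c^G$. On $\F_c(\E)$ the left action of $m\in A_c^G\sbe\M(A)$ is by direct multiplication inside $\Lb(\E)$, so
$$\gamma_h^H(m\cdot\xi)=\gamma_h^H\,m\,\gamma_{h^{-1}}^H\bigl(\gamma_h^H(\xi)\bigr)=\alpha_h^H(m)\cdot\gamma_h^H(\xi),$$
which shows $\Ad\gamma_h^H(m)=\alpha_h^H(m)$ as operators on $\F_\png(\E)$, identifying the induced $H$-action on $\Fix_\png^G(\E)$ with the restriction of $\alpha^H$ to $A_c^G$. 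I expect the most delicate point to be the strong-continuity step: one must pass through the inductive-limit topology (via the compact-support trick using properness of neither action but mere continuity of the $H$-action on $X$) in order to transfer the easy $\E$-norm continuity of $\gamma^H$ to the a priori stronger $\F_\png$-norm.
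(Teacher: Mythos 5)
Your proof is correct, but it takes a genuinely different route from the paper's. The paper obtains the $H$\nb-action by transporting the diagonal action through the tensor decomposition $\F_\png(\E)\cong \F(X)\otimes_{\contz(X)\rtimes G}(\E\rtimes_{\gamma^G,\png}G)$ of Proposition~\ref{prop:PreHilbertModule}: since each tensor factor already carries a strongly continuous $H$\nb-action compatible with $\beta^H$ on the coefficients, well-definedness and continuity on the completion come essentially for free, and the only work is checking via the explicit formula for $\Psi_\png$ that the resulting action restricts on $\F_c(\E)$ to the original $\gamma^H$. You instead work entirely on the dense subspace $\F_c(\E)$: the identities $\bbraket{\gamma_h^H\xi}{\gamma_h^H\eta}=\beta_h^H(\bbraket{\xi}{\eta})$ and $\gamma_h^H(\xi*\varphi)=\gamma_h^H(\xi)*\beta_h^H(\varphi)$, combined with the fact (from Lemma~\ref{lem-sub}, as recorded just before Definition~\ref{def-compatible-action}) that $\beta^H$ descends to an automorphic action on $B\rtimes_{\beta^G,\png}G$, give isometric extendability, and you then establish strong continuity by hand via the inductive-limit topology (the compact-support control $g\cdot\gamma_h^H(\xi)=\gamma_h^H(\xi)$ for $h$ in a compact neighbourhood, Lemma~\ref{lem-inductive-limit-top}, and an $\epsilon/3$ argument). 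Your approach is more elementary and self-contained but requires the explicit continuity argument that the paper's tensor-product device sidesteps; conversely the paper's route leans on Proposition~\ref{prop:PreHilbertModule} and still owes a (short) computation to identify the action on the dense subspace. For the final assertion your covariance identity $\gamma_h^H(m\cdot\xi)=\alpha_h^H(m)\cdot\gamma_h^H(\xi)$ for $m\in A_c^G$, together with the observation that $\alpha_h^H$ preserves $A_c^G$, is a clean direct substitute for the paper's density argument using ${_{\K(\E)_c^G}\bbraket{\F_c(\E)}{\F_c(\E)}}$. All the ingredients you invoke are available at this point in the paper, so the argument stands.
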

\begin{proof}
One can use the isomorphism $\F_\png(\E)\cong \F(X)\otimes_{\contz(X)\rtimes G}(B\rtimes_{\beta^G,\png}G)$ from Proposition~\ref{prop:PreHilbertModule} and the diagonal action $\tau^H\otimes \beta^H$ on $\F(X)\otimes_{\contz(X)\rtimes G}(B\rtimes_{\beta^G,\png}G)$
to get an $H$\nb-action $\gamma^H$ on $\F_\png(\E)$ compatible with $\beta^H$. A simple computation using the explicit isomorphism in Proposition~\ref{prop:PreHilbertModule} shows that $\gamma^H$, on the dense subspace $\F_c(\E)\sbe \F_\png(\E)$, is the restriction of the original $H$\nb-action on $\E$ to $\F_c(\E)\subseteq \E$. Since
${_{\K(\E)_c^G}\bbraket{\F_c(\E)}{\F_c(\E)}}$ is inductive limit dense in $A_c^G$, this also implies the last statement of the lemma.
\end{proof}

On the other hand, writing $(A,\alpha):=(\K(\E),\Ad\gamma^H)$, if we first take crossed products by $H$, it follows from Lemma~\ref{lem-E-imp}
that we obtain the Hilbert $B\rtimes_{\beta^H,\pnh}H$-module
$\E\rtimes_{\gamma^H,\pnh}H$ with compact operators $A\rtimes_{\alpha^H,\pnh}H$.
As above, we obtain canonical actions $\alpha^G, \gamma^G$ and $\beta^G$  on
these crossed products. Moreover, the composition
$i_{A}^\pnh\circ \phi:C_0(X)\to \M(A\rtimes_{\alpha^H,\pnh}H)\cong\Lb(\E\rtimes_{\gamma^H, \pnh}H)$
gives $(\E\rtimes_{\gamma^H, \pnh}H, \gamma^G)$ the structure of a weakly proper
$(B\rtimes_{\beta^H,\pnh}H, X\rtimes G)$-module. Using these notations, we get

\begin{theorem}\label{theo:TheFixedPointFunctorForCommutingActions}
Let $(\E,\gamma)$ be a weak $(B,X\rtimes (G\times H))$-module as above such that $G$ acts properly on $X$. Then, identifying
$$(B\rtimes_{\beta^G,\png}G)\rtimes_{\beta^H,\pnh} H\cong (B\rtimes_{\beta^H,\pnh} H)\rtimes_{{\beta}^G,\png}G$$
and writing $(A,\alpha):=(\K(\E),\Ad\gamma^H)$ we get
$$\F^G_\png(\E)\rtimes_{\gamma^H,\pnh} H\cong \F^G_\png(\E\rtimes_{\gamma^H,\pnh} H)
\quad\text{and} \quad A_\png^G\rtimes_{\alpha^H,\pnh}H\cong
(A\rtimes_{\alpha^H,\pnh}H)_\png^G.$$
The second isomorphism is an extension of
the canonical inclusion of $\contc(H, A_c^G)\subseteq \contc(H, \M(A))$ into $(A\rtimes_{\alpha^H,\pnh}H)_c^G\subseteq
\M(A\rtimes_{\alpha^H,\pnh}H)$.
\end{theorem}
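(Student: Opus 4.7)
I would first show that the algebra isomorphism is a formal consequence of the module isomorphism $\F^G_\png(\E)\rtimes_{\gamma^H,\pnh}H\cong \F^G_\png(\E\rtimes_{\gamma^H,\pnh}H)$. Taking compact operators and applying Lemma~\ref{lem-E-imp} and Proposition~\ref{prop-exotic-compacts} on both sides yields
\begin{align*}
A^G_\png\rtimes_{\alpha^H,\pnh}H
&\cong \K(\F^G_\png(\E))\rtimes_{\Ad\gamma^H,\pnh}H
\cong \K\bigl(\F^G_\png(\E)\rtimes_{\gamma^H,\pnh}H\bigr)\\
&\cong \K\bigl(\F^G_\png(\E\rtimes_{\gamma^H,\pnh}H)\bigr)
\cong (A\rtimes_{\alpha^H,\pnh}H)^G_\png,
\end{align*}
where the last two isomorphisms use once more that $\K(\E)\rtimes_{\Ad\gamma^H,\pnh}H\cong \K(\E\rtimes_{\gamma^H,\pnh}H)$ (Lemma~\ref{lem-E-imp}) and Proposition~\ref{prop-exotic-compacts}.

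For the module isomorphism, my plan is to exhibit $\contc(H,\F_c(\E))$ as a common inductive-limit dense subspace of both sides whose Hilbert-module structures agree. Density on the left follows from density of $\F_c(\E)\sbe \F^G_\png(\E)$ together with the standard density of compactly supported functions in an $H$\nb-crossed-product Hilbert module. On the right, the structure map $\phi\colon \contz(X)\to \Lb(\E\rtimes_{\gamma^H,\pnh}H)$ acts pointwise on $\contc(H,\E)$, so $\contc(X)\cdot \contc(H,\E)=\contc(H,\F_c(\E))$, and this is inductive-limit dense in $\F_c(\E\rtimes_{\gamma^H,\pnh}H)$ by Lemma~\ref{lem-dense} applied to the dense subspace $\contc(H,\E)\sbe \E\rtimes_{\gamma^H,\pnh}H$.

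The heart of the proof is then a Fubini-type computation showing that, for $x,y\in \contc(H,\F_c(\E))$, the iterated $\pnh$-over-$\png$ inner product (on the left) and the iterated $\png$-over-$\pnh$ inner product (on the right) are given by the same $B$-valued kernel on $G\times H$: both evaluate to
\[
\Delta_G(t)^{-1/2}\int_H \beta^H_{s^{-1}}\bigl(\langle x(s),\gamma^G_t(y(sh))\rangle_B\bigr)\dd{s}.
\]
Under the canonical identification $(B\rtimes_{\beta^G,\png}G)\rtimes_{\beta^H,\pnh}H\cong (B\rtimes_{\beta^H,\pnh}H)\rtimes_{\beta^G,\png}G$ guaranteed by the compatibility of $(\png,\pn,\pnh)$ from Definition~\ref{def-compatible-action}, these two elements coincide. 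A shorter analogous computation matches the right module actions, so Lemma~\ref{lem-inductive-limit-top} promotes the identity map on $\contc(H,\F_c(\E))$ to the required isomorphism of Hilbert modules. The assertion about $\contc(H,A_c^G)\hookrightarrow (A\rtimes_{\alpha^H,\pnh}H)_c^G$ then follows by chasing through the chain of isomorphisms and using Proposition~\ref{prop-exotic-compacts} to identify both $\contc(H,A_c^G)$\nb-actions on $\contc(H,\F_c(\E))$ with the same convolution formula arising from the left action of $A_c^G$ on $\F_c(\E)$ inside $\M(A)$.

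The main obstacle is the bookkeeping required in the Fubini step: one must carefully write down the two iterated inner products as $B$-valued functions on $G\times H$ (with only $\Delta_G$ appearing, since $H$ enters through the standard crossed-product inner product rather than through a weakly proper construction), and then convert the formal Fubini equality into an equality inside the iterated crossed products via compatibility of the triple.
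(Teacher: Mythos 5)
Your proposal is correct and follows essentially the same route as the paper's proof: identify $\contc(H,\F_c^G(\E))$ as a common inductive-limit dense subspace, match the iterated $\contc(G\times H,B)$-valued inner products (and module actions) by a Fubini computation, invoke the compatibility of the triple $(\png,\pn,\pnh)$ to identify the coefficient algebras, and then pass to compact operators via Lemma~\ref{lem-E-imp} and Proposition~\ref{prop-exotic-compacts} to obtain the algebra isomorphism and the statement about $\contc(H,A_c^G)$. The only difference is cosmetic: the paper verifies the final assertion by explicitly computing both left-hand inner products as a double (strict unconditional) integral over $H\times G$, whereas you leave that as a diagram chase, but the underlying argument is the same.
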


\begin{proof}
 Observe that $\contc(H,\F_c^G(\E))$, with respect to the topology on $\F_c^G(\E)$ inherited from $\E$, is inductive limit dense in
$\F_c^G(\E\rtimes_{\pnh}H)$  and in $\contc(H, \F_\png^G(\E))$
in the sense of Definition~\ref{ind-limit}. It therefore suffices to check that the
$\contc(G\times H, B)$-valued inner products on both spaces coincide on $\contc(H,\F_c^G(\E))$.
But it follows from the formulas (\ref{eq:InnerProductOfExG}) and (\ref{eq:InnerProduct}) that both inner
products are given for $\xi,\eta\in \contc(H,\F_c^G(\E))$ by the formula
$$\bbraket{\xi}{\eta}(s,h)=\int_H\Delta_G(s)^{-1/2}\beta_{l^{-1}}\big(\braket{\xi(l)}{\beta_s(\eta(lh))}_B\big)\dd{l}.$$
The resulting isomorphism of Hilbert modules $\F^G_\png(\E)\rtimes_\pnh H\cong \F^G_\png(\E\rtimes_\pnh H)$
induces an isomorphism between the compact operators
$\K(\F^G_\png(\E)\rtimes_\pnh H)=A_\png^G\rtimes_{\alpha^H,\pnh}H$
and
$\K(\F_\png^G(\E\rtimes_{\pnh}H))=(A\rtimes_{\alpha^H,\pnh}H)_\png^G$, and this isomorphism sends
the left $A_\png^G\rtimes_{\alpha^H,\pnh}H$-valued inner product of two elements $\xi,\eta \in \contc(H,\F_c^G(\E))$
to the $(A\rtimes_{\alpha^H,\pnh}H)_\png^G$-valued inner product of these elements.
But for $\xi,\eta\in \contc(H, \F_c^G(\E))$ the first inner product takes values in $\contc(H, A_\png^G)$ with the
formula
\begin{equation*}
_{\contc(H, A_\pn^G)}\bbraket{\xi}{\eta}(h)=\int_H \Delta_H(h^{-1}l) {_{A_c^G}\bbraket{\xi(l)}{\gamma_h(\eta(h^{-1}l))}}\dd{l}
\end{equation*}
\begin{equation}\label{eq:integral}
=\int_H\int_G^{st}\Delta_H(h^{-1}l)\alpha^G_s\big(_A\braket{\xi(l)}{\gamma_{h}(\eta(h^{-1}l))}\big)\dd{s}\dd{l},
\end{equation}
which follows from a combination of (\ref{eq-CcGK}) with (\ref{eq-KcG}),
while the $(A\rtimes_{\alpha^H,\pnh}H)_\png^G$-valued inner product takes values in
$(A\rtimes_{\alpha^H,\pnh}H)_c^G$ given by
$$_{(A\rtimes_{\alpha^H,\pnh}H)_c^G}\bbraket{\xi}{\eta}=\int_G^{st} \alpha^G_s\big(_{\contc(H,A)}\braket{\xi}{\eta}\big)\dd{s}.$$
If we evaluate the integrand at $h\in H$, we just obtain \eqref{eq:integral}, which
 implies the last assertion of the theorem.
\end{proof}

Of course, the isomorphism $A_\png^G\rtimes_{\alpha^H,\pnh}H\cong (A\rtimes_{\alpha^H,\pnh}H)_\png^G$
applies for any weak $X\rtimes(G\times H)$-algebra $(A,\alpha)$ in which the action of $G$ on $X$ is proper.
We only need to apply the theorem to the weak Hilbert $(A, X\rtimes(G\times H))$-module $(A,\alpha)$.
In particular we get $A_\ung^G\rtimes_{\alpha^H,\unh}H\cong (A\rtimes_{\alpha^H,\unh}H)_\ung^G$
and $A_{\redg}^G\rtimes_{\alpha^H,\redh}H\cong (A\rtimes_{\alpha^H,\redh}H)_{\redg}^G$ for the triple of universal norms
$(\ung,\un, \unh)$ and the triple of reduced norms $(\redg, \red, \redh)$, respectively.
Moreover, if $H$ is amenable, a combination of Theorem~\ref{theo:TheFixedPointFunctorForCommutingActions}
with Proposition~\ref{prop-compatible} implies:

\begin{corollary}\label{cor-amenable-fix}
Suppose that $G$ and $H$ are locally compact groups with $H$ amenable, and let $\|\cdot\|_\png$ be the $E$\nb-crossed-product norm for $G$
associated to a $G$\nb-invariant ideal $E\subseteq B(G)$. Let $X$ be a $G\times H$-space such that $G$ (resp. $H$) acts properly on $X$
and let $A$ be a weak $X\rtimes (G\times H)$-algebra with action $\alpha:G\times H\to \Aut(A)$. Then
$A_\png^G\rtimes_{\alpha^H}H\cong (A\rtimes_{\alpha^H}H)_\png^G$ \textup(resp. $A^H\rtimes_{\alpha^G,\png}G\cong (A\rtimes_{\alpha^G,\png})^H$\textup). \end{corollary}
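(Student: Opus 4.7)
The plan is to combine Proposition~\ref{prop-compatible} with Theorem~\ref{theo:TheFixedPointFunctorForCommutingActions}; indeed, the corollary is essentially a direct packaging of these two results. Since $H$ is amenable, the universal and reduced $H$-crossed products coincide, so the unsubscripted notation $\rtimes_{\alpha^H}H$ is unambiguous; this is what allows us to suppress the $H$-norm in the conclusion.

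First I would fix the norms: starting from the given $G$-invariant ideal $E\subseteq B(G)$ with associated crossed-product norm $\|\cdot\|_\png$, let $F\subseteq B(G\times H)$ be the ideal generated by $E\otimes 1_H$ and let $\|\cdot\|_\pn$ denote the associated $(G\times H)$-crossed-product norm. Proposition~\ref{prop-compatible} then ensures that the triple $(\png,\pn,\unh)$ is compatible in the sense of Definition~\ref{def-compatible-action}.

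For the first isomorphism, assuming that $G$ acts properly on $X$, I would apply Theorem~\ref{theo:TheFixedPointFunctorForCommutingActions} to the weakly proper $(A,X\rtimes(G\times H))$-module $(A,\alpha)$ regarded as a module over itself. The theorem, using the compatible triple constructed above, yields the isomorphism
$$A_\png^G\rtimes_{\alpha^H,\unh}H\cong (A\rtimes_{\alpha^H,\unh}H)_\png^G,$$
which is precisely the claim (after dropping the subscript $\unh$ by amenability of $H$).

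For the second isomorphism, assuming instead that $H$ acts properly on $X$, the roles of $G$ and $H$ must be swapped. Here I would observe that the compatibility condition in Definition~\ref{def-compatible-action} is manifestly symmetric under interchanging the two groups: the pair of quotient maps $q_{G,H}$ and $q_{H,G}$ is symmetric in $G$ and $H$. Consequently $(\unh,\pn,\png)$ is a compatible triple for $(H, H\times G, G)$, and a second application of Theorem~\ref{theo:TheFixedPointFunctorForCommutingActions}, now with $H$ playing the role of the proper factor and $G$ playing the secondary one, gives
$$A^H\rtimes_{\alpha^G,\png}G\cong (A\rtimes_{\alpha^G,\png}G)^H,$$
again suppressing the trivial $H$-norm. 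There is no genuine obstacle in the argument; the only minor point to verify explicitly is the symmetry of Definition~\ref{def-compatible-action} under $G\leftrightarrow H$, which is immediate.
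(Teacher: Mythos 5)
Your proof is correct and follows exactly the route the paper takes: it deduces the corollary by combining Proposition~\ref{prop-compatible} (which supplies the compatible triple $(\png,\pn,\unh)$ for amenable $H$) with Theorem~\ref{theo:TheFixedPointFunctorForCommutingActions} applied to $(A,\alpha)$ as a module over itself. Your explicit remark that Definition~\ref{def-compatible-action} is symmetric under interchanging $G$ and $H$ — needed for the second isomorphism, since one cannot simply reapply Proposition~\ref{prop-compatible} with the roles swapped when $G$ is not amenable — is a correct and worthwhile detail that the paper leaves implicit.
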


Note that in the above corollary  we omitted to indicate the norm in the notation of the $H$\nb-crossed products and $H$\nb-fixed-point algebras, respectively, since for amenable groups all $H$\nb-crossed-product norms coincide.

\subsection{Green's imprimitivity theorem}
If $G$ is a locally compact group and $H$ is a closed subgroup of $G$,
we write $\lt$ for the left
translation action of $G$  on $\contz(G/H)$. Then $\K(L^2(G))^{H,\Ad\rho}_{\unh}\cong \contz(G/H)\rtimes_{\lt,\ung} G$. In fact, the canonical isomorphism $\contz(G)\rtimes_{\lt,\ung} G\cong \K(L^2(G))$ transforms the action of $H$ on $C_0(G)\rtimes_{\lt,\ung} G$ induced by the right translation action $\rt$ of $H$ on $\contz(G)$ to the $H$\nb-action $\Ad\rho$ on $\K(L^2(G))$. Hence, applying {Theorem~\ref{theo:TheFixedPointFunctorForCommutingActions}}, we get
$$\K(L^2(G))^{H,\Ad\rho}_\unh\cong (\contz(G)\rtimes_{\lt, \ung}G)^{H,\rt }_\unh\cong \contz(G)^{H,\rt}\rtimes_{\lt,\ung} G\cong \contz(G/H)\rtimes_{\lt,\ung} G.$$
A similar argument shows that $(A\otimes\K(L^2(G)))^{H,\alpha\otimes\Ad{\rho}}_\unh\cong (A\otimes \contz(G/H))\rtimes_{\alpha\otimes\rt, \ung}G$ whenever $(A,\alpha)$ is a $G$\nb-algebra.

More generally, suppose $(B,\beta)$ is an $H$\nb-algebra and let
$(\Ind_H^G(B,\beta),\Ind\beta)$ be the \emph{induced $G$\nb-algebra} $\contz(G,B)^H$ obtained as in Lem\-ma~\ref{lem-actionH}
from the weak $G\rtimes (H\times G)$-algebra $\contz(G,B)$ with structure map $M:C_0(G)\to Z\M(C_0(G,B))$, the
proper diagonal $H$\nb-action $\rt\otimes \beta$ and the proper $G$\nb-action $\lt\otimes\id_B$.
It follows then from Theorem~\ref{thm-centrally-proper} that the full and reduced fixed-point algebras
coincide and that $\Ind_H^G(B,\beta)=\contz(G,B)^H$ can be described as
\begin{multline*}
\Ind_H^G(B,\beta)=\{f\in \contb(G,B): \beta_t(f(st))=f(s)\mbox{ for all }s\in G, t\in H,\\
\mbox{ and }(sH\mapsto \|f(s)\|)\in \contz(G/H)\},
\end{multline*}
with induced $G$\nb-action given by left translation: $(\Ind\beta)_s(f)|_t=f(s^{-1}t)$.

\begin{proposition}
Let $(B,\beta)$ be any $H$-algebra and let $(\png,\pnh)$
be any pair of crossed-product norms which fit into a compatible triple of norms $(\png,\pn,\pnh)$ as
in Definition \ref{def-compatible-action}. Then
 \begin{equation}\label{eq-greencompacts}
 \Ind_H^G(B,\beta)\rtimes_{\Ind\beta,\png}G \cong \big(B\otimes \K(L^2(G))\big)^{H,\beta\otimes \Ad\rho}_\pnh.
 \end{equation}
\end{proposition}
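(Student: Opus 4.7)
The strategy is to realize both sides of the desired isomorphism as two different orderings of the same operations (first fixed-points, then crossed product versus first crossed product, then fixed-points) applied to a single weak $G\times H$\nb-algebra, and then invoke Theorem~\ref{theo:TheFixedPointFunctorForCommutingActions}. Let $A:=B\otimes \contz(G)$, equipped with the commuting actions $\alpha^G:=1\otimes \lt$ of $G$ and $\alpha^H:=\beta\otimes \rt$ of $H$, and with the central $G\times H$\nb-equivariant structure map $1\otimes \id_{\contz(G)}:\contz(G)\to Z\M(A)$. Since both $\lt$ and $\rt$ are proper (indeed free) actions of $G$ and $H$ on $X:=G$, the algebra $A$ is a weak $X\rtimes(G\times H)$-algebra in which \emph{both} groups act properly.

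Apply Theorem~\ref{theo:TheFixedPointFunctorForCommutingActions} in the form obtained by interchanging the roles of $G$ and $H$ (valid because $H$ acts properly on $X$); taking $\E=A$ as a Hilbert $(A,G)$-module we obtain the natural isomorphism
\[
A^H_\pnh\rtimes_{\alpha^G,\png}G \;\cong\; \bigl(A\rtimes_{\alpha^G,\png}G\bigr)^H_\pnh.
\]
The compatibility of the triple $(\png,\pn,\pnh)$ together with the symmetry of the iterated crossed product construction discussed before Definition~\ref{def-compatible-action} guarantees that the intermediate algebras and their exotic $H$\nb-crossed products match up as required.

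For the left-hand side, the $H$\nb-action $\alpha^H$ on $A$ is centrally proper via the map $1\otimes\id:\contz(G)\to Z\M(A)$, so by Theorem~\ref{thm-centrally-proper} the fixed-point algebra $A^H_\pnh$ is independent of $\pnh$ and coincides with $A^H=\contz(G,B)^H=\Ind_H^G(B,\beta)$; the restriction of $\alpha^G=1\otimes \lt$ to this subalgebra is precisely $\Ind\beta$. Hence the left-hand side identifies with $\Ind_H^G(B,\beta)\rtimes_{\Ind\beta,\png}G$. For the right-hand side, view $A$ instead as a centrally proper $X\rtimes G$-algebra with respect to $\alpha^G$; since the $G$-action on $X=G$ is free, Lemma~\ref{lem-univ-sat}(1) shows that $A$ is universally saturated as a Hilbert $(A,G)$-module, and the final assertion of Theorem~\ref{thm-centrally-proper} then gives
\[
A\rtimes_{\alpha^G,\png}G \;=\; A\rtimes_{\alpha^G,\un}G \;\cong\; B\otimes_{\max}\bigl(\contz(G)\rtimes_{\lt,\un}G\bigr)\;\cong\; B\otimes \K(L^2(G)),
\]
where the penultimate isomorphism is standard for the trivial action on the $B$\nb-factor and the last one uses the classical identification $\contz(G)\rtimes_{\lt,\un}G\cong \K(L^2(G))$ together with nuclearity of $\K(L^2(G))$. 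Under this chain of isomorphisms the residual $H$\nb-action $\alpha^H=\beta\otimes\rt$ translates into $\beta\otimes \Ad\rho$, so the right-hand side becomes $\bigl(B\otimes\K(L^2(G))\bigr)^{H,\beta\otimes\Ad\rho}_\pnh$, yielding~\eqref{eq-greencompacts}.

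The main obstacle in carrying this out in detail is the bookkeeping for the compatible triple of norms, in particular verifying that applying the $G\leftrightarrow H$ swapped form of Theorem~\ref{theo:TheFixedPointFunctorForCommutingActions} is legitimate for the given pair $(\png,\pnh)$ and that all the identifications (fixed-point formation with $\Ind$, the equivariance of $\contz(G)\rtimes_{\lt}G\cong \K(L^2(G))$ with respect to $\rt\leftrightarrow \Ad\rho$, and the passage from $\max$ to $\min$ tensor product via nuclearity) are indeed $H$\nb-equivariant and compatible with the exotic norms throughout.
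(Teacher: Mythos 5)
Your proof is correct and follows essentially the same route as the paper: both take $A=C_0(G,B)\cong B\otimes C_0(G)$ with the commuting actions $\lt\otimes\id$ and $\rt\otimes\beta$, identify $A\rtimes_{\lt\otimes\id,\png}G\cong B\otimes\K(L^2(G))$ using that all crossed-product norms coincide for the centrally proper $G$\nb-action (Theorem~\ref{thm-centrally-proper}), and then apply Theorem~\ref{theo:TheFixedPointFunctorForCommutingActions} with the roles of $G$ and $H$ interchanged. The extra details you flag (saturation via freeness, the $\rt\leftrightarrow\Ad\rho$ equivariance of the Stone--von Neumann isomorphism) are exactly the ingredients the paper also invokes, so there is no gap.
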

\begin{proof}
Observe that we have a canonical $H$\nb-equivariant isomorphism
$$C_0(G,B)\rtimes_{\lt\otimes\id,\png}G\cong (B\otimes C_0(G))\rtimes_{\id\otimes\lt,\png}G$$
$$\cong B\otimes \big(C_0(G)\rtimes_{\lt,\png}G\big)\cong B\otimes \K(L^2(G)).$$
In fact, for the universal norm $\png=\ung$ this is well-known
(see \cite[Lemma 7.16]{Williams:Crossed}). But since the $G$-action on $\contz(G,B)$ is centrally proper, all crossed-product norms coincide for this action by Theorem~\ref{thm-centrally-proper}, from which the observation follows. The isomorphism~\eqref{eq-greencompacts} now follows directly from Theorem~\ref{theo:TheFixedPointFunctorForCommutingActions}:
\begin{align*}
 \Ind_H^G(B,\beta)\rtimes_{\Ind\beta,\png}G & \cong \big(C_0(G, B)\rtimes_{\lt\otimes \id,\png}G\big)^{H,\rt\otimes \beta}_\pnh\\
 &\cong \big(B\otimes \K(L^2(G))\big)^{H,\beta\otimes \Ad\rho}_\pnh.\qedhere
 \end{align*}
\end{proof}

As a first application we want to obtain a quite general version of Green's imprimitivity theorem for
classical and exotic crossed products. To prepare for the result, assume that $H$ is a closed subgroup of $G$
and let $\beta:H\to \Aut(B)$ be an action. Consider
$\contc(G,B)$ as a {pre-Hilbert} $\contc(G,\Ind_H^GB)-\contc(H,B)$ bimodule with
module actions and inner products
 given by the formulas
 \begin{equation}\label{eq-Green}
\begin{split}
\xi\cdot\varphi(t)&=\int_H\Delta_H(h)^{-1/2}\Delta_G(h)^{1/2}\beta_h(\xi(th) \cdot \varphi(h^{-1}))\dd{h}\\
f\cdot \xi(t)&=\int_G f(s,t)\xi(s^{-1}t)\dd{s}\\
\bbraket{\xi}{\eta}_{\contc(H,B)}(h)
&=\Delta_H(h)^{-1/2}\Delta_G(h)^{1/2}\int_G{\xi(s)^*}\beta_h(\eta(sh))\dd{s}\quad\text{and}\\
_{\contc(G, \Ind B)}\bbraket{\xi}{\eta}(t,r)&=
\int_H \Delta_G(t^{-1}rh) \beta_h\big(\xi(rh){\eta(t^{-1}rh)^*}\big)\dd{h},
\end{split}
\end{equation}
for $\xi,\eta \in \contc(G,B)$, $\varphi\in \contc(H,B)$ and $f\in \contc(G,\Ind_H^GB)$.

\begin{theorem}[Green's imprimitivity theorem for exotic norms. Version 1]\label{thm-Green}
Suppose that $(\png,\pn, \pnh)$ is  a compatible triple of norms on $G, G\times H$ and $H$ as in Definition \ref{def-compatible-action}.
Then the module actions and inner products on $C_c(G,B)$ as given in (\ref{eq-Green}) extend to give
a $\Ind_H^G(B,\beta)\rtimes_{\Ind\beta,\png}G - B\rtimes_{\beta,\pnh}H$ imprimitivity
bimodule.
\end{theorem}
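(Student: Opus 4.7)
The strategy is to realize the desired equivalence bimodule as the fixed-point module $\F^H_\pnh(L^2(G,B))$ for a suitable weakly proper $(B,G\rtimes H)$-module structure on $L^2(G,B)$, and then identify its pre-Hilbert structure on the dense subspace $\contc(G,B)$ with the formulas~\eqref{eq-Green}. View $G$ as a free and proper right $H$-space and equip $L^2(G,B)\cong L^2(G)\otimes B$ with the $H$-action $\rho|_H\otimes\beta$ and the structure map $M\otimes 1_B\colon C_0(G)\to\Lb(L^2(G,B))$. By Lemma~\ref{lem-E-imp}, $\K(L^2(G,B))\cong B\otimes\K(L^2(G))$ as $H$-algebras with $H$-action $\beta\otimes\Ad\rho$, so combining Proposition~\ref{prop-exotic-compacts} with the isomorphism~\eqref{eq-greencompacts} proved just above yields
\begin{equation*}
\Fix^H_\pnh(L^2(G,B)) \cong \bigl(B\otimes\K(L^2(G))\bigr)^{H,\beta\otimes\Ad\rho}_\pnh \cong \Ind_H^G(B,\beta)\rtimes_{\Ind\beta,\png}G.
\end{equation*}

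Since $H$ acts freely on $G$ and $B$ is a full Hilbert $B$-module over itself, Lemma~\ref{lem-univ-sat}(1) shows that the $H$-action on $L^2(G,B)$ is universally saturated, hence $\pnh$-saturated by Lemma~\ref{lem-univ-sat}(2). Lemma~\ref{lem-univ-sat}(3) then implies that $\F^H_\pnh(L^2(G,B))$ is a full equivalence bimodule between $\Fix^H_\pnh(L^2(G,B))$ and $B\rtimes_{\beta,\pnh}H$. Via the isomorphism above this is precisely a full $\Ind_H^G(B,\beta)\rtimes_{\Ind\beta,\png}G$--$B\rtimes_{\beta,\pnh}H$ equivalence bimodule.

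It remains to verify that the pre-Hilbert bimodule structure induced on $\contc(G,B)\subseteq \F_c(L^2(G,B))=\contc(G)\cdot L^2(G,B)$ (which is inductive-limit dense, hence norm dense in $\F^H_\pnh(L^2(G,B))$ by Lemma~\ref{lem-dense}) agrees with~\eqref{eq-Green}. For the right $\contc(H,B)$-action and $\contc(H,B)$-valued inner product, this is a direct substitution into \eqref{eq:InnerProduct} and \eqref{eq:ModuleConvolution} using $(\rho_h\eta)(s)=\Delta_G(h)^{1/2}\eta(sh)$, which immediately reproduces the right-hand formulas of~\eqref{eq-Green}. For the left structure, one uses the standard identification $C_0(G,B)\rtimes_{\lt\otimes\id}G\cong \K(L^2(G,B))$ implemented by the covariant pair $(M\otimes 1,\lambda_G\otimes 1)$: the left action follows from $(F\cdot\zeta)(t)=\int_G F(s,t)\zeta(s^{-1}t)\,ds$, while the rank-one operator $\ketbra{\xi}{\eta}$ for $\xi,\eta\in\contc(G,B)$ corresponds to the explicit kernel $F(s,t)=\Delta_G(s)^{-1}\xi(t)\eta(s^{-1}t)^*$ in $\contc(G,C_0(G,B))$, and averaging via the strict unconditional integral $\EE^H$ for the $H$-action $\rt\otimes\beta$ produces the left $\contc(G,\Ind_H^G B)$-valued inner product of~\eqref{eq-Green}. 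The main obstacle is this final bookkeeping: careful tracking of the modular functions $\Delta_G,\Delta_H$ and of the normalization conventions for $\rho$, $\lambda$, and $\EE^H$ is required to recover~\eqref{eq-Green} on the nose.
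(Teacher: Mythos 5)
Your proposal is correct and follows essentially the same route as the paper: realize the bimodule as $\F^H_\pnh(L^2(G,B))$ for the weakly proper $(B,G\rtimes H)$-structure given by $\rho\otimes\beta$ and $M\otimes 1_B$, identify its algebra of compact operators with $\Ind_H^G(B,\beta)\rtimes_{\Ind\beta,\png}G$ via Proposition~\ref{prop-exotic-compacts} and \eqref{eq-greencompacts}, and then check the formulas \eqref{eq-Green} on the dense subspace $\contc(G,B)$. The only cosmetic difference is that the paper pins down the left $\contc(G,\Ind_H^GB)$-valued inner product by the compatibility identity ${_{\contc(G,\Ind B)}\bbraket{\xi}{\eta}}\cdot\zeta=\xi\cdot\bbraket{\eta}{\zeta}_{\contc(H,B)}$ rather than by averaging the rank-one kernel, and your explicit appeal to Lemma~\ref{lem-univ-sat} for fullness (freeness of the right $H$-action on $G$) makes a point the paper leaves implicit.
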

\begin{proof}
Let $B\otimes L^2(G)$ be the weakly proper $(B, G\rtimes H)$-module with
$H$\nb-action $\beta\otimes \rho$ and structure map $1\otimes M: C_0(G)\to \Lb(B\otimes L^2(G))$.
Then it follows from Proposition~\ref{prop-exotic-compacts} that
$\F_\pnh^H(B\otimes L^2(G))$ is a $\K(B\otimes L^2(G))_{\pnh}^H-B\rtimes_{\beta,\pnh}H$  imprimitivity
bimodule, and combining this with the isomorphism $\K(B\otimes L^2(G))_{\pnh}^H\cong \Ind_H^G(B,\beta)\rtimes_{\Ind\beta,\png}G$
given by  (\ref{eq-greencompacts}) we see that $\F_\pnh^H(B\otimes L^2(G))$ becomes an
$\Ind_H^G(B,\beta)\rtimes_{\Ind\beta,\png}G-B\rtimes_{\beta,\pnh}H$ imprimitivity bimodule.

The subspace $\contc(G,B)\subseteq L^2(G,B)\cong B\otimes L^2(G)$ is dense in
$\F_c(L^2(G,B))=\contc(G)\cdot L^2(G,B)$ with respect to the inductive limit topology and hence it is norm dense in $\F_\pnh^H(L^2(G,B))$.
The result then follows if we can show that the module actions and inner products
 are given on $C_c(G,B)$ and the dense
subalgebras $\contc(H,B)$ and $\contc(G,\Ind B)$ of $B\rtimes_{\beta,\pnh} H$ and
$\Ind_H^G(B,\beta)\rtimes_{\Ind\beta,\png}G$ by (\ref{eq-Green}).
Indeed, the first
three formulas follow directly from the corresponding formulas for the $\contc(G, C_0(G,B))- B$ submodule
$\contc(G,B)$ of the $C_0(G,B)\rtimes_{\lt\otimes \id}G-B$ equivalence bimodule $L^2(G,B)$ together
with (\ref{eq:InnerProduct}) and (\ref{eq:ModuleConvolution}), and the last formula follows from the
requirement that $_{\contc(G, \Ind B)}\bbraket{\xi}{\eta}\cdot\zeta=\xi\cdot \bbraket{\eta}{\zeta}_{\contc(H,B)}$
for all $\xi,\eta,\zeta\in \contc(G,B)$.
\end{proof}

\begin{remark}\label{rem-green-formulas} We should note that the formulas given in (\ref{eq-Green}) correspond to the formulas
given in  \cite[Appendix B.1]{Echterhoff-Kaliszewski-Quigg-Raeburn:Categorical} via the transformation
 $\xi\mapsto \Delta_G^{-1/2}\xi$ on $C_c(G,B)$. This follows from some straightforward computations which we omit.
\end{remark}

Applying Theorem~\ref{thm-Green} to the triples $(\ung,\un,\unh)$ and $(\redg,\red,\redh)$, we
recover the well-known versions of Green's imprimitivity theorem for
universal and reduced crossed products.
{Moreover,} we shall see below that Green's theorem also applies
for arbitrary exotic crossed-product norms $\mu^G$ for $G$ corresponding to a weak-* closed $G$\nb-invariant ideal
$E\subseteq B(G)$
in the sense of \cite{Kaliszewski-Landstad-Quigg:Exotic} if we let
 $\mu^H$ be the crossed-product norm for $H$ corresponding to the
 weak-* closed ideal $E_H\subseteq B(H)$ generated by $\{f|_H: f\in E\}$, although we do not know whether
 the pair $(\png,\pnh)$ always fits into a compatible triple of norms as in Definition \ref{def-compatible-action}.
For the proof we make use of an adaptation of
\cite[Theorem 4.11]{Echterhoff-Kaliszewski-Quigg-Raeburn:Categorical} to universal crossed products.

Recall from \cite{Echterhoff-Kaliszewski-Quigg-Raeburn:Categorical}*{Lemma~3.19} that if $\delta: A\to \M(A\otimes C^*(H))$ is a coaction of a closed subgroup $H$ of a locally compact group $G$ and if $\iota_H:C^*(H)\to \M(C^*(G))$ denotes the canonical homomorphism, then $\delta$ can be inflated to a coaction of $G$ by defining
$$\Inf\delta:= (\id_B\otimes \iota_H) \circ \delta: B\to \M(B\otimes C^*(G)).$$
We then get

\begin{theorem}[Green's imprimitivity theorem for exotic norms. Version 2]\label{thm-equiv-dual coactions}
Suppose that $\beta:H\to\Aut(B)$ is an action of the closed subgroup $H$ of $G$ and let ${\X}_\un$ denote
Green's $\Ind_H^G(B,\beta)\rtimes_{\beta,\ung}G-B\rtimes_{\beta,\unh}H$ imprimitivity bimodule of Theorem~\ref{thm-Green}.
Then there exists a coaction
$\delta_{{\X}_\un}: {\X}_\un\to \M({\X}_\un\otimes C^*(G))$  which implements a
Morita equivalence between the dual coaction $\widehat{\Ind\beta}$ of $G$ on $\Ind_H^G(B,\beta)\rtimes_{\Ind\beta,\ung}G$
and the inflation $\Inf\widehat{\beta}$ of the dual coaction $\widehat\beta$ of $H$ on $B\rtimes_{\beta,\unh}H$.

Moreover, let $\mu^G$ and $\mu^H$ denote the crossed-product norms
corresponding to the weak\nb-* closed ideal $E\subseteq B(G)$
and the weak-* closed ideal $E_H\subseteq B(H)$ generated by $E|_H\defeq \{f|_H:f\in E\}$, respectively.
Then there is a unique quotient module ${\X}_\pn$ of
${\X}_\un$ such that $\delta_{{\X}_\un}$ factors through a coaction
$\delta_{X_\pn}:{\X}_\pn\to \M({\X}_\pn\otimes C^*(G))$ which
implements a Morita equivalence between the coactions
$(\widehat{\Ind\beta})_\png$ and $\Inf(\widehat{\beta}_\pnh)$ of $G$ on
$\Ind_H^G(B,\beta)\rtimes_{\Ind\beta,\png}G$ and $B\rtimes_{\beta,\pnh}H$, respectively. In particular,
Green's theorem holds for the pair $(\png,\pnh)$ of exotic crossed-product norms.
\end{theorem}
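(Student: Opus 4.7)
The plan is to first construct the coaction $\delta_{\X_\un}$ on the universal bimodule by adapting \cite[Theorem~4.11]{Echterhoff-Kaliszewski-Quigg-Raeburn:Categorical} from reduced to universal crossed products. The formulas given there on the dense subspace $\contc(G,B)\sbe\X_\un$ make sense and extend continuously at the universal level, and the routine verification that the extension respects the bimodule actions and inner products of \eqref{eq-Green} is essentially the same as in \emph{loc.\ cit.} Once existence is established, the same computation shows that $\delta_{\X_\un}$ is equivariant for $\widehat{\Ind\beta}$ on the left coefficient algebra and $\Inf\widehat\beta$ on the right, yielding a Morita equivalence of coactions.

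For the second assertion I would invoke the Rieffel correspondence between ideals of the two coefficient algebras of $\X_\un$. By construction, $\Ind_H^G(B,\beta)\rtimes_{\Ind\beta,\png}G$ is the quotient of $\Ind_H^G(B,\beta)\rtimes_{\Ind\beta,\ung}G$ by $J_{\Ind\beta,E}=\ker\bigl((\id\otimes q_E)\circ\widehat{\Ind\beta}\bigr)$, and this ideal corresponds to a unique ideal $J\sbe B\rtimes_{\beta,\unh}H$, hence to a unique quotient bimodule $\X_\pn$ of $\X_\un$. Since $\delta_{\X_\un}$ intertwines the two coactions, this ideal is
\[
J=\ker\bigl((\id\otimes q_E)\circ\Inf\widehat\beta\bigr)=\ker\bigl((\id\otimes (q_E\circ\iota_H))\circ\widehat\beta\bigr).
\]
The main obstacle is to identify $J$ with $J_{\beta,E_H}$. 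For this I would apply Proposition~\ref{prop-crossed} to the quotient map $q_E\circ\iota_H\colon \Cst(H)\to \M(C_E^*(G))$: by duality the annihilator $\ker(q_E\circ\iota_H)^\perp\sbe B(H)$ is the weak-* closure of $E|_H$, and the weak-* closed ideal of $B(H)$ generated by this set coincides with $E_H$ by the very definition of the latter. Proposition~\ref{prop-crossed} then yields $J=J_{\beta,E_H}$, so the right-hand quotient is precisely $B\rtimes_{\beta,\pnh}H$.

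It remains to transport the coaction to the quotient. Because $\delta_{\X_\un}$ intertwines two coactions whose kernels under $\id\otimes q_E$ are $J_{\Ind\beta,E}$ and $J_{\beta,E_H}$, it factors through $\X_\pn$ to give a coaction $\delta_{\X_\pn}$ implementing a Morita equivalence between the corresponding quotient coactions. On the left the quotient of $\widehat{\Ind\beta}$ is $(\widehat{\Ind\beta})_\png$ by definition. On the right, writing $q\colon B\rtimes_{\beta,\unh}H\onto B\rtimes_{\beta,\pnh}H$ for the quotient map, one computes $(q\otimes\id)\circ\Inf\widehat\beta=(\id\otimes\iota_H)\circ(q\otimes\id)\circ\widehat\beta=\Inf(\widehat\beta_\pnh)\circ q$, so the factored coaction is exactly $\Inf(\widehat\beta_\pnh)$. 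Green's theorem for the pair $(\png,\pnh)$ then follows from the Morita equivalence implemented by $\X_\pn$.
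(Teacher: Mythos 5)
Your proposal is correct and follows essentially the same route as the paper: adapt \cite[Theorem~4.11]{Echterhoff-Kaliszewski-Quigg-Raeburn:Categorical} to the universal level, identify the Rieffel correspondent of $J_{\Ind\beta,E}$ with $\ker\big((\id\otimes q_E)\circ\Inf\widehat\beta\big)$, and apply Proposition~\ref{prop-crossed} to $q_E\circ\iota_H$ after observing that $\ker(q_E\circ\iota_H)={}^\perp(E|_H)$. The paper is only slightly more explicit at the two points you compress: it passes to the linking-algebra coaction and invokes \cite[Lemmas 1.20 and 1.52]{Echterhoff-Kaliszewski-Quigg-Raeburn:Categorical} to justify the ideal matching, and it verifies that $\delta_{\X_\un}$ descends to a genuine $C^*(G)$-coaction on $\X_\pn$ (not merely a $C_E^*(G)$-valued map) by a kernel computation using the injectivity of $(\widehat{\Ind\beta})_\png$ from \cite[Theorem~6.2]{Kaliszewski-Landstad-Quigg:Exotic}.
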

\begin{proof} By Remark \ref{rem-green-formulas} we may use the formulas for the actions and inner products as
given in \cite[Chapter 6]{Echterhoff-Kaliszewski-Quigg-Raeburn:Categorical}. We are then in the
setting of \cite[Theorem 4.11]{Echterhoff-Kaliszewski-Quigg-Raeburn:Categorical}, which gives our theorem
in the case of the reduced crossed-product norms  $(\redg,\redh)$. Indeed, all constructions and computations are
done in \cite{Echterhoff-Kaliszewski-Quigg-Raeburn:Categorical} on the level of functions with compact supports on
$G$ with values in various (multiplier) algebras.
Since these spaces embed  into the universal crossed product and module completions as well as for the reduced ones, we may go through the
proof of \cite[Theorem 4.11]{Echterhoff-Kaliszewski-Quigg-Raeburn:Categorical}  step by step to see that precisely the same
arguments give the desired coaction $\delta_{{\X}_\un}: {\X}_\un\to \M({\X}_\un\otimes C^*(G))$ of our theorem.

Assume now that $\png$ is a crossed-product norm corresponding to a weak-* closed ideal $E\subseteq B(G)$ in the sense of
\cite{Kaliszewski-Landstad-Quigg:Exotic} and let $C_E^*(G)$ denote the corresponding exotic group {\cstar{}}algebra.
Let
$$L({\X}_\un)=\left(\begin{matrix} \Ind_H^G(B,\beta)\rtimes_{\Ind\beta,\ung}G & {\X}_\un\\ {\X}_\un^*& B\rtimes_{\beta,\unh}H\end{matrix}\right)$$
denote the linking algebra for ${\X}_\un$. Then there is a unique coaction
$$\delta_{L({\X}_\un)}:=\left(\begin{smallmatrix} \widehat{\Ind\beta}& \delta_{{\X}_\un}\\ \delta_{{\X}_\un^*} &\Inf\widehat{\beta}\end{smallmatrix}\right):
L({\X}_\un)\to {\M(}L({\X}_\un)\otimes C^*(G){)}\cong {\M(}L({\X}_\un\otimes C^*(G)){)}$$
of $G$ on $L({\X}_\un)$ (see \cite[Chapter 2.5]{Echterhoff-Kaliszewski-Quigg-Raeburn:Categorical}).
The intersection of the kernel $I_{L({\X}_\un)}$ of the composition
$$(\id_{L({\X}_\un)}\otimes q_E)\circ \delta_{L({\X}_\un)}:L({\X}_\un)\to \M(L({\X}_\un)\otimes C_E^*(G))\cong
\M(L({\X}_\un\otimes C_E^*(G)))$$
with the corners $\Ind_H^G(B,\beta)\rtimes_{\Ind\beta,\ung}G$ and $B\rtimes_{\beta,\unh}H$, respectively, are given by
$J_E:=\ker( \id_{\Ind B\rtimes G}\otimes q_E)\circ \widehat{\Ind\beta}$ and the ideal $I_E:=\ker(\id_{B\rtimes H}\otimes q_E)\circ \Inf\widehat\beta$.
It follows  from  \cite[Lemma 1.20 and Lemma 1.52]{Echterhoff-Kaliszewski-Quigg-Raeburn:Categorical}
that the kernel ${\Y}:=\ker(\id_{{\X}_\un}\otimes q_E)\circ \delta_{{\X}_\un}\subseteq {\X}_\un$ matches
$J_E$ and $I_E$ under the Rieffel correspondence, and hence ${\X}_\un$ factors to give a
$\Ind_H^G(B,\beta)\rtimes_{\Ind\beta, \png}G- B\rtimes_{\beta,\pnh}H$ imprimitivity bimodule
 ${\X}_\pn:= {\X}_\un/{\Y}$ as soon as we can show that $B\rtimes_{\beta,\pnh}H= (B\rtimes_{\beta,\unh}H)/I_E$.

 To see this we first observe that the annihilator $^\perp (E|_H)$ of $E|_H\subset B(H)$ in $C^*(H)$ is precisely the
 kernel of the composition $q_E\circ \iota_H: C^*(H)\to \M(C_E^*(G))$. This follows  from the fact that $E$ consists of
 all coefficient functions of unitary representations of $G$ which annihilate $\ker q_E$. The intersection of  kernels in $C^*(H)$
 of the restrictions of  these representations to $H$ coincides with $\iota_H^{-1}(\ker q_E)= \ker(q_E\circ \iota_H)$.
The equation $B\rtimes_{\beta,\pnh}H= (B\rtimes_{\beta,\unh}H)/I_E$ follows then from
 Proposition~\ref{prop-crossed} applied to the quotient $C_\nu^*(H):=C^*(H)/\ker(q_E\circ \iota_H)$.

 We finally check that  $\delta_{{\X}_\un}$ factors through a coaction
$\delta_{\X_\pn}$ on the quotient module ${\X}_\pn$ which
implements a Morita equivalence between the coactions
$(\widehat{\Ind\beta})_\png$ and $\Inf(\widehat{\beta}_\pnh)$, which exist by \cite{Kaliszewski-Landstad-Quigg:Exotic}*{Theorem~6.2}.
For this
let $q_{L({\X})}: L({\X}_\un)\to L({\X}_\un)/I_{L({\X}_\un)}\cong
L({\X}_\un/{\Y})$ denote the quotient map. Then the intersection of the kernel of the composition
$$(q_{L({\X})}\otimes \id_G)\circ  \delta_{L({\X}_\un)}: L({\X}_\un)\to \M(L({\X}_\un/\Y)\otimes C^*(G))$$
with the upper left corner $\Ind_H^G(B,\beta)\rtimes_{\Ind\beta,\ung}G$ coincides with $J_{\Ind\beta,E}$, since we know from
\cite[Theorem 6.2]{Kaliszewski-Landstad-Quigg:Exotic} that $\widehat{\Ind\beta}$ factors through an injective coaction
on $\Ind_H^G(B,\beta)\rtimes_{\Ind\beta,\png}G=(\Ind_H^G(B,\beta)\rtimes_{\Ind\beta,\ung}G)/J_{\Ind\beta,E}$.
Since $(\id_{{\X}_\un}\otimes q_E)\circ \delta_{{\X}_\un}$ is an imprimitivity bimodule homomorphism,
the intersection of the kernel of $(q_{L({\X})}\otimes \id_G)\circ  \delta_{L({\X}_\un)}$ with the other corners
correspond to $J_{\Ind\beta,E}$ by the Rieffel correspondence (use again \cite[Lemma 1.20 and Lemma 1.52]{Echterhoff-Kaliszewski-Quigg-Raeburn:Categorical}), hence they must coincide with ${\Y}$ and $I_E$, respectively.
Thus we see that the kernel of $(q_{L({\X})}\otimes \id_G)\circ  \delta_{L({\X}_\un)}$
coincides with $I_{L({\X}_\un)}=\ker\big((\id_{L({\X}_\un)}\otimes q_E)\circ \delta_{L({\X}_\un)}\big)$.
It follows that $(q_{L({\X})}\otimes \id_G)\circ  \delta_{L({\X}_\un)}$ factors through a coaction on
$L({\X}_\un)/I_{L({\X}_\un)}\cong
L({\X}_\un/\Y)$ whose restriction to the upper right corner gives a coaction on ${\X}_\pn:= {\X}_\un/{\Y}$
which implements the desired Morita equivalence between $(\widehat{\Ind\beta})_\png$ and $\Inf(\widehat{\beta}_\pnh)$.
\end{proof}

The following corollary shows that our Version 1 of Green's theorem is actually a special case of Version 2. It also indicates
that one cannot expect very much freedom for independent choices of norms in compatible triples $(\png,\pn,\pnh)$.

\begin{corollary}\label{cor-subgroup-triple} Suppose that $H$ is a closed subgroup of $G$ and that $(\png,\pn,\pnh)$ is a compatible triple
of norms for $G, G\times H$, and $H$ as in Definition \ref{def-compatible-action}. If $\png$ is associated to the
weak-* closed $G$-invariant ideal $E\subseteq B(G)$, then $\pnh$ is associated to the weak-* closed ideal $E_H$ in $B(H)$
generated by $E|_H=\{f|_H:f\in E\}$.
\end{corollary}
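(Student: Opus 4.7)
The plan is to compare the two Morita equivalences provided by Theorems~\ref{thm-Green} and~\ref{thm-equiv-dual coactions} via the Rieffel correspondence applied to the universal Green's imprimitivity bimodule. By Definition~\ref{def-compatible-action}, the compatibility hypothesis already forces $\pnh$ to be the KLQ crossed-product norm associated to some weak-* closed $H$-invariant ideal $F\sbe B(H)$, and the task is to show $F=E_H$.

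First I would reduce to the case $B=\C$ with trivial $H$-action, so that $\Ind_H^G(\C)=\contz(G/H)$ with the left-translation $G$-action, and $\C\rtimes_\pnh H=C^*_F(H)$ while $\C\rtimes_{\pn_{E_H}}H=C^*_{E_H}(H)$. By the standard bipolar duality between weak-* closed ideals of $B(H)$ and closed ideals of $C^*(H)$ (which underlies the KLQ construction, cf.\ Proposition~\ref{prop-crossed}), the identity $F=E_H$ follows once we know $C^*_F(H)=C^*_{E_H}(H)$ as quotients of $C^*(H)$.

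Next, let $\X_\un$ denote the universal Green's bimodule for this setup, \ie, the $\contz(G/H)\rtimes_{\lt,\ung}G - C^*(H)$ imprimitivity bimodule obtained by completing $\contc(G)$ via the formulas~(\ref{eq-Green}). Theorem~\ref{thm-Green} applied to the compatible triple produces a quotient bimodule $\X^{(1)}$ which implements a Morita equivalence between $\contz(G/H)\rtimes_{\lt,\png}G$ and $C^*_F(H)$; since $\X^{(1)}$ is the completion of the same $\contc(G)$ under the same algebraic operations as $\X_\un$, it is precisely the quotient of $\X_\un$ whose matching left-hand ideal under the Rieffel correspondence is
\[
J_E\defeq\ker\bigl(\contz(G/H)\rtimes_{\lt,\ung}G\onto \contz(G/H)\rtimes_{\lt,\png}G\bigr).
\]
Similarly, Theorem~\ref{thm-equiv-dual coactions} applied to $E$ and $E_H$ produces a quotient $\X_\pn=\X_\un/\Y$ implementing a Morita equivalence between the same $\contz(G/H)\rtimes_{\lt,\png}G$ and $C^*_{E_H}(H)$, and its matching left-hand ideal is also $J_E$ (this is built into the proof of that theorem).

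Finally I would invoke the uniqueness of matching in the Rieffel correspondence for $\X_\un$: since $\X^{(1)}$ and $\X_\pn$ are quotients of $\X_\un$ matching the same left-hand ideal $J_E$, they coincide as quotient bimodules, and hence their right-hand ideals agree. This gives $C^*_F(H)=C^*_{E_H}(H)$, so $F=E_H$ by the reduction above. The main obstacle I anticipate is identifying the Theorem~\ref{thm-Green} bimodule $\X^{(1)}$ as the Rieffel-correspondence quotient of $\X_\un$ matching $J_E$ on the left: since both bimodules are defined by completing the same $\contc(G)$ under the same module operations but with different exotic norms, the passage from $\X_\un$ to $\X^{(1)}$ is controlled by the exotic norms on both sides. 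The key observation that resolves this is that Theorem~\ref{thm-Green} already identifies the left algebra acting on $\X^{(1)}$ with $\contz(G/H)\rtimes_{\lt,\png}G$, which is exactly $\contz(G/H)\rtimes_{\lt,\ung}G/J_E$, so the matching left-hand ideal is forced to be $J_E$.
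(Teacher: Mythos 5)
Your proof is correct and follows essentially the same route as the paper: both quotient bimodules produced by Theorems \ref{thm-Green} and \ref{thm-equiv-dual coactions} match the ideal $J_E$ on the left under the Rieffel correspondence for the universal Green bimodule $\X_\un$, hence their right-hand sides coincide as quotients of the universal $H$\nb-crossed product. The paper runs this argument for an arbitrary $H$\nb-algebra $B$ instead of reducing to $B=\C$ and invoking bipolar duality, but that is an inessential difference.
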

\begin{proof} Let $\beta:H\to \Aut(B)$ be any action. Then it follows from Theorems \ref{thm-Green} and \ref{thm-equiv-dual coactions}
that  both crossed products $B\rtimes_{\beta,\pnh}H$ and
$B\rtimes_{\beta,\pn_{E_H}}H$  correspond to the crossed product $\Ind_H^G(B,\beta)\rtimes_{\Ind\beta,\png}G$
under the Rieffel correspondence for Green's $\Ind_H^G(B,\beta)\rtimes_{\Ind\beta,\ung}G - B\rtimes_{\beta,\unh}H$ imprimitivity
bimodule $\X_{\un}$. Hence they must be the same.
\end{proof}

\subsection{The symmetric imprimitivity theorem}
We are now going to derive a very general version of the symmetric imprimitivity theorem,
which has been first obtained for commuting free and proper actions of two groups $G$ and $H$ on a space $X$
by Green and Rieffel (\eg, see \cite{Rieffel:Applications_Morita}).
This result was extended by Raeburn \cite{Raeburn:Induced_Symmetric}, Kasparov \cite[\S 3]{Kasparov:Novikov}, and
Quigg-Spielberg \cite{Quigg-Spielberg} to commuting actions of $G$ and $H$ such that there exists a
$G\times H$-space $X$ on which $G$ and $H$ act freely and properly
together with a  structure map $\phi:C_0(X)\to \M(A)$ which takes its values in the {\em center} $Z\M(A)$ of $\M(A)$.
A version of the symmetric imprimitivity theorem for  saturated
Rieffel proper actions
is given by an Huef, Raeburn, and Williams  in \cite{anHuef-Raeburn-Williams:Symmetric}. Although the result in \cite{anHuef-Raeburn-Williams:Symmetric} was
formulated in principle for a more general class of actions (with a number of extra technical assumptions), the
 only class of examples where it was explicitly shown to apply is given by
weak $X\rtimes (G\times H)$-algebras $(A,\alpha)$ in our sense, such that $G$ and $H$ both act
freely and properly on $X$. Also, the results in \cite{anHuef-Raeburn-Williams:Symmetric} are restricted to
the case of reduced fixed-point algebras and crossed products.
Below we shall give a version of the symmetric imprimitivity theorem for such algebras
which also works for  universal crossed products and for exotic crossed products
for any compatible triple $(\png,\pn,\pnh)$ in the sense of Definition \ref{def-compatible-action}.
Since we assume the presence
of a $G\times H$-equivariant structure map $\phi:C_0(X)\to \M(A)$ from the beginning (without assuming the actions of $G$ and $H$ to be free),
our proof and the derived formulas turn out to be much easier and less technical than the ones
obtained in \cite{anHuef-Raeburn-Williams:Symmetric}. Allowing non-free actions and exotic norms, our results
also extend the results of Raeburn, Kasparov and Spielberg-Quigg on {\em centrally proper} commuting actions.

Recall from Propositions \ref{prop:PreHilbertModule} and \ref{prop-exotic-compacts} that whenever we have
a weakly proper $X\rtimes G$-algebra $(A,\alpha)$, the module
$\F_\png^G(A)$ becomes a {\em partial} $A_\png^G-A\rtimes_{\alpha,\png}G$
equivalence bimodule. The word partial indicates that the $A\rtimes_{\alpha,\png}G$-valued inner product might not be full.
It is always full if the action of $G$ on $A$ is $\png$-saturated as in Definition~\ref{def-saturated}, which is true if
 $G$ acts freely on $X$.
Now, if $(A,\alpha)$ is a weak $X\rtimes (G\times H)$-algebra such that $G$ acts properly on $X$, we
obtain the partial $A_\png^G\rtimes_{\alpha^H,\pnh}H-(A\rtimes_{\alpha^G,\png}G)\rtimes_{\alpha^H,\pnh} H$ equivalence bimodule
$\F_\png^G(A)\rtimes_\pnh H$
as above by passing to the $H$\nb-descent. This is an equivalence bimodule, if (and only if) $\F_\png^G(A)$
is one. If also the action of $H$ on $X$ is proper, we may reverse the role of $G$ and $H$ to
obtain a partial $A_\pnh^H\rtimes_{\alpha^G,\png}G-(A\rtimes_{\alpha^H,\pnh}H)\rtimes_{\alpha^G,\png} G$ equivalence bimodule
$\F_\pnh^H(A)\rtimes_\png G$, which is an equivalence bimodule
if the action of $H$ on $A$ is $\pnh$-saturated.
Using the canonical isomorphism
$$(A\rtimes_{\alpha^G,\png}G)\rtimes_{\alpha^H,\pnh} H\cong A\rtimes_{\beta,\pn}(G\times H)\cong
 (A\rtimes_{\alpha^H,\pnh}H)\rtimes_{\alpha^G,\png} G,$$
which follows from our compatibility assumption for the given norms $(\png,\pn, \pnh)$, we may
form the product
$$\F_\pn(A,G,H):=\left(\F_\png^G(A)\rtimes_\pnh H\right)\otimes_{ A\rtimes_{\beta,\pn}(G\times H)}\left(\F_\pnh^H(A)\rtimes_\png G\right)^*$$
which is a partial $A_\png^G\rtimes_{\alpha^H,\pnh}H-A_\pnh^H\rtimes_{\alpha^G,\png}G$ equivalence bimodule.
If both actions of $G$ and $H$ on $A$ are saturated, which holds in particular if both actions of $G$ and $H$
on $X$ are free, then this will become a full equivalence bimodule. Thus we have shown:

\begin{theorem}[Symmetric Imprimitivity Theorem. Version 1]\label{thm-symmetric}
Let $X$ be a $G\times H$-space with commuting proper $G$ and $H$\nb-actions and assume that $(\png,\pn,\pnh)$ is a compatible triple
of norms as in Definition \ref{def-compatible-action}
and let  $(A,\alpha)$ be a weak $X\rtimes(G\times H)$-algebra.
Then $\F_\pn(A,G,H)$ constructed above is a partial $A_\png^G\rtimes_{\alpha^H,\pnh}H-A_\pnh^H\rtimes_{\alpha^G,\png}G$ equivalence bimodule.
Moreover, if both actions of $G$ and $H$ on $A$ are $\png$- (resp. $\pnh$-) saturated, then $\F_\pn(A,G,H)$ becomes  a Morita equivalence
$$A^G_\png\rtimes_{\alpha^H,\pnh} H\sim_M A^H_\pnh\rtimes_{\alpha^G,\png} G.$$
\end{theorem}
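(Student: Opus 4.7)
The plan is to realize $\F_\pn(A,G,H)$ as the internal tensor product of two partial equivalence bimodules obtained by descending the weakly proper modules $\F_\png^G(A)$ and $\F_\pnh^H(A)$ (the first by $H$, the second by $G$), and to identify their common middle algebra using the compatibility of $(\png,\pn,\pnh)$. The key ingredients are already in place: the partial equivalence structure of $\F_\png^G(A)$ and $\F_\pnh^H(A)$ from Propositions~\ref{prop:PreHilbertModule} and~\ref{prop-exotic-compacts}, the $H$- respectively $G$\nb-equivariance produced by Lemma~\ref{lem-actionH}, the descent to crossed products via Lemma~\ref{lem-E-imp}, and the compatibility identification at the level of $A\rtimes_{\beta,\pn}(G\times H)$.

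First I would apply Propositions~\ref{prop:PreHilbertModule} and~\ref{prop-exotic-compacts} to $(A,\alpha^G)$ viewed as a weakly proper $(A,X\rtimes G)$\nb-module to obtain the partial $A_\png^G-A\rtimes_{\alpha^G,\png}G$ equivalence bimodule $\F_\png^G(A)$. Because $G$ acts properly on $X$, Lemma~\ref{lem-actionH} equips $\F_\png^G(A)$ with an $H$\nb-action compatible with $\alpha^H$ on both corners. Descending the resulting equivariant linking algebra via Lemma~\ref{lem-E-imp} then yields a partial $A_\png^G\rtimes_{\alpha^H,\pnh}H-(A\rtimes_{\alpha^G,\png}G)\rtimes_{\alpha^H,\pnh}H$ equivalence bimodule $\F_\png^G(A)\rtimes_\pnh H$. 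Swapping the roles of $G$ and $H$, an identical argument produces a partial $A_\pnh^H\rtimes_{\alpha^G,\png}G-(A\rtimes_{\alpha^H,\pnh}H)\rtimes_{\alpha^G,\png}G$ equivalence bimodule $\F_\pnh^H(A)\rtimes_\png G$. Compatibility of $(\png,\pn,\pnh)$ identifies the middle algebras of these two bimodules both with $A\rtimes_{\beta,\pn}(G\times H)$, and the internal tensor product of the first with the dual of the second gives exactly $\F_\pn(A,G,H)$ as a partial $A_\png^G\rtimes_{\alpha^H,\pnh}H-A_\pnh^H\rtimes_{\alpha^G,\png}G$ equivalence bimodule. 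For the saturation claim, Lemma~\ref{lem-univ-sat} tells me that $\png$\nb-saturation of $\alpha^G$ makes $\F_\png^G(A)$ a full equivalence bimodule, and fullness of its right inner product passes to the $\pnh$\nb-descent by a standard density argument, while the left inner product remains full by construction; the analogous statement for $H$ gives fullness of the second factor, and the internal tensor product of two full equivalence bimodules is full.

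The main obstacle I expect is not any single calculation but careful bookkeeping of the identifications: one must verify that the descended left inner products match the canonical isomorphism $A_\png^G\rtimes_{\alpha^H,\pnh}H\cong(A\rtimes_{\alpha^H,\pnh}H)_\png^G$ of Theorem~\ref{theo:TheFixedPointFunctorForCommutingActions}, and that the compatibility isomorphism $(A\rtimes_{\alpha^G,\png}G)\rtimes_{\alpha^H,\pnh}H\cong A\rtimes_{\beta,\pn}(G\times H)\cong(A\rtimes_{\alpha^H,\pnh}H)\rtimes_{\alpha^G,\png}G$ intertwines the right module structures of the two factors. All such checks reduce to dense-subspace computations on $\contc(H,\F_c(A))$ and $\contc(G,\F_c(A))$ that mirror those already carried out in the proof of Theorem~\ref{theo:TheFixedPointFunctorForCommutingActions}.
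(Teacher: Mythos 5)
Your proposal is correct and follows essentially the same route as the paper: form the partial equivalence bimodules $\F_\png^G(A)$ and $\F_\pnh^H(A)$, descend them by $H$ and $G$ respectively using the $H$- (resp.\ $G$-) actions from Lemma~\ref{lem-actionH} and the crossed-product descent of Lemma~\ref{lem-E-imp}, identify the common middle algebra with $A\rtimes_{\beta,\pn}(G\times H)$ via the compatibility of $(\png,\pn,\pnh)$, and take the balanced tensor product, with saturation giving fullness of each factor and hence of the product. The only inessential difference is that you invoke Theorem~\ref{theo:TheFixedPointFunctorForCommutingActions} as a consistency check on the left coefficient algebra, whereas the paper gets $A_\png^G\rtimes_{\alpha^H,\pnh}H\cong\K(\F_\png^G(A)\rtimes_\pnh H)$ directly from Lemma~\ref{lem-E-imp}.
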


As a consequence of the previous theorem and the isomorphisms $(A\rtimes_\png G)^H_\pnh\cong A^H_\pnh\rtimes_\png G$ and $A^G_\png\rtimes_\pnh H\cong (A\rtimes_\pnh H)^G_\png$ from Theorem~\ref{theo:TheFixedPointFunctorForCommutingActions}, we immediately get the following ``twisted symmetric imprimitivity theorem'' in which the roles of fixed-point algebras and crossed products are interchanged in comparison with the usual version of that theorem:

\begin{corollary}
Let $(A,\alpha)$ be a weak $X\rtimes (G\times H)$-algebra such that the actions of $G$ and $H$ on $X$ are proper
and the actions of $G$ and $H$ on $A$ are $\png$- (resp. $\pnh$-) saturated. Then
$$(A\rtimes_{\alpha^H,\pnh} H)^G_\png\sim_M (A\rtimes_{\alpha^G,\png} G)^H_\pnh.$$
\end{corollary}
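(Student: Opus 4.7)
The plan is to deduce this corollary as an immediate formal consequence of two previously established results: Theorem~\ref{thm-symmetric} (the symmetric imprimitivity theorem, Version 1) and Theorem~\ref{theo:TheFixedPointFunctorForCommutingActions} (the commutation of fixed-point algebras with crossed products by the other group).

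The first step is to apply Theorem~\ref{theo:TheFixedPointFunctorForCommutingActions} to the weak $X\rtimes(G\times H)$-algebra $(A,\alpha)$ twice. Since the action of $G$ on $X$ is proper, we may view $A\rtimes_{\alpha^H,\pnh}H$ as a weakly proper $X\rtimes G$-algebra via the composition of the structure map $\phi\colon C_0(X)\to\M(A)$ with the canonical inclusion $i_A^{\pnh}\colon A\to\M(A\rtimes_{\alpha^H,\pnh}H)$, and the theorem yields
\[
(A\rtimes_{\alpha^H,\pnh}H)^G_\png\cong A^G_\png\rtimes_{\alpha^H,\pnh}H.
\]
Interchanging the roles of $G$ and $H$ (which is possible because $H$ also acts properly on $X$ and the triple $(\png,\pn,\pnh)$ is assumed compatible), the same theorem gives
\[
(A\rtimes_{\alpha^G,\png}G)^H_\pnh\cong A^H_\pnh\rtimes_{\alpha^G,\png}G.
\]

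The second step is to invoke Theorem~\ref{thm-symmetric}. Under our saturation hypothesis on the $G$- and $H$-actions on $A$, that theorem produces the Morita equivalence
\[
A^G_\png\rtimes_{\alpha^H,\pnh}H\sim_M A^H_\pnh\rtimes_{\alpha^G,\png}G,
\]
implemented by the partial equivalence bimodule $\F_\pn(A,G,H)$, which is full exactly under the saturation assumption we are making.

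Chaining the two isomorphisms from the first step with this Morita equivalence yields
\[
(A\rtimes_{\alpha^H,\pnh}H)^G_\png\cong A^G_\png\rtimes_{\alpha^H,\pnh}H\sim_M A^H_\pnh\rtimes_{\alpha^G,\png}G\cong (A\rtimes_{\alpha^G,\png}G)^H_\pnh,
\]
which is exactly the asserted Morita equivalence. There is no real obstacle here beyond checking that the hypotheses of Theorem~\ref{theo:TheFixedPointFunctorForCommutingActions} are met in both directions, the only mildly subtle point being that one must use compatibility of the triple $(\png,\pn,\pnh)$ so that the identification $(A\rtimes_{\alpha^G,\png}G)\rtimes_{\alpha^H,\pnh}H\cong(A\rtimes_{\alpha^H,\pnh}H)\rtimes_{\alpha^G,\png}G$ needed inside the proofs of those theorems is available.
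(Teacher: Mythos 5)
Your proposal is correct and matches the paper's own argument exactly: the corollary is deduced by combining the Morita equivalence $A^G_\png\rtimes_{\alpha^H,\pnh}H\sim_M A^H_\pnh\rtimes_{\alpha^G,\png}G$ of Theorem~\ref{thm-symmetric} with the two isomorphisms $(A\rtimes_{\alpha^H,\pnh}H)^G_\png\cong A^G_\png\rtimes_{\alpha^H,\pnh}H$ and $(A\rtimes_{\alpha^G,\png}G)^H_\pnh\cong A^H_\pnh\rtimes_{\alpha^G,\png}G$ from Theorem~\ref{theo:TheFixedPointFunctorForCommutingActions}. The hypothesis-checking you mention (properness of both actions on $X$ and compatibility of the triple of norms) is exactly what is needed and is implicit in the paper's one-line derivation.
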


\begin{remark}
 It is interesting to observe that, even if we start with a $X\rtimes (G\times H)$-algebra $A$ such that
the structure map $\phi:C_0(X)\to Z\M(A)$ takes values in the center of $\M(A)$ and such that
$G$ and $H$ act freely and properly on $X$, which is the situation considered by Raeburn
and Kasparov in \cite{Raeburn:Induced_Symmetric} and \cite{Kasparov:Novikov}, the above corollary does not make sense
outside the setting of weakly proper algebras since the canonical
structure maps from $C_0(X)$ into $\M(A\rtimes_{\alpha^G, \png}G)$ or $\M(A\rtimes_{\alpha^H, \pnh}H)$
will rarely take values in the centers of these multiplier algebras.
\end{remark}

If one of the groups, say $H$, is amenable, we may combine Theorem~\ref{thm-symmetric} with Proposition~\ref{prop-compatible}
to obtain

\begin{corollary}\label{cor-symmetric-amenable}
Let $(A,\alpha)$ be a weak $X\rtimes (G\times H)$-algebra such that $G$ and $H$ act properly on $X$.
Assume further that $H$ is amenable. Then for any fixed $E$\nb-crossed-product norm $\png$ for $G$ we
obtain a partial $A_\png^G\rtimes_{\alpha^H}H-A^H\rtimes_{\alpha^G,\png}G$ equivalence bimodule $\F_\pn(A,G,H)$.
Moreover, if the actions of $G$ and $H$ on $A$ are $\png$- (resp. $\unh$-) saturated, then we obtain Morita equivalences
$$A_\png^G\rtimes_{\alpha^H}H \sim_M A^H\rtimes_{\alpha^G,\png}G\quad\text{and}\quad
(A\rtimes_{\alpha^H} H)^G_\png\sim_M (A\rtimes_{\alpha^G,\png} G)^H.$$
\end{corollary}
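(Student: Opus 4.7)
The plan is to deduce this corollary as a direct consequence of Theorem~\ref{thm-symmetric}, with the amenability hypothesis supplying the compatible triple of norms that theorem requires as input.

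Concretely, since $H$ is amenable and $\png$ is the $E$-crossed-product norm attached to a weak-* closed $G$-invariant ideal $E\subseteq B(G)$, I would first invoke Proposition~\ref{prop-compatible} to produce a $(G\times H)$-norm $\pn$ (namely the one associated to the ideal of $B(G\times H)$ generated by $E\otimes 1_H$) such that $(\png,\pn,\unh)$ is a compatible triple in the sense of Definition~\ref{def-compatible-action}; here $\unh$ is the unique crossed-product norm on $H$, since amenability forces $\|\cdot\|_\unh=\|\cdot\|_\redh$. Feeding this triple into Theorem~\ref{thm-symmetric} immediately yields the partial $A_\png^G\rtimes_{\alpha^H,\unh}H - A_\unh^H\rtimes_{\alpha^G,\png}G$ equivalence bimodule $\F_\pn(A,G,H)$, and gives a full Morita equivalence as soon as the actions of $G$ and $H$ on $A$ are $\png$- and $\unh$-saturated, respectively. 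Amenability of $H$ lets us suppress the superscript on $\unh$ and identify $A_\unh^H$ with $A^H$, producing the first Morita equivalence in the statement.

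For the second Morita equivalence, I would apply Corollary~\ref{cor-amenable-fix} to obtain the natural isomorphisms
$$A_\png^G\rtimes_{\alpha^H}H\;\cong\;(A\rtimes_{\alpha^H}H)_\png^G\qquad\text{and}\qquad A^H\rtimes_{\alpha^G,\png}G\;\cong\;(A\rtimes_{\alpha^G,\png}G)^H,$$
and then transport the Morita equivalence just obtained through these isomorphisms. The only genuine content lies in Proposition~\ref{prop-compatible}; once the compatible triple is in hand, the rest is bookkeeping, so I do not anticipate any real obstacle beyond recording the identifications correctly.
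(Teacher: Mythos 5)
Your proposal is correct and follows essentially the same route as the paper: the paper derives this corollary precisely by combining Proposition~\ref{prop-compatible} (to produce the compatible triple $(\png,\pn,\unh)$ from amenability of $H$) with Theorem~\ref{thm-symmetric}, and the second Morita equivalence is transported through the exchange isomorphisms of Theorem~\ref{theo:TheFixedPointFunctorForCommutingActions}, of which Corollary~\ref{cor-amenable-fix} is exactly the amenable specialization you invoke.
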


In what follows next, we want to obtain a more direct description of the \linebreak
${A^G_\png\rtimes_{\alpha^H,\pnh} H} - A^H_\pnh\rtimes_{\alpha^G,\png} G$ bimodule
$\F_\pn(A,G, H)$.
For this we need the following lemma.

\begin{lemma}\label{proper-product actions}
Suppose that $(B,\beta)$ is a $G\times H$-algebra, $X$ is a proper $G\times H$-space and $(\E,\gamma)$
is a weak $(B, X\rtimes (G\times H))$-module. Moreover, let $(A,\alpha):=(\K(\E), \Ad\gamma)$ and let $(\png,\pn,\pnh)$
be any compatible triple of norms as in Definition \ref{def-compatible-action}.
Then $(\F_\png^G(\E), \gamma^H)$ is a  weakly
proper $(B\rtimes_{{\beta}^G,\png}G, G\bs X\rtimes H)$-module  with $H$\nb-action $\gamma^H$ induced from the given $H$\nb-action $\gamma|_H$
on $\F_c^G(\E)\subseteq \E$,
and we get isomorphisms
$$\F_\pnh^H(\F_\png^G(\E))\cong \F_\pn^{G\times H}(\E)\quad\text{and}\quad (A_\png^G)_\pnh^H\cong A_\pn^{G\times H}.$$
The first isomorphism is given by the identification
$\F_c^H(\F_c^G(\E))= \F_c^{H\times G}(\E)$ and the second isomorphism is given by the
canonical inclusion $A_c^{H\times G}\into (A_\png^G)_c^H$.
\end{lemma}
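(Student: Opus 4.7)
The plan is to establish (1) first, and then deduce (2) and (3) from Proposition~\ref{prop-exotic-compacts} together with the fact that all the relevant module structures agree on the common dense subspace $\F_c^G(\E)=C_c(X)\cdot\E\subseteq\E$.

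For (1), the $H$-action on $\F_\png^G(\E)$ is provided by Lemma~\ref{lem-actionH} (and the induced $H$-action on $G\backslash X$ is proper since $G\times H$ acts properly on $X$). For the weakly proper structure, let $p\colon X\to G\backslash X$ denote the orbit map. On the inductive-limit-dense subspace $\F_c^G(\E)$, define the action of $h\in C_c(G\backslash X)$ by multiplication with $h\circ p\in C_b(X)$ via the original structure map $\phi\colon C_0(X)\to\Lb(\E)$. Equivalently, using the decomposition $\F_\png^G(\E)\cong \F(X)\otimes_{C_0(X)\rtimes G}(\E\rtimes_{\gamma,\png}G)$ of Proposition~\ref{prop:PreHilbertModule}, the algebra $C_0(G\backslash X)\cong\K(\F(X))$ acts on the first tensor factor; both prescriptions agree on $\F_c^G(\E)$ and extend to a nondegenerate \Star{}homo\-mor\-phism $C_0(G\backslash X)\to\Lb(\F_\png^G(\E))$. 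Nondegeneracy follows because any $f\cdot\xi\in\F_c^G(\E)$ is recovered by acting with a cutoff $h\in C_c(G\backslash X)$ that equals $1$ on $p(\supp f)$, and $H$-equivariance is immediate since the $G$- and $H$-actions on $X$ commute.

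For (2), the key observation is that both $\F_c^H(\F_\png^G(\E))=C_c(G\backslash X)\cdot\F_\png^G(\E)$ and $\F_c^{G\times H}(\E)=C_c(X)\cdot\E$ contain the common subspace $\F_c^G(\E)=C_c(X)\cdot\E$: the inclusion $C_c(G\backslash X)\cdot C_c(X)\cdot\E\subseteq C_c(X)\cdot\E$ is clear, and the reverse follows by multiplying any $f\cdot\eta$ with a cutoff on $G\backslash X$ equal to $1$ on $p(\supp f)$. On this common subspace the iterated $\contc(H,\contc(G,B))$-valued inner product and the $\contc(G\times H,B)$-valued inner product coincide: applying \eqref{eq:InnerProduct} once for the $G$-action on $\E$ and once for the $H$-action on $\F_\png^G(\E)$ yields, for $\xi,\eta\in \F_c^G(\E)$,
\[
\bbraket{\xi}{\eta}_{(B\rtimes_\png G)\rtimes_\pnh H}(t)(s)=\Delta_H(t)^{-1/2}\Delta_G(s)^{-1/2}\braket{\xi}{\gamma_{(s,t)}(\eta)}_B=\bbraket{\xi}{\eta}_{B\rtimes_\pn(G\times H)}(s,t),
\]
using $\Delta_{G\times H}(s,t)=\Delta_G(s)\Delta_H(t)$ and the natural identification $\contc(G\times H,B)\cong\contc(H,\contc(G,B))$. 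Compatibility of the triple $(\png,\pn,\pnh)$ in Definition~\ref{def-compatible-action} guarantees that this common algebraic datum produces the same norm, so the identity extends to the desired unitary $\F_\pnh^H(\F_\png^G(\E))\congto \F_\pn^{G\times H}(\E)$ of Hilbert $B\rtimes_{\beta,\pn}(G\times H)$-modules.

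For (3), Proposition~\ref{prop-exotic-compacts} gives $A_\png^G\cong\K(\F_\png^G(\E))$, and a second application of that proposition to the weakly proper module from (1) yields $(A_\png^G)_\pnh^H\cong \K(\F_\pnh^H(\F_\png^G(\E)))$; combining this with (2) produces $(A_\png^G)_\pnh^H\cong A_\pn^{G\times H}$. That this isomorphism extends the canonical inclusion $A_c^{G\times H}\hookrightarrow(A_\png^G)_c^H$ is then a bookkeeping check using formula~\eqref{eq-KcG} for the left inner product, iterated versus taken in one step. The main obstacle is step (2): ensuring that the tautological identification of algebraic inner products on $\F_c^G(\E)$ descends to an identification of the completed Hilbert modules. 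This is precisely where compatibility of the triple $(\png,\pn,\pnh)$ is essential---without it the iterated crossed product $(B\rtimes_\png G)\rtimes_\pnh H$ need not agree with $B\rtimes_\pn(G\times H)$, and the two completions of $\F_c^G(\E)$ would be incomparable.
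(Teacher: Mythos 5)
Your proof is correct and follows essentially the same route as the paper: identify $\F_c^{G\times H}(\E)=C_c(X)\cdot\E$ as a common dense subspace (via $C_c(G\backslash X)\cdot C_c(X)=C_c(X)$), check that the iterated and one-step inner products coincide there, invoke compatibility of the triple to identify the completions, and then apply Proposition~\ref{prop-exotic-compacts} twice for the fixed-point algebras. The only difference is that you spell out the $C_0(G\backslash X)$-structure map on $\F_\png^G(\E)$ in more detail than the paper does, which is a harmless (and welcome) addition.
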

 \begin{proof}
 Observe that $\F^H_c(\F^G_c(\E))=\contc(G\bs X)\cdot \contc(X)\cdot \E=\contc(X)\cdot \E=\F_c^{H\times G}(\E)$, since $\contc(G\bs X)\cdot \contc(X)=\contc(X)$.
 Now, if $\xi,\eta\in \contc(X)\cdot \E$, we have
 \begin{align*}
 \bbraket{\xi}{\eta}_{\contc(H\times G, B)}(h,s)&=\Delta_H(h)^{-1/2}\Delta_G(s)^{-1/2} \braket{\xi}{\gamma_{(h,s)}(\eta)}_B\\
 &=\big(\Delta_H(h)^{-1/2}\bbraket{\xi}{\gamma_h(\eta)}_{\contc(G,B)})(s)\\
 &=\bbraket{\xi}{\eta}_{\contc(H, \contc(G,B))}(h,s)
 \end{align*}
 which shows that the imbedding $\F_c^{H\times G}(\E)\into \F_c^H(\F_c^G(\E))$ is isometric.
 A similar computation also shows that it preserves the right module structure.
 Since $\F_c^G(\E)$ is norm dense in $\F_\png^G(\E)$, it follows that $\F_c^H(\F_c^G(\E))$ is inductive limit
 dense in $\F_c^H(\F_\png^G(\E))$ and the isomorphism $\F_\pnh^H(\F_\png^G(\E))\cong \F_\pn^{G\times H}(\E)$ follows from
  the isomorphism $(B\rtimes_{\beta^H,\pnh}H)\rtimes_{\alpha^G,\png}G
 \cong B\rtimes_{\alpha,\pn}(G\times H)$ of the coefficient algebras.
 Finally, using Proposition~\ref{prop-exotic-compacts}, we get
 $A_\png^G=\K(\E)_\png^G=\K(\F_\png^G(\E))$  and  then
\begin{align*}
 (A_\png^G)_\pnh^H&=\K(\F_\png^G(\E))_\pnh^H=\K(\F_\pnh^H(\F_\png^G(\E)))\\
 &=\K(\F_\pn^{G\times H}(\E))
 =\K(\E)_\pn^{G\times H}\\
 &= A_\pn^{G\times H}.\qedhere
 \end{align*}
 \end{proof}

We now want to obtain a more concrete version of our symmetric
imprimitivity theorem with precise formulas for the actions and inner products  on the
equivalence bimodule
$$
\F_\pn(A,G,H)=\left(\F_\png^G(A)\rtimes_\pnh H\right)\otimes_{ A\rtimes_{\beta,\pn}(G\times H)}\left(\F_\pnh^H(A)\rtimes_\png G\right)^*.
$$
So in what follows let us assume that $(A,\alpha)$ is a weak $X\rtimes (G\times H)$\nb-algebra
such that $G$ and $H$ both  act properly on $X$, and let $(\png,\pn,\pnh)$ be a compatible triple of norms.

Recall from Theorem~\ref{theo:TheFixedPointFunctorForCommutingActions}, applied to the Hilbert $(A, X\rtimes(G\times H))$-module $(A,\alpha)$, that $\F_\png^G(A)\rtimes_{\alpha^H,\pnh} H\cong \F_\png^G(A\rtimes_{\alpha^H\pnh} H)$ with isomorphism given by the identity map on $\contc(H, \F_c^G(A))$, which is a submodule of both modules. On the other hand, if  we view $L^2(H)\otimes A$ as a weakly proper $(A, (H\times X)\rtimes H)$-module with $H$\nb-action $\rho_H\otimes \alpha$ and structure map $M_H\otimes \phi\colon C_0(H\times X)\to \Lb(L^2(H)\otimes A)$, it follows from Proposition~\ref{prop-independence} and Proposition~\ref{prop-L2} that $A\rtimes_{\alpha^H,\pnh} H\cong \F_\pnh^H(L^2(H)\otimes A)$. If we let $G$ act on $L^2(H)\otimes A$ via $\id_{L^2(H)}\otimes \alpha$, we
get commuting actions of $H$ and $G$ on $L^2(H)\otimes A$ such that this module together with the structure map
$M_H\otimes \phi$ becomes a weakly proper $(A, (H\times X)\rtimes (H\times G))$-module. Together with Lemma~\ref{proper-product actions} we get
\begin{align*}
 \F_\png^G(A)\rtimes_{\alpha^H,\pnh} H&\cong \F_\png^G(\E\rtimes_{\alpha^H,\pnh} H)\cong \F_\png^G(\F_\pnh^H( L^2(H)\otimes A))\\
 &\cong \F_\pn^{G\times H}(L^2(H)\otimes A).
 \end{align*}

 Similarly, changing the role of $G$ and $H$ in the above discussion, and viewing
 $L^2(G)\otimes A$ as a weakly proper $(A, (G\times X)\rtimes (H\times G))$-module
 with action $\rho_G\otimes \alpha$ and structure map $M_G\otimes\phi$, we get an isomorphism
 $$ \F_\pnh^H(A)\rtimes_{\alpha^G,\png} G\cong \F_\pn^{G\times H}(L^2(G)\otimes A).$$
 By Proposition~\ref{prop-product}, we then get
 \begin{align*}
 \F_\pn(A,G,H)&=\left(\F_\png^G(A)\rtimes_\pnh H\right)\otimes_{A\rtimes_\pn(G\times H)}\left(\F_\pnh^H(A)\rtimes_\png G\right)^*\\
 &\cong \F_\pn^{G\times H}(L^2(H)\otimes A)\otimes_{A\rtimes_\pn(G\times H)} \F_\pn^{G\times H}(L^2(G)\otimes A)^*\\
 &\stackrel{(*)}{\cong} \big(( L^2(H)\otimes A)\otimes_A(L^2(G)\otimes A)^*\big)^{G\times H}_\pn\\
 &\stackrel{(**)}{\cong} \big(L^2(H)\otimes L^2(G)^*\otimes A\big)^{G\times H}_\pn.
 \end{align*}
 The isomorphism $(**)$ in the last line is given by multiplication in $A$, and the isomorphism $(*)$ is given
 as in Proposition~\ref{prop-product}.

 Next, we give a detailed description of  the  module
 $\big(L^2(H)\otimes L^2(G)^*\otimes A\big)^{G\times H}_\pn$. For this we first
  recall the structure of the
 $\K(L^2(H))\otimes A- \K(L^2(G))\otimes A$-module $L^2(H)\otimes L^2(G)^*\otimes A$.
 If we identify $\K(L^2(H))\otimes A$ with $C_0(H,A)\rtimes_{\tau\otimes\alpha}H$
 and $\K(L^2(G))\otimes A$ with $C_0(G,A)\rtimes_{\tau\otimes\alpha}G$
 as in Remark~\ref{left-action}, and if we restrict our attention to the dense submodule
 $$\contc(G\times H, A_c) :=\contc(X)\cdot \contc(G\times H, A)\cdot \contc(X),$$
 we get a (partial) $\contc(H, \contc(H, A_c))- \contc(G, \contc(G, A_c))$ pre-equivalence bimodule structure
 with actions and inner products given by
 \begin{equation}\label{eq-module-formulas}
 \begin{split}
 (f\cdot \xi)(s,h)&=\int_H\alpha_{h^{-1}}(f(l,h))\xi(s, l^{-1}h)\dd{l}\\
 (\xi\cdot \psi)(s,h)&=\int_G \xi(ts,h)\alpha_{t^{-1}s^{-1}}(\psi(t, ts)\big)\dd{t}\\
 {_{\contc(H, \contc(H, A))}\braket{\xi}{\eta}}(h,l)&=
 \Delta_H(h^{-1}l)\int_G \alpha_l\big(\xi(s,l)\eta(s,h^{-1}l)^*\big)\dd{s}\\
 \braket{\xi}{\eta}_{\contc(G, \contc(G, A))}(s, r)&=
\Delta_G(s^{-1}r) \int_H \alpha_r\big(\xi(r, l)^* \eta(s^{-1}r,l)\big)\dd{l}
\end{split}
\end{equation}
for $f\in \contc(H, \contc(H, A_c)), \psi\in \contc(G, \contc(G, A_c))$ and
$\xi,\eta\in \contc(G\times H, A_c)$. The formulas follow from the
isomorphism
$$( L^2(H)\otimes A)\otimes_A(L^2(G)\otimes A)^*\cong L^2(H)\otimes L^2(G)^*\otimes A$$
given by multiplication in $A$ together with the
formulas for the $\contc(H, \contc(H,A))-A$ module $\contc(H,A)\subseteq L^2(H)\otimes A$
and the $A- \contc(G, \contc(G,A))$-module $\contc(G,A)\subseteq (L^2(G)\otimes A)^*$
(see Remark~\ref{left-action} for $\contc(H,A)$ and use the general procedure for passing from
a bimodule $\E$ to its adjoint $\E^*$ for the second).

We now study the projections into the fixed-point algebras and the fixed-point module
via the strict unconditional integrals over $G\times H$. Since $H$ acts on $\contc(H, \contc(H,A_c))$
 by right translation of the second variable
of $f(h,l)$ and since $G$ acts by $\alpha$, this
 will send
 a function $f\in \contc(H, \contc(H, A_c))$ on the left
to the function $\EE(f):H\times H\to A_c^G$ given by
\begin{equation}\label{eq-Ecoefficient}
\EE(f)(h,l)=\int_G^{st}\int_H \alpha_s(f(h,lk))\dd{k}\dd{s}.
\end{equation}
The translation $k\mapsto l^{-1}k$ in the integral shows that
this function is constant in $l$ and that $\EE(f)$ may be regarded
as a function $\EE(f)\in \contc(H, A_c^G)$ given by the formula
\begin{equation}\label{eq-Ecoefficient1}
\EE(f)(h)=\int_G^{st}\int_H\alpha_s(f(h,l))\dd{l}\dd{s}.
\end{equation}
Similarly, if $\psi\in \contc(G,\contc(G, A_c))$
we may regard $\EE(\psi)$ as a function in $\contc(G, A_c^H)$ given by
$$\EE(\psi)(t)=\int_H^{st}\int_G \alpha_{l}(\psi(t, s))\dd{s}\dd{l}.$$
 Moreover,
the map
$\EE\colon \contc(G\times H, A_c)\to (L^2(H)\otimes L^2(G)^*\otimes A)^{G\times H}_c$
sends $\xi\in \contc(G\times H, A_c)$ to a function $\EE(\xi):G\times H\to A_c$
 given by
\begin{equation}\label{eq-Emodule}
\EE(\xi)(t,h)=\int_G\int_H \Delta_H(l)^{1/2}\Delta_G(s)^{1/2}\alpha_{(s,l)}(\xi(ts,hl))\dd{l}\dd{s}
\end{equation}
which satisfies the equation $\EE(\xi)(t,h)=\Delta_G(t)^{-1/2}\Delta_H(h)^{-1/2}\alpha_{(t^{-1}, h^{-1})}(\EE(\xi)(e,e))$.
Thus every such function is uniquely determined by its value at the unit $(e,e)$ and we may regard
$\EE(\xi)$ as an element of $A_c$ given by
$$\EE(\xi)=\int_G\int_H \Delta_H(l)^{1/2}\Delta_G(s)^{1/2}\alpha_{(s,l)}(\xi(s,l))\dd{l}\dd{s}.$$
If we apply the formulas (\ref{eq-module-formulas}) to elements
$f\in \contc(H, A_c^G)$, $\psi\in \contc(G, A_c^H)$ and $\xi, \eta\in A_c$, regarding them
as functions on $H\times H$, $G\times G$ and $G\times H$ via the relations observed above,
we see that $A_c$ becomes a partial $\contc(H, A_c^G)-\contc(G, A_c^H)$ pre-equivalence bimodule
with inner products and module actions given by
 \begin{equation}\label{eq-module-formulas-fixed}
 \begin{split}
 f\cdot \xi&=\int_H\Delta_H(l)^{1/2} f(l)\alpha_l(\xi)\dd{l}\\
 \xi\cdot \psi&=\int_G\Delta_G(t)^{-1/2} \alpha_t(\xi \psi(t^{-1})\big)\dd{t}\\
 {_{\contc(H, A_c^G)}\bbraket{\xi}{\eta}}(h)&=
 \Delta_H(h)^{-1/2}\int_G^{st} \alpha_s\big(\xi\alpha_h(\eta^*)\big)\dd{s}\\
 \bbraket{\xi}{\eta}_{\contc(G, A_c^H)}(s)&=
\Delta_G(s)^{-1/2} \int_H^{st} \alpha_l\big(\xi^*\alpha_s(\eta)\big)\dd{l}
\end{split}
\end{equation}

At this point we should note that the correct continuity condition for a function $f\in \contc(H, A_c^G)$ (and similarly
for $\contc(G, A_c^H)$) is continuity with respect to the inductive limit topology on $A_c^G$
as defined in \cite[Definition 2.11]{Buss-Echterhoff:Exotic_GFPA}.
To be more precise, we define $C_c(H, A_c^G)$ as the set of all functions
$f: G\to A_c^G$ with compact supports such that the following
conditions are satisfied:
\begin{enumerate}
\item  there exists a function $\psi\in \contc(G\bs X)$ such that
$f(h)=\psi \cdot f(h)\cdot \psi$ for all $h\in H$ (which simply means that the elements $f(h)\in A_c^G$ have controlled
support in $G\bs X$), and
\item  for every $\varphi\in \contc(X)$ the functions $h\mapsto \varphi\cdot f(h), f(h)\cdot \varphi$
are norm-continuous with controlled compact supports in $X$.
\end{enumerate}
 As a sample computation, we show how we derive the last formula  in \eqref{eq-module-formulas-fixed}
from (\ref{eq-module-formulas}): using the derivation of (\ref{eq-Ecoefficient1}) from (\ref{eq-Ecoefficient})
we may regard
$ \bbraket{\xi}{\eta}_{\contc(G, A_c^H)}$ as a function $G\times G\to A_c^H$ which is constant in the second variable.
In the same way, by the discussion following
 (\ref{eq-Emodule}) we may regard $\xi,\eta\in A_c$ as functions on $G\times H$ such that
 $\xi(t,h)=\Delta_G(t)^{-1/2}\Delta_H(h)^{-1/2}\alpha_{(t^{-1}, h^{-1})}(\xi)$ and similarly for $\eta$.
 If we now apply the last formula in (\ref{eq-module-formulas}) to these functions, we obtain
 \begin{align*}
 \bbraket{\xi}{\eta}_{\contc(G, A_c^H)}(s)&=\bbraket{\xi}{\eta}_{\contc(G, A_c^H)}(s,e)\\
 &=\Delta_G(s^{-1}) \int_H^{st} \xi(e, l)^* \eta(s^{-1},l)\dd{l}\\
 &=\Delta_G(s^{-1}) \int_H^{st}\Delta_H(l^{-1})\Delta_G(s)^{1/2} \alpha_{l^{-1}}(\xi^* \alpha_s(\eta))\dd{l}\\
 &\stackrel{l\mapsto l^{-1}}{=}
 \Delta_G(s)^{-1/2} \int_H^{st}\alpha_{l}(\xi^* \alpha_s(\eta))\dd{l}.
 \end{align*}
 The other formulas follow by the same recipe. Note that the integrals make sense as strict unconditional integrals; and
 one can check that the appropriate pairings with elements in the original module
 $\contc(H\times G, A_c)\subseteq L^2(H)\otimes L^2(G)^*\otimes A$ and the corresponding subalgebras
 in the coefficient algebras of this module reveal that these formulas determine the unique extension
 of the module operations to the multiplier modules and multiplier algebras. As a result of this discussion we get

\begin{theorem}[Symmetric Imprimitivity Theorem. Version 2]\label{thm-symmetric-direct}
Suppose that $(A,\alpha)$ is a  weak $X\rtimes (G\times H)$-algebra such that
the actions of $G$ and $H$ on $X$ are proper and let $(\png,\pn,\pnh)$ be a compatible triple of norms as
in Definition \ref{def-compatible-action}. Then
the $\contc(H, A_c^G)-\contc(G, A_c^H)$ bimodule $A_c$ equipped with the
inner products and module actions as in (\ref{eq-module-formulas-fixed})
completes to give a partial $A_\png^G\rtimes_{\alpha^H,\pnh} H-A_\pnh^H\rtimes_{\alpha^G,\png}G$ equivalence bimodule $\F_\pn(A)_G^H$.
It will be a full equivalence bimodule if the actions of $H$ and $G$ on $A$ are  $\png$- (resp. $\pnh$-) saturated.
\end{theorem}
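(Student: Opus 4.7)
The plan is to treat this theorem as a concrete repackaging of Theorem~\ref{thm-symmetric} rather than a fresh verification of the partial imprimitivity axioms. First, I would invoke Theorem~\ref{thm-symmetric} to conclude immediately that $\F_\pn(A,G,H)$, as defined there, is already a partial $A_\png^G\rtimes_{\alpha^H,\pnh} H - A_\pnh^H\rtimes_{\alpha^G,\png}G$ equivalence bimodule, and becomes a full equivalence bimodule under the stated saturation hypotheses. All remaining work is then to identify $\F_\pn(A,G,H)$ with the completion of $A_c$ under the operations (\ref{eq-module-formulas-fixed}).

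Second, I would assemble the chain of isomorphisms derived in the paragraphs preceding the statement:
$$\F_\pn(A,G,H)\cong \F^{G\times H}_\pn(L^2(H)\otimes A)\otimes_{A\rtimes_\pn(G\times H)} \F^{G\times H}_\pn(L^2(G)\otimes A)^*\cong \bigl(L^2(H)\otimes L^2(G)^*\otimes A\bigr)^{G\times H}_\pn,$$
where Theorem~\ref{theo:TheFixedPointFunctorForCommutingActions} gives the passage from iterated to joint $G\times H$-descent, Proposition~\ref{prop-L2} (via the weakly proper structure on $L^2(H)\otimes A$ and $L^2(G)\otimes A$) supplies the identifications with $A\rtimes_\pnh H$ and $A\rtimes_\png G$, Lemma~\ref{proper-product actions} absorbs the remaining fixed-point operation, and Proposition~\ref{prop-product} converts the tensor product over $A\rtimes_\pn(G\times H)$ into the fixed-point functor applied to the internal tensor product, which in turn collapses to $L^2(H)\otimes L^2(G)^*\otimes A$ via multiplication in $A$.

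Third, I would work inside the dense pre-equivalence bimodule structure on $\contc(G\times H, A_c)$ between $\contc(H,\contc(H,A_c))$ and $\contc(G,\contc(G,A_c))$ described by (\ref{eq-module-formulas}), which embeds inductive-limit densely in $L^2(H)\otimes L^2(G)^*\otimes A$ and its coefficient algebras via the identifications $\K(L^2(H))\otimes A\cong \contz(H,A)\rtimes_{\tau\otimes\alpha}H$ and $\K(L^2(G))\otimes A\cong \contz(G,A)\rtimes_{\tau\otimes\alpha}G$ of Remark~\ref{left-action}. Applying the strict unconditional integral $\EE$ over $G\times H$ to each side, the computations (\ref{eq-Ecoefficient1}) and (\ref{eq-Emodule}) show that $\EE$ sends these three dense spaces onto $\contc(H,A_c^G)$, $\contc(G,A_c^H)$, and $A_c$ respectively (with the appropriate interpretation of $\contc(H,A_c^G)$ in the inductive-limit sense). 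Pushing each of the four identities in (\ref{eq-module-formulas}) through $\EE$, and using Fubini together with the collapsing changes of variable $k\mapsto l^{-1}k$ (resp.\ $t\mapsto s^{-1}t$) that reduce a double variable to a single one after averaging, yields precisely the formulas (\ref{eq-module-formulas-fixed}); the sample computation for $\bbraket{\xi}{\eta}_{\contc(G,A_c^H)}$ given in the text serves as a template for the remaining three cases.

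The main obstacle is really bookkeeping rather than conceptual: one has to keep track of the modular functions $\Delta_G$ and $\Delta_H$ through several changes of variable, and justify that every strict unconditional integral that appears genuinely converges to an element of $A_c^G$, $A_c^H$, or $A_c$ as needed. The latter is guaranteed by the properness of the actions of $G$ and of $H$ on $X$ together with the support-control built into the definition of $A_c^G$ and $A_c^H$, so that the averaged integrands have compact support in the respective orbit spaces. Once this is in place, the formulas in (\ref{eq-module-formulas-fixed}) extend uniquely to the multiplier algebras and multiplier module (by the pairing with elements of $\contc(G\times H, A_c)$) and the completion is the desired $\F_\pn(A)_G^H$.
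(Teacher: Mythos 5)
Your proposal is correct and follows essentially the same route as the paper: the proof given there is precisely the reduction to Theorem~\ref{thm-symmetric} via the chain of isomorphisms $\F_\pn(A,G,H)\cong\bigl(L^2(H)\otimes L^2(G)^*\otimes A\bigr)^{G\times H}_\pn$ (assembled from Theorem~\ref{theo:TheFixedPointFunctorForCommutingActions}, Proposition~\ref{prop-L2}, Lemma~\ref{proper-product actions} and Proposition~\ref{prop-product}), followed by pushing the concrete formulas (\ref{eq-module-formulas}) through the averaging map $\EE$ to obtain (\ref{eq-module-formulas-fixed}) on the dense subspace $A_c$. The only cosmetic difference is that you attribute the iterated-to-joint descent step a little loosely (it is Lemma~\ref{proper-product actions} that merges the two fixed-point operations, while Theorem~\ref{theo:TheFixedPointFunctorForCommutingActions} commutes the fixed-point functor past the $H$\nb-crossed product), but the set of ingredients and their roles match the paper's argument.
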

\begin{proof} The above discussion shows that the completion
$\F_\pn(A)_G^H$ of $A_c$ with respect to the actions and inner products
of (\ref{eq-module-formulas-fixed}) is isomorphic to
the module
$\left(\F_\pn^G(A)\rtimes_\pn H\right)\otimes_{A\rtimes_\pn(G\times H)}\left(\F_\pn^H(A)\rtimes_\pn G\right)^*$
of Theorem~\ref{thm-symmetric}.
\end{proof}

\begin{remark} We should remark that the version of  the Symmetric Imprimitivity Theorem proved by
Raeburn in \cite{Raeburn:Induced_Symmetric} is only for full crossed products. The reduced version was later
obtained by Quigg and Spielberg in \cite{Quigg-Spielberg} by some rather indirect methods.
Our proof does all cases simultaneously based on the isomorphisms
$$(A\rtimes_{\png}G)\rtimes_{\pnh}H\cong A\rtimes_{\pn}(G\times H)\cong (A\rtimes_{\pnh} H)\rtimes_{\png} G$$
for the iterated crossed products together with
the compatibility of several other modules with  $E$\nb-crossed products (\eg, see Lemma~\ref{lem-E-imp}).
However, we should remark that Kasparov  in \cite{Kasparov:Novikov} also obtained his version
of the full and reduced Symmetric Imprimitivity Theorem in one step.
\end{remark}

Observe that the discussion above also shows that $\F_\pn(A)_G^H$ is isomorphic to the fixed-point bimodule $\big(L^2(H)\otimes L^2(G)^*\otimes A\big)^{G\times H}_\pn$ under the identifications of $(A\otimes \K(L^2H))^{G\times H}_\pn\cong A_\png^G\rtimes_\pnh H$ and $(A\otimes \K(L^2(G)))^{G\times H}_\pn\cong A^H_\pnh\rtimes_\png G$.
Of course, if $H$ is the trivial group, this bimodule is also isomorphic to $\F_\png^G(A)$. Therefore, as a consequence we get the following:

\begin{corollary}
For any weakly proper $X\rtimes G$-algebra $(A,\alpha)$ and any $E$\nb-crossed-product norm $\pn$ for $G$, the partial $A^G_\pn-A\rtimes_{\alpha,\pn} G$ equivalence bimodule $\F_\pn^G(A)$ is isomorphic to the fixed-point bimodule $(A\otimes L^2(G)^*)^G_\pn$ under the  identification $(A\otimes \K(L^2(G)))^G_\pn\cong A\rtimes_{\alpha,\pn} G$ as given in  Remark~\ref{left-action}.
\end{corollary}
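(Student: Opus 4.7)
The plan is to derive the corollary as the specialization of Theorem~\ref{thm-symmetric-direct} to the case where $H$ is the trivial group. When $H=\{e\}$ the notion of a weak $X\rtimes(G\times H)$-algebra degenerates to that of a weak $X\rtimes G$-algebra, the trivial action of $H$ on $X$ is automatically proper, and Proposition~\ref{prop-compatible} (or a direct check, since $B(G\times H)=B(G)$) guarantees a compatible triple $(\pn,\pn,\mathrm{triv})$ for any given $E$-crossed-product norm $\pn$ for $G$.

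First I would trace the isomorphism chain leading up to Theorem~\ref{thm-symmetric-direct} in this degenerate case. With $H$ trivial, $A^H_\pnh=A$ and $A\rtimes_\pnh H=A$, so $\F^H_\pnh(A)\cong A$ as a Hilbert $A$-module and $\F^G_\png(A)\rtimes_\pnh H=\F^G_\pn(A)$. Hence the definition
$$\F_\pn(A,G,H):=(\F^G_\png(A)\rtimes_\pnh H)\otimes_{A\rtimes_\pn(G\times H)}(\F^H_\pnh(A)\rtimes_\png G)^*$$
collapses, via Proposition~\ref{prop-product} applied with a trivial factor, to $\F^G_\pn(A)\otimes_{A\rtimes_\pn G} A\cong \F^G_\pn(A)$. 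On the other side, $L^2(H)=\C$, so the fixed-point bimodule $(L^2(H)\otimes L^2(G)^*\otimes A)^{G\times H}_\pn$ reduces to $(L^2(G)^*\otimes A)^G_\pn$, and the isomorphism established in the paragraph preceding Theorem~\ref{thm-symmetric-direct} becomes exactly $\F^G_\pn(A)\cong (L^2(G)^*\otimes A)^G_\pn$.

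Next I would identify the coefficient algebras. The symmetric theorem yields a partial $A^G_\pn\rtimes_\pnh H - A^H_\pnh\rtimes_\pn G$ equivalence, which with $H$ trivial becomes a partial $A^G_\pn - A\rtimes_\pn G$ equivalence, matching the bimodule structure on $\F^G_\pn(A)$ recorded in Propositions~\ref{prop:PreHilbertModule} and \ref{prop-exotic-compacts}. On the right-hand coefficient, $A^H_\pnh\rtimes_\pn G=A\rtimes_\pn G$ is identified with $(A\otimes\K(L^2(G)))^{G\times H}_\pn=(A\otimes\K(L^2(G)))^G_\pn$ via Corollary~\ref{cor-fix-AtensorK}, and this is precisely the identification made explicit on $\contc(G,A)$ in Remark~\ref{left-action}.

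There is no real obstacle beyond bookkeeping; the content of the corollary is already latent in the discussion preceding Theorem~\ref{thm-symmetric-direct}. The one small check worth carrying out is that the explicit formulas (\ref{eq-module-formulas-fixed}) specialized to $H=\{e\}$ (so that $\int_H^{st}(\cdot)\dd l$ is the identity and $\Delta_H\equiv 1$) recover the formulas~\eqref{eq:InnerProduct} and~\eqref{eq:ModuleConvolution} that define the pre-Hilbert structure of $\F^G_\pn(A)$, which is immediate by inspection.
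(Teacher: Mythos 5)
Your proposal is correct and follows essentially the same route as the paper, which obtains the corollary precisely by specializing the discussion preceding Theorem~\ref{thm-symmetric-direct} (the identification of $\F_\pn(A)_G^H$ with $(L^2(H)\otimes L^2(G)^*\otimes A)^{G\times H}_\pn$) to the case where $H$ is the trivial group. Your additional checks — that the trivial group gives a compatible triple, that $A^H_\pnh=A$ and $\F^H_\pnh(A)\cong A$, and that the formulas in (\ref{eq-module-formulas-fixed}) collapse to \eqref{eq:InnerProduct} and \eqref{eq:ModuleConvolution} — are exactly the bookkeeping the paper leaves implicit.
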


Taking duals (or taking $G$ to be the trivial group in the above discussion), it also follows that
the $\pn$-fixed-point bimodule of $A\otimes L^2(G)$ is isomorphic to the dual $\F_\pn^G(A)^*$ of $\F_\pn(A)$, \ie,
$$\big(A\otimes L^2(G)\big)^{G,\alpha\otimes\rho}_\pn\cong \F_\pn^G(A)^*.$$
Recall that, for $\pn=\red$, $\F_\red^G(A)^*$ is isomorphic to Rieffel's bimodule constructed in \cite{Rieffel:Proper} for $A$
viewed as a Rieffel proper algebra with respect to the dense subalgebra $A_c$. The above isomorphism therefore gives a new point of view for Rieffel's bimodule implementing the partial equivalence between $A\rtimes_{\alpha,\red}G$ and $A^G_\red$: it is just the reduced fixed-point bimodule of $L^2(G,A)\cong A\otimes L^2(G)$ with respect to the action $\alpha\otimes \rho$. Of course, our results only allow us to prove this for weakly proper algebras $A$, but  it could be true also for general Rieffel proper actions.

We close this section with some natural questions:
\begin{question}
 (1) If $H$ is amenable, we were able to show in Proposition~\ref{prop-compatible}
that every crossed-product norm $\png$ for $G$ fits
into a compatible triple $(\png,\pn,\unh)$. Note that $\unh$ is the unique crossed-product norm for $H$.
Is it true in general that every given norm $\png$ for $G$ (besides the universal or reduced norm) fits into a
compatible triple $(\png,\pn,\pnh)$ for suitable norms $\pn$ and $\pnh$ for $G\times H$ and $H$, respectively?
\\
(2) In the light of Corollary~\ref{cor-subgroup-triple}, suppose
$H$ is a closed subgroup of $G$, $\png$ is a norm for $G$  associated to a $G$\nb-invariant weak-* closed
ideal $E\subseteq B(G)$ and $\pnh$ is the
norm for $H$ corresponding to the ideal $E_H$ generated by $E|_H$ in $B(H)$.
Does the pair $(\png,\pnh)$ always fit into a compatible triple $(\png,\pn,\pnh)$?
\end{question}

\section{Functoriality in correspondence categories}\label{sec:functoriality}

Throughout this section, we fix a locally compact group $G$ and a $G$\nb-invariant {nonzero ideal} $\En$ of $B(G)$ and we denote by $\Enorm$ the corresponding crossed-product norm on $\contc(G,A)$ for any $G$\nb-algebra $A$.

We are going to prove in this section that our constructions are functorial. More precisely, we show that the assignment $A\mapsto A^G_\Enorm$ is a functor between appropriate categories of \cstar{}algebras with correspondences as their morphisms. For categories of \cstar{}algebras based on \Star{}homomorphisms as their morphisms, the functoriality of fixed-point algebras has been proved already in \cite{Buss-Echterhoff:Exotic_GFPA}: in this case, one obtains functoriality in complete generality (the actions here might be non-saturated). On the other hand, since functors preserve invertible morphisms, which in correspondence categories are imprimitivity bimodules, Example~\ref{ex-not-equivalent} shows that we cannot expect functoriality of the construction $A\mapsto A^G_\Enorm$ between correspondence categories except if we restrict to saturated actions.
But we show below that the construction $A\mapsto A^G_\Enorm$ is functorial with respect to (isomorphism classes of) equivariant correspondences if we restrict to the subcategory whose objects are weakly proper saturated actions. This includes (but is not limited to) the
important case where $G$ acts freely on the underlying proper space $X$.

We first need to fix some notation. Given two \cstar{}algebras $A$ and $B$, by a \emph{correspondence} from $A$ to $B$ we mean a (right) Hilbert $B$-module $\E$ together with a nondegenerate \Star{}homomorphism $\phi\colon A\to \Lb(\E)$. We usually use $\phi$ to view $\E$ as a left $A$-module and write $a\cdot \xi\defeq \phi(a)\xi$ for all $a\in A$ and $\xi\in \E$.
The \emph{correspondence category} $\CCCorr$ has \cstar{}algebras as its objects and isomorphism classes, denoted $[\E]$, of $A-B$-correspondences $\E$ as the set of morphisms from $A$ to $B$. The composition of two morphisms $[\D]\colon A\rightsquigarrow B$ and $[\E]\colon B\rightsquigarrow C$ is given by the usual balanced tensor product of correspondences: $[\E]\circ[\D]\defeq [\D\otimes_B\E]$.

For us it will also be important to consider equivariant versions of correspondence categories. For a fixed locally compact group $G$ and a $G$\nb-space $X$, we write $\Corr{X}{G}$ for the category whose objects are weak $X\rtimes G$-\cstar{}algebras, \ie, $G$\nb-algebras $A$ endowed with $X\rtimes G$\nb-equivariant nondegenerate \Star{}homomorphism of $\contz(X)$ into $\M(A)$,  and whose morphisms $A\rightsquigarrow B$ are isomorphisms classes of \emph{$G$\nb-equivariant correspondences}. This means correspondences $\E$ from $A$ to $B$ endowed with $G$\nb-actions compatible with the $G$\nb-actions on $A$ and $B$ (in the usual sense; see \cite{Echterhoff-Kaliszewski-Quigg-Raeburn:Categorical}). Notice that in this case the structural $\contz(X)$-homomorphisms into $\M(A)$ and $\M(B)$ induce left and right module $\contz(X)$-actions on $\E$ by $f\cdot \xi\defeq \phi_A(f)\xi$ and $\xi\cdot \phi_B(f)$ for all $f\in \contz(X)$ and $\xi\in \E$, but we do not require any compatibility conditions between these two actions. The left action $f\cdot \xi= \phi_A(f)\xi$ gives $\E$ the structure of a weak $(B,X\rtimes G)$-module, which then also induces a structure of a weak $(X\rtimes G)$-\cstar{}algebra on $\K(\E)$. In what follows, we always equip $\E$ and $\K(\E)$ with these structures. If the $G$-action on $X$ is proper, $\E$ is therefore a weakly proper $(B,X\rtimes G)$-module and we can apply the theory developed in the previous sections.

Notice that with these settings, $\Corr{X}{G}$ is a full subcategory of the correspondence
category $\CorrG$ which has $G$-\cstar{}algebras as objects and isomorphism classes of $G$\nb-equivariant
correspondences as morphisms.

Assume now that the $G$-space $X$ is proper. Let $\E$ be a $G$\nb-equivariant correspondence from $A$ to $B$ (two given weak $X\rtimes G$-algebras). Forgetting the left $A$-action, we may  view $\E$ as a partial $\K(\E)-B$ equivalence bimodule and therefore consider the $\Enorm$-fixed-point bimodule $\E^G_\Enorm$,
which is a partial equivalence bimodule from $\K(\E)^G_\Enorm$ to $B^G_\Enorm$ (see Lemma~\ref{lem-XG-bimodule}). In particular, $\K(\E^G_\Enorm)$ is an ideal in $\K(\E)^G_\Enorm$ and hence we get a canonical
\Star{}homomorphism $\M(\K(\E)^G_\Enorm)\to \M(\K(\E^G_\Enorm))$ (we will shortly assume that all actions are saturated;
in this case $\K(\E^G_\Enorm)$ is actually isomorphic to $\K(\E)^G_\Enorm$, but we do not need this here).

On the other hand, the nondegenerate $G$-equivariant \Star{}homomorphism $A\to \Lb(\E)=\M(\K(\E))$ induces, by \cite[Proposition~6.2]{Buss-Echterhoff:Exotic_GFPA}, a nondegenerate \Star{}homomorphism $A^G_\Enorm\to \M(\K(\E)^G_\Enorm)$. Composing this with $\M(\K(\E)^G_\Enorm)\to \M(\K(\E^G_\Enorm))$, we therefore get a  \Star{}homomorphism $A^G_\Enorm\to \Lb(\E^G_\Enorm)$ which is easily seen (using the construction of $\E_\pn^G$ as in Remark~\ref{rem:AlternativeDescriptionFixedModule}) to be given by $\EE(a)\cdot x=\EE(a\cdot x)$ for all $a\in A_c$ and $x\in \E_c^G$. Therefore $\E^G_\Enorm$ has a canonical structure of a correspondence from $A^G_\Enorm$ to $B^G_\Enorm$. Since both fixed-point algebras carry canonical structures as weak $G\bs X$-algebras via $\EE(f)\cdot a=\EE(f\cdot a)$ for $f\in C_c(X)$ and $a\in A_c^G$ (and similarly for $B_\Enorm^G$), we obtain a morphism $[\E^G_\Enorm]\colon A^G_\Enorm\rightsquigarrow B^G_\Enorm$ in $\CCorr{G\bs X}$.  It is clear that the construction $\E\mapsto \E^G_\Enorm$ descends to isomorphism classes and hence gives a well-defined assignment between the categories $\Corr{X}{G}$ and $\CCorr{G\bs X}$ which on the level of objects maps $A$ to $A^G_\Enorm$ and on the level of morphisms maps $[\E]$ to $[\E^G_\Enorm]$.

Our next goal is to show that these assignments determine a functor $\Fix^G_\Enorm$ if we restrict to the full subcategory $\Corr{X}{G}_{\sat}^\Enorm$ consisting of $\Enorm$-saturated objects in $\Corr{X}{G}$, \ie, the objects in $\Corr{X}{G}_{\sat}^\Enorm$ are weak $X\rtimes G$-algebras whose action is $\Enorm$-saturated (and the morphisms between two such objects are the same as in $\Corr{X}{G}$). Notice that, by Lemma~\ref{lem-univ-sat}, if the $G$\nb-action on $X$ is free, then $\Corr{X}{G}_{\sat}^\Enorm=\Corr{X}{G}$, but in general $\Corr{X}{G}$ is strictly bigger. As already noticed at the beginning of this section, $\Fix^G_\Enorm$ cannot be a functor from $\Corr{X}{G}$ to $\CCorr{G\bs X}$ in general. The main point is the following result:

\begin{proposition}\label{prop:TensorOfFixedModules}
Let $A,B,C$ be weak $X\rtimes G$-algebras and
let $\D$ be an $X\rtimes G$-equivariant correspondence from $A$ to $B$ and let $\E$ be
a $G$-equivariant correspondence from $B$ to $C$.
Assume that the action on $B$ is $\Enorm$-saturated. Then the canonical embedding $\D^G_c\odot\E^G_c\into (\D\otimes_B\E)^G_c$ extends to an isomorphism
$$\D^G_\Enorm\otimes_{B^G_\Enorm}\E^G_\Enorm\congto (\D\otimes_B\E)^G_\Enorm$$
of correspondences from $A^G_\Enorm$ to $C^G_\Enorm$.
\end{proposition}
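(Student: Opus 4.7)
The plan is to obtain the isomorphism by chaining together the decomposition results already established for fixed modules. First I would apply Corollary~\ref{cor-full} to each of $\D$, $\E$ and $\D\otimes_B\E$ (all viewed as partial equivalence bimodules over their algebras of compact operators), yielding
\begin{align*}
\D_\Enorm^G &\cong \F_\Enorm^G(\D) \otimes_{B\rtimes_\Enorm G} \F_\Enorm^G(B)^*, \\
\E_\Enorm^G &\cong \F_\Enorm^G(\E) \otimes_{C\rtimes_\Enorm G} \F_\Enorm^G(C)^*, \\
(\D\otimes_B\E)_\Enorm^G &\cong \F_\Enorm^G(\D\otimes_B\E) \otimes_{C\rtimes_\Enorm G} \F_\Enorm^G(C)^*.
\end{align*}
Then Proposition~\ref{prop:TensorDecompositionFixedPointModules} applied with $\E_1=\D$ and $\E_2=\E$ gives the crucial identification $\F_\Enorm^G(\D\otimes_B\E) \cong \F_\Enorm^G(\D) \otimes_{B\rtimes_\Enorm G} (\E\rtimes_\Enorm G)$, and applied with $\E_1=B$ (viewed as a weakly proper $(B,X\rtimes G)$-module) and $\E_2=\E$ gives $\F_\Enorm^G(\E) \cong \F_\Enorm^G(B) \otimes_{B\rtimes_\Enorm G} (\E\rtimes_\Enorm G)$.

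Combining these substitutions, the tensor product $\D_\Enorm^G \otimes_{B_\Enorm^G} \E_\Enorm^G$ becomes
\[
\F_\Enorm^G(\D) \otimes_{B\rtimes_\Enorm G} \F_\Enorm^G(B)^* \otimes_{B_\Enorm^G} \F_\Enorm^G(B) \otimes_{B\rtimes_\Enorm G} (\E\rtimes_\Enorm G) \otimes_{C\rtimes_\Enorm G} \F_\Enorm^G(C)^*.
\]
Here the hypothesis that the action on $B$ is $\Enorm$-saturated enters decisively: by Lemma~\ref{lem-univ-sat}(3), $\F_\Enorm^G(B)$ is a genuine $B_\Enorm^G - B\rtimes_\Enorm G$ equivalence bimodule, so $\F_\Enorm^G(B)^* \otimes_{B_\Enorm^G} \F_\Enorm^G(B) \cong B\rtimes_\Enorm G$. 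The middle portion collapses and we recover the expression for $(\D\otimes_B\E)_\Enorm^G$ obtained above. This is exactly where the general (non-saturated) statement must fail, consistent with Example~\ref{ex-not-equivalent}.

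The remaining task is to verify that the composite isomorphism agrees with the canonical embedding $\D_c^G \odot \E_c^G \hookrightarrow (\D\otimes_B\E)_c^G$. I expect this bookkeeping to be the main obstacle: each of the isomorphisms above is described explicitly on an inductive-limit-dense subspace (a strict unconditional integral in Corollary~\ref{cor-full}, a convolution-type formula~\eqref{eq-decom} in Proposition~\ref{prop:TensorDecompositionFixedPointModules}, and the Rieffel cancellation $\F_\Enorm^G(B)^*\otimes \F_\Enorm^G(B)\cong B\rtimes_\Enorm G$ coming from the formula ${_{B_\Enorm^G}\!\bbraket{\xi}{\eta}}\cdot \zeta = \xi\cdot \bbraket{\eta}{\zeta}_{B\rtimes_\Enorm G}$). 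One would track an elementary tensor $d\otimes e \in \D_c^G\odot \E_c^G$ through each step, using the compatibility relation $\EE(m)\cdot x = m\cdot x$ valid for multipliers $m\in \M(\D)^G$ acting on $x\in \F_c(\D)$ (and the analogue for $\E$), to see that all the intermediate integrals telescope and the output is simply the image of $d\otimes e$ in $(\D\otimes_B\E)_c^G\subseteq \M(\D\otimes_B\E)^G$ under the natural multiplication map. This identifies the constructed isomorphism with the one claimed, and its compatibility with the left $A_\Enorm^G$-action follows immediately since $A_\Enorm^G$ only acts on the first tensor factor via its map into $\M(\K(\D)_\Enorm^G)$ from \cite[Proposition~6.2]{Buss-Echterhoff:Exotic_GFPA}.
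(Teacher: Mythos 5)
Your proposal follows essentially the same route as the paper's proof: the same applications of Corollary~\ref{cor-full} and Proposition~\ref{prop:TensorDecompositionFixedPointModules}, the same use of saturation to cancel $\F^G_\Enorm(B)^*\otimes_{B^G_\Enorm}\F^G_\Enorm(B)\cong B\rtimes_\Enorm G$, and the same resulting chain of isomorphisms. The only step you leave as a sketch is the verification that the composite agrees with the canonical embedding on $\D^G_c\odot\E^G_c$; the paper carries this out by writing $\eta=a*f$ with $a\in B_c$ and $f\in\contc(G,\E)$ (such elements being dense in $\F_c(\E)$) and tracking $\EE(\xi\cdot b^*)\otimes\EE(\eta\cdot c^*)$ through the chain, exactly as you propose.
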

\begin{proof}
By Corollary~\ref{cor-full}, we have canonical isomorphisms
$$\D^G_\Enorm\cong \F^G_\Enorm(\D)\otimes_{B\rtimes_\Enorm G}\F^G_\Enorm(B)^* \quad\mbox{and}\quad \E^G_\Enorm\cong \F^G_\Enorm(\E)\otimes_{C\rtimes_\Enorm G}\F^G_\Enorm(C)^*$$
mapping $\EE(\xi\cdot b^*)\mapsto \xi\otimes b$ and $\EE(\eta\cdot c^*)\mapsto \eta\otimes c$, respectively,
for all $\xi\in \contc(X)\cdot \D$, $b\in \contc(X)\cdot B$, $\eta\in \contc(X)\cdot \E$ and $c\in \contc(X)\cdot C$.
Since $B\to \Lb(\E)$ is nondegenerate, we have a canonical isomorphism $B\otimes_B\E\cong \E$, $a\otimes \eta\mapsto a\cdot \eta$. Hence,
Proposition~\ref{prop:TensorDecompositionFixedPointModules} yields an isomorphism
$$\F^G_\Enorm(B)\otimes_{B\rtimes_\Enorm G}\E\rtimes_\Enorm G\cong \F^G_\Enorm(\E)$$
which identifies $a\otimes f\in \F_c(B)\odot\contc(G,\E)$ with $a*f=\int_G\Delta(t)^{-1/2}\gamma_t(a\cdot f(t^{-1}))dt$.
Since the $G$\nb-action on $B$ is $\Enorm$-saturated, we also have an isomorphism
$$\F^G_\Enorm(B)^*\otimes_{B^G_\Enorm}\F^G_\Enorm(B)\cong B^G_\Enorm,\quad b\otimes a\mapsto \bbraket{b}{a}_{B\rtimes_\Enorm G}.$$
Also, if $\theta$ and $\gamma$ denote the $G$\nb-actions on $\D$ and $\E$, respectively, then, by Proposition~\ref{prop:TensorDecompositionFixedPointModules}, we have an isomorphism
$$\F^G_\Enorm(\D)\otimes_{B\rtimes_\Enorm G}\E\rtimes_\Enorm G\congto \F^G_\Enorm(\D\otimes_B\E)$$
sending $\xi\otimes g\in \F_c(\D)\odot \contc(G,\E)$ to $\xi*g=\int_G \Delta(t)^{-1/2}\theta_t(\xi)\otimes \gamma_t(g(t^{-1})) dt$. Coupling these isomorphisms
and also the canonical isomorphism $B\rtimes_\Enorm G\otimes_{B\rtimes_\Enorm G}\E\rtimes_\Enorm G\cong \E\rtimes_\Enorm G$
which sends $\varphi\otimes f$ to $\varphi* f|_t=\int_G\varphi(s)\cdot \gamma_s(f(s^{-1}t)) ds$ (see~\eqref{eq-left-action}), we obtain the following chain of isomorphisms:
\begin{equation}\label{eq:isomorphismsFixedModules}
\begin{split}
\D^G_\Enorm &\otimes_{B^G_\Enorm}\E^G_\Enorm\\
   & \cong \F^G_\Enorm(\D)\otimes_{B\rtimes_\Enorm G}\F^G_\Enorm(B)^*\otimes_{B^G_\Enorm}\F^G_\Enorm(\E)\otimes_{C\rtimes_\Enorm G}\F^G_\Enorm(C)^*\\
    &\cong \F^G_\Enorm(\D)\otimes_{B\rtimes_\Enorm G}\F^G_\Enorm(B)^*\otimes_{B^G_\Enorm}\F^G_\Enorm(B)\otimes_{B\rtimes_\Enorm G}\E\rtimes_\Enorm G\otimes_{C\rtimes_\Enorm G}\F^G_\Enorm(C)^*\\
    &\cong \F^G_\Enorm(\D)\otimes_{B\rtimes_\Enorm G}B\rtimes_\Enorm G\otimes_{B\rtimes_\Enorm G}\E\rtimes_\Enorm G\otimes_{C\rtimes_\Enorm G}\F^G_\Enorm(C)^* \\
    &\cong \F^G_\Enorm(\D)\otimes_{B\rtimes_\Enorm G}\E\rtimes_\Enorm G\otimes_{C\rtimes_\Enorm G}\F^G_\Enorm(C)^*\\
    &\cong \F^G_\Enorm(\D\otimes_B\E)\otimes_{C\rtimes_\Enorm G}\F^G_\Enorm(C)^*\\
    &\cong (\D\otimes\E)^G_\Enorm,
\end{split}
\end{equation}
The last isomorphism follows from Corollary~\ref{cor-full}.

We now show that the composition of the above isomorphisms extends the canonical embedding
$\D^G_c\odot\E^G_c\into (\D\otimes_B\E)^G_c$ given by the identity on elementary tensors.
To see this let $x\in \D^G_c$ be of the form $x=\EE(\xi\cdot b^*)$ with $\xi\in \F_c(\D)=C_c(X)\cdot\D$ and $b\in B_c$ and let $y\in \E^G_c$ be of the form
$y=\EE(\eta\cdot c^*)$ with $\eta\in \F_c(\E)=C_c(X)\cdot\E$ and $c\in C_c$.  Notice that such elements form dense subspaces of $\D^G_c$ and $\E^G_c$, respectively.
Furthermore, we may assume that $\eta=a*f$ with $a\in B_c$ and $f\in \contc(G,\E)$ since these form a dense subspace of
$\F_c(\E)$, which follows from Proposition~\ref{prop:TensorDecompositionFixedPointModules} applied to the decomposition $\E\cong B\otimes_B\E$.
Now, starting with $x\otimes y=\EE(\xi\cdot b^*)\otimes \EE(\eta\cdot c^*)\in\D^G_\Enorm \otimes_{B^G_\Enorm}\E^G_\Enorm$ we follow the isomorphisms in~\eqref{eq:isomorphismsFixedModules}
writing $\cong$ to identify the corresponding elements:
\begin{align*}
x\otimes y&=\EE(\xi\cdot b^*)\otimes \EE(\eta\cdot c^*)\\
        &\cong \xi\otimes b\otimes a*f\otimes c\\
        &\cong \xi\otimes b\otimes a\otimes f\otimes c\\
        &\cong \xi\otimes \bbraket{b}{a}_{B\rtimes_\pn G}\otimes f\otimes c\\
        &\cong \xi\otimes \bbraket{b}{a}_{B\rtimes_\pn G}* f\otimes c\\
        &\cong \xi*(\bbraket{b}{a}* f)\otimes c\\
        &\cong \EE\big((\xi*(\bbraket{b}{a}_{B\rtimes_\pn G}* f))\cdot c^*\big).
\end{align*}
Now we compute
\begin{align*}
\xi*(\bbraket{b}{a}&_{B\rtimes_\pn G}* f)=\int_G\Delta(t)^{-1/2}\tilde\gamma_t(\xi\otimes_B\bbraket{b}{a}_{B\rtimes_\pn G}*f(t^{-1})) dt\\
       &=\int_G\Delta(t)^{-1/2}\theta_t(\xi)\otimes_B\gamma_t\left(\int_G\bbraket{b}{a}_{B\rtimes_\pn G}(s)\gamma_s(f(s^{-1}t^{-1}))ds\right) dt\\
        &=\int_G\int_G\Delta(t)^{-1/2}\Delta(s)^{-1/2}\theta_t(\xi)\otimes_B\beta_t(b^*)\beta_{ts}(a)\gamma_{ts}(f(s^{-1}t^{-1}) ds dt\\
        &\stackrel{s\mapsto t^{-1}s}{=}\int_G\int_G\Delta(s)^{-1/2}\theta_t(\xi)\otimes_B\beta_t(b^*)\beta_s(a)\gamma_s(f(s^{-1})) ds dt\\
        &=\int_G\theta_t(\xi\cdot b^*)\otimes_B\int_G\Delta(s)^{-1/2}\gamma_s(a\cdot f(s^{-1})) ds\\
        &=\EE(\xi\cdot b^*)\otimes_B a*f=\EE(\xi\cdot b^*)\otimes_B \eta.
\end{align*}
It follows that $\EE\big((\xi*(\bbraket{b}{a}_{B\rtimes_\pn G}\cdot f))\cdot c^*\big)=\EE(\xi\cdot b^*)\otimes \EE(\eta\cdot c^*)=x\otimes y$.
This proves that the composition of isomorphisms in~\eqref{eq:isomorphismsFixedModules} is the canonical embedding, as desired.
It is clear that this canonical isomorphism preserves the left
$A^G_\Enorm$-actions as well as the right $C^G_\Enorm$-actions and the $C^G_\Enorm$-valued inner products (all
the isomorphisms involved preserve this right structure). Therefore $\D^G_\Enorm\otimes_{B^G_\Enorm} \E^G_\Enorm\cong (\D\otimes_B\E)^G_\Enorm$
as $A^G_\Enorm-C^G_\Enorm$ correspondences.
\end{proof}

We are now ready to prove one of the main results of this section:

\begin{theorem}\label{thm-Fix-functor}
Let $G$ be a locally compact group and let $X$ be a proper $G$\nb-space. Then
the assignment mapping a weak $X\rtimes G$-algebra $A$ to $A^G_\Enorm$ and the isomorphism class of an
$X\rtimes G$-equivariant correspondence $[\E]$ to $[\E^G_\Enorm]$ defines a functor $\Fix^G_\Enorm$
from $\Corr{X}{G}_{\sat}^\Enorm$ to $\CCorr{G\bs X}$.
\end{theorem}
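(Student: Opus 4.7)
The plan is to verify the four standard axioms of a functor: the action on objects, the action on morphisms, preservation of identities, and preservation of composition. The first three are short and essentially already settled by the previous results; the main content is the composition axiom, which is exactly Proposition~\ref{prop:TensorOfFixedModules}.

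First I would check that the assignments are well-defined. On objects, if $A$ is an $\Enorm$-saturated weak $X\rtimes G$-algebra, then $A^G_\Enorm$ carries a canonical nondegenerate $C_0(G\bs X)$-structure by \cite[Proposition~6.2]{Buss-Echterhoff:Exotic_GFPA}, so it is an object of $\CCorr{G\bs X}$. On morphisms, given a $G$-equivariant correspondence $\E$ from $A$ to $B$ (objects of $\Corr{X}{G}^\Enorm_{\sat}$), the discussion preceding the theorem (using Lemma~\ref{lem-XG-bimodule} to realise $\E^G_\Enorm$ as a partial $\K(\E)^G_\Enorm$--$B^G_\Enorm$ equivalence bimodule and \cite[Proposition~6.2]{Buss-Echterhoff:Exotic_GFPA} for the nondegenerate left action of $A^G_\Enorm$) produces a correspondence $\E^G_\Enorm$ from $A^G_\Enorm$ to $B^G_\Enorm$. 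Since the construction is canonical, it descends to isomorphism classes, so $[\E]\mapsto[\E^G_\Enorm]$ is well defined.

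Next I would verify preservation of identities. The identity morphism at $A$ in $\Corr{X}{G}^\Enorm_{\sat}$ is the isomorphism class of $A$ itself, viewed as an $A$--$A$ correspondence via left and right multiplication. By Corollary~\ref{cor-full}, applied to $\E=A$ with both left and right actions the canonical ones, together with the assumption that $A$ is $\Enorm$-saturated (so that $\F^G_\Enorm(A)$ is a full $A^G_\Enorm$--$A\rtimes_{\alpha,\Enorm}G$ equivalence bimodule), we obtain
\[
A^G_\Enorm \cong \F^G_\Enorm(A) \otimes_{A\rtimes_{\alpha,\Enorm}G} \F^G_\Enorm(A)^* \cong A^G_\Enorm
\]
as an $A^G_\Enorm$--$A^G_\Enorm$ correspondence, with the left and right $A^G_\Enorm$-module structures agreeing with the canonical ones; unravelling the isomorphisms on elements of the form $\EE(ab^*)$ with $a,b\in A_c$ shows that this is precisely the identity correspondence on $A^G_\Enorm$.

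Finally, for composition, I would simply invoke Proposition~\ref{prop:TensorOfFixedModules}: given composable morphisms $[\D]\colon A\rightsquigarrow B$ and $[\E]\colon B\rightsquigarrow C$ in $\Corr{X}{G}^\Enorm_{\sat}$, the saturation hypothesis on the middle algebra $B$ is exactly what the proposition requires, yielding the natural isomorphism
\[
\D^G_\Enorm \otimes_{B^G_\Enorm} \E^G_\Enorm \;\cong\; (\D\otimes_B \E)^G_\Enorm
\]
of correspondences from $A^G_\Enorm$ to $C^G_\Enorm$. In the notation of $\CCorr{G\bs X}$ this says
\[
\Fix^G_\Enorm([\E])\circ\Fix^G_\Enorm([\D]) = [\D^G_\Enorm \otimes_{B^G_\Enorm} \E^G_\Enorm] = [(\D\otimes_B\E)^G_\Enorm] = \Fix^G_\Enorm([\E]\circ[\D]),
\]
so composition is preserved. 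The one point deserving a sanity check is that these isomorphisms are natural in an appropriate sense (so that they really assemble into a functor and not just a pseudofunctor), but this is automatic here since the maps in Proposition~\ref{prop:TensorOfFixedModules} are determined by their values on the canonical dense subspaces $\D^G_c\odot\E^G_c\hookrightarrow(\D\otimes_B\E)^G_c$. The main obstacle, had it arisen, would have been composition, but that has already been done in Proposition~\ref{prop:TensorOfFixedModules}, so what remains is purely a bookkeeping verification.
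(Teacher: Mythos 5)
Your proposal is correct and follows essentially the same route as the paper: composition is reduced to Proposition~\ref{prop:TensorOfFixedModules} (whose hypothesis is exactly the saturation of the middle algebra), identity preservation is immediate from the construction, and well-definedness comes from the discussion preceding the theorem. The only cosmetic difference is that you verify the identity axiom via Corollary~\ref{cor-full}, whereas the paper observes directly that the fixed module of the identity correspondence $A$ is $A^G_\Enorm$ itself; both are fine.
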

\begin{proof}
Since all objects in $\Corr{X}{G}_{\sat}^\Enorm$ are, by definition, $\Enorm$-saturated weak $X\rtimes G$-algebras,
Proposition~\ref{prop:TensorOfFixedModules} says that
$$[\E^G_\Enorm]\circ[\D^G_\Enorm]=[\D^G_\Enorm\otimes_{B^G_\Enorm}\E^G_\Enorm]=[(\D\otimes_B\E)^G_\Enorm]$$
for every pair of composable morphisms $[\D]$ and $[\E]$ in $\Corr{X}{G}$.
This means that $\Fix^G_\Enorm\colon \Corr{X}{G}_{\sat}^\Enorm\to\CCorr{G\bs X}$, $[\E]\mapsto [\E^G_\Enorm]$ preserves composition of morphisms.
Since the identity morphism $[A]\colon A\to A$ in $\Corr{X}{G}$ for an object $A$ in $\Corr{X}{G}$
is obviously sent to the identity morphism $[A^G_\Enorm]\colon A^G_\Enorm \to A^G_\Enorm$
in $\CCorr{G\bs X}$, it follows that $\Fix^G_\Enorm$ is a functor $\Corr{X}{G}_{\sat}^\Enorm\to\CCorr{G\bs X}$.
\end{proof}

For a free and proper $G$\nb-space $X$, every weak $X\rtimes G$-algebra is $\Enorm$-saturated by Lemma~\ref{lem-univ-sat}, so we immediately get the following  consequence:

\begin{corollary}
If $X$ is a free and proper $G$\nb-space, then the assignments $A\mapsto A^G_\Enorm$, $[\E]\mapsto [\E^G_\Enorm]$
define a functor $\Fix^G_\Enorm\colon \Corr{X}{G}\to \CCorr{G\bs X}$.
\end{corollary}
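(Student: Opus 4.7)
The corollary is an immediate consequence of Theorem~\ref{thm-Fix-functor}, so my plan is essentially to observe that the freeness hypothesis on the $G$-action on $X$ forces the saturation hypothesis in that theorem to hold automatically for every object in $\Corr{X}{G}$, collapsing the subcategory $\Corr{X}{G}_{\sat}^\Enorm$ onto the whole of $\Corr{X}{G}$.

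More precisely, I would first recall that Lemma~\ref{lem-univ-sat}(1) tells us that whenever $G$ acts freely on $X$, every weakly proper $(B,X\rtimes G)$-module which is full as a Hilbert $B$-module is universally saturated, and then by Lemma~\ref{lem-univ-sat}(2) it is $\Enorm$-saturated for every exotic crossed-product norm. For an object $A$ in $\Corr{X}{G}$, we view $(A,\alpha)$ as the weakly proper Hilbert $(A,X\rtimes G)$-module $(A,\alpha)$ itself; this module is trivially $A$-full since $A=\langle A,A\rangle_A$. Hence the action on $A$ is $\Enorm$-saturated for every $\Enorm$ attached to a $G$-invariant ideal $E\subseteq B(G)$.

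Consequently the full subcategory $\Corr{X}{G}_{\sat}^\Enorm$ introduced in the paragraph preceding Theorem~\ref{thm-Fix-functor} coincides with the full category $\Corr{X}{G}$. Theorem~\ref{thm-Fix-functor} then provides the functor $\Fix^G_\Enorm$ directly, with the same assignments on objects and morphisms as stated in the corollary. There is no additional obstacle: the only technical ingredient, namely the preservation of composition in $\Fix^G_\Enorm([\E]\circ[\D])=[\E^G_\Enorm]\circ[\D^G_\Enorm]$, is subsumed in Theorem~\ref{thm-Fix-functor} because the saturation hypothesis on the intermediate algebra $B$ used in Proposition~\ref{prop:TensorOfFixedModules} is now automatic.
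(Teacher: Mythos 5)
Your proof is correct and follows exactly the paper's route: freeness of the $G$\nb-action on $X$ makes every object of $\Corr{X}{G}$ automatically $\Enorm$\nb-saturated via Lemma~\ref{lem-univ-sat}, so that $\Corr{X}{G}_{\sat}^\Enorm=\Corr{X}{G}$ and the corollary is an immediate instance of Theorem~\ref{thm-Fix-functor}. The only detail you add beyond what the paper states explicitly is the (correct) observation that $(A,\alpha)$, viewed as a Hilbert $(A,X\rtimes G)$\nb-module, is full, which is what makes Lemma~\ref{lem-univ-sat}(1) applicable.
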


We should notice that for reduced generalized fixed-point algebras, the above corollary
has been proved by an Huef, Raeburn and Williams in \cite[Theorem~13]{Huef-Raeburn-Williams:FunctorialityGFPA}.

Next, we interpret the construction $A\mapsto \F_\Enorm(A)$ as a natural isomorphism in the  following sense (similar to what has been done for reduced generalized fixed-point algebras and \emph{free} actions on spaces in \cite{Kaliszewski-Quigg-Raeburn:ProperActionsDuality}).
The result  generalises the main result of \cite{Huef-Kaliszewski-Raeburn-Williams:Naturality-Rieffel} where the authors
obtain a reduced version under the additional assumption that $G$ acts freely on $X$:

\begin{proposition}\label{prop-functor}
If $G$ acts properly on a space $X$, then the assignment that maps a $\Enorm$-saturated weak $X\rtimes G$-algebra $A$ to the $A^G_\Enorm-A\rtimes_\Enorm G$ equivalence bimodule
$\F^G_\Enorm(A)$ is a natural isomorphism between the fixed functor $\Fix^G_\Enorm\colon A\mapsto A^G_\Enorm$
and the crossed product functor $\CP_\Enorm\colon A\mapsto A\rtimes_\Enorm G$ considered as functors from $\Corr{X}{G}_{\sat}^\Enorm$ to $\CCorr{G\bs X}$. In particular, if $G$ acts freely and properly on $X$, then $A\mapsto \F_\Enorm(A)$ is a natural isomorphism between $\Fix^G_\Enorm$ and $\CP_\Enorm$ as functors $\Corr{X}{G}\to\CCorr{G\bs X}$.
\end{proposition}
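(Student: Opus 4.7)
The plan is to prove the natural isomorphism in two steps: first establish that each component $\F^G_\Enorm(A)$ is an invertible morphism in $\CCorr{G\bs X}$, and then verify the naturality square. Since every object $A$ of $\Corr{X}{G}_{\sat}^\Enorm$ is $\Enorm$-saturated by definition, Lemma~\ref{lem-univ-sat}(3) says that $\F^G_\Enorm(A)$ is a genuine $A^G_\Enorm$-$A\rtimes_\Enorm G$ imprimitivity bimodule, and imprimitivity bimodules are exactly the invertible morphisms in a correspondence category. So invertibility of each component is immediate.

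For naturality, given an equivariant correspondence $[\E]\colon A\rightsquigarrow B$ between two saturated weak $X\rtimes G$-algebras, what must be proved is an isomorphism of $A^G_\Enorm$-$B\rtimes_\Enorm G$ correspondences
\begin{equation*}
\E^G_\Enorm \otimes_{B^G_\Enorm} \F^G_\Enorm(B) \;\cong\; \F^G_\Enorm(A) \otimes_{A\rtimes_\Enorm G} (\E\rtimes_\Enorm G).
\end{equation*}
The strategy is to identify both sides canonically with $\F^G_\Enorm(\E)$, viewed as an $A^G_\Enorm$-$B\rtimes_\Enorm G$ correspondence through the canonical map $A^G_\Enorm\to \M(\K(\E)^G_\Enorm)=\M(\K(\F^G_\Enorm(\E)))$ recalled immediately before Proposition~\ref{prop:TensorOfFixedModules}.

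For the right-hand side, I will apply Proposition~\ref{prop:TensorDecompositionFixedPointModules} with $(\E_1,\gamma_1)=(A,\alpha)$ (seen as the defining weakly proper $(A,X\rtimes G)$-module) and $(\E_2,\gamma_2)=(\E,\gamma)$ with its $A$-module structure $\varphi\colon A\to \Lb(\E)$. Since $A\otimes_A\E\cong \E$ as Hilbert $(B,G)$-modules, with the $C_0(X)$-structure on $\E$ induced from that on $A$ coinciding with the given structure map of $\E$, this yields a Hilbert $B\rtimes_\Enorm G$-module isomorphism $\F^G_\Enorm(A)\otimes_{A\rtimes_\Enorm G}(\E\rtimes_\Enorm G)\cong \F^G_\Enorm(\E)$. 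A short computation on elementary vectors $b\otimes x\in \F_c(A)\odot \contc(G,\E)$, using $G$\nb-invariance of any $a\in A^G_c$ to move $\varphi(a)$ past $\gamma_t$, will check that this map intertwines the two left $A^G_\Enorm$-actions.

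For the left-hand side I forget the $A$-action and view $\E$ as a partial $\K(\E)$-$B$ equivalence bimodule, where $\K(\E)$ is itself a weak $X\rtimes G$-algebra (Remark~\ref{rem-proper-module}). Corollary~\ref{cor-full} then gives the canonical identification $\E^G_\Enorm\cong \F^G_\Enorm(\E)\otimes_{B\rtimes_\Enorm G}\F^G_\Enorm(B)^*$ of partial $\K(\E)^G_\Enorm$-$B^G_\Enorm$ equivalence bimodules, and hence, after composing with the map $A^G_\Enorm\to \M(\K(\E)^G_\Enorm)$, of $A^G_\Enorm$-$B^G_\Enorm$ correspondences. Tensoring on the right with $\F^G_\Enorm(B)$ over $B^G_\Enorm$ and invoking the Rieffel-correspondence identification $\F^G_\Enorm(B)^*\otimes_{B^G_\Enorm}\F^G_\Enorm(B)\cong B\rtimes_\Enorm G$, which is valid precisely because $B$ is $\Enorm$-saturated, yields $\E^G_\Enorm\otimes_{B^G_\Enorm}\F^G_\Enorm(B)\cong \F^G_\Enorm(\E)$. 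Splicing the two identifications gives the required natural isomorphism; the last sentence of the proposition follows because, when $G$ acts freely and properly on $X$, saturation is automatic by Lemma~\ref{lem-univ-sat}(1), so $\Corr{X}{G}_{\sat}^\Enorm=\Corr{X}{G}$. The main technical nuisance will be the bookkeeping of left $A^G_\Enorm$-actions across the chain of tensor products, since $A$ need not act by compact operators on $\E$, so every comparison of left actions has to be routed through the canonical nondegenerate map $A^G_\Enorm\to \M(\K(\E)^G_\Enorm)$ from \cite{Buss-Echterhoff:Exotic_GFPA}*{Proposition~6.2}.
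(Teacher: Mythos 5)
Your proposal is correct and follows essentially the same route as the paper: both sides of the naturality square are identified with $\F^G_\Enorm(\E)$, the right-hand side via Proposition~\ref{prop:TensorDecompositionFixedPointModules} applied to $A\otimes_A\E\cong\E$, and the left-hand side via Corollary~\ref{cor-full} together with the saturation-dependent identification $\F^G_\Enorm(B)^*\otimes_{B^G_\Enorm}\F^G_\Enorm(B)\cong B\rtimes_\Enorm G$. Your added remarks on invertibility of the components via Lemma~\ref{lem-univ-sat} and on routing the left $A^G_\Enorm$-action through $A^G_\Enorm\to\M(\K(\E)^G_\Enorm)$ only make explicit what the paper leaves implicit.
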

\begin{proof}
Recall that on the level of objects, the crossed product functor $\CP_\Enorm$ sends a weak $X\rtimes G$-algebra $A$ to the crossed product $A\rtimes_\Enorm G$,
viewed as a weak $G\bs X$-algebra with respect to the canonical homomorphism $\contz(G\bs X)\to \M(A\rtimes_\Enorm G)$ induced by $\contz(X)\to\M(A)$. On the level of morphisms it sends
a $G$-equivariant $A-B$ correspondence $[\E]\colon A\rightsquigarrow B$ to $[\E\rtimes_\Enorm G]$,
(see Section~\ref{sec:CrossedProductsHilbertModules} for the precise formulas).

All we have to prove is naturality of $A\mapsto \F^G_\Enorm(A)$, \ie, given two $\Enorm$-saturated weak $X\rtimes G$-algebras $A$ and $B$
and a morphism $[\E]\colon A\rightsquigarrow B$, we have to show that the
following diagram of morphisms in $\CCorr{G\bs X}$ commutes:
$$
\begin{CD}
A^G_\Enorm @>\F^G_\Enorm(A)>> A\rtimes_\Enorm G\\
@V\E^G_\Enorm VV  @VV\E\rtimes_\Enorm G V\\
B^G_\Enorm
@> \F^G_\Enorm(B)  >> B\rtimes_\Enorm G.
\end{CD}
$$
In other words, we need to show that $\F^G_\Enorm(A)\otimes_{A\rtimes_\Enorm G}\E\rtimes_\Enorm G\cong \E^G_\Enorm\otimes_{B^G_\Enorm}\F^G_\Enorm(B)$ as $G\bs X$-equivariant $A^G-B\rtimes_\Enorm G$ correspondences.
we have a canonical isomorphism $\F^G_\Enorm(A)\otimes_{A\rtimes_\Enorm G}\E\rtimes_\Enorm G\cong \F^G_\Enorm(\E)$ of Hilbert $B\rtimes_\Enorm G$-modules
Hence it is  an isomorphism of $G\bs X$-equivariant $A^G_\Enorm-B\rtimes_\Enorm G$ correspondences.
On the other hand, by Corollary~\ref{cor-full}, we have a canonical isomorphisms $\E^G_\Enorm\cong \F^G_\Enorm(\E)\otimes_{B\rtimes_\Enorm G}\F^G_\Enorm(B)^*$.  Together with the canonical isomorphism $\F^G_\Enorm(B)^*\otimes_{B^G_\Enorm}\F^G_\Enorm(B)\cong B\rtimes_\Enorm G$ (here we have used that $B$ is $\Enorm$-saturated) this yields an isomorphism $\E^G_\Enorm\otimes_{B^G_\Enorm}\F^G_\Enorm(B)\cong \F^G_\Enorm(\E)\otimes_{B\rtimes_\Enorm G}\F^G_\Enorm(B)^*\otimes_{B^G_\Enorm}\F^G_\Enorm(B)\cong \F^G_\Enorm(\E)$ of Hilbert $B\rtimes_\Enorm G$-modules.  It is easy to check that this isomorphism preserves
the left $A_\pn^G$-actions. This yields the desired result.
\end{proof}
\begin{remark}\label{rem-universal-cat} Recall that a proper $G$-space $Z$ is called universal, if
for every proper $G$-space $X$ there exists a continuous $G$-map $\varphi:X\to Z$
which is unique up to $G$-homotopy. It is shown in \cite{Kasparov-Skandalis:Bolic}
that such universal proper $G$-space $Z$ always exists, and is unique up to $G$-homotopy.
Let us fix a universal proper $G$-space which, following the usual convention, we call
$\underline{EG}$.

As observed in Remark \ref{rem-unique}, we can always replace a given proper $G$\nb-space $X$ by $\underline{EG}$ by composing
the structure map $\phi:C_0(X)\to \M(A)$ with  the \Star{}homomorphism $\varphi^*:C_0(\underline{EG})\to \M(C_0(X))$ given by $\varphi^*(f)=f\circ \varphi$ for a continuous $G$-map $\varphi:X\to\underline{EG}$. It follows from Proposition \ref{prop-independence} that the generalized fixed-point algebras $A_\pn^G$ remain unchanged. Thus, if $X$ and $Y$ are two proper
$G$-spaces, $(A,\alpha)$ is a saturated weakly proper $X\rtimes G$-algebra, and $(B,\beta)$ is a
saturated weakly proper $Y\rtimes G$-algebra, we may regard both systems $(A,\alpha)$ and $(B,\beta)$ as objects in $\Corr{\underline{EG}}{G}_{\sat}^\pn$. We therefore obtain
a universal theory in which the objects are the saturated weakly proper $X\rtimes G$-algebras {\bf for all proper $G$-spaces $X$}. Moreover, if $(A,\alpha)$ and $(B,\beta)$ are two such objects,
the morphisms from $(A,\alpha)$ to $(B,\beta)$ are just the
isomorphism classes of $G$\nb-equivariant correspondences $[\E]\colon A\rightsquigarrow B$.
\end{remark}

We finish by deducing a symmetric version of the above result.  Suppose that $G$ and $H$ are locally compact groups and
that $(\png,\pn,\pnh)$ is a compatible triple of norms as in Definition \ref{def-compatible-action}. Assume that $X$ is a $G\times H$-space
such that $G$ and $H$ act properly on $X$. In this situation we write $\Corr{X}{G\times H}_{\sat}^\pn$ for the category of weak
$X\rtimes (G\times H)$-algebras $(A,\alpha)$ such that the both underlying weakly proper $X\rtimes G$- and $X\rtimes H$-structures
$(A,\alpha^G)$ and $(A,\alpha^H)$ are saturated. If $(\mathcal E,\gamma)$ is a correspondence from $(A,\alpha)$ to $(B,\beta)$
in $\Corr{X}{G\times H}_{\sat}^\pn$, it follows from a combination of Lemma~\ref{lem-XG-bimodule} with Lemma~\ref{lem-actionH} (the latter applied
to $A$, $B$ and the linking algebra $L(\E)$) that
the restriction of $\gamma^H$ to $\E_c^G\subseteq \E$ extends to an $H$\nb-action
on the $A_\png^G-B_\png^G$-correspondence $\E_\png^G$.  By considering diagonal actions on  tensor products of correspondences,
it is straightforward to obtain an equivariant version of Proposition~\ref{prop:TensorOfFixedModules} and Theorem~\ref{thm-Fix-functor}
 to obtain the following result, in which  $\CorrG$ (resp. $\CorrH$) denotes the correspondence category of \cstar{}dynamical
 $G$\nb-systems (resp. $H$\nb-systems)
 as in \cite[Chapter 2.2]{Echterhoff-Kaliszewski-Quigg-Raeburn:Naturality}:

\begin{proposition}\label{prop-symmetric-Fix-functor}
Let $(\png,\pn,\pnh)$ be a compatible triple of crossed-product norms as in Definition \ref{def-compatible-action}.
Then there are functors
\begin{align*}
&\Fix_\png^G(H):\Corr{X}{G\times H}_{\sat}^\pn\to \CorrH\quad \text{and}\\
&\Fix_\png^H(G):\Corr{X}{G\times H}_{\sat}^\pn\to \CorrG
\end{align*}
in which $\Fix_{\pn^G}^G(H)$ sends an object $(A,\alpha)\in \Corr{X}{G\times H}_{\sat}^\pn$ to the object \linebreak
$(A_\png^G,\alpha^H)\in \CorrH$
and an isomorphism class $[(\E,\gamma)]$ of an $(A,\alpha) - (B,\beta)$ correspondence $(\E,\gamma)$ to the isomorphism class of the
$(A_\png^G, \alpha^H)-(B_\png^G,\beta^H)$ correspondence $(\E_\png^G,\gamma^H)$, and similarly for $\Fix_{\pn^H}^H(G)$.
\end{proposition}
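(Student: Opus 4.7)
The plan is to show that the two claimed constructions are genuine functors by reducing to previously proved non\nobreakdash-equivariant results and then verifying $H$\nb-equivariance (resp. $G$\nb-equivariance) of the canonical identifications. I focus on $\Fix_\png^G(H)$; the other case is symmetric.

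First, I will define the functor on objects. Given $(A,\alpha)\in \Corr{X}{G\times H}_{\sat}^\pn$, the subsystem $(A,\alpha^G)$ is a saturated weakly proper $X\rtimes G$-algebra, hence Lemma~\ref{lem-actionH} (applied to the Hilbert $(A,X\rtimes G)$-module $A$ with the commuting $H$-action $\alpha^H$) produces a strongly continuous $H$-action on $A_\png^G$ that extends the restriction of $\alpha^H$ to the subspace $A_c^G\subseteq \M(A)^G$. Similarly, for a morphism $[(\E,\gamma)]\colon (A,\alpha)\to (B,\beta)$ I apply the equivariant combination of Lemma~\ref{lem-XG-bimodule} (to the linking algebra $(L(\E),\theta^G)$, which is itself a weakly proper $X\rtimes G$-algebra) and Lemma~\ref{lem-actionH} (to the commuting $H$-action $\theta^H$ on $L(\E)$). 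This puts an $H$-action $\gamma^H$ on the $A_\png^G-B_\png^G$ correspondence $\E_\png^G$ that restricts to $\gamma^H|_{\E_c^G}$ on the dense subspace $\E_c^G = p L(\E)_c^G q$. The compatibility of $\gamma^H$ with $\alpha^H$ and $\beta^H$ and with all inner products follows from the corresponding compatibilities of $\theta^H$ inside $L(\E)_\png^G$ together with the identification $L(\E_\png^G)\cong L(\E)_\png^G$ of Remark~\ref{rem-proper-module}(c). Well\nb-definedness on isomorphism classes is immediate since any equivariant isomorphism of correspondences restricts to one of the fixed\nb-point bimodules.

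Second, I will verify functoriality. Preservation of identity morphisms is clear, since the canonical morphism $[A]$ is implemented by the bimodule $A$ itself whose fixed\nb-point bimodule is $A_\png^G$. The essential point is preservation of composition: given morphisms $[(\D,\theta)]\colon (A,\alpha)\rightsquigarrow (B,\beta)$ and $[(\E,\gamma)]\colon (B,\beta)\rightsquigarrow (C,\sigma)$ in $\Corr{X}{G\times H}_{\sat}^\pn$, I must show that
\[
(\D_\png^G\otimes_{B_\png^G}\E_\png^G,\, \theta^H\otimes\gamma^H)\cong ((\D\otimes_B\E)_\png^G, (\theta\otimes\gamma)^H)
\]
as $H$-equivariant $A_\png^G-C_\png^G$ correspondences. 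The underlying isomorphism of correspondences (without the $H$-structure) is furnished by Proposition~\ref{prop:TensorOfFixedModules}, whose hypothesis, namely $\png$\nb-saturation of the middle algebra $B$, is exactly built into the definition of $\Corr{X}{G\times H}_{\sat}^\pn$.

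The remaining task, and the only point that demands actual verification, is $H$\nb-equivariance of this isomorphism. The key observation is that by Proposition~\ref{prop:TensorOfFixedModules} the isomorphism is explicitly the canonical extension of the inclusion $\D_c^G\odot \E_c^G\hookrightarrow (\D\otimes_B\E)_c^G$ acting as the identity on elementary tensors. Now on $\D_c^G$ and $\E_c^G$ the $H$\nb-actions $\theta^H$ and $\gamma^H$ are, by construction in the previous paragraph, simply the restrictions of the original $H$\nb-actions on $\D$ and $\E$; likewise $(\theta\otimes\gamma)^H$ restricts on $(\D\otimes_B\E)_c^G$ to the diagonal $H$\nb-action. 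Since the diagonal action is compatible with the balanced tensor product (it is $B$\nb-bilinear because $\beta^H$ respects the $B$\nb-bimodule structure), one has $(\theta^H\otimes\gamma^H)(\xi\otimes\eta)=\theta^H(\xi)\otimes\gamma^H(\eta)=(\theta\otimes\gamma)^H(\xi\otimes\eta)$ on elementary tensors in $\D_c^G\odot\E_c^G$. Equivariance then propagates to the completion by continuity of the $H$\nb-actions and of the isomorphism.

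The anticipated obstacle is purely bookkeeping: one must ensure that the various identifications $\E_\png^G=p L(\E)_c^G q$, $\F_\pn^G(B)^*\otimes_{B_\png^G}\F_\pn^G(B)\cong B\rtimes_\png G$ (used inside the proof of Proposition~\ref{prop:TensorOfFixedModules}) and $\D\otimes_B\E$ are all intertwined by the same diagonal $H$\nb-action. No new analytic input is required; everything follows from the fact that the $H$\nb-actions on all ingredients arise by descent of the single $H$\nb-action on the ambient linking algebras and tensor products, so the chain of canonical isomorphisms in~\eqref{eq:isomorphismsFixedModules} is $H$\nb-equivariant at each step.
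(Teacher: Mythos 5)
Your proposal is correct and follows essentially the same route as the paper: the paper obtains the $H$\nb-action on $\E_\png^G$ from Lemma~\ref{lem-XG-bimodule} combined with Lemma~\ref{lem-actionH} applied to the linking algebra, and then asserts that an equivariant version of Proposition~\ref{prop:TensorOfFixedModules} and Theorem~\ref{thm-Fix-functor} follows by "considering diagonal actions on tensor products." Your write-up simply supplies the details the paper leaves as straightforward, the key one being that the composition isomorphism is the identity on the inductive-limit-dense subspace $\D_c^G\odot\E_c^G$, where the $H$\nb-actions restrict to the original diagonal action, so equivariance propagates by continuity.
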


Recall that $\CCCorr $ denotes the correspondence category with \cstar{}algebras as objects.
In what follows, we let $\CP_\png: \CorrG\to \CCCorr$, $A\mapsto A\rtimes_\png G$
(resp. $\CP_\pnh: \CorrH\to \CCCorr$, $A\mapsto A\rtimes_\pnh H$) denote the $\png$\nb-(resp. $\pnh$-)crossed product functor  for
$G$ (resp. $H$). Moreover, we let
$\CP_\png^H$ and $\CP_\pnh^G$  denote the functors from $\Corr{X}{G\times H}$ to
$\CorrH$ and $\CorrG$, respectively, which send  a system $(A,\alpha)$ to $(A\rtimes_{\alpha^G,\png} G, \alpha^H)$ and
$(A\rtimes_{\alpha^H,\pnh} H, \alpha^G)$, respectively.
We are now ready to combine the results of this section with the symmetric imprimitivity theorem to obtain the
following generalization of the main result  of \cite{Huef-Kaliszewski-Raeburn-Williams:Naturality-Symmetric}:

\begin{theorem}\label{thm-symmetric-naturality}
Assume that $X$ is a $G\times H$-space such that $G$ and $H$ act properly on $X$ and let $(\png,\pn,\pnh)$ be a
compatible triple of norms as in Definition \ref{def-compatible-action}. Then the
$A_\png^G\rtimes_{\pnh}H-A_\pnh^H\rtimes_{\png}G$ equivalence bimodule $\F_\pn(A)_G^H$ of Theorem  \ref{thm-symmetric-direct}
is a natural isomorphism between the functors $\CP_\pnh\circ \Fix_\png^G(H)$ and $\CP_\png\circ \Fix_\png^H(G)$.
Moreover, both functors are naturally isomorphic to the crossed-product functor
$$\CP_\pn: \Corr{X}{G\times H}_{\sat}^\pn\to \CCCorr\; ;\; (A,\alpha)\mapsto A\rtimes_\pn(G\times H)$$
via the equivalence bimodules $\F_\png^G(A)\rtimes_\pnh H$ and $\F_\pnh^H(A)\rtimes_\png G$, respectively.
\end{theorem}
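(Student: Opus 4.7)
The plan is to prove the statement in two stages: first establish the two ``half'' natural isomorphisms between $CP_\mu$ and each of $CP_{\mu^H}\circ\Fix_{\mu^G}^G(H)$ and $CP_{\mu^G}\circ\Fix_{\mu^H}^H(G)$ via the bimodules $\F_{\mu^G}^G(A)\rtimes_{\mu^H}H$ and $\F_{\mu^H}^H(A)\rtimes_{\mu^G}G$, respectively, and then assemble the naturality of $\F_\mu(A)_G^H$ by composition. Since the claimed natural transformation $\F_\mu(A)_G^H$ was \emph{defined} in the construction preceding Theorem~\ref{thm-symmetric-direct} as the composition
\[
\F_\mu(A)_G^H\cong\bigl(\F_{\mu^G}^G(A)\rtimes_{\mu^H}H\bigr)\otimes_{A\rtimes_\mu(G\times H)}\bigl(\F_{\mu^H}^H(A)\rtimes_{\mu^G}G\bigr)^*,
\]
once both factors are shown to be natural, the composite is too.

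For the first half, I would first promote Proposition~\ref{prop-functor} to an $H$\nb-equivariant statement: for $(A,\alpha),(B,\beta)$ in $\Corr{X}{G\times H}_{\sat}^\mu$ and a morphism $[\E]\colon A\rightsquigarrow B$, I would check that the Hilbert-module isomorphism $\F_{\mu^G}^G(A)\otimes_{A\rtimes_{\mu^G}G}(\E\rtimes_{\mu^G}G)\cong \E^G_{\mu^G}\otimes_{B^G_{\mu^G}}\F_{\mu^G}^G(B)$ established in the proof of Proposition~\ref{prop-functor} is compatible with the $H$\nb-actions of Lemma~\ref{lem-actionH}. This is really just bookkeeping: all three canonical isomorphisms used there (namely Corollary~\ref{cor-full}, Proposition~\ref{prop:TensorDecompositionFixedPointModules}, and the saturation isomorphism $\F_{\mu^G}^G(B)^*\otimes_{B^G_{\mu^G}}\F_{\mu^G}^G(B)\cong B\rtimes_{\mu^G}G$) are induced by the identity on $\F_c^G(\E)\odot\contc(G,\E)$, hence $H$\nb-equivariantly on generators. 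This yields that $A\mapsto \F_{\mu^G}^G(A)$ is a natural isomorphism $\Fix_{\mu^G}^G(H)\Rightarrow CP_{\mu^G}^H$ of functors $\Corr{X}{G\times H}_{\sat}^\mu\to\CorrH$, where both sides land in the $H$\nb-equivariant correspondence category.

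Next I would apply the $\mu^H$-crossed-product functor $CP_{\mu^H}\colon\CorrH\to\CCCorr$ (which is functorial and preserves composition of correspondences by Corollary~\ref{cor-E-correspond}) and use compatibility of the triple $(\mu^G,\mu,\mu^H)$ to identify $CP_{\mu^H}\circ CP_{\mu^G}^H$ with $CP_\mu$. The result is that $A\mapsto \F_{\mu^G}^G(A)\rtimes_{\mu^H}H$ is a natural isomorphism $CP_{\mu^H}\circ\Fix_{\mu^G}^G(H)\Rightarrow CP_\mu$. Interchanging the roles of $G$ and $H$ gives the symmetric statement for $\F_{\mu^H}^H(A)\rtimes_{\mu^G}G$. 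Taking the tensor product of the first with the dual (adjoint) of the second over $A\rtimes_\mu(G\times H)$ and using the identification recalled above yields the naturality of $\F_\mu(A)_G^H$ between $CP_{\mu^H}\circ\Fix_{\mu^G}^G(H)$ and $CP_{\mu^G}\circ\Fix_{\mu^H}^H(G)$.

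The main obstacle is the verification in the first step that the naturality square of Proposition~\ref{prop-functor} commutes $H$\nb-equivariantly. The challenge is that the $H$\nb-action on $\F_{\mu^G}^G(A)$ is defined only indirectly through Lemma~\ref{lem-actionH}; one must trace the explicit formulas on the dense subspace $\F_c^G(\E)=\contc(X)\cdot \E$ and check that each intermediate isomorphism in the chain appearing in the proof of Proposition~\ref{prop-functor} intertwines the restricted $H$\nb-actions. Once this is done, the rest is essentially functorial formalism using Corollary~\ref{cor-E-correspond} and the iterated crossed-product identifications built into the compatibility assumption of Definition~\ref{def-compatible-action}.
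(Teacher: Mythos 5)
Your proposal is correct and follows essentially the same route as the paper's proof: the paper likewise reduces everything to an $H$\nb-equivariant upgrade of Proposition~\ref{prop-functor} (giving the natural isomorphism $\Fix_\png^G(H)\Rightarrow\CP_\png^H$ via $A\mapsto\F_\png^G(A)$ with the $H$\nb-action of Lemma~\ref{lem-actionH}), applies $\CP_\pnh$ and the iterated crossed-product identifications from the compatibility of $(\png,\pn,\pnh)$ to obtain the two half natural isomorphisms with $\CP_\pn$, and then composes them using the decomposition $\F_\pn(A)_G^H\cong\big(\F_\png^G(A)\rtimes_\pnh H\big)\otimes_{A\rtimes_\pn(G\times H)}\big(\F_\pnh^H(A)\rtimes_\png G\big)^*$ from the proof of Theorem~\ref{thm-symmetric-direct}. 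Your identification of the main work — tracing the $H$\nb-equivariance of the chain of isomorphisms in Proposition~\ref{prop-functor} on the dense subspaces — matches what the paper delegates to "the above discussion."
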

\begin{proof} Note that the canonical isomorphisms
$$(A\rtimes_{\alpha^G,\png}G)\rtimes_{\alpha^H,\pnh} H\cong A\rtimes_{\alpha,\pn}(G\times H)\cong
(A\rtimes_{\alpha^H,\pnh}H)\rtimes_{\alpha^G,\png} G,$$
which exist by the compatibility assumption for $(\png,\pn,\pnh)$, give natural isomorphisms between the functors
$\CP_\pnh\circ \CP_\png^H$, $\CP_\pn$, and $\CP_\png\circ \CP_\pnh^G$, respectively.
Moreover, by the above discussion we obtain an $H$\nb-equivariant version
of Proposition~\ref{prop-functor} which shows that $(A,\alpha)\mapsto (\F_\png^G(A), \tilde{\alpha}^H)$ (in which $\tilde\alpha^H$ denotes the
action of $H$ on $\F_\png^G(A)$ induced from $\alpha$ as in Lemma~\ref{lem-actionH}) is
a natural isomorphism between the functor $\Fix_\png^G(H)$ and $\CP_\png^H$.
 Taking crossed products by $H$, we therefore get the
natural isomorphism $(A,\alpha)\mapsto \F_\png^G(A)\rtimes_{\pnh}H$ between $\CP_\pnh\circ \Fix_\png^G(H)$ and
$\CP_\pnh\circ \CP_\png^H\cong \CP_\pn$.
Similarly $(A,\alpha)\mapsto \F_\pnh^H(A)\rtimes_{\png}G$ gives a natural isomorphism
between the functors $\CP_\png\circ \Fix_\pnh^H(G)$ and
$\CP_\png\circ \CP_\pnh^G\cong \CP_\pn$. By  the proof of Theorem~\ref{thm-symmetric-direct} we have an isomorphism
$\F_\pn(A)_G^H\cong \left(\F_\pn^G(A)\rtimes_\pn H\right)\otimes_{A\rtimes_\pn(G\times H)}\left(\F_\pn^H(A)\rtimes_\pn G\right)^*$
and the result follows.
\end{proof}


\begin{bibdiv}
 \begin{biblist}
\bib{Huef-Raeburn-Williams:FunctorialityGFPA}{incollection}{
  author={an Huef, Astrid},
  author={Raeburn, Iain},
  author={Williams, Dana P.},
  title={Functoriality of Rieffel's generalised fixed-point algebras for proper actions},
  booktitle={Superstrings, geometry, topology, and $C^*$\nobreakdash -algebras},
  series={Proc. Sympos. Pure Math.},
  volume={81},
  pages={9--25},
  publisher={Amer. Math. Soc.},
  address={Providence, RI},
  date={2010},
  review={\MRref {2681756}{2012b:46149}},
}

\bib{anHuef-Raeburn-Williams:Symmetric}{article}{
  author={an Huef, Astrid},
  author={Raeburn, Iain},
  author={Williams, Dana P.},
  title={A symmetric imprimitivity theorem for commuting proper actions},
  journal={Canad. J. Math.},
  volume={57},
  date={2005},
  number={5},
  pages={983--1011},
  issn={0008-414X},
  review={\MRref {2164592}{2006f:46067}},
}

\bib{Huef-Kaliszewski-Raeburn-Williams:Naturality-Rieffel}{article}{
  author={an Huef, Astrid},
  author={Kaliszewski, S.},
  author={Raeburn, Iain},
  author={Williams, Dana P.},
     TITLE = {Naturality of {R}ieffel's {M}orita equivalence for proper
              actions},
   JOURNAL = {Algebr. Represent. Theory},
    VOLUME = {14},
      YEAR = {2011},
    NUMBER = {3},
     PAGES = {515--543},
      ISSN = {1386-923X},
       DOI = {10.1007/s10468-009-9201-2},
  review={\MRref {2785921}{2012h:46114}},
}

\bib{Huef-Kaliszewski-Raeburn-Williams:Naturality-Symmetric}{article}{
  author={an Huef, Astrid},
  author={Kaliszewski, S.},
  author={Raeburn, Iain},
  author={Williams, Dana P.},
     TITLE = {Naturality of symmetric imprimitivity theorems},
   JOURNAL = {Proc. Amer. Math. Soc.},
    VOLUME = {141},
      YEAR = {2013},
    NUMBER = {7},
     PAGES = {2319--2327},
      ISSN = {0002-9939},
       DOI = {10.1090/S0002-9939-2013-11712-0},
  review={\MRref {3043013}{}},
}

\bib{Baum-Connes-Higson:BC}{article}{
  author={Baum, Paul},
  author={Connes, Alain},
  author={Higson, Nigel},
  title={Classifying space for proper actions and \(K\)\nobreakdash-theory of group \(C^*\)\nobreakdash-algebras},
  conference={
    title={\(C^*\)\nobreakdash-Algebras: 1943--1993},
    address={San Antonio, TX},
    date={1993},
  },
  book={
    series={Contemp. Math.},
    volume={167},
    publisher={Amer. Math. Soc.},
    place={Providence, RI},
  },
  date={1994},
  pages={240--291},
  review={\MRref{1292018}{96c:46070}},
  doi={10.1090/conm/167/1292018},
}

\bib{Baum-Guentner-Willett}{article}{
  author={Baum, Paul},
 author={Guentner, Eric},
 author={Willett, Rufus}
  title={Expanders, exact crossed products, and the Baum-Connes conjecture},
  status={eprint},
  note={\arxiv {1311.2343}},
  date={2013},
}

\bib{Brown-Guentner:New_completions}{article}{
  author={Brown, Nathanial P.},
  author={Guentner, Erik},
  title={New $C^*$-completions of discrete groups and related spaces},
  status={eprint},
  note={\arxiv {1205.4649}},
  date={2012},
}

\bib{Buss-Echterhoff:Exotic_GFPA}{article}{
  author={Buss, Alcides},
  author={Echterhoff, Siegfried},
  title={Universal and exotic generalized fixed-point algebras for weakly proper actions and duality},
  status={eprint},
  note={\arxiv {1304.5697}},
  date={2013},
}

\bib{Combes:Crossed_Morita}{article}{
  author={Combes, Fran\c{c}ois},
  title={Crossed products and Morita equivalence},
  journal={Proc. London Math. Soc. (3)},
  volume={49},
  date={1984},
  number={2},
  pages={289--306},
  issn={0024-6115},
  review={\MRref{748991}{86c:46081}},
}

\bib{Echterhoff-Emerson:Structure_proper}{article}{
  author={Echterhoff, Siegfried},
  author={Emerson, Heath},
  title={Structure and $K$-theory of crossed products by proper actions},
  journal={Expo. Math.},
  volume={29},
  date={2011},
  number={3},
  pages={300--344},
  issn={0723-0869},
  review={\MRref {2820377}{}},
  doi={10.1016/j.exmath.2011.05.001},
}

\bib{Echterhoff-Kaliszewski-Quigg-Raeburn:Naturality}{article}{
    AUTHOR = {Echterhoff, Siegfried},
    AUTHOR = {Kaliszewski, Steven},
    AUTHOR = {Quigg, John},
    AUTHOR = {Raeburn, Iain},
     TITLE = {Naturality and induced representations},
   JOURNAL = {Bull. Austral. Math. Soc.},
    VOLUME = {61},
      YEAR = {2000},
    NUMBER = {3},
     PAGES = {415--438},
      ISSN = {0004-9727},
  MRNUMBER = {1762638 (2001j:46101)},
       DOI = {10.1017/S0004972700022449},
}

\bib{Echterhoff-Kaliszewski-Quigg-Raeburn:Categorical}{article}{
  author={Echterhoff, Siegfried},
  author={Kaliszewski, Steven P.},
  author={Quigg, John},
  author={Raeburn, Iain},
  title={A categorical approach to imprimitivity theorems for $C^*$\nobreakdash -dynamical systems},
  journal={Mem. Amer. Math. Soc.},
  volume={180},
  date={2006},
  number={850},
  pages={viii+169},
  issn={0065-9266},
  review={\MRref {2203930}{2007m:46107}},
}

\bib{Echterhoff-Raeburn:Multipliers}{article}{
  author={Echterhoff, Siegfried},
  author={Raeburn, Iain},
  title={Multipliers of imprimitivity bimodules and Morita equivalence of crossed products},
  journal={Math. Scand.},
  volume={76},
  date={1995},
  number={2},
  pages={289--309},
  issn={0025-5521},
  review={\MRref {1354585}{97h:46093}},
}

\bib{Exel:Unconditional}{article}{
  author={Exel, Ruy},
  title={Unconditional integrability for dual actions},
  journal={Bol. Soc. Brasil. Mat. (N.S.)},
  volume={30},
  number={1},
  date={1999},
  pages={99--124},
  issn={0100-3569},
  review={\MRref{1686980}{2000f:46071}},
  doi={10.1007/BF01235677},
}

\bib{Green:algebras_transformation_groups}{article}{
  author={Green, Philip},
  title={$C^*$\nobreakdash -algebras of transformation groups with smooth orbit space},
  journal={Pacific J. Math.},
  volume={72},
  year={1977},
  number={1},
  pages={71--97},
  issn={0030-8730},
  review={\MRref {0453917}{56 \#12170}},
}

\bib{Kaliszewski-Landstad-Quigg:Exotic}{article}{
  author={Kaliszewski, Steven P.},
  author={Landstad, Magnus B.},
  author={Quigg, John},
  title={Exotic group \cstar{}algebras in noncommutative duality},
  status={preprint},
  date={2012},
  note={\arxiv {1211.4982}},
}

\bib{Kaliszewski-Muhly-Quigg-Williams:Fell_bundles_and_imprimitivity_theoremsII}{article}{
  author={Kaliszewski, Steven P.},
  author={Muhly, Paul S.},
  author={Quigg, John},
  author={Williams, Dana P.},
  title={Fell bundles and imprimitivity theorems: towards a universal generalized fixed point algebra},
  status={preprint},
  date={2012},
  note={\arxiv {1206.6739}},
}

\bib{Kaliszewski-Quigg-Raeburn:ProperActionsDuality}{article}{
  author={Kaliszewski, Steven P.},
  author={Quigg, John},
  author={Raeburn, Iain},
  title={Proper actions, fixed-point algebras and naturality in nonabelian duality},
  journal={J. Funct. Anal.},
  volume={254},
  date={2008},
  number={12},
  pages={2949--2968},
  doi={10.1016/j.jfa.2008.03.010},
  review={\MRref {2418615}{2010a:46160}},
}

\bib{Kasparov:Novikov}{article}{
  author={Kasparov, Gennadi G.},
  title={Equivariant \(KK\)-theory and the Novikov conjecture},
  journal={Invent. Math.},
  volume={91},
  date={1988},
  number={1},
  pages={147--201},
  issn={0020-9910},
  review={\MRref {918241}{88j:58123}},
  doi={10.1007/BF01404917},
}

\bib{Kasparov-Skandalis:Bolic}{article}{
  author={Kasparov, Gennadi G.},
  author={Skandalis, Georges},
  title={Groups acting properly on ``bolic'' spaces and the Novikov conjecture},
  journal={Ann. of Math. (2)},
  volume={158},
  date={2003},
  number={1},
  pages={165--206},
  issn={0003-486X},
  review={\MRref {1998480}{2004j:58023}},
}

\bib{Lance:Hilbert_modules}{book}{
  author={Lance, E. {Ch}ristopher},
  title={Hilbert $C^*$\nobreakdash -modules},
  series={London Mathematical Society Lecture Note Series},
  volume={210},
  publisher={Cambridge University Press},
  place={Cambridge},
  date={1995},
  pages={x+130},
  isbn={0-521-47910-X},
  review={\MRref {1325694}{96k:46100}},
}

\bib{Marelli_Raeburn:Proper_actions_not_saturated}{article}{
  author={Marelli, Dami\'an},
  author={Raeburn, Iain},
  title={Proper actions which are not saturated},
  journal={Proc. Amer. Math. Soc.},
  volume={137},
  date={2009},
  number={7},
  pages={2273--2283},
  doi={10.1090/S0002-9939-09-09867-0},
  review={\MRref {2495260}{2010f:46102}},
}

\bib{Meyer:Generalized_Fixed}{article}{
  author={Meyer, Ralf},
  title={Generalized fixed point algebras and square-integrable groups actions},
  journal={J. Funct. Anal.},
  volume={186},
  date={2001},
  number={1},
  pages={167--195},
  issn={0022-1236},
  review={\MRref {1863296}{2002j:46086}},
  doi={10.1006/jfan.2001.3795},
}

\bib{Quigg-Spielberg}{article}{
    AUTHOR = {Quigg, John C. and Spielberg, J.},
     TITLE = {Regularity and hyporegularity in {$C^*$}-dynamical systems},
   JOURNAL = {Houston J. Math.},
    VOLUME = {18},
      YEAR = {1992},
    NUMBER = {1},
     PAGES = {139--152},
      ISSN = {0362-1588},
  review={\MRref {1159445}{93c:46122}},
}

\bib{Raeburn:Induced_Symmetric}{article}{
  author={Raeburn, Iain},
  title={Induced $C^*$\nobreakdash -algebras and a symmetric imprimitivity theorem},
  journal={Math. Ann.},
  volume={280},
  year={1988},
  number={3},
  pages={369--387},
  issn={0025-5831},
  doi={10.1007/BF01456331},
  review={\MRref {936317}{90k:46144}},
}

\bib{Raeburn-Williams:Morita_equivalence}{article}{
  author={Raeburn, Iain},
  author={Williams, Dana P.},
  title={Morita equivalence and continuous-trace $C^*$\nobreakdash -algebras},
  series={Mathematical Surveys and Monographs},
  volume={60},
  publisher={American Mathematical Society},
  address={Providence, RI},
  year={1998},
  pages={xiv+327},
  isbn={0-8218-0860-5},
  review={\MRref {1634408}{2000c:46108}},
}

\bib{Rieffel:Applications_Morita}{article}{
  author={Rieffel, Marc A.},
  title={Applications of strong Morita equivalence to transformation group $C^*$\nobreakdash -algebras},
  booktitle={Operator algebras and applications, Part I (Kingston, Ont., 1980)},
  series={Proc. Sympos. Pure Math.},
  volume={38},
  pages={299--310},
  publisher={Amer. Math. Soc.},
  address={Providence, R.I.},
  date={1982},
  review={\MRref {679709}{84k:46046}},
}

\bib{Rieffel:Proper}{article}{
  author={Rieffel, Marc A.},
  title={Proper actions of groups on $C^*$\nobreakdash -algebras},
  conference={ title={Mappings of operator algebras}, address={Philadelphia, PA}, date={1988}, },
  book={ series={Progr. Math.}, volume={84}, publisher={Birkh\"auser Boston}, place={Boston, MA}, },
  date={1990},
  pages={141--182},
  review={\MRref {1103376}{92i:46079}},
}

\bib{Rieffel:Integrable_proper}{article}{
  author={Rieffel, Marc A.},
  title={Integrable and proper actions on $C^*$\nobreakdash-algebras, and square-integrable representations of groups},
  journal={Expo. Math.},
  volume={22},
  date={2004},
  number={1},
  pages={1--53},
  issn={0723-0869},
  review={\MRref{2166968}{2006g:46108}},
}

\bib{Tu:Baum-Connes}{article}{
    AUTHOR = {Tu, Jean-Louis},
     TITLE = {La conjecture de {B}aum-{C}onnes pour les feuilletages moyennables},
   JOURNAL = {$K$-Theory},
    VOLUME = {17},
      YEAR = {1999},
    NUMBER = {3},
     PAGES = {215--264},
      ISSN = {0920-3036},
       DOI = {10.1023/A:1007744304422},
        review={\MRref{1703305}{2000g:19004}},
}

\bib{Williams:Crossed}{book}{
    AUTHOR = {Williams, Dana P.},
     TITLE = {Crossed products of {$C{^\ast}$}-algebras},
    SERIES = {Mathematical Surveys and Monographs},
    VOLUME = {134},
 PUBLISHER = {American Mathematical Society},
   ADDRESS = {Providence, RI},
      YEAR = {2007},
     PAGES = {xvi+528},
      ISBN = {978-0-8218-4242-3; 0-8218-4242-0},
 review={\MRref{2288954}{2007m:46003}},
}
 \end{biblist}
\end{bibdiv}

\end{document}